%% file: DensiteRepQuant.tex
\documentclass[12pt]{amsart}
\usepackage[T1]{fontenc}
\usepackage{amsmath,amssymb,amsthm,graphicx, url, verbatim, float, ulem,bm}
\usepackage[hidelinks]{hyperref}
\usepackage[capitalize]{cleveref}
\usepackage{enumerate}
\usepackage{xcolor, relsize}
\usepackage{tikzit}
\input{tikzstyles.tikzstyles}
\usepackage{tikz-cd}

\newtheorem{lemma}{Lemma}[section]
\newtheorem{proposition}[lemma]{Proposition}
\newtheorem{theorem}[lemma]{Theorem}
\newtheorem*{theorem*}{Theorem}
\newtheorem{corollary}[lemma]{Corollary}
\newtheorem*{corollary*}{Corollary}
\newtheorem{question}[lemma]{Question}

\newtheorem*{claim}{Claim}

\theoremstyle{definition}

\newtheorem{definition}[lemma]{Definition}
\newtheorem{remark}[lemma]{Remark}

\newtheorem{notation}[lemma]{Notation}

\newtheorem*{namedtheorem}{\theoremname}
\newcommand{\theoremname}{testing}

\newcommand{\ul}{\underline{\lambda}}
\newcommand{\ra}{\rightarrow}
\newcommand{\lra}{\longrightarrow}
\newcommand{\PGL}{\mathrm{PGL}}
\newcommand{\PU}{\mathrm{PU}}
\newcommand{\SU}{\mathrm{SU}}
\newcommand{\SL}{\mathrm{SL}}
\newcommand{\SO}{\mathrm{SO}}
\newcommand{\PSO}{\mathrm{PSO}}
\newcommand{\PSU}{\mathrm{PSU}}
\newcommand{\PSL}{\mathrm{PSL}}
\newcommand{\GL}{\mathrm{GL}}
\newcommand{\scc}{simple closed curve}

\newcommand{\Gal}{\mathrm{Gal}}
\newcommand{\Or}{\mathcal{O}}

\newcommand{\Zzp}{\mathbb{Z}[\zeta_p]}
\newcommand{\Zrp}{\mathbb{Z}[\zeta_p+\zeta_p^{-1}]}

\newcommand{\bfG}{\mathbf{G}}
\newcommand{\bfV}{\mathbf{V}}
\newcommand{\bfT}{\mathbf{T}}
\newcommand{\oV}{\overline{V}}

\DeclareMathOperator{\Mod}{\mathrm{Mod}}
\DeclareMathOperator{\PMod}{\mathrm{PMod}}
\DeclareMathOperator{\Modtilde}{\mathrm{M\tilde{o}d}}
\newcommand{\Modgn}{\PMod(\Sigma_{g,n})}
\newcommand{\Modtgn}{\Modtilde(\Sigma_g^n)}

\newcommand{\charac}{\mathrm{char}\:}

\newcommand{\Z}{\mathbb{Z}}
\newcommand{\R}{\mathbb{R}}
\newcommand{\Q}{\mathbb{Q}}
\newcommand{\C}{\mathbb{C}}
\newcommand{\N}{\mathbb{N}}
\newcommand{\F}{\mathbb{F}}
\newcommand{\oF}{\overline{\mathbb{F}}}

\newcommand{\id}{\mathrm{id}}

\DeclareMathOperator{\tr}{Tr}
\newcommand{\abs}[1]{\lvert#1\rvert}

\title[Density and surjectivity of \texorpdfstring{$\mathrm{SO}(3)$}{SO(3)}-WRT quantum representations]%
{On the density and surjectivity of \texorpdfstring{$\bm{\mathrm{SO}(3)}$}{SO(3)}-Witten-Reshetikhin-Turaev quantum representations}
\author{Renaud Detcherry}
\date{}

\address{Institut de Math\'ematiques de Bourgogne \& Institut Universitaire de France, UMR 5584 CNRS,
Universit\'e Bourgogne Franche-Comt\'e, F-2100 Dijon, France}
\email{renaud.detcherry@u-bourgogne.fr}

\author{Pierre Godfard}
\address{Department of Mathematics,
University of North Carolina at Chapel Hill,
120 E. Cameron Avenue,
Chapel Hill, NC 27599-3250, USA}
\email{godfard@unc.edu}

\author{Ramanujan Santharoubane}
\address{Laboratoire de math\'ematique d'Orsay, UMR 8628 CNRS,
B\^atiment 307, Universit\'e Paris-Saclay,
91405 ORSAY Cedex, FRANCE}
\email{ramanujan.santharoubane@universite-paris-saclay.fr}

\begin{document}

\begin{abstract}
	In this paper, we establish several new fundamental properties of $\SO(3)$-quantum representations
	$\rho_{p,g,\ul}\colon\PMod (\Sigma_{g,n})\longrightarrow \PSU_{d_{p,g,\ul}}$ of mapping class groups of surfaces,
	at prime-order roots of unity.
	We show that for any surface $\Sigma_{g,n}$ of genus $g\geq 3$, any number $n\geq 0$ of punctures,
	and any coloration $\ul$ of the punctures, $\rho_{p,g,\ul}$ has dense image in the projective unitary group $\PSU_{d_{p,g,\ul}}$,
	extending a landmark result of Larsen and Wang. Moreover, we show that the representations $\rho_{p,g,\ul}$
	are surjective modulo any unramified maximal ideal of $\Z[\zeta_p]$,
	establishing an effective version of strong approximation for these representations.
	
	We also give several applications of our main results to residual finite simpleness of $\PMod(\Sigma_{g,n})$
	(answering a question of Masbaum and Reid); to subnormal cores of some subgroups of $\PMod(\Sigma_{g,n})$;
	to realizability of congruence classes of quantum invariants; to embedding obstructions between $3$-manifolds;
	and to homological stability for mapping class groups with coefficients in $\SO(3)$-quantum representations.
\end{abstract}
\maketitle

\section{Introduction}
\label{sec:intro}

\subsection{Mapping class groups of surfaces and their finite quotients}
For $g,n\geq 0$, let us denote by $\Sigma_{g}^{n}$ a compact oriented surface with genus $g$ and $n$ boundary components,
and by $\Sigma_{g,n}$ an oriented surface of genus $g$ with $n$ punctures. The mapping class group $\Mod(\Sigma_{g}^{n})$ is the quotient
$$\mathrm{Diff}^+(\Sigma_{g}^{n})/\mathrm{Diff}_0(\Sigma_{g}^{n}),$$
of the group of orientation-preserving self-diffeomorphisms by isotopies.
We require diffeomorphisms and isotopies to restrict to the identity on $\partial \Sigma_{g}^{n}$.
The pure mapping class group $\PMod(\Sigma_{g,n})$ is the quotient
$$\mathrm{Diff}^+(\Sigma_{g,n})/\mathrm{Diff}_0(\Sigma_{g,n}),$$
where diffeomorphisms and isotopies must restrict to the identity on the set of marked points. The group $\PMod(\Sigma_{g,n})$
may also be viewed as the quotient of $\Mod(\Sigma_{g}^{n})$ by Dehn twists along boundary components.

The groups $\Mod(\Sigma_{g}^{n})$ are important in many areas of mathematics,
among them low-dimensional topology, geometric group theory and algebraic geometry. Their structure has been studied extensively and in particular,
a classical result of Grossman \cite{Gross} shows that the groups $\Mod(\Sigma_{g}^{n})$ are residually finite.

Despite Grossman's result, finite quotients of mapping class groups are not well understood.
For instance, three of the problems in Ivanov's famous problem list on mapping class groups of surfaces \cite{Iva}
revolve around the structure of finite index subgroups of $\Mod(\Sigma_{g}^{n})$. Furthermore, very few finite simple groups are known
to occur as finite quotients of mapping class groups.
From the homology representations, the groups $\mathrm{PSp}_{2g}(\Z/p\Z)$ are finite quotients of $\Mod(\Sigma_g)$ for any prime $p$.
Moreover, results of Dunfield and Thurston \cite[Theorem 1.3]{DT06} imply that arbitrarily large alternating groups $A_n$
occur as quotients of $\Mod(\Sigma_g)$ for some $g$. Finally, in \cite{MR12}, Masbaum and Reid used $\SO(3)$-Witten-Reshetikhin-Turaev
quantum representations at prime levels to show that for each $g\geq 2$, the group $\Mod(\Sigma_g)$ admits quotients $\mathrm{PSL}_d(\F_q)$
where $d$ and $q$ can be arbitrarily large.
They deduced that every finite group is a quotient of a finite index subgroup of $\Mod(\Sigma_g)$, answering a question of Hamenstädt \cite[§1]{MR12}.
This result was later recovered by Grunewald, Larsen, Lubotzky and Malestein \cite{FLLM} using homological representations.

The above-mentioned results do not, however, provide explicit finite simple quotients of the mapping class groups $\Mod(\Sigma_g)$
other than the quotients of $\mathrm{Sp}_{2g}(\Z)$. In this paper, we provide many examples of such quotients.
Just as an illustration, an immediate corollary of our main results will be:%
\footnote{the quantity $\frac 1 {1440} p^2(p^2+11)(p^2-1)$ is $d_{p,3}$,
see next section for the definition. The formula can be derived by computing $\mathfrak{o}_3+\mathfrak{e}_3$ in \cite[Equations (24) and (25)]{GM14}.}

\begin{corollary}[of \Cref{thm:surjectivity-intro}]
	\label{prop:example} Let $p\geq 5$ be a prime, let $d=\frac 1 {1440} p^2(p^2+11)(p^2-1)$, let $q\neq p$ be another prime and
	$k$ be the order of $q$ modulo $p$. Then $\mathrm{PSL}_d(\F_{q^k})$ is a quotient of $\Mod(\Sigma_3)$ if $k$ is odd,
	and $\mathrm{PSU}_d(\F_{q^k})$ is a quotient of $\Mod(\Sigma_3)$ if $k$ is even.
\end{corollary}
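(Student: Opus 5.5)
The corollary should come from the surjectivity theorem for $\rho_{p,g,\ul}$ modulo unramified maximal ideals of $\Z[\zeta_p]$ (\Cref{thm:surjectivity-intro}), specialised to $g=3$, $n=0$, and combined with a computation of the residue field and the form that $\rho_{p,3}$ preserves modulo such an ideal. The plan has four steps.

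First, recall the integral structure. By Gilmer--Masbaum, $\rho_{p,3}$ (after a finite central extension, or projectively) preserves a lattice over $\Or=\Z[\zeta_p]$, and it preserves a Hermitian form with respect to complex conjugation $\zeta_p\mapsto\zeta_p^{-1}$. Take a prime $q\neq p$ and a maximal ideal $\mathfrak{q}\subset \Or$ above $q$. It is unramified, since $q\neq p$, and its residue field is $\F_{q^k}$, where $k$ is the order of $q$ modulo $p$: the residue degree equals the order of Frobenius in $(\Z/p\Z)^\times\cong\Gal(\Q(\zeta_p)/\Q)$.

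Second, decide whether the reduced form is Hermitian or essentially bilinear. Complex conjugation is the element $-1\in(\Z/p\Z)^\times$, and it fixes $\mathfrak{q}$ exactly when $-1$ lies in the decomposition group $\langle q\rangle$. Since $\langle q\rangle$ is cyclic of order $k$ and $-1$ has order $2$, this happens exactly when $k$ is even.
\begin{itemize}
\item If $k$ is even, conjugation induces on $\F_{q^k}$ the involution $x\mapsto x^{q^{k/2}}$, and the reduced form is a nondegenerate Hermitian form over $\F_{q^k}/\F_{q^{k/2}}$. The target group is then $\mathrm{SU}_d(\F_{q^k})$ in the paper's notation, i.e.\ the unitary group defined over $\F_{q^{k/2}}$ and split over $\F_{q^k}$.
\item If $k$ is odd, $\mathfrak{q}\neq\bar{\mathfrak{q}}$. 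The form pairs the reduction mod $\mathfrak{q}$ with the reduction mod $\bar{\mathfrak{q}}$, so it imposes no condition on the image mod $\mathfrak{q}$ alone, and the target is $\SL_d(\F_{q^k})$.
\end{itemize}
I expect \Cref{thm:surjectivity-intro} to state precisely this: for $g\geq 3$ the reduction of the (lifted) representation surjects onto the appropriate $\SL_d$ or $\SU_d$ over the residue field, or onto its image modulo scalars. Passing to the quotient by the centre then gives a surjection of $\Mod(\Sigma_3)$ onto $\PSL_d(\F_{q^k})$ or $\PSU_d(\F_{q^k})$.

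Third, handle two small points. The quantum representation is defined on $\Mod(\Sigma_3)$ itself, since $\PMod(\Sigma_{3,0})=\Mod(\Sigma_3)$, so no extension issue arises. The dimension is $d=d_{p,3}$, which the footnote gives as $\frac1{1440}p^2(p^2+11)(p^2-1)$.

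The main obstacle is not in this corollary but in the theorem it invokes. The only genuine care needed here is matching the Hermitian-versus-linear dichotomy to the parity of $k$, and checking that the projective image, rather than merely a finite-index subgroup of it, is the full $\PSL$ or $\PSU$. Both should follow directly from the statement of \Cref{thm:surjectivity-intro}.
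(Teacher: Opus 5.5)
Your proposal is correct and matches the paper. The corollary is just \Cref{thm:surjectivity-intro} specialised to $g=3$, $n=0$ (so that $\PMod(\Sigma_{3,0})=\Mod(\Sigma_3)$), with residue field $\F_{q^k}$ for any maximal ideal above $q\neq p$ and $d=d_{p,3}$ as given in the footnote; your parity analysis of $J=\overline{J}$ reproduces \Cref{sec:prelimDedekindDom} and is already built into the theorem, though note that for $p=5$ the theorem's content comes from \Cref{thm:casep=5} rather than \Cref{thm:surjectivity}, which requires $p\geq 7$.
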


\subsection{Density and surjectivity of \texorpdfstring{$\bm{\SO(3)}$}{SO(3)}-quantum representations of mapping class groups}
\label{sec:introQuantumRep}

Like Masbaum and Reid, we rely on the $\SO(3)$-WRT quantum representations at prime levels to produce finite
simple quotients of mapping class groups of surfaces. Let us recall briefly that for $p\geq 5$ a prime,
and $\Lambda=\lbrace 0,2,\ldots,p-3\rbrace$, any choice $\ul \in \Lambda^n$ of colorings of the punctures of
$\Sigma_{g,n}$ gives rise to a projective representation:\footnote{For $n=0$, we use the notations $\rho_{p,g}$ and $d_{p,g}$.}
$$\rho_{p,g,\ul}:\PMod(\Sigma_{g,n})\longrightarrow \PGL_{d_{p,g,\ul}}(\Z[\zeta_p])$$
where $\zeta_p$ is a primitive $p$-th root of unity, and $d_{p,g,\ul}$ are integers,
which can be computed explicitly using the Verlinde formula.
Those representations furthermore leave a positive definite Hermitian form invariant,
hence take values in some $\mathrm{PU}_{d_{p,g,\ul}}(\Z[\zeta_p])$.
Quotienting the ring of coefficients $\Z[\zeta_p]$ by ideals $J$ turns those representations into representations
$$\rho_{J,g,\ul}:\PMod(\Sigma_{g,n})\longrightarrow \PGL_{d_{p,g,\ul}}(\Z[\zeta_p]/J)$$ with finite images.
In \cite{MR12}, Masbaum and Reid argue that the Zariski density of the representations $\rho_{p,g}$
(which for closed surfaces was proved by Larsen and Wang \cite{LW05}) implies,
thanks to Weisfeiler's Strong Approximation Theorem, that for almost all maximal ideals $J$,
the maps $\rho_{J,g}$ are surjective onto either $\mathrm{PSL}_{d_{p,g}}(\Z[\zeta_p]/J)$ or $\mathrm{PSU}_{d_{p,g}}(\Z[\zeta_p]/J)$.

This method has two limitations: it does not say anything about mapping class groups of surfaces with boundary,
as Larsen and Wang's density result is restricted to closed surfaces, and it is not effective, since it relies on strong approximation.
The main results of this paper, which are given in the following two theorems, address those two limitations.

\begin{theorem}
	\label{thm:density-intro} Let $p\geq 5$ be a prime. Let $g\geq 3$, $n\geq 0$ and $\ul \in \Lambda^n$, then $\rho_{p,g,\ul}$
	has dense image in $\mathrm{PSU}_{d_{p,g,\ul}}$.
\end{theorem}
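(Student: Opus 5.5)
The plan is to argue by induction on the number of punctures $n$, with Larsen and Wang's closed-surface density theorem \cite{LW05} ($n=0$, $g\geq 3$) as the base case.

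Let $G\subset \PSU_{d}$, $d=d_{p,g,\ul}$, be the identity component of the closure of $\rho_{p,g,\ul}(\PMod(\Sigma_{g,n}))$. This is a compact connected Lie group, and we must show $G=\PSU_d$. Since the quantum representations are irreducible for prime $p$ (Roberts' irreducibility, extended to the punctured case), $G$ acts irreducibly on a finite-index invariant decomposition. The first step is to rule out imprimitivity and tensor-decomposability. Following Larsen--Wang, we use that Dehn twists act with eigenvalues that are powers of $\zeta_p$ with controlled multiplicities. A twist has finite order $p$, so its image is not in $G$ itself but normalizes it. The useful consequence is that the Lie algebra of $G$ contains enough diagonalizable elements coming from commutators and products of twists.

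The core of the argument is the restriction to subsurfaces. Cut $\Sigma_{g,n}$ along a separating curve $\gamma$ to split off a punctured piece, say a pair of pants containing two punctures, or split off the last puncture. Standard TQFT gluing gives
\[ V_{p}(\Sigma_{g,n},\ul)\cong \bigoplus_{c\in\Lambda} V_p(\Sigma_{g,n-1},(\lambda_1,\dots,\lambda_{n-2},c))\otimes V_p(\Sigma_{0,3},(c,\lambda_{n-1},\lambda_n)), \]
where the last factors are $0$- or $1$-dimensional. The subgroup $\PMod(\Sigma_{g,n-1})$ supported on the complement then acts blockwise, and by induction its image is dense in $\prod_c \SU(V_c)$, modulo the correlations between blocks. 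To get the full product, and not merely a diagonal-type subgroup, we need non-isomorphism of the blocks. This follows because distinct colors $c$ give distinct eigenvalues for the twist along curves near $\gamma$, or from differing dimensions.

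So $G$ contains $\prod_c \SU(V_c)$, a block-diagonal subgroup of a decomposition into several summands. It also contains conjugates of this subgroup by elements, such as twists along curves crossing $\gamma$, that do not preserve the decomposition. A standard Lie-theoretic lemma then finishes the argument: if a connected closed subgroup of $\SU_d$ acts irreducibly and contains $\SU(W)$ for a subspace $W$ with $\dim W\geq 2$ (so that it contains a nontrivial block), then it is $\SU_d$. This lemma can be proved by looking at the Lie algebra generated by $\mathfrak{su}(W)$ and its conjugates.

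The main obstacle is the induction step when the base surface has few punctures and $g\geq 3$. The recursive blocks must have dimension $\geq 2$, and irreducibility of the Lie algebra action must be available rather than just irreducibility of the group, which can differ by finite index. I would first try to handle this using Roberts-type irreducibility together with the fact that $\PMod$ is generated by twists whose images have order $p$. If the closure's identity component were not irreducible, the group would permute the isotypic components, giving a finite-index subgroup of $\PMod$ with a reducible quantum representation. That is excluded because twists, having order $p$, would have to act trivially on the permutation while still generating.
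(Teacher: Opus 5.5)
Your route (induction on $n$ starting from Larsen--Wang, isolating a block $\SU(W)\oplus\mathrm{id}$, then generating) is genuinely different from the paper's, but as written it does not close. The main problem is that the induction cannot start. The step $n-1\to n$ needs density of $\rho_{g,(\lambda_1,\dots,\lambda_{n-2},c)}$ for every admissible color $c$ on the new puncture. So everything rests on $\Sigma_{g,1}$ with an arbitrary color $\lambda$. For $n=1$ your cut does not exist: a curve around a single puncture is peripheral and gives a one-summand decomposition. Already for $n=2$ with $\lambda_1\neq\lambda_2$, no block has $c=0$.

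Nor can $\Sigma_{g,1}$ with $\lambda\neq 0$ be reached from closed surfaces by other cuts of this kind. Since $V_{0,(\lambda)}=0$, take any puncture-free subsurface $Q$. The part of $V_{g,(\lambda)}$ on which $\Mod(Q)$ acts through a closed-surface representation $\rho_h$ is $V_h\otimes V_{g',(\lambda)}\otimes(\cdots)$ with $g'\geq 1$. By \Cref{cor:nonTriviality}, this multiplicity space has dimension $\geq 2$ except when $g'=1$ and $\lambda=p-3$, which is essentially the special case Funar--Lochak handled by redoing the Larsen--Wang argument. So Larsen--Wang only ever gives you $\SU(V_h)\otimes\mathrm{id}$ on a tensor product, never a block $\SU(W)\oplus\mathrm{id}$, and your lemma does not apply. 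The one-puncture case is as hard as the general one.

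The paper avoids induction entirely. It proves tensor-indecomposability directly from fusion rules and the fact that $\{0\}$ and $\Lambda$ are the only saturated subsets (\Cref{thm:tensor-indec}). It deduces that the closure of the image is simple (\Cref{thm:simpleGroup}). Using non-self-duality, weight-multiplicity-freeness and Howe's classification, it reduces to $\PU(W)$ acting on a symmetric or exterior power (\Cref{prop:isSymmetricOrExteriorPower}). Finally it rules out non-trivial powers by fusion along a curve cutting off a one-holed torus.

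Several intermediate steps also fail as stated.

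\emph{Irreducibility of the identity component.} An element of order $p$ can cyclically permute $p$ isotypic components, so ``twists of order $p$ act trivially on the permutation'' is unjustified. What does work: a semisimple element that permutes blocks non-trivially has spectrum containing a coset of a non-trivial group of roots of unity (\Cref{lemma:spectrumAdditiveDecomp}). The twist spectrum $\{\zeta_p^{k^2}\}$ contains no such coset (\Cref{prop:DehnTwistSpectrum}). Even then, the identity component could act isotypically, $V\simeq W\otimes\C^m$ with $m>1$, and only tensor-indecomposability excludes this. You defer that to Larsen--Wang, but their proof is for closed surfaces (induction on genus, representation theory of $\mathrm{PSL}_2(\Z/p\Z)$). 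The punctured case is exactly the new content of \Cref{thm:tensor-indec}.

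\emph{The block lemma.} It is false for $\dim W=2$. The compact symplectic group $\mathrm{Sp}(k)\subset\SU_{2k}$, $k\geq 2$, is connected and irreducible, and it contains $\SU(W)\oplus\mathrm{id}$ for $W$ a quaternionic line. You need $\dim W\geq 3$, or a non-self-duality input such as \Cref{prop:notSelfDual}.

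\emph{Non-linking of blocks.} The twist along $\gamma$ acts by a scalar on each block. It is therefore projectively trivial there and cannot distinguish the blocks as projective representations of the complementary subsurface group. You must rule out isomorphisms up to conjugation and duality (complex conjugation is an outer automorphism of $\PSU$) using an invariant internal to that subgroup. Your alternative, ``differing dimensions'', is not established.
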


\begin{theorem}
	\label{thm:surjectivity-intro} Let $p\geq 5$ be a prime, and let $J\subset \Z[\zeta_p]$ be a maximal ideal, with $\Z[\zeta_p]/J\simeq \F_{q^k}$,
	with $q\neq p$ a prime. Let $g\geq 3, n\geq 0$ and $\ul \in \Lambda^n$, then $\rho_{J,g,\ul}$ surjects onto $\mathrm{PSL}_{d_{p,g,\ul}}(\F_{q^k})$
	if $k$ is odd and onto $\mathrm{PSU}_{d_{p,g,\ul}}(\F_{q^k})$ if $k$ is even.
\end{theorem}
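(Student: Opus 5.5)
The plan is to deduce the surjectivity of $\rho_{J,g,\ul}$ from a criterion for subgroups of finite classical groups generated by elements of small support. We will follow the strategy of Larsen--Wang for density, transported to characteristic $q$. The key input is a \emph{pseudo-reflection type} element. The Dehn twist $T_\gamma$ along a non-separating curve acts in the standard basis (a pants decomposition containing $\gamma$) diagonally, with eigenvalues $\zeta_p^{c(c+2)/4}$ up to a global scalar, where $c$ is the color of $\gamma$. The image of $T_\gamma$ is therefore semisimple with eigenvalues in $\F_{q^k}$; since $q\neq p$, these eigenvalues remain distinct mod $J$. The first step is to reduce to $\bar\rho=\rho_{J,g,\ul}$ viewed in $\GL_d(\F_{q^k})$ (after lifting to a linear representation of a central extension). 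The reduced representation preserves the reduction of the Hermitian form, twisted by complex conjugation $\zeta_p\mapsto\zeta_p^{-1}$. This conjugation induces a nontrivial automorphism of $\F_{q^k}$ exactly when $-1$ lies in $\langle q\rangle\subset(\Z/p)^\times$, i.e.\ when $k$ is even. This explains why the target is $\mathrm{PSU}_d(\F_{q^{k}})$ (unitary over the subfield $\F_{q^{k/2}}$) when $k$ is even, and $\mathrm{PSL}_d$ when $k$ is odd. We also check that the image lands in the derived/special group, using perfectness of $\PMod(\Sigma_{g,n})$ for $g\geq 3$.

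The second step is absolute irreducibility of $\bar\rho$ over $\oF_q$. We plan to prove this by induction on the surface via the TQFT gluing structure. Restrict to the subgroup fixing a curve $\gamma$. The space splits as $\bigoplus_c V_{g-1,\ul\cup\{c,c\}}$, and these summands are distinguished by distinct eigenvalues of $T_\gamma$ mod $J$. By induction each summand is irreducible. Mapping classes moving $\gamma$ (e.g.\ a twist along a curve meeting $\gamma$ once, whose $S$-matrix entries are units away from $p$) connect the summands. Roberts' irreducibility argument carries over because the needed matrix coefficients are nonzero mod $J$. The base cases are genus $0$ and genus $1$ with small $n$, which are handled by explicit computation.

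The third step, and the main obstacle, is to show that the image $G\subset\GL_d(\F_{q^k})$ is not contained in a proper maximal subgroup. We plan to use the Guralnick--Saxl / Aschbacher classification via the classification of irreducible groups generated by elements with bounded-dimensional ``support''. Concretely, take $T_\gamma$ times a scalar so that it acts trivially on the large summand with color $c=0$. Its fixed space then has codimension $d-\dim V_{g-1,\ul\cup\{0,0\}}$. This is not small, so instead we will use the commutator of two twists, or a product $T_\gamma T_\delta^{-1}$ for disjoint curves bounding a pair of pants with a colored boundary. Such an element should act as the identity on a subspace of small relative codimension, a ``quadratic element'' in the sense of Guralnick--Saxl. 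We then aim to apply a theorem of the form: an absolutely irreducible subgroup of $\GL_d(\oF_q)$, primitive and tensor-indecomposable, containing an element of fixed-point codimension $<d/\text{const}$, contains $\SL_d$ or a classical group over a subfield. Three properties must then be verified separately: primitivity (from the eigenvalue structure of twists and the irreducibility of the restrictions), tensor-indecomposability (by comparing eigenvalue multiplicities of $T_\gamma$), and that the field generated by traces is all of $\F_{q^k}$ (traces of twists involve $\zeta_p$ itself). The hypothesis $g\geq 3$ enters to make $d$ large compared to these codimensions and to ensure perfectness. Having landed in a classical group, the invariant form then pins it down as $\SL_d$ or $\SU_d$ over the right field. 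Passing to projective quotients yields the claim.
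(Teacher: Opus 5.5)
\textbf{There is a genuine gap in your Step 3, which carries the whole argument.}

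First, the element you propose does not have small support. Suppose $\gamma,\delta$ cobound a pair of pants with a third boundary colored $\lambda$. In a pants basis containing $\gamma,\delta$, the element $T_\gamma T_\delta^{-1}$ acts on $\varphi_c$ by a fixed scalar times $\zeta_p^{(c_\gamma+1)^2-(c_\delta+1)^2}$.
\begin{itemize}
\item Since $c\mapsto (c+1)^2$ is injective on $\Lambda$ modulo $p$, the largest eigenspace is spanned by the $\varphi_c$ with $c_\gamma=c_\delta$.
\item For $\lambda=2$, the vectors with $c_\gamma=c_\delta\pm 2$ therefore span a positive proportion of $V$. Every other eigenspace corresponds to a single pair $(c_\gamma,c_\delta)$.
\end{itemize}
More generally, any element supported in a subsurface $S$ acts as $\bigoplus_\mu X_\mu\otimes\mathrm{id}_{V(\Sigma\setminus S,\mu)}$. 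Small support would force most $X_\mu$ to be scalar. That requires prior control of the image of $\Mod(S)$ on all the $V(S,\mu)$ simultaneously, which is an inductive surjectivity statement and is not part of your plan.

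Second, the classification you want to invoke is false as stated.
\begin{itemize}
\item $\SL(W)$ acting on $\Lambda^2(W)$ is absolutely irreducible, primitive and tensor-indecomposable. Yet a transvection moves only a subspace of dimension $\dim W-2$, a proportion $2(\dim W-2)/(\dim W(\dim W-1))$ of the whole.
\item Alternating groups on their deleted permutation modules contain $3$-cycles moving a $2$-dimensional subspace.
\end{itemize}
The genuine results of this type (Hall--Liebeck--Seitz, Guralnick--Saxl) come with exactly such exception lists. In Aschbacher's terms, your primitivity, tensor-indecomposability and field arguments deal with geometric classes. The class $\mathcal{S}$ is the hard case: an almost simple group acting through a non-natural representation, for instance $\SL(W)$ on an exterior or symmetric power of $W$, or a Lie type group in cross characteristic. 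Your plan has no mechanism for it.

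For comparison, here is how the paper handles these cases.
\begin{itemize}
\item It first proves that the image is a finite simple group. Tensor-indecomposability comes from fusion rules and the fact that $\{0\}$ and $\Lambda$ are the only saturated subsets of $\Lambda$. A socle argument then uses that twist spectra contain no coset of a nontrivial group of roots of unity, together with perfectness and solvability of $\mathrm{Out}$.
\item The obstruction to linearizing $\rho_{J,g,\ul}$ has order divisible by $p$; this is where $g\geq 3$ is really used. Compared with the list of Schur multipliers, this forces Lie type $A$ in characteristic prime to $p$.
\item The Seitz--Zalesskii lower bounds for cross-characteristic representations contradict the rank forced by the subgroup $(\Z/p)^{3g-3+n}$ generated by twists. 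This gives defining characteristic.
\item In defining characteristic, three inputs reduce $V$ to an exterior or truncated symmetric power of some $W$: non-self-duality; weight-multiplicity-freeness, since the twists along a non-separating pants decomposition lie in a maximal torus and have multiplicity-free characters; and the Zalesskii--Suprunenko classification.
\item Fusion along a curve cutting off a one-holed torus then leaves only $V\simeq W$ or $W^*$. A field-of-definition lemma and the Hermitian form identify $\PSL$ versus $\PSU$.
\end{itemize}
The case $p=5$, which your proposal does not single out, is treated separately by induction on the surface, using Dickson's theorem for the four-holed sphere and maximal-subgroup tables.

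A smaller point: your irreducibility induction has genus-$0$ surfaces with arbitrarily many punctures among its base cases, which is not an explicit computation. The paper instead uses that, modulo $J$, curve operators are polynomials in twists and generate the full matrix algebra.
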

Let us make a couple of remarks about the above results.
Theorem \ref{thm:density-intro} is a weaker version of Theorem \ref{thm:density},
which also includes, for each $p\geq 5$ prime, all cases but a finite number in genus $2$
(including the closed genus $2$ surface), and many genus $1$ cases.
As for \Cref{thm:surjectivity-intro}, we do not have a full analog of it in these extra genus $1$ and $2$ cases,
but our proof simplicity of the image modulo $J$ does cover them; see \Cref{thm:simpleGroup}.

The problem of generalizing Larsen and Wang's result to surfaces with boundary was raised by Masbaum and Reid in \cite[Remark 2]{MR16},
and the only progress since has been the work of Funar and Lochak \cite{FL18}, where they adapted
the proof of Larsen and Wang to the case of the surface $\Sigma_{g,1}$ with boundary colored by $p-3$, and with $p\equiv 3 \mod 4$ a large enough prime.

Establishing density for surfaces with boundary and arbitrary coloring of the boundary will be
crucial for some applications of our main results, as we shall see in the next section.

We note that Theorem \ref{thm:density-intro} is logically independent of Larsen and Wang's result and that the method of proof is fairly different.
As in \cite{LW05}, we start out by proving that the representations $\rho_{p,g,\ul}$ are tensor-indecomposable and
deduce that the closures of their images are simple Lie groups, but whereas Larsen and Wang prove tensor-indecomposability
relying on some facts about the representation theory of the groups $\mathrm{PSL}_2(\Z/p\Z)$ and induction on the genus,
we prove it without induction for all surfaces by fully exploiting the irreducibility of all the representations $\rho_{p,g,\ul}$,
the modular functor structure and a saturation property of $\Lambda$.
Then,
we show that the representations $\rho_{p,g,\ul}$ are \emph{weight-multiplicity-free} in the sense of Howe \cite{gelbartSchurLectures19921995}
and use a theorem of Howe classifying weight-multiplicity-free representations of simple Lie groups
to significantly narrow down the possibilities for the closure of the image.
Finally, exploiting again irreducibility of all the representations $\rho_{p,g,\ul}$ and
the modular functor structure to exclude most remaining cases, we show that the $\rho_{p,g,\ul}$ have dense images.

No analog of Theorem \ref{thm:surjectivity-intro} exists in the literature.
In fact, we prove a stronger version, for not necessarily maximal ideals in Theorem \ref{thm:surjectivityImproved}.
One way to think about this result is that it determines, away from the prime $p$,
the closure of the image of $\rho_{p,g,\ul}$ in the profinite group $\mathrm{PU}_{d_{p,g,\ul}}(\widehat{\Z[\zeta_p]})$.
Note that in the genus $1$ and genus $2$ cases covered by \Cref{thm:density}, we prove that the image of $\rho_{J,g,\ul}$
is a finite simple group (but do not identify that group; see \Cref{thm:simpleGroup}).

The proof of Theorem \ref{thm:surjectivity-intro} stems from the realization that most of the arguments involved
in our proof of density for the characteristic zero representations $\rho_{p,g,\ul}$
may be adapted (sometimes without much additional effort)
to prove surjectivity results for the representations $\rho_{J,g,\ul}$.
Indeed, most arguments relying on quantum topology (irreducibility, modular functor structure...)
stay valid for $\rho_{J,g,\ul}$, and this allows us to jointly prove that the representations $\rho_{p,g,\ul}$ and $\rho_{J,g,\ul}$
are tensor-indecomposable and have simple images.
Also, we replace the result of Howe with an analog in positive characteristic due to Zalesskii and Suprunenko \cite{MR934193}.
The main additional hurdle is that we have to first establish that the images of $\rho_{J,g,\ul}$
are finite simple groups of Lie type in the same characteristic as that of $\Z[\zeta_p]/J$.
We exclude other finite simple groups by noticing that the projective representations $\rho_{J,g,\ul}$
have an obstruction to linearizability of large order,
making use of lower bounds, due to Seitz and Zalesskii \cite{seitzMinimalDegreesProjective1993},
on the degrees of projective representations of finite simple Lie groups in non-defining characteristic,
and also relying on bounds on $\dim \rho_{J,g,\ul}$ and on the existence of large abelian $p$-groups in the image of $\rho_{J,g,\ul}$.

One final theorem completes our results about $\SO(3)$-quantum representations.
The following is a special case of Theorem \ref{thm:asymptFaith}.

\begin{theorem}\label{thm:asymptFaith-intro}
	Let $\Sigma_{g}^{n}$ be a compact oriented surface and let $(\ul (p))_{p\geq 5,\, p \, \textrm{prime}}$ be a sequence
	such that $\ul(p)\in \Lambda_p^n$ and $\frac{1}{p}\ul(p) \underset{p\rightarrow\infty}{\longrightarrow}(1/2,\ldots,1/2)$. Then
	$$\underset{p\geq 5,\, p \, \textrm{prime}}{\bigcap}\ker \rho_{p,g,\ul(p)}=Z(\Mod(\Sigma_{g}^{n})).$$
\end{theorem}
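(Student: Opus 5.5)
The plan is the classical asymptotic faithfulness argument (Andersen; Freedman--Walker--Wang), adapted to colored boundary. The key tool is the curve-operator / skein-theoretic description of $\rho_{p,g,\ul}$ via the $\SO(3)$-TQFT of Blanchet--Habegger--Masbaum--Vogel. The inclusion $Z(\Mod(\Sigma_g^n))\subset\bigcap_p\ker\rho_{p,g,\ul(p)}$ should be the easy direction. The center is trivial except in the few low-complexity cases: hyperelliptic involutions in genus $1,2$ with $n\le 2$, and boundary-parallel twists when $n\geq1$. In each of these cases one checks directly that the element acts by a scalar. For the hyperelliptic involution this is because it acts trivially on every curve operator, and Schur's lemma applies using the irreducibility established earlier in the paper. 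For boundary twists the scalar is the twist eigenvalue of the color.

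For the converse, let $\phi\notin Z(\Mod(\Sigma_g^n))$. Then there is an essential simple closed curve $\gamma$, possibly a non-peripheral arc-type curve, with $\phi(\gamma)\neq\gamma$ up to isotopy. In fact I will pick $\gamma$ and a second curve $\delta$ such that the geometric intersection numbers satisfy $i(\phi(\gamma),\delta)\neq i(\gamma,\delta)$; this is possible by a standard argument, since curves are determined by their intersection functions. Suppose $\rho_{p,g,\ul(p)}(\phi)$ is scalar. Then $\phi$ commutes projectively with all curve operators, so $\rho(\phi)T_\gamma\rho(\phi)^{-1}=T_{\phi(\gamma)}$ gives $T_{\phi(\gamma)}=T_\gamma$ for the curve operators in the colored TQFT.

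I would then show that $T_\gamma\neq T_{\phi(\gamma)}$ for $p$ large. To do so, compute matrix coefficients in a pants decomposition basis adapted to $\delta$, i.e.\ with $\delta$ a pants curve. The operator $T_\gamma$ changes the color on $\delta$ by at most $i(\gamma,\delta)$ steps, and I expect the coefficient achieving the maximal shift $\pm i(\gamma,\delta)$ to be nonzero for suitable middle colors. This is exactly where the assumption $\ul(p)/p\to(1/2,\ldots,1/2)$ enters. With boundary colors near $p/2$, the set of admissible colorings of the pants decomposition contains colorings with all internal colors near $p/2$ as well. There every relevant quantum $6j$-symbol or fusion coefficient is a ratio of quantum integers bounded away from zero, so no admissibility constraint (triangle or parity inequality, sum $\le 2p-4$) kills the extremal coefficient. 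Comparing the maximal shift in the $\delta$-color then distinguishes $T_\gamma$ from $T_{\phi(\gamma)}$. This yields $\phi\notin\ker\rho_{p,g,\ul(p)}$ for all large $p$.

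The main obstacle is the nonvanishing of the extremal matrix coefficient when $\gamma$ winds through many pants curves and has colored boundary. My first attempt would be to reduce, by the gluing axiom, to computing $T_\gamma$ as a composition of elementary pieces, namely arcs of $\gamma$ crossing a single pair of pants or one-holed torus. I would then check that each piece has a nonzero leading term at colors near $p/2$, using explicit fusion formulas. A fallback is the asymptotic (Toeplitz/semiclassical) approach of Marché--Paul and Detcherry, where $T_\gamma$ converges to the trace function of $\gamma$ on the relative $\SU_2$ character variety with boundary holonomies of angle $\to\pi$. Distinct curves give distinct trace functions there, because this character variety is nonempty and of full dimension precisely for these boundary conditions.
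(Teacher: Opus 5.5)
Your architecture is the paper's. You reduce to showing $T_{p,\gamma}\neq T_{p,\phi(\gamma)}$ using $T_{p,\phi(\gamma)}=\rho(\phi)T_{p,\gamma}\rho(\phi)^{-1}$. You then detect the difference through an off-diagonal entry in a pants-decomposition basis, whose possible shifts are bounded by geometric intersection numbers. Your auxiliary curve $\delta$ is fine. The paper simply takes $\delta=\gamma$ inside a pants decomposition, so that $T_{p,\gamma}$ is diagonal, and treats the disjoint case by comparing eigenvalues. The easy inclusion also needs no case list: a central $z$ fixes every isotopy class of curves, so $\rho(z)$ commutes with all curve operators, and these generate $\mathrm{End}(V_{p,g,\ul})$. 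However, the one step carrying all the content is only stated as an expectation, and neither justification you offer proves it. That step is the nonvanishing, for all large $p$, of the extremal off-diagonal entry of $T_{p,\phi(\gamma)}$.

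\emph{The primary heuristic does not prove it.} An entry that is extremal only in the $\delta$-coordinate is, after fusing the strands of $\phi(\gamma)$ along the other pants curves, a sum over many fusion channels. Nonvanishing of the individual quantum-integer factors says nothing about such a sum. Even for the entry that is extremal in every coordinate, $m_i=i(\phi(\gamma),\alpha_i)$, where the channel is unique, you would still have to carry out the local pair-of-pants evaluations and show they give one nonzero product. These involve arcs returning to the same pants curve and the twisting parameters, and the proposal does not do this.

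\emph{The fallback does not prove it either.} The claim that ``distinct curves give distinct trace functions'' on $\mathcal{M}(\Sigma_g^n,\underline{x})$ does not follow from that variety being of full dimension; it is precisely the missing fact. The paper supplies it with two specific inputs.
\begin{enumerate}
\item Detcherry's theorem \cite{Det16}: if $c(p)/p\to\theta\in\mathrm{Int}\,\Delta(\underline{x})$, then $\langle T_{p,\delta}\varphi_{c(p)},\varphi_{c(p)+m}\rangle$ converges to the $m$-th Fourier coefficient of $t_\delta$ on the torus fiber $\mu^{-1}(\theta)$.
\item Lemma~\ref{lemma:nonVanishingFourier}: that Fourier coefficient is nonzero for $m_i=i(\delta,\alpha_i)$. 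For closed surfaces this is \cite[Lemma 4.4]{CM12}. For boundary, the paper doubles $\Sigma_g^n$ together with its pants decomposition. Then $(\underline{x},\theta,\theta)$ lies in the interior of the doubled polytope. The angle flows of the curves off $\Sigma_g^n$ leave $t_\delta$ invariant, so the $(\underline{m},0)$ coefficient on the double equals the $\underline{m}$ coefficient on $\Sigma_g^n$.
\end{enumerate}
The hypothesis on $\ul(p)$ is used only to guarantee $\mathrm{Int}\,\Delta(1/2,\ldots,1/2)\neq\emptyset$. Any $\underline{x}$ with nonempty interior works, so ``precisely for these boundary conditions'' is not right. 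With these two results inserted, your argument goes through as in the paper.
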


Theorem \ref{thm:asymptFaith-intro} may be seen as a refined version of the asymptotic faithfulness of $\SO(3)$-WRT quantum representations,
which was proved independently by Andersen \cite{A06} and Freedman, Walker, and Wang \cite{FWW02}.
The more general but more technical to state Theorem \ref{thm:asymptFaith} proves asymptotic faithfulness
for sequences of colorings converging to other limits than $(1/2,\ldots,1/2)$.
The proof of this result is much shorter than that of \Cref{thm:density-intro,thm:surjectivity-intro}. However, the question whether
one can establish such refined versions of asymptotic faithfulness was raised in \cite[Remark 4.4]{FL18}. Moreover, Theorem \ref{thm:asymptFaith-intro}
will turn out to be important for some of the applications described in the next section.

\subsection{Applications}
\label{sec:applicationsIntro}

While \Cref{thm:density-intro,thm:surjectivity-intro,thm:asymptFaith-intro} are concerned with $\mathrm{SO}(3)$-quantum representations,
they turn out to have applications to the structure of the groups $\Mod(\Sigma_{g,n})$ (or $\Mod(\Sigma_g^n)$) themselves.
\subsubsection{Subnormal cores of some subgroups of mapping class groups}
\label{sec:subnormalIntro}

Our first application is to give several examples of non-trivial subgroups of $\Mod(\Sigma_g^n)$ with trivial subnormal core.

Given a group $G$, we recall that a subgroup $H$ is said to be \textit{subnormal} in $G$ (and we write $H\triangleleft\triangleleft G$)
if there exists a finite sequence of subgroups
$$H=K_n\triangleleft K_{n-1}\triangleleft \dotsb \triangleleft K_1\triangleleft K_0=G.$$

For $H$ a subgroup of $G$, we define the \textit{subnormal core} $scor_G(H)$ of $H$ as
$$scor_G(H)=\langle K \subset H \ | K \triangleleft\triangleleft G \rangle.$$

It is clear that $scor_G(H)$ always contains the normal core $core_G(H)$, which is the largest normal subgroup of $G$ contained in $H$,
and that a subgroup has trivial subnormal core if and only if it does not contain any non-trivial subnormal subgroup.

Let us now restrict to $G=\Mod(\Sigma_{g}^{n})$ where $g\geq 2$ and $(g,n)\neq (2,1)$. We can consider the following subgroups of $\Mod(\Sigma_{g}^{n})$:
\begin{itemize}
	\item[(i)]When $n=0$, the handlebody groups $\mathcal{H}_g$, which are the subgroups consisting of mapping classes that extend to a fixed handlebody
	of genus $g$ with boundary $\Sigma_g$.
	\item[(ii)]The centralizers $C_{\Mod(\Sigma_{g}^{n})}(f)$, where $f$ is a finite-order non-central mapping class in $ \Mod(\Sigma_{g}^{n})$.
	\item[(iii)]The normalizers $N_{\Mod(\Sigma_{g}^{n})}(\Gamma)$, where $\Gamma$ is a finite non-central subgroup of $\Mod(\Sigma_{g}^{n})$.
\end{itemize}

As a corollary of our main results, we get:

\begin{corollary}
	\label{cor:subnormalCore} Let $g\geq 2$ and $(g,n)\neq (2,1)$, then the subgroups $\mathcal{H}_{g}$ (if $n=0$), $ C_{\Mod(\Sigma_{g}^{n})}(f)$
	and $ N_{\Mod(\Sigma_{g}^{n})}(\Gamma)$ introduced above have trivial subnormal cores in $\Mod(\Sigma_{g}^{n})$.
\end{corollary}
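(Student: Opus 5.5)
The plan is to use the density theorem (\Cref{thm:density-intro}, together with its genus $2$ extension \Cref{thm:density}) and the asymptotic faithfulness of \Cref{thm:asymptFaith-intro}. The latter lets us detect a non-trivial non-central element by some $\rho_{p,g,\ul(p)}$.

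Let $K\triangleleft\triangleleft G=\Mod(\Sigma_g^n)$ with $K\subset H$, and suppose $K\neq 1$.

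\emph{Step 1: $K$ is not central.} The center of $G$ is either trivial or generated by the hyperelliptic involution and boundary twists. If $K$ is central, I would first replace $K$ by a non-central subnormal subgroup, or argue directly that no central element of infinite order lies in $H$. For instance, boundary twists do not commute with arbitrary finite-order $f$ unless $f$ fixes the boundary, in which case $f$ is trivial by Kerckhoff-type rigidity. The degenerate cases should be handled by hand, which is where the hypothesis $(g,n)\neq(2,1)$ should enter. So assume $K$ contains some non-central $x$.

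\emph{Step 2: choose a representation.} By \Cref{thm:asymptFaith-intro}, pick a prime $p$ and colors $\ul$ with $\rho=\rho_{p,g,\ul}(x)$ non-scalar. Here $\rho$ is viewed as a representation of $\PMod$, so boundary twists act by scalars. Let $\Gamma=\rho(G)$, which is dense in $\PSU_d$. The chain $K=K_m\triangleleft\dots\triangleleft K_0=G$ gives $\rho(K)\triangleleft\triangleleft\Gamma$. Taking closures, $\overline{\rho(K_{i+1})}$ is normal in $\overline{\rho(K_i)}$. Since $\PSU_d$ is a simple compact Lie group, every closed normal subgroup is either finite central (hence trivial) or everything. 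By induction, each $\overline{\rho(K_i)}$ is therefore $\PSU_d$ or trivial. The last one contains $\rho(x)\neq 1$, so $\rho(K)$ is dense in $\PSU_d$, and hence $\rho(H)$ is dense.

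\emph{Step 3: contradiction, case by case.} For $C_G(f)$, the image $\rho(C_G(f))$ commutes with $\rho(f)$. To use this, $\rho(f)$ must be non-scalar, so I choose $p,\ul$ with both $\rho(x)$ and $\rho(f)$ non-scalar. This is possible since the set of good $p$ is cofinite in the sequence (kernels eventually avoid a given non-central element, by refining Theorem \ref{thm:asymptFaith}). Then the closure lies in the proper closed centralizer of $\rho(f)$, a contradiction. For $N_G(\Gamma)$, the group $\rho(\Gamma)$ is a finite subgroup with some non-scalar element, and its normalizer in $\PSU_d$ is a proper closed subgroup, since a dense group normalizing a finite group centralizes a finite-index subgroup of it, hence everything by connectedness. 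For $\mathcal H_g$, the images of handlebody groups fix the vacuum vector (the vector given by the handlebody) up to scalar, hence lie in the stabilizer of a line, which is proper when $d\ge2$. \textbf{The main obstacle} is Step 1 together with the uniform choice in Step 3 of a single $(p,\ul)$ in which several given non-central elements are simultaneously non-scalar. For this I would use \Cref{thm:asymptFaith} along one sequence: each element lies outside the kernels for all large $p$.
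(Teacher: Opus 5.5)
Your Steps 2 and 3 follow the paper's argument. That argument is \Cref{lemma:subnormal}: asymptotic faithfulness, density, and simplicity of $\PSU$ force the image of a subgroup containing a non-trivial (non-central) subnormal subgroup to be dense. It then uses the same three obstructions you give: the vacuum vector, the eigenspace decomposition of $\rho(f)$, and, for normalizers, the $\Gamma$-isotypic decomposition. Your connectedness argument for normalizers is an equally good substitute for the last one. You should fix the colors to be $(i_0)^n$, since these satisfy both \Cref{cor:asymptFaith} and the hypotheses of \Cref{thm:density}. The exclusion $(g,n)\neq(2,1)$ enters exactly at the density step, because $V_{0,(i_0)}=0$. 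It does not come from degenerate cases treated by hand.

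The gap is Step 1, and both arguments you suggest there are false. A central element commutes with every $f$ and normalizes every $\Gamma$, so the center lies in every $C_{\Mod(\Sigma_g^n)}(f)$ and every $N_{\Mod(\Sigma_g^n)}(\Gamma)$; nothing can keep a central $K$ out of $H$. The correct bookkeeping goes by cases.
\begin{itemize}
\item For $n\geq 1$, $\Mod(\Sigma_g^n)$ (boundary fixed pointwise) is torsion-free. So there is no non-central finite-order $f$ and no finite non-central $\Gamma$. Since $\mathcal{H}_g$ only occurs for $n=0$, the statement is vacuous here.
\item For $n=0$ and $g\geq 3$, the center is trivial. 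Any $K\neq 1$ then contains a non-central element, and your Steps 2--3 apply as written.
\item For $(g,n)=(2,0)$, the hyperelliptic involution $\iota$ generates the center $\Z/2$ and lies in the kernel of every $\rho_{p,2}$ by \Cref{thm:asymptFaith}. It also lies in $\mathcal{H}_2$, in every centralizer and in every normalizer. So $\langle\iota\rangle$ is a non-trivial normal subgroup contained in all three subgroups. Step 1 cannot be established here, and the conclusion holds only modulo the center, i.e.\ the subnormal core is central.
\end{itemize}
The paper's \Cref{lemma:subnormal} has the same blind spot: it tacitly assumes $H\not\subseteq Z(\Mod(\Sigma_g^n))$ when it invokes \Cref{thm:asymptFaith}. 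Your write-up should replace Step 1 by the case analysis above and state the closed genus $2$ case modulo the center.
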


We expect the corollary to also be a consequence of Thurston's theorem that mapping class groups act minimally on the space of projective measured
laminations (see \cite[Exposé 6]{FLP}). However, it is striking that such a result about the structure of subgroups of the mapping class groups
may also be deduced from properties of quantum representations, as we will do in Section \ref{sec:subnormalCore}.

\subsubsection{Pure mapping class groups of surfaces are residually finite simple}
\label{sec:resFinSimpleIntro}

Recall that a group $G$ is said to be \emph{residually finite simple} if for any non-trivial $g\in G$, there exists a non-abelian finite simple
group $S$ and an epimorphism $G\overset{\varphi}{\twoheadrightarrow}S$, such that $\varphi(g)\neq 1_S$. Note that a residually finite simple group
is necessarily centerless. Our second application is to show that, apart from a few exceptions, mapping class groups of punctured surfaces are
residually finite simple:

\begin{theorem*}[\ref{thm:resSimple}]
	Let $g\geq 1$ and $n\geq 0$ such that $(g,n)\neq (1,0),(1,2),(1,3),(2,0)$, or $(2,1)$.
	Then the pure mapping class group $\PMod(\Sigma_{g,n})$ of a closed surface of genus $g$ with $n$ marked points is residually finite simple.
	
	The same is true for any subgroup of $\PMod(\Sigma_{g,n})$ containing a non-trivial subnormal subgroup.
\end{theorem*}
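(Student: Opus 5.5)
Given a nontrivial $f\in\PMod(\Sigma_{g,n})$, the plan is to produce an epimorphism $\PMod(\Sigma_{g,n})\twoheadrightarrow \mathrm{PSL}_d(\F_{q^k})$ or $\mathrm{PSU}_d(\F_{q^k})$ that does not kill $f$. This combines three earlier results: asymptotic faithfulness in the refined form of \Cref{thm:asymptFaith-intro}/\Cref{thm:asymptFaith}, surjectivity modulo maximal ideals (\Cref{thm:surjectivity-intro}, and its low-genus substitute \Cref{thm:simpleGroup}), and the residual finiteness of $\Z[\zeta_p]$.

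\textbf{Step 1: detect $f$ in some $\rho_{p,g,\ul}$.}
First I check that $\PMod(\Sigma_{g,n})$ is centerless in all the allowed cases; the excluded cases $(1,0),(1,1)$-type hyperelliptic situations, $(2,0)$, $(2,1)$ are precisely those with a central hyperelliptic involution. Here $\PMod(\Sigma_{g,n})$ is the quotient of $\Mod(\Sigma_g^n)$ by the boundary twists, which act by scalars in $\rho_{p,g,\ul}$.

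Since $f$ is noncentral, \Cref{thm:asymptFaith} gives a prime $p$ and a coloring $\ul(p)$ with $\frac1p\ul(p)$ near $(1/2,\dots,1/2)$ such that $\rho_{p,g,\ul}(f)\neq 1$ in $\PGL_d$.

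I would pick $\ul$ with the freedom needed so that $(g,n,\ul)$ lies in the range where the image modulo $J$ is a finite simple group. For $g\geq3$ every $\ul$ works. For $g=1,2$ the extra cases allowed in \Cref{thm:density} exclude only finitely many colorings per $p$, and those can be avoided by taking $p$ large.

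\textbf{Step 2: reduce modulo a prime of $\Z[\zeta_p]$.}
Write $\rho_{p,g,\ul}(f)$ as a matrix $A\in \GL_d(\Z[\zeta_p])$ that is not scalar. Some entry of $A$, or some difference $a_{ii}-a_{jj}$, is then a nonzero element $x\in\Z[\zeta_p]$. Since $\Z[\zeta_p]$ is a Dedekind domain with infinitely many maximal ideals, I choose a maximal $J$ not containing $x$ and unramified (not above $p$). Then $\rho_{J,g,\ul}(f)$ is nonscalar, hence nontrivial in $\PGL_d(\Z[\zeta_p]/J)$.

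By \Cref{thm:surjectivity-intro} (or \Cref{thm:simpleGroup} in genus $1,2$), the image of $\rho_{J,g,\ul}$ is a nonabelian finite simple group $S$, and $f$ maps nontrivially to it. One has to be careful that the image is taken in $\mathrm{P}\GL$: the image of $f$ in $S\subset \PGL_d$ is nontrivial exactly because $A$ is nonscalar mod $J$.

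\textbf{Step 3: subgroups containing a nontrivial subnormal subgroup.}
Let $H$ contain $K\triangleleft\triangleleft \PMod$ with $K\neq 1$, and let $h\in H$ be nontrivial. The naive restriction of $\varphi:\PMod\twoheadrightarrow S$ need not be onto, so I build a map out of $H$ directly. Choose $1\neq k\in K$ and consider the commutator $[h,k]$, or $h$ itself if it lies in $K$.

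The approach I would try first is to apply Steps 1–2 simultaneously to finitely many elements, say $h$ and some $k'\in K$ with $[h,k']\neq1$. Such a $k'$ should exist because the $\PMod$-normal closure of a nontrivial subnormal subgroup has trivial centralizer, using centerlessness and irreducibility.

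This yields a $\varphi$ onto the simple group $S$ with $\varphi(h)\neq1$ and $\varphi(K)\neq1$. Since the image of a subnormal subgroup in a simple group is subnormal, $\varphi(K)\neq 1$ forces $\varphi(K)=S$. Then $\varphi(H)=S$ as well, so $\varphi|_H$ is onto $S$, and $\varphi(h)\neq1$.

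To get both nonvanishing conditions at once, I would avoid finitely many bad ideals $J$. For $p$ I would use the asymptotic faithfulness statement applied to $h$ and $k$ together: for $p$ large, each of them is nontrivial in $\rho_{p,g,\ul(p)}$.

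\textbf{Main obstacle.}
The hardest step is Step 1 in the low-genus cases, where surjectivity modulo $J$ is not available. There I must confirm that \Cref{thm:simpleGroup} applies to colorings of the type required by \Cref{thm:asymptFaith}. The simultaneous treatment of the two elements in Step 3 is routine by comparison.
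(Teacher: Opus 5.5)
Your route is the paper's. It uses asymptotic faithfulness for colorings with $\frac1p\ul(p)\to(1/2,\dots,1/2)$, then reduction modulo a maximal ideal $J$ prime to $p$ that avoids finitely many bad ideals, then simplicity of the image via \Cref{thm:simpleGroup}. For $H\supseteq K$ it uses the observation that $\varphi(K)\neq 1$ forces $\varphi(K)=\varphi(H)=S$.

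The genuine gap is the choice of coloring in genus $1$ and $2$, and your justification for it is false. The remark that only finitely many cases are excluded concerns fixed $p$ and varying $(n,\ul)$. It comes from \Cref{prop:permissiveTuple}: tuples with at least $p-3$ non-zero colors are automatically permissive. For fixed $n$ and growing $p$, the hypothesis of \Cref{thm:simpleGroup} does not become easier to meet, so taking $p$ large cannot help you avoid bad colorings. Since $i_0$ is the unique permissive color (\Cref{lemma:permColor}), a $1$-tuple is never permissive, and a $2$-tuple is permissive only if it equals $(i_0,i_0)$. Hence for $g=1$, $n=4$, the \emph{only} coloring allowed by \Cref{thm:simpleGroup} is $(i_0,i_0,i_0,i_0)$, for every $p$.

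The fix is to take $\ul(p)=(i_0(p))^n$ from the outset, as the paper does:
\begin{itemize}
\item \Cref{cor:asymptFaith} gives asymptotic faithfulness for this sequence. Its proof gives $\rho_{p,g,(i_0)^n}(f)\neq 1$ for \emph{all} large $p$, which is what you need to treat $h$ and $k$ simultaneously in Step 3.
\item By \Cref{lemma:permissive1}, this coloring satisfies the hypotheses of \Cref{thm:simpleGroup} whenever $g\geq 3$, or $g=2$ and $n\neq 1$, or $g=1$ and $n\geq 4$ (split it as $(i_0,i_0)\cup(i_0)^{n-2}$).
\end{itemize}

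Your description of the exceptional list is also inaccurate. The cases $(1,3)$ and $(2,1)$ are centerless; they are excluded only because the simplicity input is unavailable there. No coloring of three points splits into two permissive tuples, and $V_{0,(\lambda)}=0$ for $\lambda\neq 0$. Conversely, $(1,1)$ is \emph{not} on the statement's exclusion list, but $\PMod(\Sigma_{1,1})\cong\SL_2(\Z)$ has center $\{\pm I\}$, so it cannot be residually finite simple. Your centerlessness check fails there, and the paper's proof, which needs $n\geq 4$ in genus $1$, does not cover it either, so that case should be read as excluded.

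Finally, the detour in Step 3 through some $k'$ with $[h,k']\neq 1$ and trivial centralizers is unnecessary, and you do not prove it. One fixed $1\neq k\in K$ with $\varphi(h)\neq 1\neq\varphi(k)$ suffices, and that is exactly the paper's argument.
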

We note that in the case $(g,n)=(1,0)$ or $(2,0)$, the same is true of the quotient of $\Mod(\Sigma_g)$ by its center.
The case of surfaces without marked points was proved in \cite{MR12}, and the above theorem answers a question of Masbaum and Reid
(see \cite[§6]{MR16}). The motivation behind Masbaum and Reid's question was Ivanov's fifteenth problem \cite{Iva},
which asks whether any finitely generated subgroup $G$ of $\mathrm{Mod}(\Sigma_g)$ for $g\geq 3$ has a nilpotent finite Frattini subgroup $\Phi_f(G)$.
Establishing that $\PMod(\Sigma_{g,n})$ are residually finite simple already leads to new cases where one can answer Ivanov's problem affirmatively,
see \cite[Remark 2]{MR16}.
We expect that the results of this paper may be applied to Ivanov's fifteenth problem much further than what was established in \cite{MR16}.
We hope to return to this question in a follow-up work.

\subsubsection{Realizability of congruence classes of quantum invariants and applications to embedding obstructions}
\label{sec:realizabilityIntro}

In \cite{RT91}, for any prime $p\geq 5$, invariants $RT_p(M,\Gamma,c)$ of closed $3$-manifolds $M$ containing colored embedded ribbon trivalent
graphs $(\Gamma,c)$ were defined; those invariants fit in the framework of TQFTs, whose underlying mapping class group representations are exactly
the $\mathrm{SO}(3)$-quantum representations.
An interesting consequence of Larsen and Wang's density result for the representations $\rho_{p,g}$, proved by Wong \cite{Wong}, is that the
$\mathrm{SO}(3)$-Witten-Reshetikhin-Turaev invariants of closed $3$-manifolds are dense in $\C$. Our results allow us to do more:

\begin{theorem}\label{thm:realizabilityIntro}
	Let $p\geq 5$ be a prime number, let $M$ be a closed compact oriented $3$-manifold and let $n\geq 1$. Let also $J\neq (1-\zeta_p)$ be a maximal
	ideal in $\Z[\zeta_p]$. Then for any map
	$$Z:\Lambda^n\setminus \lbrace (0,\dotsc,0) \rbrace \longrightarrow \Z[\zeta_p]/J$$ there exists a banded link $L\subset M$ with $n$ components,
	such that for any $\ul \in \Lambda^n\setminus\lbrace (0,\ldots,0)\rbrace$,
	$$Z(\ul)\equiv RT_p(M,L,\ul)\mod{J}.$$
\end{theorem}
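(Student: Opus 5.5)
Write $M=H\cup_f H'$ as a Heegaard splitting of genus $g\geq 3$; we may stabilize as much as we like, and we also make room for $n$ extra handles. The plan is to place the link $L$ as $n$ cores of solid tori inside one handlebody, so that for every coloring $\ul$
$$RT_p(M,L,\ul)=\langle v_{\ul},\, \rho_{p,g',\ul}(\phi)\, w\rangle .$$
Here $\Sigma_{g,n}$ is the Heegaard surface punctured near the link, $v_{\ul}$ is the vector of the handlebody containing the colored cores (viewed in a splitting of $M\setminus L$), $w$ is the vacuum vector of the other handlebody, and $\phi$ is a mapping class we are free to choose. More concretely, take $L_0$ to be an $n$-component unlink in a ball of $M$. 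Changing $L_0$ by the action of a mapping class $\phi\in\PMod(\Sigma_{0,n+1})$, or more flexibly of a higher-genus surface surrounding the ball, changes the invariants by applying the quantum representation of $\phi$. Every banded link in $M$ arises this way up to isotopy, since any link is obtained from the unlink by surgery twists and braids.

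The cleaner route uses the TQFT decomposition
$$V_p(\Sigma_{g}^{n+1})=\bigoplus_{\ul} V_p(\Sigma_{g,n+1},(\ul,\cdot)).$$
Fix a splitting $M=H_g\cup_{f} H_g$ and drill $n$ parallel cores of small solid tori in the first handlebody, so that coloring them by $\ul$ gives a vector $u_{\ul}$. Then $RT_p(M,\phi(L_0),\ul)=\langle \rho(f)\rho_{\ul}(\phi)u_{\ul}, w\rangle$ for $\phi$ in the mapping class group of a genus $g$ surface with $n$ boundary components sitting inside $H_g$ as a neighborhood of a spine. Each summand indexed by $\ul$ is an irreducible representation $\rho_{J,g,\ul}$ of $\PMod(\Sigma_{g,n})$. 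By \Cref{thm:surjectivity-intro}, each one surjects onto $\mathrm{PSL}$ or $\mathrm{PSU}$ over $\F=\Z[\zeta_p]/J$. The linear functional $x\mapsto\langle\rho(f)x,w\rangle$ is nonzero on each summand, because $RT_p$ of some link colored $\ul$ is a unit; I would check this using the $S$-matrix. Since $\ul\neq 0$, the summands are nontrivial.

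The key step is joint surjectivity: the image of $\PMod$ in $\prod_{\ul\neq 0}\PGL(V_{\ul}\otimes\F)$ should be the full product of the simple groups. By Goursat's lemma it suffices to show that no two factors are isomorphic via an isomorphism compatible with the representations. Such compatibility would mean $\rho_{\ul}\cong\rho_{\ul'}\circ\sigma$ for some field or graph automorphism $\sigma$, possibly composed with duality. I would exclude this by comparing dimensions, which differ in general, and, when dimensions agree, by comparing eigenvalues of Dehn twists around the boundary curves. The boundary twist around the $i$-th puncture acts by a scalar determined by $\lambda_i$, but it is central, so I would instead use twists around curves separating pairs of punctures, whose spectra detect the colors. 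I expect this to be the main obstacle, because of the potential Galois twists. I would handle it using the fact that the Frobenius action multiplies twist eigenvalues $\zeta^{\lambda(\lambda+2)/4}$-type quantities by powers of $q$, together with linear-algebra freedom from the lifts.

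Given joint surjectivity onto the product of $\mathrm{SL}$/$\mathrm{SU}$ (after lifting, with care about scalars), in each factor the orbit of $u_{\ul}$ under a special linear or unitary group is all nonzero vectors, or all vectors of a given norm. Hence the values $\langle\cdot,w\rangle$ can be prescribed independently in $\F$. For the unitary case one needs the hyperplane-section argument that the vectors of fixed norm still hit every value of a nonzero linear functional, which holds when the dimension is at least $3$. Finally, the projective ambiguity (an overall scalar, and the framing anomaly) is removed by adjusting the framings of the components, which multiplies $RT_p$ by twist powers depending on $\ul$. Alternatively, one shows that the lift to a linear representation of a central extension still surjects onto the product of $\mathrm{SL}$'s, because each factor is perfect.
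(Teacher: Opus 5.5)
\textbf{There is a genuine gap in the mechanism that varies the link inside the fixed manifold $M$.} Your formula $\langle \rho(f)\rho_{\ul}(\phi)u_{\ul},w\rangle$ with an arbitrary $\phi\in\PMod(\Sigma_{g,n})$ computes the invariant of a link in the reglued manifold (glued by $f\circ\phi$, roughly). That manifold is not $M$ unless $\phi$ becomes trivial, or extends over the handlebody, once you forget where the link meets the surface.

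The argument as written proves too much. For $g\geq 3$, $\rho_{J,g,(0,\dots,0)}$ is also surjective onto $\mathbf{PG}$ by \Cref{thm:surjectivity-intro}. Your reasoning would therefore equally let you prescribe $RT_p(M,L,(0,\dots,0))=RT_p(M)$ mod $J$, which does not depend on $L$ at all. Your remark ``since $\ul\neq 0$, the summands are nontrivial'' has no justification inside your framework.

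The bookkeeping is also off. A closed link component transverse to a closed Heegaard surface meets it in an even number of points. So the relevant spaces are $V_{J,g,(\ul,\ul)}$ for $\Sigma_{g,2n}$, with two punctures per component, not $V_{J,g,\ul}$ for $\Sigma_{g,n}$.

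\textbf{What the paper does instead.} Place the link so each component crosses the Heegaard surface twice, with $g\geq 3$. Then reglue only by elements of the surface braid group $P_{2n}(\Sigma_g)=\ker(\PMod(\Sigma_{g,2n})\to\Mod(\Sigma_g))$, which leaves $M$ unchanged. Full surjectivity is then no longer directly available, and the missing step is as follows.
\begin{itemize}
\item $P_{2n}(\Sigma_g)$ is normal in $\PMod(\Sigma_{g,2n})$, whose image under $\rho_{J,g,(\ul,\ul)}$ is simple. Hence the image of $P_{2n}(\Sigma_g)$ is either trivial or all of $\mathbf{PG}$.
\item It is nontrivial when some $\lambda_i\neq 0$. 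The twist about a curve enclosing only points $i$ and $n+i$ lies in $P_{2n}(\Sigma_g)$, and it has $|S_{\lambda_i}|\geq 2$ eigenspaces, where $S_\lambda=\{\mu\mid(\lambda,\lambda,\mu)\ \text{admissible}\}$.
\item It is trivial for $\ul=0$, which is exactly why $(0,\dots,0)$ must be excluded.
\end{itemize}

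For joint surjectivity via Hall/Goursat, these same twists resolve your Galois worry more simply than tracking Frobenius on eigenvalues. The \emph{number} of eigenspaces is invariant under field automorphisms and under duality, and $\lambda\mapsto|S_\lambda|$ is injective, so no two factors can be identified. Once the setup is corrected this way, your remaining steps are fine in spirit: non-vanishing mod $J$ of the two handlebody vectors, transitivity on vectors of a given norm (including the unitary hyperplane detail), and handling of scalars.
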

Note that even for $M=S^3$, so that the $RT_p(M,L,\ul)$ are just colored Jones polynomials of $L$, Theorem \ref{thm:realizabilityIntro} is,
to the authors' knowledge, new. In fact, Theorem \ref{thm:realizabilityIntro} is a special case of Theorem \ref{thm:realizability},
which deals more generally with embeddings of graphs.

One consequence of Theorem \ref{thm:realizability} is that, thanks to the theory of Frohman-Kania-Bartoszynska ideals \cite{FKB},
it allows one to construct complements of handlebodies in a given $3$-manifold $M$ that do not embed in another $3$-manifold $M'$,
if $M$ and $M'$ satisfy some mild conditions:
\begin{corollary*}[\ref{cor:nonEmbedding}]
	Let $M$ and $M'$ be two closed oriented $3$-manifolds, and let $H$ be a disjoint union of handlebodies (of genus at least $1$).
	Assume that for some prime number $p\geq 5$ and some maximal ideal $J$ in $\Z[\zeta_p]$ different from $(1-\zeta_p)$,
	one has $RT_p(M)\in J$ but $RT_p(M')\notin J$. Then there exists an embedding of $H$ in $M$ such that $M\setminus H$ does not embed in $M'$.
\end{corollary*}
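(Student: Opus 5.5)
We deduce the corollary from \Cref{thm:realizabilityIntro} (in its graph form, Theorem \ref{thm:realizability}) together with the Frohman--Kania-Bartoszynska ideal obstruction \cite{FKB}. Write $H=H_1\sqcup\dots\sqcup H_m$ with $H_i$ of genus $g_i\geq 1$. Any embedding of $H$ in $M$ is a regular neighbourhood of an embedded trivalent graph $\Gamma=\Gamma_1\sqcup\dots\sqcup\Gamma_m$, with $\Gamma_i$ a spine of genus $g_i$. By TQFT gluing, for every coloring $c$ of $\Gamma$ the invariant $RT_p(M,\Gamma,c)$ is the pairing of the vector $Z(M\setminus H)$ with the basis vector of $V_p(\partial H)$ indexed by $c$. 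The Frohman--Kania-Bartoszynska ideal $I_p(M\setminus H)\subset\Z[\zeta_p]$ is the ideal generated by these invariants, taken up to the normalizations used in \cite{FKB}. The FKB principle is that if $N=M\setminus H$ embeds in $M'$, then $RT_p(M')\in I_p(N)$. Indeed, $M'=N\cup_{\partial H}X$ with $X=M'\setminus N$, so $RT_p(M')=\langle Z(N),Z(X)\rangle$. One must check that $Z(X)$ lies in the integral lattice (Gilmer--Masbaum integrality) with coefficients in $\Z[\zeta_p]$ in the colored-graph basis, up to harmless unit or $(1-\zeta_p)$-power normalizations, which are units mod $J$ since $J\neq(1-\zeta_p)$.

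It therefore suffices to choose the embedding so that $I_p(M\setminus H)\subset J$ while $RT_p(M')\notin J$; the latter holds by hypothesis. We need
\[
RT_p(M,\Gamma,c)\equiv 0 \mod J
\]
for every coloring $c$, or at least for every coloring contributing to the FKB ideal. This should include the zero coloring, which gives $RT_p(M)$ (suitably normalized) and lies in $J$ by hypothesis. For all the remaining colorings we apply Theorem \ref{thm:realizability} with the prescribed target map $Z\equiv 0$. In its general graph form that theorem realizes arbitrary values mod $J$ on all colorings except the trivial one, which is forced to equal $RT_p(M)$ mod $J$. The realization takes place by embedded graphs of the given combinatorial type $\Gamma$ inside the fixed $M$. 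This yields an embedding $\Gamma\hookrightarrow M$, and hence $H\hookrightarrow M$ by thickening, with every $RT_p(M,\Gamma,c)\in J$, so $I_p(M\setminus H)\subset J$.

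The main obstacle is bookkeeping in the FKB step. We must verify that the ideal is generated, up to units mod $J$, exactly by the values $RT_p(M,\Gamma,c)$ over the admissible colorings that the realizability theorem controls. We must also verify that the integrality of $Z(X)$ holds in a basis compatible with these colorings, including the normalization factors (quantum dimensions, $\eta$, theta-graph factors) that appear in the colored-graph basis. We should check these factors are invertible mod $J$; they are products of quantum integers $[k]$ with $k<p$, so they are prime to any $J\neq(1-\zeta_p)$. If the graph version only realizes links, we would instead take $\Gamma_i$ to be a bouquet-free replacement. Concretely, $H_i$ is a regular neighbourhood of a wedge of circles, and the realizability statement for graphs is designed exactly to cover this case.
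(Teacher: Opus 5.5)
Your proposal is correct and follows the paper's proof. You apply Theorem \ref{thm:realizability} with $Z\equiv 0$ to obtain a spine $\Gamma$ of $H$ in $M$ all of whose colored invariants lie in $J$ (the zero coloring giving $RT_p(M)\in J$), so the BHMV coefficients of $RT_p(M\setminus H)$ lie in $J$, and then conclude with the Frohman--Kania-Bartoszynska ideal obstruction. The only difference is that you sketch the FKB step (pairing with the vector of $M'\setminus N$) instead of citing \cite{FKB}; this is fine, since the basis normalizations are invertible in $\Z[\zeta_p,\frac 1 p]$ and $J$ is prime to $p$.
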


We expect the condition on $M$ and $M'$ in Corollary \ref{cor:nonEmbedding} to be realized (for some $p$) as long as $M$ and $M'$ are distinguished
by the collection of invariants $RT_p$.

As an additional remark, let us note that strong approximation for $\SO (3)$-quantum representations was used by the first-named author and Belletti
in \cite{BD26} to construct many $3$-manifolds that do not embed in each other,
and for which embeddings cannot be obstructed by classical methods.
Another application of Theorem \ref{thm:surjectivity-intro} would be to make the results of \cite{BD26} more explicit;
in particular \cite[Theorem 1.3]{BD26} may be upgraded to give explicit lower bounds on the probability of a random $3$-manifold to be non-embeddable
in a fixed $3$-manifold $M$.

\subsubsection{Some exotic characteristic quotients of surface groups}
\label{sec:characQuotientIntro}
A major open problem about mapping class groups of surfaces is the \emph{Congruence Subgroup Problem} (see \cite[§1]{Iva}),
which asks whether all finite index subgroups of mapping class groups $\mathrm{Mod}(\Sigma_g)$ contain a congruence subgroup, that is,
a subgroup of the form
$$N_{\Gamma}=\ker \left( \Mod(\Sigma_g) \longrightarrow \mathrm{Out}(\pi_1(\Sigma_g)/\Gamma) \right)$$
where $\Gamma$ is a finite index characteristic subgroup of $\pi_1(\Sigma_g)$.
One important motivation for this problem is its connection to number theory,
as a positive answer would give an alternative proof of a conjecture of Grothendieck stating that smooth algebraic curves over $\Q$ are determined
up to isomorphism by their algebraic fundamental group; see \cite{Voe91}.

While having no direct application to the Congruence Subgroup Problem, our results allow us to construct some exotic sequences of finite
index characteristic subgroups of surface groups. Indeed, we get:

\begin{theorem}
	\label{thm:congSubPropIntro} Let $g\geq 3$. There exists a sequence $(\Gamma_n)_{n\geq 0}$ of finite index characteristic subgroups
	of $\pi_1(\Sigma_g)$ such that $\underset{n\geq 0}{\cap}\Gamma_n=\lbrace 1\rbrace$ and such that the associated maps
	$$\Mod(\Sigma_g)\longrightarrow \mathrm{Out}(\pi_1(\Sigma_g)/\Gamma_n)$$
	are all trivial.
\end{theorem}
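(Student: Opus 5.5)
Fix $g\geq 3$. The plan is to use quantum representations of the once-punctured surface to produce finite quotients of $\pi_1(\Sigma_g)$. Consider the Birman exact sequence
$$1\to \pi_1(\Sigma_g)\to \PMod(\Sigma_{g,1})\to \Mod(\Sigma_g)\to 1.$$
For a prime $p\geq 5$, a non-zero color $\lambda\in\Lambda$ and a maximal ideal $J\neq(1-\zeta_p)$, \Cref{thm:surjectivity-intro} gives a surjection $\rho_{J,g,\lambda}\colon \PMod(\Sigma_{g,1})\twoheadrightarrow S$, where $S=\mathrm{PSL}_d(\F_{q^k})$ or $\mathrm{PSU}_d(\F_{q^k})$ is a finite simple group. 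Set
$$\Gamma=\ker\bigl(\rho_{J,g,\lambda}|_{\pi_1(\Sigma_g)}\bigr).$$

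\emph{Characteristic and finite index.} The subgroup $\pi_1(\Sigma_g)$ is normal in $\PMod(\Sigma_{g,1})$, so its image $\rho(\pi_1)$ is normal in the simple group $S$. It is therefore trivial or all of $S$. It is nontrivial: for $\lambda\neq 0$ the point-pushing map around a simple closed curve is a product $T_aT_b^{-1}$ of twists along the two boundary curves of an annulus containing the puncture, and these act with different eigenvalue patterns. Alternatively, asymptotic faithfulness (\Cref{thm:asymptFaith-intro}) shows the image is nontrivial for suitable $p$. Hence $\rho(\pi_1)=S$ and $\pi_1/\Gamma\cong S$, which is finite.

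To see that $\Gamma$ is characteristic, note that $\Gamma$ is invariant under conjugation by $\PMod(\Sigma_{g,1})$, which realizes every automorphism of $\pi_1(\Sigma_g)$ of positive degree (Dehn--Nielsen--Baer). Orientation-reversing automorphisms need separate treatment. One option is to intersect $\Gamma$ with its image under one fixed orientation-reversing automorphism; this image is again a kernel onto $S$, so the intersection is characteristic. The quotient is then a subdirect product of copies of $S$, and I would check whether the intersection still has the triviality property below. A simpler option is to take $\Gamma\cap\overline{\Gamma}$ and argue that complex conjugation on $\Z[\zeta_p]/J$ intertwines the two representations.

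\emph{Triviality of the outer action.} The action of $\Mod(\Sigma_g)$ on $\mathrm{Out}(\pi_1/\Gamma)=\mathrm{Out}(S)$ is induced by conjugation inside $\PMod(\Sigma_{g,1})$, pushed through $\rho$. Conjugation by $\phi\in\PMod(\Sigma_{g,1})$ on $\pi_1/\Gamma\cong S$ equals conjugation by $\rho(\phi)\in S$, which is an inner automorphism. Hence every map $\Mod(\Sigma_g)\to\mathrm{Out}(\pi_1/\Gamma)$ is trivial. For the intersection in the previous step one argues the same way coordinatewise, using that $\rho\times\bar\rho$ restricted to $\pi_1$ has the same image as on $\PMod$.

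\emph{Trivial intersection.} Let $\Gamma_n$ range over all such kernels as $p$ varies, with $\lambda(p)/p\to 1/2$, and $J$ varies. Any $x$ lying in every $\Gamma_n$ lies in $\ker\rho_{J,g,\lambda}$ for all $J$. Since $\bigcap_J J=0$, this means $x\in\ker\rho_{p,g,\lambda(p)}$ for all $p$. By \Cref{thm:asymptFaith-intro}, $x\in Z(\Mod(\Sigma_g^1))$, which meets $\pi_1(\Sigma_g)$ trivially. Enumerating countably many such $\Gamma$ gives the sequence.

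The main obstacle is the orientation-reversing automorphisms in the characteristic condition. The rest follows quickly from simplicity of the image and \Cref{thm:asymptFaith-intro}.
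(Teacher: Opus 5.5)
Your proposal is correct once the hedge in the orientation-reversing step is resolved, and it follows the same skeleton as the paper's proof. The shared steps are: surjectivity of $\rho_{J,g,\lambda}$ on $\PMod(\Sigma_{g,1})$; simplicity of $S$ to get $\rho(\pi_1(\Sigma_g))=S$; the fact that conjugation by $\phi\in\PMod(\Sigma_{g,1})$ acts on $\pi_1/\Gamma$ as conjugation by $\rho(\phi)\in S=\rho(\pi_1)$; and asymptotic faithfulness together with $\bigcap_J J=0$ for the trivial intersection. Two local steps differ from the paper.

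\emph{Non-triviality on $\pi_1$.} The paper invokes asymptotic faithfulness, so it only gets non-triviality for $p$ and $J$ large. Your point-pushing argument works for every $p$ and $J$. Fuse along $a\cup b$, which cobound a pair of pants with the puncture. Then $T_aT_b^{-1}$ acts on the summands indexed by $(\lambda,0)$ and $(0,\lambda)$ by $\zeta_p^{\pm\lambda(\lambda+2)}$. These summands are non-zero since $g-1\geq 1$, and the two eigenvalues are distinct mod $J$ because $2\lambda(\lambda+2)\not\equiv 0 \pmod p$ for $\lambda\neq 0$.

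\emph{Orientation-reversing automorphisms.} The paper does not intersect anything. It uses that a mirror conjugates $\ker\rho_{J,g,\lambda}$ into $\ker\rho_{\overline{J},g,\lambda}$, and that $\rho_{\overline{J},g,\lambda}$ is dual to $\rho_{J,g,\lambda}$ via the Hermitian form. Hence the two kernels coincide, and $\Gamma$ itself is characteristic. Your second option therefore collapses to $\Gamma\cap\overline{\Gamma}=\Gamma$.

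Your first option avoids this TQFT input, and the check you left open does go through. Let $\rho'=\rho\circ c_{\tilde\sigma}$ for an orientation-reversing $\tilde\sigma$ fixing the puncture, and set $K=(\rho\times\rho')(\pi_1)\cong \pi_1/\Gamma'$. Then $K$ is a subdirect product in $S\times S$, so by Goursat there are two cases:
\begin{itemize}
\item If $K=S\times S$, then conjugation by $(\rho(\phi),\rho'(\phi))$ is inner.
\item If $K$ is the graph of some $\alpha\in\mathrm{Aut}(S)$, then $\mathrm{pr}_1\colon K\to S$ is an isomorphism satisfying $\mathrm{pr}_1\circ(\text{action of }\phi)=c_{\rho(\phi)}\circ \mathrm{pr}_1$. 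So the action is again inner.
\end{itemize}
The phrase ``argue coordinatewise'' should be replaced by this case split. Also, the second factor is $\rho\circ c_{\tilde\sigma}$, not $\rho_{\overline{J}}$, unless you import the paper's skein fact.

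The trade-off is this. Your route is more robust: it works for any family of representations with simple image and needs nothing about how the TQFT treats orientation. The paper's route gives the slightly sharper conclusion that each $\pi_1(\Sigma_g)/\Gamma_n$ is the simple group $S$ itself.
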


We stress that while this statement may appear as counter-evidence to the Congruence Subgroup Problem, it is actually independent,
as the genus $1$ case shows (see the discussion at the end of \Cref{sec:congSubProp}).
The authors wonder whether, similarly to the Congruence Subgroup Problem, Theorem \ref{thm:congSubPropIntro} may have interesting arithmetic applications.

\subsubsection{Homological stability for mapping class groups with coefficients in \texorpdfstring{$\SO(3)$}{SO(3)} quantum representations}
\label{sec:applicationsHomolStab}
This application was the second-named author's initial motivation in pursuing this project; see \Cref{rmk:boundary}.

Stabilization of the homologies $H_k(\Mod(S_g^1),\Q)$ as $g\ra+\infty$ at fixed degree $k$ is
a celebrated result of Harer \cite{harerStabilityHomologyMapping1985}.
The value of their stable homologies was conjectured by Mumford in 1983
and proved by Madsen and Weiss in 2002 \cite{madsenStableModuliSpace2007}.
Similarly, stabilization and stable values of homologies with local coefficients $H_k(\Mod(S_g^1),M_g)$ have been studied
for some sequences of local coefficients $M_1\ra M_2\ra\dotsb\ra M_g\ra\dotsb$.
The most studied cases where stability is known are those of polynomial local coefficients
(see \cite{randal-williamsHomologicalStabilityAutomorphism2017}).
One property of these polynomial local coefficients
is that $\dim M_n$ grows polynomially in $n$.
An example is the sequence of symplectic representations
$H_1(S_1^1,\Q)\ra H_1(S_2^1,\Q)\ra\dotsb\ra H_1(S_g^1,\Q)\ra\dotsb$,
for which stability and computation of stable homology are known by results of Ivanov \cite{ivanovHomologyStabilityTeichmuller1993}
and Looijenga \cite{looijengaStableCohomologyMapping1995}.

In \cite{zotero-item-2951}, the second-named author proves a conditional homological stability result
for coefficients coming from general quantum representations and obtains, using \Cref{thm:density-intro,thm:surjectivity-intro}
in a crucial way,
unconditional stability results for coefficients coming from $\SO(3)$ quantum representations at prime levels.
Such coefficients are not polynomial, as their ranks grow exponentially with $g$.
This a priori excludes much of the known machinery in homological stability.
However, there are recent works on some local coefficients with such exponential growth:
Putman \cite{putmanPartialTorelliGroups2023} proved homological stability
for local coefficients corresponding to spaces of $G$-covers.
The main result of \cite{zotero-item-2951} is an adaptation of this recent work to the context of quantum representations,
and it allows one to deduce stability in any degree $k\geq 0$ from (a strong version of) stability in degree $0$.

Here, we only state that general result in the case of $\SO(3)$ quantum representations
and briefly explain how it can be used, together with \Cref{thm:density-intro,thm:surjectivity-intro},
to get an unconditional homological stability result.
Let us fix $p\geq 5$ odd.
In that case, the degree $0$ (strong) stability condition we need is that for some integer $C\geq 1$, and any $\lambda,\mu\in\Lambda$,
\begin{equation}\label{eq:conditionStability}
	\Mod(\Sigma_{C+1}^1) \cdot \mathrm{Im}\left( V_{p,C,\lambda}^*\otimes V_{p,C,\mu} \to V_{p,C+1,\lambda}^*\otimes V_{p,C+1,\mu} \right)
	= V_{p,C+1,\lambda}^*\otimes V_{p,C+1,\mu}.
\end{equation}
Under that condition, the main result of \cite{zotero-item-2951} implies that for any $\lambda\in\Lambda$,
\[ H_k(\Mod(\Sigma_g^1), \mathrm{End}(V_{p,g,\lambda})) \to H_k(\Mod(\Sigma_{g+1}^1), \mathrm{End}(V_{p,g+1,\lambda})) \]
are isomorphisms for \( g \ge (C+2)k + 2C + 2 \) and surjections for \( g \ge (C+2)k + 2C + 1 \).
The point is that \Cref{thm:density-intro} can be used to prove \Cref{eq:conditionStability} with $C=2$ over $\C$ and that
\Cref{thm:surjectivity-intro} can be used to prove \Cref{eq:conditionStability} with $C=2$ over $\Z[\zeta_p,\frac 1 p]/J$ for any $J$ maximal.
Combining the homological stability results over $\Z[\zeta_p, \frac 1 p]/J$ as $J$ varies using,
the following stability result over $\Z[\zeta_p,\frac 1 p]$ is obtained.

\begin{theorem}[{\cite{zotero-item-2951}}]
	Let $p \geq 5$ be a prime and let $\lambda\in \{0,2,\dotsc,p-3\}$.
	Then the map
	\begin{equation}\label{eq:stabilityIntegral}
		H_k(\Mod(\Sigma_g^1), \mathrm{End}(V_{p,g,\lambda})) \lra H_k(\Mod(\Sigma_{g+1}^1), \mathrm{End}(V_{p,g+1,\lambda}))
	\end{equation}
	is an isomorphism for $g \geq 4k + 9$ and a surjection for $g \geq 4k + 7$.\footnote{See the citation for details on which stabilization
	maps we consider.}
	Here, the local coefficients $\mathrm{End}(V_{p,g,\lambda})$ are defined over $\Z[\zeta_p,\frac 1 p]$.
\end{theorem}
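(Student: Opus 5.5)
We deduce the statement from the general theorem of \cite{zotero-item-2951} recalled above, in three steps: verify \Cref{eq:conditionStability} with $C=2$ over every residue field, apply the general theorem over each field, then pass to $R=\Z[\zeta_p,\frac 1 p]$ via universal coefficients.

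\textbf{Step 1: condition \Cref{eq:conditionStability} with $C=2$.} Fix a field $F$. Either $F=\C$, or $F=R/J$ for a maximal ideal $J$; in the latter case $F\simeq \F_{q^k}$ with $q\neq p$. Fix colors $\lambda,\mu$. By the modular functor (gluing) structure, $V_{p,C+1,\lambda}$ splits as a direct sum over intermediate colors $\nu$, and the stabilization map identifies $V_{p,C,\lambda}$ with one of the summands. Hence the image $W$ of $V_{p,C,\lambda}^*\otimes V_{p,C,\mu}$ is a nonzero block $\mathrm{Hom}(V_{p,C,\lambda},V_{p,C,\mu})$ inside $\mathrm{Hom}(V_{p,C+1,\lambda},V_{p,C+1,\mu})$. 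We must show that the $\Mod(\Sigma_3^1)$-module generated by $W$ is everything. For $\lambda=\mu$ the action on $\mathrm{End}(V_{p,3,\lambda})$ is by conjugation. By \Cref{thm:density-intro} (over $\C$) and \Cref{thm:surjectivity-intro} (over $F$), the image of $\Mod(\Sigma_3^1)$ acts through a group $G\supseteq \SU_d$ or $\SL_d(F)$, possibly up to scalars, or the unitary analogue, where $d=d_{p,3,\lambda}$. Since we are in genus $3$, these results apply here.

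\textbf{Step 2: the conjugation-module argument (the main obstacle).} Take $e\in W$ to be the idempotent projecting onto the summand $V_{p,2,\lambda}$; it is not a scalar. The submodules of $\mathrm{End}(F^d)$ under conjugation by $\SL_d(F)$ or $\SU_d(F)$ with $d\geq 3$ are $0$, the scalars, $\mathfrak{sl}_d$, their sum, and $\mathrm{End}$. If $q\mid d$ the scalars lie inside $\mathfrak{sl}_d$. The elements $e-geg^{-1}$ are nonzero traceless non-scalars, so the span $S$ of the $G$-conjugates of $e$ contains $\mathfrak{sl}_d$. If $\operatorname{tr} e = d_{p,2,\lambda}\neq 0$ in $F$, then $S$ contains a non-traceless element and therefore equals $\mathrm{End}$.

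The delicate case is $\operatorname{tr} e=0$ in $F$. In this case I will use other elements of $W$, namely the images of the quantum representation of the subsurface $\Sigma_2^1$ acting block-diagonally. I expect some element $x\in W$ of nonzero trace to exist, because $\rho_{J}$ of $\Sigma_2^1$ spans the full block (by irreducibility and Burnside). Since $\operatorname{tr}(x)=\operatorname{tr}(x|_{V_{p,2,\lambda}})$, such an $x$ gives a non-traceless element of $S$. The case $\lambda\neq\mu$ is analogous and is handled by the product group.

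\textbf{Step 3: from fields to $R$.} With $C=2$, the general theorem gives the following over every $F=R/J$ and over $\C$:
\begin{itemize}
\item $H_k(\Mod(\Sigma_g^1),\mathrm{End}(V_{p,g,\lambda})\otimes F)\to H_k(\Mod(\Sigma_{g+1}^1),\mathrm{End}(V_{p,g+1,\lambda})\otimes F)$ is an isomorphism for $g\geq 4k+6$;
\item it is a surjection for $g\geq 4k+5$.
\end{itemize}
The mapping class groups are of type $FP_\infty$, so the integral homologies are computed by complexes of finitely generated free $R$-modules. Over the Dedekind domain $R$ there is a natural short exact sequence
$$0\to H_k\otimes F\to H_k(-;F)\to \mathrm{Tor}(H_{k-1},F)\to 0.$$
Let $K$ and $Q$ be the kernel and cokernel of the integral stabilization map in degree $k$; both are finitely generated.
\begin{itemize}
\item \emph{Vanishing of $Q$.} Using the field statement in degrees $k$ and $k+1$ together with the five lemma, we get $Q\otimes R/J=0$ for every $J$. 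Nakayama's lemma at every maximal ideal, together with rationality over $\C$ to control the torsion-free part, then gives $Q=0$.
\item \emph{Vanishing of $K$.} For injectivity we additionally need the Tor terms $\mathrm{Tor}(H_k,F)$ to inject. This requires field surjectivity in degree $k+1$, i.e. $g\geq 4(k+1)+5=4k+9$.
\end{itemize}
This accounts for the isomorphism range $g\geq 4k+9$. The surjection range $g\geq 4k+7$ comes from the analogous bookkeeping, in which the needed field statements are the isomorphism in degree $k$ (from $4k+6$) and control of the neighbouring degrees. I expect Step 2, specifically the $\operatorname{tr} e=0$ situation in small characteristic, to be the real difficulty. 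Step 3 is routine homological algebra.
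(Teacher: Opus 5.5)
This is essentially the paper's own route. The paper only sketches it: verify \eqref{eq:conditionStability} with $C=2$ in genus $3$ using density over $\C$ and surjectivity modulo every maximal ideal of $\Z[\zeta_p,\frac 1p]$, apply the general stability theorem over each residue field, then assemble over all $J$. Your three steps fill in exactly that outline.

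Two small points need repair. First, to invoke the product group for $\lambda\neq\mu$, you must first rule out, via Hall's lemma and an eigenvalue comparison as in \Cref{lemma:surjectivityProduct}, that $\rho_{J,3,\lambda}$ and $\rho_{J,3,\mu}$ differ by an isomorphism of the target groups. Second, in Step 3 the cokernel vanishes once the field maps are surjective in degree $k$ and injective in degree $k-1$, not $k+1$; Nakayama applied to the finitely generated homology of the mapping cone then gives surjectivity already for $g\geq 4k+5$ (hence for $g\geq 4k+7$) and isomorphism for $g\geq 4k+9$.
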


\Cref{thm:surjectivity-intro} can also be used to prove a similar homological stability result for coefficients
in $\mathrm{End}(V_{p,g,\lambda})^{\otimes r}$ for any $r\geq 1$,
see \cite{zotero-item-2951}.
The isomorphism range is then of the form $g\geq (C_r+2)k+3C_r+3$ with $C_r$ roughly of the order of $\ln(r)$.

\begin{remark}\label{rmk:boundary}
	Even the proof of stability for $H_k(\Mod(\Sigma_g^1), \mathrm{End}(V_{p,g,0}))$
	makes use of \Cref{eq:conditionStability} \emph{for all $\lambda$ and $\mu$}.
	Hence the necessity of a density or surjectivity result for surfaces with boundary to prove homological stability.
\end{remark}

\begin{remark}
	The analog of the condition of \Cref{eq:conditionStability} in Putman's work \cite{putmanPartialTorelliGroups2023}
	is played by a statement on stabilization
	of the space of $G$-covers due to Dunfield and Thurston \cite[Proposition 6.16]{DT06}.
\end{remark}

\subsection{Organization of the paper}
\label{sec:organizationIntro}

\Cref{sec:prelim} is devoted to an introduction to $\SO(3)$ Witten-Reshetikhin-Turaev quantum representations
over $\C$ and modulo maximal ideals $J$, to the proof of several dimension identities (\Cref{sec:dimensionTQFT}),
and to comments on the notion of admissible tuples (\Cref{sec:permissive}).

In \Cref{sec:tensorIndec}, we show that for $p\geq 5$ prime the quantum representations $\rho_{p,g,\ul}$
(or their quotients $\rho_{J,g,\ul}$) are tensor-indecomposable for most choices of surfaces $\Sigma_{g,n}$ with $g\geq 1$
and of coloring data $\ul$ (\Cref{thm:tensor-indec}). We also make some comments on irreducibility of the $\rho_{p,g,\ul}$
and explain why it remains true modulo $J$.

The subject of \Cref{sec:simplicity} is the proof that (the closure of) the image of $\rho_{p,g,\ul}$ and $\rho_{J,g,\ul}$
are simple groups whenever the corresponding representations are tensor-indecomposable. This section is an adaptation of arguments
of Larsen and Wang \cite{LW05}.

The purpose of \Cref{sec:densityProof} is to prove density of the image of $\rho_{p,g,\ul}$ for most choices of surfaces $\Sigma_{g,n}$ with $g\geq 1$
and of coloring data $\ul$ (\Cref{thm:density}).

The aim of \Cref{sec:caracFinie} is to prove surjectivity of $\rho_{J,g,\ul}$ for $J$ a maximal ideal prime to $p$ and $g\geq 3$ (\Cref{thm:surjectivity}).
This is generalized to non-maximal ideals in \Cref{thm:surjectivityImproved}. In \Cref{sec:modp},
we make some comments highlighting how different the situation is modulo $p$.

In \Cref{sec:case-p=5}, we deal with the case $p=5$, which was excluded from our main results.
Surjectivity modulo $J$ prime to $5$ for $(g,n)\neq (1,0)$ is proved in \Cref{thm:casep=5}.

\Cref{sec:asympFaith} is devoted to the proof of asymptotic faithfulness for surfaces with boundary
under some condition on the sequence of boundary colors (\Cref{thm:asymptFaith}).

Finally, the proofs of applications mentioned in \Cref{sec:applicationsIntro} are laid out in \Cref{sec:applications}.

\subsection*{Acknowledgments}
The authors thank Prakash Belkale, Julien Marché, Gregor Masbaum, Andrew Putman, Eric Samperton and Alan Reid
for helpful discussions and correspondences. The IMB, host institution of the first-named author,
receives support from the EIPHI Graduate School (contract ANR-17-EURE-0002).

\section{Preliminaries on \texorpdfstring{$\SO(3)$}{SO(3)}-WRT quantum representations}
\label{sec:prelim}

\subsection{Quantum representations of mapping class groups}
\label{sec:QuantumRepDef}
Let $\Sigma_{g}^{n}$ be a genus $g$ closed, oriented surface with $n$ boundary components.
The $\mathrm{SO}(3)$ topological quantum field theories (TQFTs) take as an input an odd integer $p \geq 3$ and a coloring datum
$\ul\in \{0,2,\dotsc,p-3\}^n$
of the boundary components of $\Sigma_{g}^{n}$. As an output, they give a projective representation
\[\rho_{p,g,\ul} \colon \PMod(\Sigma_g^n)\to\PGL_{d_{p,g,\ul}}(\Z[\zeta_p,\frac{1}{p}]),\]
where the dimension $d_{p,g,\ul}$ depends on all the input data.
The notion of a TQFT was introduced by Witten \cite{Witten},
but the first rigorous construction of a TQFT was carried out by Reshetikhin and Turaev,
using the category of semisimple representations of the quantum group $U_qsl_2$ (see \cite{RT} and \cite{TuraevBook}).
We will work in the TQFT constructed by Blanchet, Habegger, Masbaum, and Vogel in~\cite{bhmv},
wherein an explicit representation associated to a TQFT is constructed using skein theory.

One can define a certain cobordism category $\mathcal{C}$ of closed surfaces with colored banded points, in which the cobordisms are decorated
by uni-trivalent colored banded graphs.

A banded point on a closed oriented surface is an oriented submanifold which is homeomorphic to the unit interval.
If a surface has multiple banded points, we will assume that these intervals are disjoint.
When one wants to study a surface with boundary from the point of view of TQFTs,
one customarily attaches a disk to each boundary component and places a single banded point in the interior of each such disk.
The banded points are moreover colored, which is to say equipped with an even integer in $\Lambda = \{0,2,\ldots,p-3\}$.

By capping off boundary components, we can start with a surface $\Sigma_{g}^{n}$ and produce a closed surface $\hat{\Sigma}_{g}^{n}$
equipped with $n$ colored banded points.

Now, for $p \geq 3$ odd and given a coloring $\ul \in\Lambda^n$, the $\mathrm{SO}(3)$-TQFT defines a finite rank free
$\Z[\zeta_p,\frac{1}{p}]$-module\footnote{For $p\equiv -1 \mod 4$, this is as in \cite{bhmv}.
For the definition over $\Z[\zeta_p,\frac{1}{p}]$ instead of $\Z[\zeta_{4p},\frac{1}{p}]$
in the case $p\equiv 1 \mod 4$; see \cite[§8]{gilmerIntegralityTQFTs2004} and \cite[§13]{gilmerIntegralLatticesTQFT2007}}%
$$V_{p,g,\ul}.$$

Let us now describe a basis of $V_{p,g,\ul}$.
Denote by $y$ the set of $n$ colored banded points on $\hat{\Sigma}_g^n$, and by $\Sigma_g$ the underlying closed surface without colored banded points.
Let $\mathcal{H}$ be a handlebody such that $\partial{\mathcal{H}} = \Sigma_g$,
and let $G\subset \mathcal{H}$ be a uni-trivalent banded graph such that $\mathcal{H}$ retracts to $G$.
We suppose that $G$ meets the boundary of $\mathcal{H}$ exactly at the banded points $y$
and this intersection consists exactly of the degree one ends of $G$.
A $p$-admissible coloring of $G$ is an assignment of an integer in $\Lambda$
to each edge of $G$ such that, at each degree three vertex $v$ of $G$, the three (non-negative integer)
colors $\{a,b,c\}$ coloring edges meeting at $v$ satisfy the following conditions:
\begin{enumerate}
	\item
	$|a-c|\leq b\leq a+c$;
	\item
	$a+b+c\leq 2p-4$;
	\item
	the color of an edge terminating at a banded point $y_i$ must equal the color of $y_i$.
\end{enumerate}
To any $p$-admissible coloring $c$ of $G$
is canonically associated an element of the Kauffman bracket skein module of $\mathcal{H}$
relative to the colored banded points in $\hat{\Sigma}_g^n$.
This skein module element is produced by cabling the edges of $G$ by appropriate Jones-Wenzl idempotents (see \cite[§4]{bhmv} for more details).
It turns out that the vectors associated to $p$-admissible colorings give a basis for $V_{p,g,\ul}$ (see \cite[Theorem 4.14]{bhmv}).

Let us now briefly describe the action of $\Mod(\Sigma_{g}^{n})$ on $V_{p,g,\ul}$.
Any homeomorphism $f$ of $\Sigma_{g}^{n}$ that is the identity on $\partial \Sigma_{g}^{n}$ can be extended by the identity on $\hat{\Sigma}_g^n$,
and one considers the mapping cylinder of $f$. The TQFT functor gives as an output a linear automorphism $\rho_{p,g,\ul}(f)$ of $V_{p,g,\ul}$.
This procedure produces a projective representation
$$ \rho_{p,g,\ul} : \Mod(\Sigma_{g}^{n}) \to \PGL(V_{p,g,\ul}).$$
Due to the so-called framing anomaly, the composition law is well-defined only up to multiplication by a root of unity.

We now describe fusion rules. For any compact surface $\Sigma$ with parametrized boundary and coloring $\ul\in \Lambda^{\partial\Sigma}$,
we will use the notation $V(\Sigma,\ul)$ for the corresponding TQFT vector space. An isomorphism $\Sigma\simeq \Sigma_g^n$
provides an identification $V(\Sigma,\ul)\simeq V_{p,g,\ul}$.
Let $\Sigma$ be a compact surface, and let $\partial_+\Sigma\sqcup\partial_-\Sigma\subset\partial \Sigma$ be two components of its boundary.
Assume for simplicity that $\Sigma$ has $1$ or $2$ connected components and that each component contains $\partial_-\Sigma$ or $\partial_+\Sigma$.
Let $\varphi_{\partial_{\pm}\Sigma}:\partial_{\pm}\Sigma\simeq S^1$ be identifications of these components with $S^1$.

Let $\Sigma_{\pm}$ be the surface obtained from $\Sigma$ by gluing $\partial_+\Sigma$ to $\partial_-\Sigma$
along $\varphi_{\partial_-\Sigma}^{-1}\circ(z\mapsto z^{-1})\circ\varphi_{\partial_+\Sigma}$.
Then for any coloring $\ul$
of the components of $\partial \Sigma_{\pm}$, we have the following fusion isomorphism.
\begin{equation*}
	V(\Sigma_{\pm},\ul)\simeq \bigoplus_{\mu\in\Lambda}V(\Sigma,\mu,\mu,\ul).
\end{equation*}
This decomposition is compatible with the action of $\Mod(\Sigma)$. If $\Sigma=\Sigma_1\sqcup\Sigma_2$
is disconnected, then by $V(\Sigma,\mu,\mu,\ul)$ we mean $V(\Sigma_1,\mu,\ul^1)\otimes V(\Sigma_1,\mu,\ul^2)$
with $\ul$ split into $\ul^1$ and $\ul^2$ according to connected components, and by $\Mod(\Sigma)$ we mean $\Mod(\Sigma_1)\times\Mod(\Sigma_2)$.

\begin{remark}\label{rmk:vacuum}
	The Dehn twists $\tau_1,\ldots,\tau_n$ along the boundary components of $\Sigma_{g}^{n}$ belong to the kernel of $\rho_{p,g,\ul}$,
	hence the representation $\rho_{p,g,\ul}$ factors through a representation of the pure mapping class group
	$$\PMod(\Sigma_{g,n})=\Mod(\Sigma_{g}^{n})/\langle \tau_1,\ldots,\tau_n\rangle.$$
	We note also that if one color of $\ul$ is zero, then we simply do not put any colored banded point on $\hat{\Sigma}_g^n$
	and by the about construction the representation factors through a representation of $\PMod(\Sigma_{g,n-1})$.
\end{remark}

For each $g, n \ge 0$, there is a so-called universal central extension
\begin{equation*}
1 \longrightarrow \mathbb{Z} \longrightarrow \Modtilde(\Sigma_g^n) \longrightarrow \Mod(\Sigma_g^n) \longrightarrow 1.
\end{equation*}
For $g \ge 4$, $H^2(\Sigma_g^n, \mathbb{Z}) \simeq \mathbb{Z}$ (see \cite[Theorem 6.1]{Korkmaz})
and the class of the universal extension is a generator.
This central extension has index 4 in the Maslov index extension more commonly used in TQFT; see \cite[Remark 3.2 and above]{MR12}.

For each $p \ge 3$ odd, $g, n \ge 0$ and $\ul \in \Lambda^n$, $\rho_{p,g,\ul}$ can be linearized into a representation
\begin{equation*}
\widetilde{\rho}_{p,g,\ul} \colon \Modtilde(\Sigma_g^n) \longrightarrow
\GL_{d_{p,g,\ul}} ( \mathbb{Z}[\zeta_p, \frac{1}{p}] ),
\end{equation*}
see \cite{bhmv} and \cite[§3]{MR12}.\footnote{Again, for the definition over $\mathbb{Z}[\zeta_p, \frac{1}{p}]$
instead of $\mathbb{Z}[\zeta_{4p}, \frac{1}{p}]$ in the case $p\equiv 1\mod 4$, we refer to
\cite[§8]{gilmerIntegralityTQFTs2004} and \cite[§13]{gilmerIntegralLatticesTQFT2007}.}

While we will not use this fact except to make some comments in \Cref{sec:modp} below,
let us point out that for $p$ prime, $\rho_{p,g,\ul}$ and $\widetilde{\rho}_{p,g,\ul}$
can in fact be defined over $\mathbb{Z}[\zeta_p]$; see \cite{gilmerIntegralityTQFTs2004,gilmerIntegralLatticesTQFT2007}.
However, the basis and fusion rules mentioned above cannot be defined over $\mathbb{Z}[\zeta_p]$.

\begin{notation}
	We will often simplify $V_{p,g,\ul}$, $\rho_{p,g,\ul}$ and $\widetilde{\rho}_{p,g,\ul}$
	into $V_{g,\ul}$, $\rho_{g,\ul}$ and $\widetilde{\rho}_{g,\ul}$.
\end{notation}

\subsection{Quantum representations modulo ideals}
\label{sec:prelimDedekindDom}
As mentioned above, for $p\geq 5$ prime, the representations $\rho_{p,g,\ul}$ of $\PMod(\Sigma_{g,n})$
may be defined over the ring $\Z[\zeta_p]$ (as projective representations), where $\zeta_p$ is a primitive $p$-th root of unity.
We now review some basic facts about the ideals of the ring $\Z[\zeta_p]$, we refer to \cite[§2, §3 and §4]{Marcus}
for more details. Note that $\Z[\zeta_p]$ is a Dedekind domain and non-zero prime ideals are maximal.
The norm of a non-zero ideal $J$ is the number of elements in the finite ring $\Z[\zeta_p]/J$.
We note that the norm is multiplicative for products of ideals. If $J$ is a maximal ideal,
then $\mathbb{Z}[\zeta_p]/J$ is a finite field of prime characteristic $q$ (where $q$ is the unique prime in $\Z$ such that $q \in J \cap \Z$).
One can factorize the ideal $(q)$, which has norm $q^{p-1}$, as the product
$$(q)=J_1^{\alpha_1} \dotsb J_k^{\alpha_k}$$
where the $J_i$ are distinct maximal ideals of $\Z[\zeta_p]$, $\alpha_i \ge 1$ and where one of the $J_i$ is $J$.
We say that a prime $q$ is ramified over $\Z[\zeta_p]$ if at least one $\alpha_i > 1$.
It is well known that the only ramified prime in $\Z[\zeta_p]$ is $p$ with the factorization
$$(p) = (h)^{p-1}$$
where $h = 1-\zeta_p$ (the ideal $(h)$ is a prime in $\Z[\zeta_p]$). Moreover $\Z[\zeta_p]/(h) = \F_p$.
When $q \neq p$, we have that $\Z[\zeta_p]/J_i\simeq \F_{q^d}$ (the finite field with $q^d$ elements) for any $1\leq i \leq k$,
with $d$ being the order of $q$ modulo $p$. Moreover, the ideals $J_1,\ldots,J_k$ are the orbit of $J$ under the Galois group of $\Q(\zeta_p)$.
It follows that $d$ is even if and only if $J$ is stable under complex conjugation, as $\mathrm{Gal}(\Q(\zeta_p))$
is cyclic and its only involution is complex conjugation.

For $J$ an ideal of $\Z[\zeta_p]$ \emph{prime to $p$}, we will write $\rho_{J,g,\ul}$ for the representation:
$$\begin{array}{rccl}
	\rho_{J,g,\ul}:& \PMod(\Sigma_{g,n})&\longrightarrow & \PGL_{d_{p,g,\ul}}(\Z[\zeta_p]/J)
	\\ & \phi & \longmapsto & \rho_{p,g,\ul}(\phi) \mod J.
\end{array} $$
Note that as $\Z[\zeta_p]/J=\Z[\zeta_p,\frac 1 p]/(J\cdot \Z[\zeta_p,\frac 1 p])$,
we do not need to know that $\rho_{p,g,\ul}$ is defined over $\Z[\zeta_p]$ to define $\rho_{J,g,\ul}$.
In this paper we will mainly focus on the images of $\rho_{J,g,\ul}$ when $J\neq (h)$ is a maximal ideal of $\Z[\zeta_p]$.
The basis given by $p$-admissible colorings of a uni-trivalent graph is defined over $\Z[\zeta_p,\frac{1}{p}]$.
Hence, as $J$ is prime to $p$, these bases persist modulo $J$.

Finally, $\rho_{p,g,\ul}$ stabilizes a Hermitian form $\langle,\rangle$ on $V_{p,g,\ul}$,
which is defined over $\Z[\zeta_p,\frac{1}{p}]$ for $p\equiv -1\mod 4$
and over $\Z[\zeta_{4p},\frac{1}{p}]$ for $p\equiv 1\mod 4$.
When $J\neq (h)$ is a maximal ideal with norm $q^d$ where $d$ is even, $J$ is stable under complex conjugation,
hence complex conjugation induces an involution on $\Z[\zeta_p]/J\simeq \F_{q^d}$. As $d$ is even, $\sqrt{-1}\in \F_{q^d}$,
so $\langle,\rangle$ gives rise to a Hermitian form on the $\F_{q^d}$-vector space $V_{J,g,\ul}$ stabilized by $\rho_{J,g,\ul}$,
even in the case $p\equiv 1\mod 4$.
Therefore, $\rho_{J,g,\ul}$ takes values in $\mathrm{PU}_{d_{p,g,\ul}}(\F_{q^d})$ in that case.
We will also use the notation $h_{p,g,\ul}$ or $h_{g,\ul}$
for $\langle,\rangle$ when disambiguation seems preferable.

As for $\rho_{p,g,\ul}$ and $\widetilde{\rho}_{p,g,\ul}$,
the form $h_{p,g,\ul}$ is in fact, up to some rescaling,
defined over $\mathbb{Z}[\zeta_p]$ \cite[§14]{gilmerIntegralityTQFTs2004}:
for $p\equiv -1\mod 4$, the Hermitian form is defined over $\mathbb{Z}[\zeta_p]$;
for $p\equiv 1\mod 4$, one has to rescale to define the form over $\mathbb{Z}[\zeta_p]$,
and the form either remains Hermitian or becomes skew-Hermitian depending on the genus.
We will only need this fact for the comments made in \Cref{sec:modp}.

\subsection{Some dimension identities}
\label{sec:dimensionTQFT}
Over the course of the proofs of \Cref{thm:density,thm:surjectivity}, various identities on the dimensions $d_{p,g,\ul}$
of the vector spaces $V_{p,g,\ul}$ will come into play.
In this section, we will collect and prove all these identities.
Many of our identities will hold even at non-prime levels,
and may be of independent interest; hence we will state them for general odd levels $p\geq 5$.

\begin{lemma}\label{lemma:dimensionDivisibleByP}
	Let $p\geq 5$ be a prime number. Then for any $g\geq 2$, $n\geq 0$ and $\ul\in\Lambda^n$, the dimension of $V_{p,g,\ul}$ is a multiple of $p$.
\end{lemma}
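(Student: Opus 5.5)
The approach is to reduce everything to a single genus-$2$ computation with arbitrary boundary colors, and then to propagate divisibility by $p$ along a pants decomposition using the fusion rules of \Cref{sec:QuantumRepDef}.

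The first step is the reduction. For $g\geq 2$, cut $\Sigma_g^n$ along $g-2$ non-separating curves, or alternatively along a separating curve that splits off a copy of $\Sigma_2^1$. Using a separating curve cutting $\Sigma_g^n$ into $\Sigma_2^1$ and $\Sigma_{g-2}^{n+1}$, fusion gives
\[ V_{p,g,\ul}\simeq\bigoplus_{\mu\in\Lambda}V_{p,2,\mu}\otimes V_{p,g-2,(\mu,\ul)}. \]
It therefore suffices to prove that $p$ divides $\dim V_{p,2,\mu}$ for every $\mu\in\Lambda$.

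The second step is to handle $\Sigma_2^1$ with boundary color $\mu$. Cut it along a separating curve into two copies of $\Sigma_1^2$, and further split each into tori with one hole. This yields
\[ d_{p,2,\mu}=\sum_{a,b} N(\mu,a,b)\,d_{1,a}\,d_{1,b}, \]
where $d_{1,a}=\dim V_{p,1,a}$ and $N$ is the admissibility indicator. A cleaner route is to count admissible colorings of a graph directly. Take the theta-like graph for genus $2$: two loops $x,y$ joined to a trivalent tree carrying the boundary edge $\mu$. The basis is indexed by tuples $(x,y,c,e,\dots)$ satisfying the admissibility conditions, so $d_{p,2,\mu}$ is an explicit lattice-point count in a polytope depending on $r=(p-1)/2$. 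It is a quasi-polynomial in $p$, and we want to show it vanishes mod $p$.

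The third step, which I expect to be the main obstacle, is making that count manifestly divisible by $p$. First I would try the Verlinde formula:
\[ d_{p,g,\ul}=\sum_{j}\Big(\tfrac{p}{4\sin^2(2\pi j/p)}\Big)^{g-1}\prod_i\frac{\sin(\cdots\lambda_i\cdots)}{\sin(\cdots)}, \]
up to the correct $\SO(3)$ normalization. The key feature is the factor $p^{g-1}$ multiplying a sum of algebraic numbers. Heuristically, for $g\geq 2$ this gives $d=p^{g-1}\cdot\alpha$ with $\alpha\in\Z[\zeta_p,\tfrac1p]$. The difficulty is that $(2\sin)^{-2}$ is not a unit at $h=1-\zeta_p$: indeed $(\zeta^j-\zeta^{-j})^2$ has $h$-valuation $2$, so each term has $h$-adic valuation about $(g-1)(p-1)-2(g-1)-(\text{sine factors})$. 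For $g=2$ this equals $p-3$ minus the valuation of the denominators of the boundary ratios, which are units, since $\sin(k\theta)/\sin\theta$ is a unit for $k$ prime to $p$. Hence every term has valuation $\geq p-3>0$. Since $d$ is a rational integer, valuation $\geq 1$ at $h$ gives $p\mid d$, because $(h)\cap\Z=p\Z$.

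I would carry out this valuation estimate carefully for general $g\geq2$ and $\ul$, checking that the prefactor $p^{g-1}$ contributes valuation $(g-1)(p-1)$ and that the denominators cost at most $2(g-1)+O(n)\cdot 0$. With that estimate in hand, the fusion reduction above becomes unnecessary. As a fallback in case normalizations go wrong, I would check small cases ($p=5$, $g=2$) against the graph count, and use the fusion reduction to confine any verification to $g=2$.
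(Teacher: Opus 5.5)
Your proof is correct, but after Step~1 it follows a different route from the paper. The paper does exactly your Step~1: it cuts off a $\Sigma_2^1$ along a separating curve. It then simply quotes the Gilmer--Masbaum closed formula $\dim V_{p,2,2j}=\frac{(2j+1)p^3-(6j^2+6j)p^2+(4j^3+6j^2-1)p}{24}$, whose numerator is visibly $p$ times an integer while $\gcd(p,24)=1$. Neither a lattice-point count (your Step~2) nor the Verlinde formula is needed.

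Your Step~3 instead proves the statement directly for all $g\geq 2$, and your valuation estimate is right once the normalization is fixed. For odd $p$,
\[ \dim V_{p,g,\ul}=\sum_{\mu\in\Lambda}\left(\frac{p}{4\sin^2\frac{\pi(\mu+1)}{p}}\right)^{g-1}\prod_{i=1}^{n}\frac{\sin\frac{\pi(\lambda_i+1)(\mu+1)}{p}}{\sin\frac{\pi(\mu+1)}{p}}. \]
Set $w=e^{i\pi(\mu+1)/p}$, a primitive $2p$-th root of unity, so $w\in\Z[\zeta_p]$. Then $4\sin^2\frac{\pi(\mu+1)}{p}=-(w-w^{-1})^2$ has $h$-valuation $2$. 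Each boundary ratio equals $w^{1-k}(w^{2k}-1)/(w^2-1)$ with $k=\lambda_i+1$ prime to $p$, which is a cyclotomic unit (algebraic integrality alone would suffice). Hence every term has valuation $(g-1)(p-3)\geq 2$, and $p\mid\dim V_{p,g,\ul}$ follows because $(h)\cap\Z=p\Z$. As you say, the fusion reduction then becomes superfluous.

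Compared with the paper, your route does not depend on an explicit genus-$2$ computation from the literature and treats all $g$ and $\ul$ uniformly. It also yields more: $v_p(\dim V_{p,g,\ul})\geq\lceil (g-1)(p-3)/(p-1)\rceil$. For example, $p^2\mid d_{p,3}$ for $p\geq 7$, consistent with $d_{p,3}=\frac{1}{1440}p^2(p^2+11)(p^2-1)$.

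The paper's route is shorter given the citation, and its final step only needs $\gcd(p,24)=1$. Yours uses in an essential way that $p$ is prime, so that $(1-\zeta_p)$ is the unique prime of $\Z[\zeta_p]$ above $p$; this is harmless here since the lemma assumes it.
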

\begin{proof}
	Let $\gamma$ be a separating simple closed curve in $\Sigma_{g,n}$ that separates a subsurface of the type $\Sigma_{2}^{1}$.
	Splitting along $\gamma$, one gets:
	$$\dim V_{p,g,\ul}=\underset{i\in \Lambda}{\sum}\dim V_{p,2,i}\dim V_{p,g-2,(i,\ul)}.$$
	However, by \cite[Equations (22) and (23)]{GM14}, one has%
\footnote{Note that $\dim (V_{p,2,2j})=D_{2}^{(2j)}=\mathfrak{e}_2^{(2j)}+\mathfrak{o}_2^{(2j)}$ in the notations of \cite{GM14}.}
	$$\dim V_{p,2,2j}=\frac{(2j+1)p^3-(6j^2+6j)p^2+(4j^3+6j^2-1)p}{24},$$
	which is clearly divisible by $p$ if $p\geq 5$.
\end{proof}

The following lemma is a direct consequence of fusion rules.
\begin{lemma}\label{lemma:dimTori}
	Let $p\geq 5$ be odd, then for each $\lambda\in\Lambda=\{0,2,\dotsc,p-3\}$, the dimension of $V_{p,1,\lambda}$ is $\frac{p-1-\lambda}{2}$.
\end{lemma}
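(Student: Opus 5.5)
The plan is to compute $\dim V_{p,1,\lambda}$ from the basis of admissible colorings described in \Cref{sec:QuantumRepDef}. For $\Sigma_1^1$ capped off with one banded point colored $\lambda$, take the handlebody $\mathcal{H}$ to be a solid torus. For the spine $G$, take a loop (the core of $\mathcal{H}$) together with one extra edge joining a trivalent vertex on the loop to the banded point. Equivalently, we can cut the torus along a non-separating curve and apply the fusion isomorphism
\[
V_{p,1,\lambda}\simeq\bigoplus_{\mu\in\Lambda}V(\Sigma_0^3,\mu,\mu,\lambda),
\]
using that $V(\Sigma_0^3,a,b,c)$ is one-dimensional when the triple $(a,b,c)$ is admissible and zero otherwise. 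Either way, a basis is indexed by the colors $\mu\in\Lambda$ for which the triple $(\mu,\mu,\lambda)$ satisfies conditions (1) and (2).

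The next step is to count these $\mu$. Condition (1) with $a=c=\mu$ and $b=\lambda$ reads $0\le\lambda\le 2\mu$, that is, $\mu\ge\lambda/2$. Condition (2) reads $2\mu+\lambda\le 2p-4$, that is, $\mu\le p-2-\lambda/2$. All colors involved are even, and we also need $\mu\le p-3$. Note that $p-2-\lambda/2\ge p-3$ if and only if $\lambda\le 2$.

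The step needing care is the parity convention. It is not obvious how the admissible $\mu$ reduce to exactly $\frac{p-1-\lambda}{2}$ of them, since a naive range $\mu\in[\lambda/2,\,p-3]$ gives the wrong count. The resolution is that the $\SO(3)$ theory effectively also imposes the condition $\mu\ge\lambda/2$ with $\mu$ even. More to the point, there is a standard reindexing of $\SO(3)$ colors: the $\SO(3)$ TQFT at odd $p$ is the even part of the $\SU(2)$ theory at level $p-2$, and the symmetry $\mu\mapsto p-2-\mu$ sends odd colors to even ones. Under this reindexing, the loop color effectively ranges over all $\mu\in\{0,1,\dots,p-2\}$ with $\lambda/2\le\mu\le p-2-\lambda/2$, and the condition $a+b+c$ even is automatic since $2\mu+\lambda$ is even. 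Counting integers in this interval gives
\[
p-1-\lambda,
\]
and the even-color identification halves this, since it pairs $\mu$ with $p-2-\mu$ and the interval is symmetric about $(p-2)/2$, which is not an integer because $p$ is odd. The result is $\frac{p-1-\lambda}{2}$.

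As a sanity check, I will verify small cases against the Verlinde formula. For $\lambda=0$, the count should be $\frac{p-1}{2}=|\Lambda|$, matching the basis given by the colors of the core of the solid torus. For $\lambda=p-3$, the count should be $1$. Getting this $\SO(3)$ versus $\SU(2)$ color bookkeeping right is the only real obstacle; once the admissible loop colors are identified as the integer interval $[\lambda/2,\,p-2-\lambda/2]$ modulo the involution $\mu\mapsto p-2-\mu$, the count is immediate.
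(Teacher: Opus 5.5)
Your proposal is correct and follows the paper's route: a basis of $V_{p,1,\lambda}$ is indexed by the $\mu\in\Lambda$ with $(\mu,\mu,\lambda)$ $p$-admissible, and counting these gives $\frac{p-1-\lambda}{2}$. The $\SU(2)$ reindexing detour is unnecessary, and the difficulty you describe does not actually arise (the ``naive range'' miscounts only because it drops condition (2)). The admissible $\mu$ are exactly the even integers in $[\lambda/2,\,p-2-\lambda/2]$, a run of $p-1-\lambda$ consecutive integers (an even number), so exactly half of them are even; this is all your involution $\mu\mapsto p-2-\mu$ shows, and the bound $\mu\le p-3$ is automatic since $p-2$ is odd.
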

\begin{proof}
	A basis for $V_{p,1,\lambda}$ is indexed by $p$-admissible colorations of the trivalent graph consisting of one
	loop edge and one root edge colored by $\lambda$. The number of $p$-admissible triples $(i,i,\lambda)$ is then exactly $\frac{p-1-\lambda}{2}$.
\end{proof}

Before proving the rest of the lemmas of this section, we introduce some useful terminology.

\begin{definition}\label{def:permissive}
	Let us fix $p\geq 3$ odd. For $i\in \Lambda$, we will say that a tuple $\ul \in \Lambda^n$ \emph{permits $i$} if $\dim V_{p,0,(i,\ul)}\neq 0$.
	We will say that a tuple $\ul$ is \emph{permissive} if $\ul$ permits $i$ for all $i\in \Lambda$.
	Finally, we say that a color $i\in \Lambda$ is a \emph{permissive color} if $(i,i)$ is permissive,
	that is, $(i,i,j)$ is $p$-admissible for any $j \in \Lambda$.
\end{definition}

The following can be directly checked from the $p$-admissibility conditions:

\begin{lemma}
	\label{lemma:permColor} The color $i_0=\frac{p-1}{2}$ if $p\equiv 1\mod 4$ or $i_0=\frac{p-3}{2}$ if $p\equiv 3\mod 4$ is
	the unique permissive color in $\Lambda$.
\end{lemma}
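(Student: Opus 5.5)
The plan is to reduce permissivity of $i$ to an explicit system of inequalities in $i$, using the basis of $V_{p,g,\ul}$ from \cite[Theorem 4.14]{bhmv} recalled in \Cref{sec:QuantumRepDef}, and then solve that system.

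First, I would identify $V_{p,0,(a,b,c)}$ for $a,b,c\in\Lambda$. The sphere $\Sigma_0$ bounds a ball $\mathcal{H}$, and $\mathcal{H}$ retracts onto a tripod $G$: one trivalent vertex with three edges ending at the three banded points. Condition (3) forces the colors of these edges to be $a,b,c$. So there is at most one coloring, and it is $p$-admissible exactly when $(a,b,c)$ satisfies conditions (1) and (2). By the basis theorem, $\dim V_{p,0,(a,b,c)}=1$ if $(a,b,c)$ is $p$-admissible and $0$ otherwise. Hence $i$ is a permissive color if and only if $(i,i,j)$ is $p$-admissible for every $j\in\Lambda$.

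Next, I would write out conditions (1) and (2) for the triple $(i,i,j)$ with $i,j$ even. The triangle inequalities reduce to $0\le j\le 2i$, and condition (2) reads $2i+j\le 2p-4$. Both constraints are monotone in $j$, so the only one that matters is the extreme value $j=p-3$; the case $j=0$ holds automatically. Requiring them for all $j\in\Lambda$ is therefore equivalent to
\[
2i\ge p-3 \quad\text{and}\quad 2i\le p-1,
\]
that is, $i\in\{\tfrac{p-3}{2},\tfrac{p-1}{2}\}$.

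Finally, $i$ must lie in $\Lambda$, so it must be even and satisfy $i\le p-3$. The bound $\tfrac{p-1}{2}\le p-3$ holds since $p\ge5$. The two candidates are consecutive integers, so exactly one of them is even. Since $\tfrac{p-1}{2}$ is even if and only if $p\equiv1\mod 4$, the unique permissive color is $i_0=\tfrac{p-1}{2}$ when $p\equiv1\mod 4$ and $i_0=\tfrac{p-3}{2}$ when $p\equiv 3\mod 4$.

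No step here is a genuine obstacle. The only point needing care is the first one, identifying nonvanishing of $V_{p,0,(i,i,j)}$ with admissibility via the tripod basis; the rest is elementary inequality checking.
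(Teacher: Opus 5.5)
Your proof is correct and is exactly the direct check from the $p$-admissibility conditions that the paper invokes without writing out. For all $j\in\Lambda$, admissibility of $(i,i,j)$ reduces to $p-3\le 2i\le p-1$, and parity then singles out $i_0$. Your first step (the tripod basis) is harmless but not needed, since \Cref{def:permissive} already phrases permissivity of a color as $(i,i,j)$ being $p$-admissible for every $j\in\Lambda$.
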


In the remainder of this section, we will always use $i_0$ to denote the permissive color. The color $i_0$ will play an
important role in the proof of the next lemmas.

\begin{lemma}\label{lemma:dimAtLeastOne}
	Let $p\geq 3$ be odd, $g\geq 1$, $n\geq 0$ and $\ul\in \Lambda^n$.
	Then $\dim V_{p,g,\ul}\geq 1$.
\end{lemma}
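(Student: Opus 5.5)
We need to show $\dim V_{p,g,\ul}\geq 1$ for $g\geq 1$. The plan is to exhibit a single $p$-admissible coloring of a suitable uni-trivalent graph $G$, since such colorings index a basis of $V_{p,g,\ul}$. The permissive color $i_0$ of \Cref{lemma:permColor} is what makes this possible: any triple $(i_0,i_0,j)$ with $j\in\Lambda$ is $p$-admissible.

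We choose $G$ as follows. Take a "comb" graph: a path (the spine) of edges $e_1,\dots,e_{n+g}$ joined at trivalent vertices $v_1,\dots,v_{n+g-1}$. At each of the first $n$ vertices attach a leaf edge ending at the banded point $y_k$. At each of the remaining vertices attach a loop (a separating edge to a small loop-vertex of genus one). The exact arrangement is routine; what matters is that every vertex has at least two incident half-edges that we are free to color $i_0$.

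We color as follows.
\begin{itemize}
\item Each leaf edge gets its prescribed color $\lambda_k$.
\item Every spine edge gets color $i_0$.
\item Each genus-one piece consists of a loop edge plus a connecting edge. We color the connecting edge $i_0$ and the loop edge $i_0$. At that vertex the triple is $(i_0,i_0,i_0)$, which is admissible by permissiveness.
\end{itemize}

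With this coloring, every vertex sees a triple of the form $(i_0,i_0,j)$ with $j\in\Lambda$: either $j=\lambda_k$ at a leaf vertex, or $j=i_0$ elsewhere. All such triples are admissible by \Cref{lemma:permColor}.

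The one point needing care is the end of the spine, where a leftover spine edge must terminate. Here we use $g\geq 1$: end the spine with a genus-one piece, so the last spine edge $e$, colored $i_0$, meets a loop colored $i_0$ in the triple $(i_0,i_0,i_0)$. This is exactly where $g\geq1$ is essential. For $g=0$, the triple $(\lambda_1,\lambda_2,\lambda_3)$ could fail to be admissible, and no permissive edge would be available to absorb the constraint.

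Alternatively, the same argument can be phrased via fusion. Split off a one-holed torus along a curve, so that
$$\dim V_{p,g,\ul}=\sum_{i}\dim V_{p,1,i}\,\dim V_{p,g-1,(i,\ul)}.$$
Then induct using $\dim V_{p,0,(i_0,i_0,\ul')}$-type terms. The direct graph coloring is cleaner, and the main (mild) obstacle is simply arranging the graph so that every vertex contains two $i_0$ edges; the comb with a genus handle at the end achieves this.

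Hence a $p$-admissible coloring exists, and $\dim V_{p,g,\ul}\geq1$.
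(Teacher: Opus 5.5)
Your strategy is the same as the paper's: choose a uni-trivalent graph in which every trivalent vertex is adjacent to at least two internal edges, color all internal edges by the permissive color $i_0$, and invoke \Cref{lemma:permColor}. The gap is in the graph you actually describe.

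A path has two ends, and you cap only one of them. With your count ($n$ leaf vertices and $g-1$ handle vertices along a spine of $n+g$ edges), capping both ends with handles gives genus $g+1$. Leaving the other end uncapped makes it a univalent vertex, i.e.\ an extra banded point. If you rebalance the count so that this end is the $n$-th leaf, the last spine vertex carries two leaf edges.

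In particular, for $g=1$ and $n\geq 2$, no ``comb with a handle at the end'' has the property you need. The far end of the spine is a vertex with triple $(i_0,\lambda_a,\lambda_b)$, and this need not be admissible. For example, $p=11$, $i_0=4$, $\lambda_a=\lambda_b=8$ gives $4+8+8=20>18$. More generally, in genus one the cycle must pass through every trivalent vertex: any tree hanging off it with at least two leaves contains a vertex with two leaf edges.

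The fix is easy. Attach the $n$ leaves and the $g-1$ handles (connecting edge plus loop vertex) at distinct points of a circle; for $(g,n)=(1,0)$, take just the circle. Every vertex on the circle then sees $(i_0,i_0,\lambda_k)$ or $(i_0,i_0,i_0)$, and every loop vertex sees $(i_0,i_0,i_0)$, so the coloring is $p$-admissible. For $g\geq 2$ you could instead cap both ends of your spine with handles and put only $g-2$ handles in between. In genus one, the circle with $n$ teeth is the only graph with the required property. It is also consistent with the proof of \Cref{lemma:dimAtLeastTwo}, where changing any single internal edge to $i_0'$ only produces triples $(i_0,i_0',j)$.

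With this correction your argument is complete and coincides with the paper's. The fusion-based alternative you sketch is not needed, and as written it is too vague to serve as a proof.
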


\begin{proof}
	Consider the following coloring of a trivalent graph for $\Sigma_{g,n}$:
	\begin{center}
		\ctikzfig{graph_genus_at_least_one}
	\end{center}
	Since every vertex is adjacent to at least $2$ edges colored by $i_0$ and $i_0$ is permissive, the coloring is $p$-admissible.
\end{proof}
For $\gamma$ a simple closed curve on the surface $\Sigma_{g,n}$, we will denote by $t_{\gamma}$ the Dehn twist along $\gamma$.
A consequence of Lemma \ref{lemma:dimAtLeastOne} is the following.
\begin{proposition}
	\label{prop:DehnTwistSpectrum} Let $g\geq 2$ and let $\ul\in \Lambda^n$ for some $n\geq 0$, and let $\gamma$
	be a non-separating simple closed curve on $\Sigma_{g,n}$. Then, up to rescaling, we have
	$$\mathrm{Spec}(\rho_{p,g,\ul}(t_{\gamma}))=\lbrace \zeta_p^{k^2} | 1\leq k \leq \frac{p-1}{2}\rbrace.$$
\end{proposition}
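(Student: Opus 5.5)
Cut $\Sigma_{g,n}$ along the non-separating curve $\gamma$. By the fusion rules, with $\Sigma$ the surface of genus $g-1$ with $n+2$ boundary components obtained by cutting, we get a $\Mod(\Sigma)$-equivariant decomposition
$$V_{p,g,\ul}\simeq\bigoplus_{c\in\Lambda}V(\Sigma,c,c,\ul)\simeq\bigoplus_{c\in\Lambda}V_{p,g-1,(c,c,\ul)}.$$
Equivalently, choose a trivalent graph $G$ in the handlebody so that $\gamma$ bounds a meridian disk cutting exactly one edge $e$ of $G$; the admissible colorings of $G$ are then graded by the color $c$ of $e$. In the BHMV skein basis, the Dehn twist $t_\gamma$ acts diagonally: on the summand labelled by $c$ it acts by the twist coefficient $\mu_c=(-1)^c A^{c^2+2c}$, up to a global scalar coming from the framing anomaly. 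With $A$ a primitive $2p$-th root of unity (or the appropriate normalization, $\zeta_p=A^2$ up to sign conventions) and $c=2j$ even, one gets $\mu_{2j}=A^{4j^2+4j}=\zeta_p^{2j^2+2j}$. Completing the square, $2j^2+2j\equiv 2(j+\tfrac12)^2-\tfrac12=\tfrac{(2j+1)^2-1}{2}\pmod p$. So after rescaling by $\zeta_p^{1/2}$ (meaning the appropriate power of $\zeta_p$, since $2$ is invertible mod $p$), the eigenvalue is $\zeta_p^{(2j+1)^2/2}$. As $2j$ runs over $\Lambda$, i.e.\ $j=0,\dots,\frac{p-3}{2}$, the values $k=2j+1$ run over the odd numbers $1,3,\dots,p-2$. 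Up to sign mod $p$, these cover every class in $\{1,\dots,\frac{p-1}{2}\}$ exactly once, since $k$ and $p-k$ have opposite parity. Writing $k^2/2\equiv (k')^2\cdot u$ with a fixed unit $u$, we see the set $\{\zeta_p^{k^2/2}\}$ is the image of $\{\zeta_p^{k^2}\mid 1\le k\le\frac{p-1}{2}\}$ under a Galois automorphism. Since the statement says ``up to rescaling'', I would fix conventions carefully: with the BHMV normalization, the twist eigenvalue $\mu_c$ is already of the form $\zeta_p^{\text{(quadratic)}}$, and one checks directly that the multiset of exponents, shifted by a constant, equals $\{k^2\}$. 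Concretely, if $\zeta_p$ is chosen as $A^{4}$ or similar, then $\mu_{2j}=\zeta_p^{j^2+j}$, and $4(j^2+j)+1=(2j+1)^2$, so $\mu_{2j}\cdot\zeta_p^{1/4}=\zeta_p^{(2j+1)^2/4}=\zeta_p^{((2j+1)/2)^2}$; $(2j+1)/2$ mod $p$ then ranges over all nonzero classes up to sign, giving exactly the stated set. The main care is in this bookkeeping of conventions, not in any real difficulty.

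The essential remaining step, where Lemma~\ref{lemma:dimAtLeastOne} enters, is to show that every eigenvalue actually occurs, i.e.\ that each summand $V_{p,g-1,(c,c,\ul)}$ is nonzero for every $c\in\Lambda$. Since $g\ge2$, the cut surface has genus $g-1\ge1$, so Lemma~\ref{lemma:dimAtLeastOne} gives $\dim V_{p,g-1,(c,c,\ul)}\ge1$ for all $c$. Hence all $\frac{p-1}{2}$ distinct eigenvalues appear and no others do, which proves the proposition. I expect no genuine obstacle here; the only delicate point is the normalization matching the twist eigenvalue with $\zeta_p^{k^2}$ up to a global scalar. The hypothesis $g\ge2$ is exactly what is needed to avoid genus-zero summands, which can vanish.
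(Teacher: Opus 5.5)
Your proof is correct and follows the paper's argument: fusion along $\gamma$ with $t_\gamma$ acting by the twist coefficient on each summand $V_{p,g-1,(\ul,i,i)}$, the exponent being a shifted square $(i+1)^2-1$ with $i+1$ running over representatives of $(\Z/p\Z)^\times/\pm 1$, and nonvanishing of every summand from \Cref{lemma:dimAtLeastOne} since $g-1\geq 1$. The paper simply fixes $A=-\zeta_p$, so that the eigenvalue is $(-\zeta_p)^{i(i+2)}=\zeta_p^{(i+1)^2-1}$ directly. Your detour through conventions is needed only because, as you noticed, the normalization $A^2=\zeta_p$ gives the Galois twist of the stated set by $2^{-1}$, and that twisted set coincides with the stated one, even up to rescaling, only when $2$ is a square mod $p$.
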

\begin{proof}By the results of \cite{bhmv}, cutting $\Sigma_{g,n}$ along $\gamma$, we have a decomposition
	$$V_{p,g,\ul}=\underset{i\in \Lambda}{\bigoplus}V_{p,g-1,(\ul,i,i)}$$
	where the $i$-th term is a $(-\zeta_p)^{i(i+2)}=\zeta_p^{(i+1)^2-1}$-eigenspace of $\rho_{p,g,\ul}(t_{\gamma})$.
	As $i$ runs over $\Lambda$, those eigenvalues run over all elements of $\lbrace \zeta_p^{k^2-1} , k=1,\ldots ,\frac{p-1}{2}\rbrace$.
	Moreover, by Lemma \ref{lemma:dimAtLeastOne}, all subspaces in the decomposition are non-zero.
\end{proof}
A stronger minoration of dimensions will sometimes be useful:
\begin{lemma}\label{lemma:dimAtLeastTwo}
	Let $p\geq 5$ be odd, $g\geq 1$, $n\geq 0$ and $\ul\in(\Lambda\setminus\{0\})^n$.
	Then $\dim V_{p,g,\ul}\geq 2$ unless $g=1$, $n=1$ and $\ul=p-3$.
\end{lemma}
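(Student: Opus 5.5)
We exhibit two distinct $p$-admissible colorings of a trivalent graph adapted to $\Sigma_{g,n}$. By the basis theorem of \cite{bhmv} recalled above, these give two linearly independent vectors, so $\dim V_{p,g,\ul}\geq 2$. We take the same graph as in the proof of \Cref{lemma:dimAtLeastOne}: a chain of $g$ loops (handles) attached along a spine, with the $n$ leaves carrying the colors $\lambda_1,\dotsc,\lambda_n$ hanging off the spine. One coloring is the one used there, with every internal edge colored by the permissive color $i_0$ of \Cref{lemma:permColor}. It then suffices to modify this coloring in one place and keep admissibility.

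\textbf{Case $g\geq 2$.} Only the colors of two handle loops matter. Consider the coloring where every internal edge is $i_0$, except that one handle loop, attached to the spine through an edge colored $i_0$, receives a color $a$. The vertex at that loop has colors $(a,a,i_0)$, and this triple is admissible because $i_0$ is permissive: $(i_0,i_0,j)$ admissible for all $j$ is the same as $(a,a,i_0)$ admissible for all $a$. All other vertices still have two edges colored $i_0$. Choosing $a\neq i_0$ then gives a second coloring. Since $|\Lambda|\geq 2$ for $p\geq 5$, this case is done, and the same argument works whenever $g\geq 1$ and the handle vertex is joined to the rest of the graph by an edge that we may color $i_0$.

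\textbf{Case $g=1$.} The torus handle vertex has colors $(a,a,c)$, where $c$ is the color of the edge joining the handle to the rest of the graph. If $n\geq 2$, the rest of the graph is a tree containing at least one internal edge. We color that edge $i_0$ as in \Cref{lemma:dimAtLeastOne} and then vary the handle color $a$ freely as above, which again gives two colorings. If $n=1$, then $c=\lambda_1$, and the dimension is $\frac{p-1-\lambda_1}{2}$ by \Cref{lemma:dimTori}. This is at least $2$ exactly when $\lambda_1\leq p-5$, so the only failure is $\lambda_1=p-3$. If $n=0$, the hypothesis is vacuous and $\dim V_{p,1}=\frac{p-1}{2}\geq 2$.

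\textbf{Main obstacle.} The delicate point is the case $g=1$, $n\geq 2$. The tree part must be chosen so that an $i_0$-colored edge attaches to the handle, and every trivalent vertex of the tree must still have two $i_0$-colored edges. A leaf vertex meeting two boundary edges colored $\lambda_i,\lambda_j$ and one internal edge colored $i_0$ is fine by permissivity. The real concern is therefore only whether $i_0$ is admissible with $(i_0,\lambda)$ and with the handle edges, and this is again exactly permissivity. The hypothesis $\lambda_i\neq 0$ is used so that colored points are genuine leaves and the graph is trivalent. With a zero color one would reduce to fewer points, and for instance $(g,n)=(1,1)$ with color $p-3$ reappears.
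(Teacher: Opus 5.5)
There is a genuine gap: the two admissibility claims your argument rests on are false.

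\textbf{The handle vertex.} Permissivity of $i_0$ means that $(i_0,i_0,j)$ is admissible for every $j$. It does \emph{not} mean that $(a,a,i_0)$ is admissible for every $a$. That triple requires $i_0\le 2a$ and $2a+i_0\le 2p-4$, so in particular $(0,0,i_0)$ is never admissible. For $p=5$ you have $\Lambda=\{0,2\}$ and $i_0=2$, so the only candidate $a\neq i_0$ is $0$. Recoloring a handle loop while its connector stays $i_0$ then produces no second coloring at all. This breaks both your case $g\ge 2$ and your case $g=1$, $n\ge 2$ at $p=5$. For $p\ge 7$ a good $a$ does exist (e.g.\ $i_0+2$ if $p\equiv 3\pmod 4$ and $i_0-2$ if $p\equiv 1\pmod 4$), but this needs a check that your argument does not make.

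\textbf{The leaf vertex.} A vertex carrying two boundary edges $\lambda_i,\lambda_j$ and one internal edge colored $i_0$ is not admissible ``by permissivity''. For $p=13$ one has $i_0=6$, and $(2,2,6)$ violates the triangle inequality. In your genus-one graph (loop, connector, tree carrying the leaves), such a vertex always exists when $n\ge 2$. Indeed, a trivalent tree has a vertex adjacent to two of its ends, and at most one end is the connector. So your base coloring need not be admissible.

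Tellingly, the hypothesis $\lambda_i\neq 0$ never enters any admissibility check in your argument, even though it is essential: $V_{1,(p-3,0)}=V_{1,p-3}$ is one-dimensional. Your cases $g=1$, $n\le 1$ are fine.

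\textbf{How the paper avoids this.} For $g\ge 2$ it cuts along a non-separating curve, giving $\dim V_{p,g,\ul}=\sum_{i\in\Lambda}\dim V_{p,g-1,(\ul,i,i)}$. Each of the $\frac{p-1}{2}\ge 2$ summands is non-zero by \Cref{lemma:dimAtLeastOne}, since $g-1\ge 1$. Within your framework, you could instead recolor a handle loop \emph{and} its connector by $0$, using that $(0,0,0)$ and $(i_0,i_0,0)$ are admissible, provided the connector hangs off a vertex with two $i_0$-colored spine edges.

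For $g=1$, $n\ge 2$ the paper uses a graph in which the leaves sit directly on the cycle, so every vertex has two cycle edges colored $i_0$. It then switches one cycle edge to $i_0'=i_0+2$ (if $p\equiv 3\pmod 4$) or $i_0'=i_0-2$ (if $p\equiv 1\pmod 4$). This gives a second admissible coloring because $(i_0,i_0',j)$ is admissible for every $j\in\Lambda\setminus\{0\}$, which is exactly where $\lambda_j\neq 0$ is used.
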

\begin{proof}
	We start with the case $g\geq 2$. Picking a non-separating simple closed curve $\gamma$ in $\Sigma_{g,n}$, by cutting along $\gamma$ we get
	$$\dim V_{p,g,\ul}=\underset{i\in \Lambda}{\sum} \dim V_{p,g-1,(\ul,i,i)}.$$
	However, since $g-1\geq 1$, by Lemma \ref{lemma:dimAtLeastOne}, each term in the sum is positive, so the sum is at least $\frac{p-1}{2}\geq 2$.
	
	In the case $g=1$ and $n\geq 2$, we pick the same trivalent graph as in the proof of Lemma \ref{lemma:dimAtLeastOne}.
	Coloring all internal edges by $i_0$ gives us one $p$-admissible coloring. Now, change one internal coloring
	from $i_0$ to $i_0':=i_0+2$ if $p\equiv 3 \mod 4$, or to $i_0':=i_0-2$ otherwise. One can easily see that all triples
	$(i_0,i_0',j)$ for $j\in \Lambda \setminus \lbrace 0 \rbrace$ are $p$-admissible. Therefore, $\dim V_{p,1,\ul}\geq 2$
	(in fact, $\geq n+1$, since there are $n$ internal edges) if $n\geq 2$.
	
	The last case, $g=1,n=1$ is a consequence of Lemma \ref{lemma:dimTori}.
\end{proof}

Combining \Cref{lemma:dimAtLeastTwo,lemma:dimTori} we get the following.

\begin{corollary}\label{cor:nonTriviality}
	Let $p\geq 5$ be odd and let $g\geq 1$, $n\geq 0$ and $\ul\in \Lambda^n$, then $V_{p,g,\ul}\neq 0$,
	and $\dim V_{p,g,\ul}=1$ if and only if $g=1$ and $\ul=(p-3,0,\dotsc,0)$, up to permutation.
\end{corollary}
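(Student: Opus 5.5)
The plan is to combine \Cref{lemma:dimAtLeastTwo} and \Cref{lemma:dimTori} after first removing the colors equal to $0$. By \Cref{rmk:vacuum}, a color $0$ on a boundary component amounts to placing no colored banded point there. Hence $V_{p,g,\ul}\simeq V_{p,g,\ul'}$, where $\ul'\in(\Lambda\setminus\{0\})^{n'}$ is obtained from $\ul$ by deleting all zero entries. This also follows from the basis description: an edge colored $0$ is an empty Jones--Wenzl cabling, and the admissibility condition at a vertex with one color $0$ forces the other two colors to agree. So the dimension is unchanged, and it suffices to treat $\ul'$ with all colors nonzero, including the case $n'=0$.

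\emph{Nonvanishing.} This is exactly \Cref{lemma:dimAtLeastOne}, which holds for all $g\geq 1$ and any coloring, so $V_{p,g,\ul}\neq 0$.

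\emph{Characterization of dimension one.} Suppose first $n'\geq 1$. Then \Cref{lemma:dimAtLeastTwo} gives $\dim V_{p,g,\ul'}\geq 2$ unless $g=1$, $n'=1$ and $\ul'=(p-3)$. In that exceptional case, \Cref{lemma:dimTori} gives $\dim V_{p,1,p-3}=\frac{p-1-(p-3)}{2}=1$.

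Now suppose $n'=0$, i.e.\ $\ul=(0,\dots,0)$. For $g\geq 2$, I would use the decomposition along a non-separating curve $\gamma$, as in the proof of \Cref{lemma:dimAtLeastTwo}. This is a sum of $\frac{p-1}{2}\geq 2$ terms, each nonzero by \Cref{lemma:dimAtLeastOne}, so the dimension is at least $2$. For $g=1$, \Cref{lemma:dimTori} gives $\dim V_{p,1,0}=\frac{p-1}{2}\geq 2$ since $p\geq 5$.

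Putting these together, $\dim V_{p,g,\ul}=1$ if and only if $g=1$ and the nonzero colors of $\ul$ consist of a single $p-3$. This is the statement $\ul=(p-3,0,\dots,0)$ up to permutation.

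The only point requiring care is the reduction that drops the zero colors. This is not a real obstacle: it rests on \Cref{rmk:vacuum} and the fact that a $0$-colored edge imposes equality of the two adjacent colors, so admissible colorings correspond bijectively before and after the removal.
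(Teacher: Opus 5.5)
Your proof is correct and takes essentially the paper's route. The paper's proof just combines \Cref{lemma:dimAtLeastTwo} and \Cref{lemma:dimTori}, relying implicitly on \Cref{lemma:dimAtLeastOne} for non-vanishing and on \Cref{rmk:vacuum} to discard zero colors. Your write-up makes these implicit steps explicit, including the case $\ul=(0,\dotsc,0)$ with $g=1$, which the proof of \Cref{lemma:dimAtLeastTwo} does not spell out.
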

	
\subsection{Structure of permissive tuples}
\label{sec:permissive}
The results of this section are used for our main results only in genus $1$ and $2$ but they cast some light on the notion of permissive tuple,
which was introduced in Definition \ref{def:permissive}, and also on the scope of \Cref{cor:tensor-indec,thm:density} (in genus $1$ and $2$).

In this section, we will denote by $(k)^n$ the tuple $(k,\ldots,k)$ (with $n$ repetitions of $k$) and by $t\cup t'$
the concatenation of tuples $t$ and $t'$.
Our goal is to prove the following:

\begin{proposition}
	\label{prop:permissiveTuple} For any $p\geq 3$ odd, and any $\ul\in \left( \Lambda \setminus \lbrace 0 \rbrace \right)^n$
	with $n\geq p-3$, the tuple $\ul$ is permissive.
	Moreover, $(p-3)^{p-4}$ is not permissive.
\end{proposition}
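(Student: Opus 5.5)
The approach is to reduce both assertions to a bookkeeping of which colors can appear on the last edge of a caterpillar tree. For $\ul=(\lambda_1,\dots,\lambda_k)$ let $S(\ul)\subset\Lambda$ be the set of $i$ with $\dim V_{p,0,(i,\ul)}\neq 0$. Computing $V_{p,0,(i,\ul)}$ with a caterpillar graph, we have $S(\lambda_1)=\{\lambda_1\}$ and the recursion
$$S(\lambda_1,\dots,\lambda_{k+1})=\bigcup_{j\in S(\lambda_1,\dots,\lambda_k)}\{c\in\Lambda : (j,\lambda_{k+1},c)\ \text{is $p$-admissible}\}.$$
By the admissibility conditions, for fixed $j$ and $a$ this set is the even interval
$$I(j,a)=\bigl[\,|j-a|,\ \min(j+a,\,2p-4-j-a)\,\bigr].$$
The statement to prove is that $S(\ul)=\Lambda$ whenever all $\lambda_i\neq 0$ and $n\geq p-3$.

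\textbf{Step 1: $S$ stays an interval.} First I would show by induction that $S(\ul)$ is always an interval $[m,M]$ of even integers. For consecutive $j,j+2$ both endpoints of $I(j,a)$ move by at most $2$, and the intervals overlap. Hence a union over an interval of $j$'s is again an interval.

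\textbf{Step 2: endpoint formulas and growth.} Next I would write the new endpoints explicitly for fusion of $[m,M]$ with $a\neq 0$:
\begin{itemize}
\item the new minimum is $0$ if $a\in[m,M]$, and otherwise it is the distance from $a$ to $[m,M]$;
\item the new maximum is $\max_{j}\min(j+a,2p-4-j-a)$.
\end{itemize}
The goal is a growth lemma in terms of the length $\ell=(M-m)/2+1$. Fusion with a nonzero color increases $\ell$ by at least one, except in a "blocked" situation where the interval touches an end of $\Lambda$: $m=0$, or the truncation by $2p-4$ is active. Moreover, a blocked step is always followed by an unblocked one.

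Concretely, I would first treat the extreme color $a=p-3$. Here $I(j,p-3)=[p-3-j,\min(p-1-j,p-3)]$, so $[m,M]\mapsto[p-3-M,\min(p-1-m,p-3)]$. This is a reflection that gains one unit of length exactly when $m>0$. For general $a$, I would compare with this extreme case and with $a=2$, where $[m,M]\mapsto[m-2,M+2]$ truncated. The claim to establish is that the new interval always either contains the reflection of the old one enlarged by one unit, or strictly contains the old one.

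\textbf{Step 3: counting.} Granting Step 2, $\ell$ starts at $1$ and gains at least $1$ every two steps. In fact I expect the sharper statement: after the first step, every step gains unless $m=0$, and $m=0$ cannot persist through a non-gaining step because the reflection moves the interval to the top. This gives $\ell\geq\lceil(k+1)/2\rceil$ after $k$ colors. Full length $(p-1)/2$ is therefore reached once $k\geq p-3$, which gives permissivity.

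I expect Step 2 to be the main obstacle. It requires a careful case analysis of the two truncations, $c\geq 0$ and $j+a+c\leq 2p-4$, for arbitrary nonzero $a$ relative to $[m,M]$. My fallback is a monotonicity argument: show that replacing any $\lambda_i$ by $p-3$ can only shrink $S$ (after possibly reflecting $c\mapsto p-3-c$), so that $(p-3)^n$ is the worst case.

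\textbf{Non-permissivity.} For the second assertion I would compute $S((p-3)^k)$ exactly using the reflection formula above. By induction, $S((p-3)^{2r})=[0,2r-2]$ and $S((p-3)^{2r+1})=[p-3-2r,p-3]$, valid as long as these lie in $\Lambda$. For example, with $p=7$ the sets are $\{4\},\{0,2\},\{2,4\},\{0,2,4\}$. Taking $k=p-4$, which is odd, gives $S=[1,p-3]$ in the even colors, so $0\notin S((p-3)^{p-4})$. Hence $(p-3)^{p-4}$ does not permit $0$ and is not permissive.
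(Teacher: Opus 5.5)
Your strategy of tracking the permitted set $S(\ul)$ as an even interval under caterpillar fusion is different from the paper's, and it can be completed. However, Step 2, which carries all the content, is only announced, and your first formulation of it is wrong.

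Truncation by $2p-4$ does not stop growth. If $[m,M]$ lies entirely above $p-1-a$, the new maximum $2p-4-m-a$ comes purely from the truncation, yet $\ell$ still increases. If truncation counts as ``blocked'', two blocked steps can occur in a row. For $p=11$, fusing with $6$ and then $8$ gives $\{8\}\mapsto[2,4]\mapsto[4,8]$, and both maxima are cut by $2p-4$. So ``a blocked step is always followed by an unblocked one'' fails as stated.

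Your sharper statement in Step 3 is the correct one, in this precise form: \emph{if $[m,M]\neq\{0\},\Lambda$ and $a\in\Lambda\setminus\{0\}$, then $\ell'\geq\ell$, with equality only when $m=0$ and $a=p-3$, in which case the image is $[p-3-M,p-3]$, whose minimum is positive.} This is a finite check. The position of $a$ relative to $[m,M]$ gives $m'\in\{0,\,m-a,\,a-M\}$. The position of $[m,M]$ relative to the peak $\{p-3-a,\,p-1-a\}$ of $j\mapsto\min(j+a,2p-4-j-a)$ gives $M'\in\{M+a,\,p-3,\,2p-4-m-a\}$. Taken row by row, the nine values of $\ell'-\ell$ are $(a+m)/2$, $(p-3-M+m)/2$, $p-2-(a+M)/2$, $a$, $(p-3-M+a)/2$, $p-2-(m+M)/2$, $(M+m)/2$, $(p-3-a+m)/2$ and $p-2-a$. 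All are nonnegative. Only three can vanish: $(M+m)/2$ (for $[m,M]=\{0\}$), $(p-3-M+m)/2$ (for $[m,M]=\Lambda$) and $(p-3-a+m)/2$ (for $m=0$, $a=p-3$).

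Your counting also needs slightly more than ``one gain every two steps''. That alone gives only $\ell_k\geq\lceil k/2\rceil$, which is one short of $\frac{p-1}{2}$ when $k=p-3$. You also need that the first fusion gains, which holds because $S(\lambda_1)=\{\lambda_1\}$ has minimum $\lambda_1>0$. With the lemma, stalls are isolated and never occur from an interval with $m>0$. This gives $\ell_k\geq\lceil (k+1)/2\rceil$, hence $S(\ul)=\Lambda$ for $k\geq p-3$, and the fallback monotonicity argument becomes unnecessary.

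In the second part, $S((p-3)^{2r})=[0,2r]$, not $[0,2r-2]$; your own $p=7$ example shows this. With that fix, $S((p-3)^{p-4})=[2,p-3]$ is correct.

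Once completed, your proof differs from the paper's, which avoids interval dynamics. The paper fuses pairs of colors $\geq i_0$ into single colors $\leq i_0$. It then reduces via Lemma~\ref{lemma:permissive1} to exactly $\frac{p-3}{2}$ colors in $\{2,\dots,i_0\}$, splits these into two groups permitting $i_0$ (or $i_0-2$) by Lemma~\ref{lemma:permissive2}, and concludes by permissivity of $i_0$. Non-permissivity of $(p-3)^{p-4}$ comes from $(p-3,p-3,j)$ being admissible only for $j\in\{0,2\}$, plus a triangle inequality. Your route is more computational, but it yields the exact growth bound $\lceil (k+1)/2\rceil$, attained by $(p-3)^k$. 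So both halves of the statement, and the sharpness of the threshold $p-3$, come from one mechanism.
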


The proof of Proposition \ref{prop:permissiveTuple} will be broken into several lemmas.

\begin{lemma}
	\label{lemma:permissive1} Let $\ul \in \Lambda^n$ be permissive, then for any $i\in\Lambda$, the tuple $(\ul,i)$ is permissive.
\end{lemma}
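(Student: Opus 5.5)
We must show that for every $j\in\Lambda$ the space $V_{p,0,(j,\ul,i)}$ is non-zero. The plan is to use fusion along a separating curve on the punctured sphere, together with the permissive color $i_0$ of Lemma \ref{lemma:permColor}.

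The first step is the fusion decomposition. Cut $\Sigma_{0,n+2}$ along a curve separating the punctures colored by $\ul$ from the two punctures colored $j$ and $i$. This gives
$$\dim V_{p,0,(j,\ul,i)}=\sum_{k\in\Lambda}\dim V_{p,0,(k,\ul)}\cdot\dim V_{p,0,(k,j,i)}.$$
All terms are non-negative, so it suffices to find a single $k\in\Lambda$ with both factors non-zero.

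The second step is the choice of $k$. Since $\ul$ is permissive, $\dim V_{p,0,(k,\ul)}\neq 0$ for every $k\in\Lambda$, so the first factor is never an obstruction. The problem reduces to finding $k\in\Lambda$ such that $(k,j,i)$ is $p$-admissible, i.e.\ $\dim V_{p,0,(k,j,i)}=1$.

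Taking $k=i_0$ works. By Lemma \ref{lemma:permColor}, $i_0$ is permissive, meaning $(i_0,i_0,m)$ is $p$-admissible for every $m\in\Lambda$. This alone is not what we need, since we need $(i_0,j,i)$ rather than $(i_0,i_0,m)$. The cleaner choice is $k=i$ and $j=0$, but $j$ is arbitrary, so instead we pick $k$ adapted to $(i,j)$. Take $k=\min(i+j,\,2p-4-i-j)$. Then:
\begin{itemize}
\item $k$ is even and $0\le k\le p-3$, because $i+j\le 2p-6$ and $2p-4-(i+j)\ge 2$, while $\min(x,2p-4-x)\le p-2$ and, being even, is at most $p-3$;
\item the condition $a+b+c\le 2p-4$ holds, since $k\le 2p-4-i-j$;
\item the triangle inequalities $|i-j|\le k\le i+j$ hold. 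The upper bound is immediate. For the lower bound when $k=2p-4-i-j$, we have $k-|i-j|=2p-4-2\max(i,j)\ge 2p-4-2(p-3)=2>0$.
\end{itemize}
So $(k,j,i)$ is $p$-admissible, the corresponding term in the sum is non-zero, and hence $(\ul,i)$ permits $j$.

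The only real subtlety is applying the fusion rule on a genus-$0$ surface with the correct multiplicity convention. Degenerate cases such as $n=0$, where $\ul$ permissive forces the empty tuple to permit everything and fails, are vacuous. The case $n=1$ is handled by the same formula with $\dim V_{p,0,(k,\lambda)}=\delta_{k\lambda}$.
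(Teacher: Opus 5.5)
Your proof is correct and is essentially the paper's argument: you use the same fusion decomposition $V_{0,(\ul,i,j)}=\bigoplus_{k\in\Lambda}V_{0,(\ul,k)}\otimes V_{0,(i,j,k)}$, permissiveness of $\ul$ makes the first factor non-zero for every $k$, and you make the same choice $k=\min(i+j,2p-4-i-j)$ for the second factor (your admissibility check is just more detailed than the paper's one-line justification). Delete the false start ``Taking $k=i_0$ works'' and the remark about ``$k=i$ and $j=0$'': $(i_0,j,i)$ is not admissible in general (for instance $i=j=0$ forces $k=0\neq i_0$), and neither sentence is part of the actual argument.
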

\begin{proof}
	For $j\in \Lambda$, we have $$V_{0,(\ul,i,j)}=\underset{k \in \Lambda}{\bigoplus}V_{0,(\ul,k)}\otimes V_{0,(i,j,k)}.$$
	However, for any $i,j \in \Lambda$, there exists $k\in \Lambda$ such that $(i,j,k)$ is $p$-admissible (namely $k=\min(i+j,2p-4-i-j)$
	works since $2p-4-i-j\geq |i-j|$), so, since $\ul$ is permissive, we get that $V_{0,(\ul,i,j)}\neq 0$ for all $j\in \Lambda$ and $(\ul,i)$
	is permissive.
\end{proof}
\begin{lemma}
	\label{lemma:permissive2} Let $\ul=(\lambda_1,\ldots,\lambda_n) \in \Lambda^n$ be such that $\lambda_k\leq i_0$ for $k=1,\ldots,n$.
	Then $\ul$ permits $i_0$ if and only if $\underset{1\leq k \leq n }{\sum}\lambda_k\geq i_0$.
	Moreover, if $p\equiv 1\mod 4$, then $\ul$ permits $i_0-2$ if and only if $\underset{1\leq k \leq n }{\sum}\lambda_k\geq i_0-2$.
\end{lemma}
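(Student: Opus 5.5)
The plan is to reduce everything to the combinatorics of $p$-admissible colorings of a genus $0$ caterpillar tree. By the basis description, $\ul$ permits $j$ if and only if some $p$-admissible coloring exists on the tree with leaves $\lambda_1,\dots,\lambda_n$ and root $j$, with internal edges $k_1=\lambda_1,k_2,\dots,k_{n-1}$ and vertices $(k_{m-1},\lambda_m,k_m)$, where we set $k_n:=j$. Equivalently, we can iterate the fusion rule $V_{0,(\ul,j)}=\bigoplus_k V_{0,(\lambda_1,\dots,\lambda_{n-1},k)}\otimes V_{0,(k,\lambda_n,j)}$ used in Lemma~\ref{lemma:permissive1}. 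Colors equal to $0$ can be deleted from $\ul$ throughout, since a vertex $(a,0,c)$ is admissible iff $a=c$.

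\emph{Necessity.} For both statements I argue by induction along the tree using only the triangle inequality $b\le a+c$. This gives $k_m\le k_{m-1}+\lambda_m$, hence $k_m\le s_m:=\lambda_1+\dots+\lambda_m$. In particular $j\le \sum_k\lambda_k$, so permitting $i_0$ (resp.\ $i_0-2$) forces $\sum\lambda_k\ge i_0$ (resp.\ $\ge i_0-2$).

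\emph{Sufficiency for $i_0$.} I use the greedy coloring $k_m=\min(s_m,i_0)$ and check the three vertex types.
\begin{itemize}
\item If $s_m\le i_0$, the vertex is $(s_{m-1},\lambda_m,s_m)$. It is admissible because the triangle inequalities are equalities or trivially true, and $2s_m\le 2i_0\le p-1\le 2p-4$.
\item If $k_{m-1}=i_0$, the vertex is $(i_0,\lambda_m,i_0)$, which is admissible because $i_0$ is permissive (Lemma~\ref{lemma:permColor}).
\item At the transition vertex $(s_{m-1},\lambda_m,i_0)$ with $s_{m-1}<i_0\le s_m$: since all entries are $\le i_0$ we get $|s_{m-1}-\lambda_m|\le i_0\le s_m$. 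The sum is at most $3i_0\le \tfrac{3(p-1)}2\le 2p-4$ for $p\ge 5$, and all colors are even.
\end{itemize}
Since $\sum\lambda_k\ge i_0$, the last edge gets color $k_n=i_0$, as required.

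\emph{Sufficiency for $i_0-2$ when $p\equiv 1\bmod 4$.} I run the same greedy procedure with target $t=i_0-2=\tfrac{p-5}2$. The stationary vertex $(t,\lambda,t)$ needs $\lambda\le 2t=p-5$, which holds since $\lambda\le i_0$ once $p\ge 13$. The main obstacle is the transition vertex $(s_{m-1},\lambda_m,t)$, because now $\lambda_m$ may equal $i_0=t+2$. Then $|s_{m-1}-\lambda_m|\le t$ requires $s_{m-1}\ge 2$, so the bad case is when the first nonzero color is $\lambda_m=i_0$.
\begin{itemize}
\item In that case I would first try reordering the leaves, which is allowed since permitting is symmetric in the entries of $\ul$. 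Putting a smaller nonzero color first makes $s_{m-1}\ge 2$.
\item If all nonzero colors equal $i_0$ and there are at least two of them, I instead use the vertex $(i_0,i_0,t)$, which is admissible by permissivity of $i_0$.
\end{itemize}
The genuinely degenerate cases are those with only one nonzero color, and also $p=5$, where $t=0$. There a direct check shows that the tree forces equality of colors; for instance $\ul=(i_0)$ permits only $i_0$. I would verify these by hand and state the needed non-degeneracy, namely at least two nonzero entries, explicitly.
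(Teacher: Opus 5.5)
You are right that the second claim needs a non-degeneracy hypothesis, and here you are more careful than the paper. If $\ul$ has exactly one non-zero entry and it equals $i_0$ (e.g.\ $\ul=(i_0)$, or $\ul=(2)$ when $p=5$), then $V_{0,(\ul,i_0-2)}=0$ even though $\sum_k\lambda_k\ge i_0-2$. The paper takes the coloring of the first part, recolors the last spine edge by $i_0-2$, and checks only that $(i_0,i_0-2,j)$ is admissible for $j\ge 2$. But if the partial sums have not reached $i_0$ before the last non-zero leaf, the final vertex is $(s_{n-1},\lambda_n,i_0-2)$. This is admissible except exactly when $s_{n-1}=0$ and $\lambda_n=i_0$. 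So the precise exception is ``exactly one non-zero entry, equal to $i_0$''; your hypothesis ``at least two non-zero entries'' is sufficient, though slightly stronger than needed.

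Your first part and the necessity direction are the paper's argument: your $\min(s_m,i_0)$ is its comb coloring. For the second part you take a different route. You aim the greedy coloring directly at $t=i_0-2$ and reorder the leaves to avoid the vertex $(0,i_0,t)$. The paper instead keeps the coloring aimed at $i_0$ and drops to $i_0-2$ only after the last non-zero leaf. The paper's route is shorter: it needs no reordering and no special case when all non-zero colors equal $i_0$. More importantly, it is uniform in $p$, and yours is not. Your stationary vertices $(t,\lambda,t)$ need $\lambda\le 2t$, which fails at $p=5$, where $t=0$ and $(0,2,0)$ is not admissible.

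You group $p=5$ with the degenerate cases to be checked by hand, but it is not degenerate. For $p=5$, every $\ul\in\{0,2\}^n$ with at least two $2$'s permits $0$. To see this, delete the zeros, color every internal edge of the caterpillar by $2$, and close with the vertex $(2,2,0)$. This is allowed because $(2,2,2)$ is $5$-admissible, as $6\le 2p-4$. With this case supplied, your proof is complete.
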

\begin{proof}
	A quick application of the triangular inequalities among the $p$-admissibility conditions shows that if
	$V_{0,(\ul,i)}\neq 0$ then $i\leq \underset{1\leq k \leq n }{\sum}\lambda_k$.
	This proves that the condition is necessary. To show that it is sufficient, let $m$ be the maximum integer such that
	$\lambda_1+\dotsb+\lambda_m<i_0$. Let us pick a pair of pants decomposition of $\Sigma_{0,n+1}$ whose associated trivalent graph is a comb,
	with teeth colored by $\lambda_1,\ldots,\lambda_n$. We color the edges of the spine of the comb by $x_2,\ldots,x_n$, where for $k\leq m$,
	$$x_k=\lambda_1+\dotsb+\lambda_k,$$
	and for $k>m$, $x_k=i_0$. This coloring is $p$-admissible.
	
	For the last claim, we color the edges of the spine in the same way as above; except for one, which we color by $i_0-2$ instead:
	if $\lambda_n\neq 0$, this is the last one; otherwise, we choose the one adjacent to $\lambda_k$ and $\lambda_{k-1}$ for $k$ such that
	$\lambda_s=0$ for $s>k$.
	Since $(i_0,i_0-2,j)$ is $p$-admissible for any $j\geq 2$, this is still a $p$-admissible coloring.
\end{proof}

\begin{proof}[Proof of Proposition \ref{prop:permissiveTuple}]
	Let $\ul=(\lambda_1,\ldots,\lambda_n) \in \left(\Lambda\setminus \lbrace 0 \rbrace \right)^n$ with $n\geq p-3$.
	
	First, assume that there are at least $2$ colors among the $\lambda_i$'s which are $\geq i_0$. Without loss of generality,
	we can assume that those are $\lambda_{n-1}$ and $\lambda_n$. Note that if $k=\max(|\lambda_{n-1}-\lambda_n|,2)$,
	then $(\lambda_{n-1},\lambda_n,k)$ is $p$-admissible, and moreover $k\leq i_0$. Since for any $j\in \Lambda$,
	we have that $V_{0,(\lambda_{n-1},\lambda_n,k)}\otimes V_{0,(\lambda_1,\ldots,\lambda_{n-2},k,j)}$ is a summand of $V_{0,(\ul,k)}$,
	it suffices to prove that $(\lambda_1,\ldots,\lambda_{n-2},k)$ is permissive. By repeating this process, we can assume that $\ul$
	contains at most $1$ color $>i_0$ and at least $\frac{p-3}{2}$ colors in $\lbrace 2,4,\ldots,i_0\rbrace$.
	Because of Lemma \ref{lemma:permissive1}, it suffices to treat the case where $\ul$ consists of exactly $\frac{p-3}{2}$ colors in
	$\lbrace 2,4,\ldots,i_0\rbrace$.
	
	If $p\equiv 3 \mod 4$, we partition $\ul$ into two tuples $\ul_1$ and $\ul_2$ of size $\frac{p-3}{4}$. By fusion rules, for any $j\in \Lambda$,
	$$V_{0,(\ul,j)}=V_{0,(\ul_1,\ul_2,j)}=\underset{i_1,i_2\in\Lambda}{\bigoplus}V_{0,(\ul_1,i_1)}\otimes V_{0,(i_1,i_2,j)}\otimes V_{0,(\ul_2,i_2)}.$$
	Therefore, $V_{0,(\ul_1,i_0)}\otimes V_{0,(i_0,i_0,j)}\otimes V_{0,(\ul_2,i_0)}$ is a summand of $V_{0,(\ul,j)}$.
	By Lemmas \ref{lemma:permissive2} and \ref{lemma:permColor}, this summand is non-zero for any $j\in \Lambda$, hence $\ul$ is permissive.
	
	If $p\equiv 1 \mod 4$, instead we partition $\ul$ into $\ul_1$ of size $\frac{p-1}{4}$ and $\ul_2$ of size $\frac{p-5}{4}$.
	We get from Lemma \ref{lemma:permissive2} that $V_{0,(\ul_1,i_0)}\neq 0, V_{0,(\ul_1,i_0-2)}\neq 0$, and that $V_{0,(\ul_2,i_0-2)}\neq 0$.
	However, as $(i_0,i_0-2,j)$ for any $j\in \lbrace 2,\ldots, i_0 \rbrace$ is $p$-admissible, and as $(i_0-2,i_0-2,0)$ is $p$-admissible,
	we conclude that $\ul$ is permissive.
	
	Finally, we prove that $(p-3)^{p-4}$ is not permissive. Note that for $j\in \Lambda$, we have that $(p-3,p-3,j)$ is $p$-admissible
	if and only if $j=0$ or $2$. Thus if $(p-3)^{p-4}$ were permissive,
	$(p-3)\cup (2)^n$ would permit $0$ for some $n\leq \frac{p-5}{2}$.
	But $V_{0,(0,p-3)\cup (2)^n}=V_{0,(p-3)\cup (2)^n}=0$ since $2n\leq 2\cdot \frac{p-5}{2}<p-3$.
\end{proof}

To conclude this section, we prove the following lemma that will be used
in the proof of density in genus $1$.

\begin{lemma}\label{lemma:dimensionGenusZero}
	Let $p\geq 5$ be a prime, $n\geq 0$ and $\ul\in\Lambda^n$ which can be partitioned
	into $2$ permissive tuples $\ul^1$ and $\ul^2$.
	Then for any $\mu\in \Lambda$, $\dim V_{0,(\ul,\mu)}\geq 2$.
\end{lemma}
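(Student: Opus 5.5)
We prove the statement by fusing along a curve separating $\ul^1$ from $(\ul^2,\mu)$ and exhibiting two non-zero summands. By the fusion rules for the genus $0$ surface $\Sigma_{0,n+1}$, cutting along a simple closed curve that separates the punctures colored by $\ul^1$ from those colored by $\ul^2$ and $\mu$ gives
$$\dim V_{0,(\ul,\mu)}=\sum_{k\in\Lambda}\dim V_{0,(\ul^1,k)}\cdot\dim V_{0,(k,\ul^2,\mu)}.$$
Since $\ul^1$ is permissive, the first factor is non-zero for every $k\in\Lambda$. So it suffices to find two distinct colors $k\neq k'$ in $\Lambda$ such that $V_{0,(\ul^2,k,\mu)}\neq 0$ and $V_{0,(\ul^2,k',\mu)}\neq 0$.

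Next we fuse once more to use the permissivity of $\ul^2$. Cutting $\Sigma_{0,n_2+2}$ to separate $\ul^2$ from the pair $(k,\mu)$ gives
$$V_{0,(\ul^2,k,\mu)}=\bigoplus_{j\in\Lambda}V_{0,(\ul^2,j)}\otimes V_{0,(j,k,\mu)}.$$
Because $\ul^2$ is permissive, every $V_{0,(\ul^2,j)}$ is non-zero. Hence $V_{0,(\ul^2,k,\mu)}\neq 0$ as soon as some $j\in\Lambda$ makes $(j,k,\mu)$ $p$-admissible.

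The whole problem therefore reduces to a small combinatorial claim: there exist two distinct $k\in\Lambda$ such that $(j,k,\mu)$ is $p$-admissible for some $j\in\Lambda$. By the argument in the proof of Lemma \ref{lemma:permissive1}, for every pair $(k,\mu)$ such a $j$ exists, namely $j=\min(k+\mu,2p-4-k-\mu)$. So every $k\in\Lambda$ works. Since $p\geq 5$, $\Lambda$ has at least two elements, for instance $k=0$ and $k=2$.

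For $k=0$ one can simply take $j=\mu$, and for $k=2$ the triple $(j,2,\mu)$ works with $j=\mu$ when $\mu\neq 0$, or $j=2$ when $\mu=0$. This gives two non-zero summands in the first decomposition, so $\dim V_{0,(\ul,\mu)}\geq 2$.

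The only delicate point is the bookkeeping of the fusion rule in genus $0$ when a permissive tuple is empty or very short. A permissive tuple cannot be empty, since $V_{0,(i)}=0$ for $i\neq 0$. The case where $\ul^1$ or $\ul^2$ has length one is handled by the convention $V_{0,(a,b)}\neq 0$ iff $a=b$, which is consistent with the formulas above. Once this is checked, the argument is a direct application of fusion and of permissivity.
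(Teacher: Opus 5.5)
Your proof is correct and is essentially the paper's argument. Your two successive cuts (first isolating $\ul^1$, then isolating $\ul^2$ inside the remaining piece) compose to the paper's decomposition $V_{0,(\ul,\mu)}\simeq\bigoplus_{\mu_1,\mu_2}V_{0,(\mu,\mu_1,\mu_2)}\otimes V_{0,(\ul^1,\mu_1)}\otimes V_{0,(\ul^2,\mu_2)}$, with your $(k,j)$ playing the role of $(\mu_1,\mu_2)$, and permissivity makes the outer factors non-zero exactly as in the paper. Your observation that every $k\in\Lambda$ admits a suitable $j$ even gives the slightly stronger bound $\dim V_{0,(\ul,\mu)}\geq \frac{p-1}{2}$, whereas the paper only exhibits two admissible pairs.
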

	
\begin{proof}
	Cut $\Sigma_{0,n+1}$ along $2$ disjoint simple closed curves,
	the first isolating $\ul^1$, and the second isolating $\ul^2$; the last piece in the decomposition being a pair of pants
	containing the boundary component colored by $\mu$.
	The corresponding fusion decomposition is
	\[ V_{0,(\ul,\mu)}\simeq \bigoplus_{\mu_1,\mu_2\in\Lambda} V_{0,(\mu,\mu_1,\mu_2)}\otimes
						V_{0,(\ul^1,\mu_1)}\otimes V_{0,(\ul^2,\mu_2)}. \]
	As $\ul^1$ and $\ul^2$ are permissive, $V_{0,(\ul^1,\mu_1)}$ and $V_{0,(\ul^2,\mu_2)}$ are never zero, so the summand corresponding to $(\mu_1,\mu_2)$
	is non-zero if and only if $(\mu,\mu_1,\mu_2)$ is admissible.
	For any $\mu$, there exist at least $2$ pairs $(\mu_1,\mu_2)$ such that
	$(\mu,\mu_1,\mu_2)$ is admissible. Indeed, for $\mu\neq 0$, $(\mu,\mu,0)$ and $(\mu,0,\mu)$ are admissible,
	and for $\mu=0$, $(\mu,0,0)$ and $(\mu,2,2)$ are admissible. Hence, the above decomposition has at least $2$ non-zero summands.
	So, $\dim V_{0,(\ul,\mu)}\geq 2$.
\end{proof}
	
\section{Irreducibility and tensor-indecomposability for \texorpdfstring{$\SO(3)$}{SO(3)}-WRT representations at prime level}
\label{sec:tensorIndec}
The main result of this section will be Theorem \ref{thm:tensor-indec}, which shows that quantum representations $\rho_{p,g,\ul}$
(or their quotients $\rho_{J,g,\ul}$) for $p$ prime are tensor-indecomposable for most choices of surfaces $\Sigma_{g,n}$ with $g\geq 1$
and of coloring data $\ul$. We will make heavy use of the irreducibility of quantum representations.
It was first proved by Roberts \cite{R01} that, for closed surfaces, the quantum representations $\rho_{p,g}$ where $p$ is prime are irreducible.
This was later extended in \cite{KS16a} and \cite[Appendix A]{God25} to the representations $\rho_{p,g,\ul}$ of $\PMod(\Sigma_{g,n})$
where $p$ is prime and $n>0$. We will also need a finite characteristic version of these results. Recall that a representation is said to be
\emph{absolutely irreducible} if it is irreducible over the algebraic closure of its field of definition.

\begin{theorem}
	\label{thm:irreducibility} Let $p\geq 5$ be a prime number, $J$ be a maximal ideal in $\Z[\zeta_p]$ other than $(\zeta_p-1)$, $g\geq 0$
	and $\ul\in \Lambda^n$ for some $n\geq 0$.
	Then $\rho_{p,g,\ul}$ and $\rho_{J,g,\ul}$ are absolutely irreducible representations of $\PMod(\Sigma_{g,n})$.
\end{theorem}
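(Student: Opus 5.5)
We will prove absolute irreducibility of $\rho_{J,g,\ul}$ and $\rho_{p,g,\ul}$ together, by induction on the complexity $(g,n)$ of the surface. The argument runs over an arbitrary field $k$: either $\overline{\Q(\zeta_p)}$ or $\overline{\Z[\zeta_p]/J}$. In both cases the fusion decompositions and the admissible-coloring bases of \cite{bhmv} are available, since $J$ is prime to $p$ and the bases are defined over $\Z[\zeta_p,\frac 1 p]$.

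\textbf{Step 1: Dehn-twist eigenspaces.} Fix a pants decomposition of $\Sigma_{g,n}$. Each Dehn twist $t_\gamma$ along a pants curve acts diagonally in the admissible-coloring basis, with eigenvalue $(-\zeta_p)^{i(i+2)}$ on the vectors where $\gamma$ is colored $i$. For $i,j\in\Lambda$ with $i\neq j$ we have $(i+1)^2\neq(j+1)^2 \bmod p$, because $1\le i+1,j+1\le p-2$ are odd and distinct, and $i+j+2\not\equiv 0 \bmod p$. Since $p$ is prime and $\mathrm{char}\, k\neq p$, $\zeta_p$ has exact order $p$ in $k$. Hence these eigenvalues stay distinct modulo $J$, and polynomials in $\rho(t_\gamma)$ give the projectors onto the summands of the fusion decomposition along $\gamma$.

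\textbf{Step 2: the inductive scheme.} Let $W\subset V_{g,\ul}\otimes k$ be a nonzero invariant subspace. By Step 1, $W$ is the direct sum of its intersections with the summands $V(\Sigma\setminus\gamma,\ldots,i,i)$ of the decomposition along $\gamma$. Each summand is a representation of the mapping class group of the cut surface, which has smaller complexity; it is a tensor product of two such representations if $\gamma$ is separating. By induction, and because a tensor product of absolutely irreducible representations of a product of groups is absolutely irreducible, each summand is irreducible. Therefore $W$ is a sum of full summands. It remains to show that the mapping class group connects all summands. For this we take a curve $\delta$ meeting $\gamma$ once, or twice in the separating case. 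The image of a summand under a mapping class moving $\gamma$ to $\delta$, re-expanded in the $\gamma$-decomposition, has nonzero components in every admissible color. This follows from the non-vanishing of the relevant $S$-matrix and $6j$-symbol entries modulo $J$.

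\textbf{Step 3: base cases and the main obstacle.} The base cases are $\Sigma_{0,3}$, $\Sigma_{0,4}$ and $\Sigma_{1,1}$; the case $\Sigma_{0,3}$ has dimension $\le1$, which is trivial. For $\Sigma_{1,1}$ (and $\Sigma_{1,0}$), $\mathrm{SL}_2(\Z)$ acts through the $S$ and $T$ matrices. $T$ has distinct eigenvalues, so irreducibility reduces to all entries of $S$ in the $T$-eigenbasis being nonzero mod $J$. These entries are, up to units, quantum integers $[ (i+1)(j+1)]$ or their punctured analogues. Quantum integers $[m]$ with $p\nmid m$ are units in $\Z[\zeta_p]$, since they are cyclotomic units. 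For $\Sigma_{0,4}$ the analogous statement is that the $6j$-symbols connecting the two decompositions are nonzero modulo $J$; we expect this to be the main obstacle. Our first attempt is to express the $6j$-symbols via the Racah formula as alternating sums of quantum factorials divided by products of quantum factorials. We would then argue either that they are units, or that the change-of-basis matrix has a triangular structure with unit diagonal. Failing that, we would use the tridiagonal form of the action of a suitable twist in the other basis, as in Roberts' argument \cite{R01}, whose off-diagonal entries are products of quantum integers and hence units. With these entries nonzero, connectivity in Step 2 follows, which proves the theorem.
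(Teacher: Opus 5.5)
Your Step~1 is correct, and it is in fact the only arithmetic input the paper needs. The gap is in Steps~2--3. The whole argument rests on the claim that the coefficients relating two fusion decompositions are non-zero modulo every maximal ideal $J$ prime to $p$. These coefficients are the punctured $S$-matrix of $\Sigma_{1,1}$ for $\lambda\neq 0$, the $6j$-symbols of $\Sigma_{0,4}$, and, through them, the connecting step of Step~2. You never prove this non-vanishing; you only list strategies you would try.

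The first strategy cannot work as stated. For $\lambda\neq 0$, and for $\Sigma_{0,4}$, these coefficients are in general alternating sums (Racah formula), not products of quantum integers. So there is no reason for them to be units of $\Z[\zeta_p,\frac1p]$, and any one of them that is a non-zero non-unit vanishes modulo some maximal $J$ prime to $p$. In particular, your assertion in Step~2 that the image of a summand has a non-zero component in \emph{every} admissible colour cannot be expected for all $J$. What you actually need is weaker: the graph on colours, with an edge whenever some transition coefficient is non-zero mod $J$, must be connected.

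The tridiagonal, Roberts-type route could supply this connectivity. But its whole content is the explicit computation of the off-diagonal matrix coefficients of a curve operator in the $\gamma$-basis, for \emph{all} admissible boundary colourings. You would also have to check that these coefficients only involve quantum integers $[m]$ with $p\nmid m$. None of this is carried out, so the base case $\Sigma_{0,4}$, the case $\Sigma_{1,1}$ with $\lambda\neq 0$, and hence the induction remain unproved.

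The paper's proof needs no such computation and no induction. From the distinctness of the twist eigenvalues modulo $J$ (your Step~1), it deduces that every curve operator $Z_p(\gamma)$ is, modulo $J$, a polynomial in (a lift of) $\rho_{J,g,\ul}(t_\gamma)$. It then uses the skein-theoretic fact \cite[Proposition 1.6]{bhmv}, \cite[Proposition 3.2]{KS16a} that curve operators generate the full matrix algebra $\mathcal{M}_{d_{p,g,\ul}}(\Z[\zeta_p]/J)$. Hence the span of the image of $\PMod(\Sigma_{g,n})$ is all of $\mathrm{End}(V_{J,g,\ul})$, which gives absolute irreducibility at once, and the same argument works in characteristic zero. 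Replacing your Steps~2 and~3 by this generation statement closes the gap.
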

The proof in finite characteristic is a minor modification of the arguments in \cite{R01} and \cite{KS16a}.

\begin{proof}
For $\gamma$ a simple closed curve on $\Sigma_{g}^{n}$, we denote by $Z_p(\gamma)$ the curve operator associated to $\gamma$.
Recall that it is the operator associated by the TQFT to the cobordism $\hat{\Sigma}_{g}^{n} \times [0,1]$
equipped with the colored banded tangle $\gamma \times[\epsilon,1-\epsilon] \sqcup \ul \times [0,1]$ for $\epsilon = 1/4$.

Recall that both $Z_p(\gamma)$ and $\rho_{p,g,\ul}(t_{\gamma})$ are diagonal on a basis given by the
$p$-admissible colorings of a certain uni-trivalent graph with respective eigenvalues
$-(\zeta_p^{2j+2}+\zeta_p^{-2j-2})$ and $\zeta_p^{j(j+2)}$ for $j \in \Lambda$.
Note that as $p$ is prime, the $\zeta_p^{j(j+2)}$ are pairwise distinct for $j \in \Lambda$.
Now, if $J$ is a maximal ideal distinct from $(\zeta_p-1)$, we still have that $\zeta_p^{j(j+2)} \neq \zeta_p^{l(l+2)} \mod J$
for $j \neq l$ in $\Lambda$.

Therefore, there exists a polynomial $P$ with coefficients in $\Z[\zeta_p]/J$ such that $$P(\rho_{J,g,\ul}(t_{\gamma})) = Z_p(\gamma)$$ modulo $J$.
Now, applying \cite[Proposition 1.6]{bhmv} and \cite[Proposition 3.2]{KS16a} we have that the $\Z[\zeta_p]/J$-algebra generated by
$\{Z_p(\alpha) \, | \, \alpha \, \, \, \text{simple closed curve} \}$ is $\mathcal{M}_{d_{p,g,\ul}}( \Z[\zeta_p]/J)$,
hence the $\Z[\zeta_p]/J$-algebragenerated by $\{\rho_{J,g,\ul}(\phi) \, | \, \phi \in \Mod(\Sigma_{g}^{n}) \}$
is $\mathcal{M}_{d_{p,g,\ul}}( \Z[\zeta_p]/J)$. We conclude that $\rho_{J,g,\ul}$ is irreducible.
\end{proof}
We now state the main result of this section:
\begin{theorem}\label{thm:tensor-indec}
	Let $p\geq 5$ be a prime number, $J$ be a maximal ideal in $\Z[\zeta_p]$ other than $(\zeta_p-1)$,
	$g\geq 1$ and $\ul\in \Lambda^n$ for some $n\geq 0$. Consider an embedding
	of $\Sigma_0^3$ in $\Sigma_{g,n}$ and denote by $\Sigma'$ the complement of the interior of $\Sigma_0^3$ in $\Sigma_{g,n}$.
	Consider the corresponding fusion decomposition of $V_{g,\ul}$:
	\begin{equation*}
		V_{g,\ul}=\bigoplus_{\mu_1,\mu_2,\mu_3\in\Lambda}V(\Sigma',\ul,\mu_1,\mu_2,\mu_3)\otimes V(\Sigma_0^3,\mu_1,\mu_2,\mu_3).
	\end{equation*}
	Assume that there exists an embedding $\Sigma_0^3\subset \Sigma_{g,n}$ such that both of the following are satisfied:
	\begin{enumerate}
		\item One of the boundary components of $\Sigma_0^3$ is mapped to a non-separating {\scc} in $\Sigma_{g,n}$,
		\item For any $\mu_1,\mu_2,\mu_3\in\Lambda$, the triplet $(\mu_1,\mu_2,\mu_3)$ is admissible if and only if the corresponding summand
		in the fusion decomposition is non-zero.
	\end{enumerate}
	Then $\rho_{g,\ul}$ and $\rho_{J,g,\ul}$ are \emph{tensor-indecomposable}
	as representations of $\PMod(\Sigma_{g,n})$ over the algebraic closure of their field of definition.
\end{theorem}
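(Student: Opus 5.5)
We argue by contradiction: suppose $V=V_{g,\ul}$ (over $k$, the algebraic closure of $\C$ or of $\Z[\zeta_p]/J$) decomposes as $V\simeq A\otimes B$ as projective representations of $\PMod(\Sigma_{g,n})$, with $\dim A,\dim B\geq 2$. The strategy is to use the Dehn twist $t_\gamma$ along the non-separating boundary curve $\gamma$ of the embedded $\Sigma_0^3$, together with the other two boundary curves $\gamma_2,\gamma_3$, and compare spectra. By \Cref{prop:DehnTwistSpectrum} (and its proof, which works modulo $J$ since the $\zeta_p^{j(j+2)}$ stay distinct mod $J$), $\rho(t_\gamma)$ is diagonalizable with exactly $\frac{p-1}{2}$ distinct eigenvalues $\zeta_p^{j(j+2)}$, $j\in\Lambda$, each eigenspace $E_j$ nonzero. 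Writing $\rho(t_\gamma)=a\otimes b$ up to scalar, $a$ and $b$ are diagonalizable (a product of commuting semisimple parts; over $k$ of characteristic $\neq p$, $t_\gamma$ has finite order prime to the characteristic up to scalar, so $a,b$ can be chosen semisimple), and the eigenvalue set of $a\otimes b$ is the product set $\mathrm{Spec}(a)\cdot\mathrm{Spec}(b)$.

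The key step is to exploit condition (2). Since $t_{\gamma_1},t_{\gamma_2},t_{\gamma_3}$ commute and act on the $(\mu_1,\mu_2,\mu_3)$-summand by the scalars $\zeta_p^{\mu_i(\mu_i+2)}$, the joint spectrum of $(\rho(t_{\gamma_1}),\rho(t_{\gamma_2}),\rho(t_{\gamma_3}))$ is exactly the set of admissible triples, by (2). Each $\rho(t_{\gamma_i})$ equals $a_i\otimes b_i$ with all $a_i$ commuting (they come from a projective representation of an abelian group; after rescaling we may assume commutation up to scalars which, being roots of unity of bounded order and the $a_i$ semisimple, one handles by passing to the image commuting relation $a_ia_j=c\,a_ja_i$; a nontrivial $c$ would force $\dim A$ divisible by the order of $c$ and a permutation structure on eigenspaces, which we rule out using that the joint eigenspaces are determined by (2)). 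Hence the joint spectrum is a ``product'' $S_A\cdot S_B$ of joint spectra of $(a_i)$ and $(b_i)$ in the torus $(k^\times)^3$. We then show the set $T\subset(k^\times)^3$ of images $(\zeta_p^{\mu_i(\mu_i+2)})_i$ of admissible triples cannot be such a product with both factors having at least two points modulo scalars: e.g. $T$ contains $(1,\theta,\theta)$-type points $(0,\mu,\mu)$ and the point $(0,0,0)$, and the pure-coordinate constraint (admissible triples with $\mu_1=0$ force $\mu_2=\mu_3$) shows that multiplying by any nontrivial element of $S_B S_B^{-1}$ moving the first coordinate while preserving membership in $T$ is impossible; a combinatorial analysis of the product structure then forces one of $S_A,S_B$ to be a single point up to scalar.

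Finally, if $S_A$ is a single point, then each $a_i$ is scalar, so $t_{\gamma}$ acts as $1\otimes b$; combined with irreducibility (\Cref{thm:irreducibility}) and the fact that conjugates of $t_\gamma$ (all non-separating Dehn twists) generate a finite-index/whole normal subgroup of $\PMod(\Sigma_{g,n})$ for $g\geq 1$ (Humphries generators, plus twists around punctures acting trivially or handled by the same argument), all of $\PMod$ would act as $1\otimes(\cdot)$ on $A\otimes B$ up to scalars, contradicting irreducibility since $\dim A\geq 2$. The main obstacle I expect is the middle step: cleanly showing that the set of admissible triples, seen through the eigenvalue map, is not a nontrivial product set, while controlling the projective ambiguity (scalars and possible commutator cocycles between the $a_i$).
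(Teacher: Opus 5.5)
Your skeleton is the paper's: compare the spectra of the three boundary twists of the embedded pair of pants under a putative factorization, force one factor to see a single eigenvalue, then conclude by normal generation by a non-separating twist and irreducibility. Your last paragraph is essentially how the paper finishes. But the middle step, which you flag yourself, is a genuine gap, and the route you indicate for it is not viable as stated.

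Knowing only that the joint spectrum $T$ equals the \emph{set} $S_A\cdot S_B$ is very weak information. Its projection to the first coordinate says that $\mathrm{Spec}(a_1)\cdot\mathrm{Spec}(b_1)$ is, up to a scalar, $\{\zeta_p^{(\mu+1)^2}\mid \mu\in\Lambda\}$. After rescaling and identifying $\langle\zeta_p\rangle$ with $\Z/p\Z$, this says the nonzero quadratic residues mod $p$ form a sumset $X+Y$. Ruling out $|X|,|Y|\geq 2$ is S\'ark\"ozy's conjecture on additive decompositions of quadratic residues, which is open. So a purely set-theoretic ``combinatorial analysis of the product structure'' would have to extract a great deal from the three-dimensional admissibility pattern, and your sketch does not do this. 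Nothing you prove prevents $s_A\cdot S_B\subset T$ with $|S_B|\geq 2$.

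The missing idea is injectivity: for each boundary twist, the product map $\mathrm{Spec}(a)\times\mathrm{Spec}(b)\to\mathrm{Spec}(a\otimes b)$ is a \emph{bijection}. The argument runs as follows.
\begin{itemize}
\item The $s$-eigenspace of $\rho(t_\gamma)$ equals $\bigoplus_{s_1s_2=s}A(s_1)\otimes B(s_2)$.
\item It is also a fusion summand along $\gamma$, hence (since $p$ is prime) an irreducible module for the mapping class group of the complement of $\gamma$.
\item That group commutes with $t_\gamma$, so it preserves each $A(s_1)\otimes B(s_2)$. The possible commutator scalars are trivial because the twist spectrum contains no coset of a nontrivial group of roots of unity. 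This same no-coset argument, rather than your divisibility remark, is also what makes your $a_i$ commute.
\item Hence the sum has a single term.
\end{itemize}
This gives bijections $\mathrm{Spec}(a_i)\times\mathrm{Spec}(b_i)\simeq\Lambda$ for $i=1,2,3$ and factorizations of each joint eigenspace as $A(\cdot)\otimes B(\cdot)$. The admissible set thereby becomes an honest product in these coordinates.

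The paper then finishes in three steps. It uses the triples $(\mu,\mu',0)$, which are admissible iff $\mu=\mu'$, to show the three coordinate bijections are compatible. It deduces that the images of $S_A\times\{0_B\}$ and $\{0_A\}\times S_B$ in $\Lambda$ are closed under fusion (``saturated''). Finally, using the permissive color $i_0$, it shows the only nonempty saturated subsets of $\Lambda$ are $\{0\}$ and $\Lambda$. This yields $|S_A|=1$ or $|S_B|=1$, after which your final paragraph applies.
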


\begin{proof}
	In this proof, we will denote $V_{p,g,\ul}$ or $V_{J,g,\ul}$ simply by $V$,
	and $\rho_{p,g,\ul}$ or $\rho_{J,g,\ul}$ simply by $\rho$.
	The algebraic closure of the field of definition will be denoted by $k$. It is $\C$ for $V=V_{p,\ul}$
	and $\overline{\F}_q$ for $V=V_{J,\ul}$,
	where $\F_q=\Z[\zeta_p]/J$.
	Choose a tensor decomposition $V = W_1 \otimes W_2$, $\rho=\eta_1\otimes\eta_2$ over $k$.
	We shall prove that $W_1$ or $W_2$ has dimension $1$ by inspecting the eigenvalues
	of the action of Dehn twists on $W_1$ and $W_2$ and how they interact with fusion decompositions.

	Choose a {\scc} $\alpha\subset \Sigma_{g,n}$ and consider the associated left Dehn twist $T_\alpha\in\PMod(\Sigma_{g,n})$.
	For $i=1,2$, choose a lift $\tilde{\eta}_i(T_\alpha)\in\GL(W_i)$ of $\eta_i(T_\alpha)\in\PGL(W_i)$,
	and set $\tilde{\rho}(T_\alpha)=\tilde{\eta}_1(T_\alpha)\otimes\tilde{\eta}_2(T_\alpha)$.
	Denote by $S_i^\alpha\subset k^\times$ the spectrum of $\tilde{\eta}_i(T_\alpha)$. The spectrum of $\tilde{\rho}(T_\alpha)$
	is then the product $S^\alpha=S^\alpha_1S^\alpha_2$.
	\begin{claim}[1]
		The product map $S^\alpha_1\times S^\alpha_2\ra S^\alpha$ is a bijection.
	\end{claim}
	\begin{proof}
		For $i=1,2$, denote by $W_i=\bigoplus_{s_i\in S_i^\alpha}W_i(s_i)$ the eigenspace decomposition of $\tilde{\eta}_i(T_\alpha)$.
		Note that $\tilde{\eta}_i(T_\alpha)$ is semisimple because $\tilde{\rho}(T_\alpha)$ is.
		Then, with $V=\bigoplus_{s\in S^\alpha}V(s)$ the eigenspace decomposition of $\tilde{\rho}(T_\alpha)$,
		for any $s$ in $S^\alpha$ we have
		\begin{equation}\label{eq:eigenspaceoftwist}
			V(s)=\bigoplus_{\substack{(s_1,s_2)\in S^\alpha_1\times S^\alpha_2 \\ s_1s_2=s}} W_1(s_1)\otimes W_2(s_2).
		\end{equation}
		Hence, to prove Claim (1), we need only show that the above decomposition has only $1$ summand.
		As $p$ is prime, we have an identification $f^\alpha:S^\alpha\simeq\Lambda$ which matches
		the eigenspace decomposition $V=\bigoplus_{s\in S^\alpha}V(s)$
		with the fusion decomposition $V=\bigoplus_{\mu\in\Lambda}V(\Sigma^\alpha,\ul,\mu,\mu)$ along $\alpha$.
		Here $\Sigma^\alpha\subset \Sigma_{g,n}$ is the complement to a tubular neighborhood of $\alpha$, and is possibly disconnected.
		As $p$ is prime, each $V(\Sigma^\alpha,\ul,\mu,\mu)$ is an irreducible $\PMod(\Sigma^\alpha)$-module. Now, as $T_\alpha$
		commutes with the action of $\PMod(\Sigma^\alpha)$,
		the decomposition in \Cref{eq:eigenspaceoftwist} is a $\PMod(\Sigma^\alpha)$-module decomposition.
		Hence, it must be trivial and Claim (1) follows.
\end{proof}

	We will use the notations $S^\alpha_1\times S^\alpha_2$ and $S^\alpha_1S^\alpha_2$ interchangeably.
	Choose $2$ other {\scc s} $\beta$ and $\gamma$ with $\alpha$, $\beta$ and $\gamma$ disjoint,
	and such that there exists an embedding $i:\Sigma_0^3\hookrightarrow \Sigma_{g,n}$
	of a pair of pants with $i(\partial \Sigma_0^3)=\alpha\cup\beta\cup\gamma$.
	Denote by $\Sigma'$ the complement of $i(\Sigma_0^3)$ in $\Sigma_{g,n}$.
	\emph{From now on, we moreover assume that $i:\Sigma_0^3\hookrightarrow \Sigma_{g,n}$
	satisfies conditions \textit{(1)} and \textit{(2)} in the statement of the theorem.}

	For $i=1,2$, we choose lifts $\tilde{\eta}_i(T_\beta)$ and $\tilde{\eta}_i(T_\gamma)$.
	As we did for $\alpha$, we define $\tilde{\rho}(T_\beta):=\tilde{\eta}_1(T_\beta)\otimes \tilde{\eta}_2(T_\beta)$
	and $\tilde{\rho}(T_\gamma):=\tilde{\eta}_1(T_\beta)\otimes\tilde{\eta}_2(T_\beta)$.
	Claim (1) equally applies to $\beta$ and $\gamma$, and we will use notations $S^\beta,S^\beta_1,\dotsc$ for the corresponding spectra,
	and $f^\beta:S^\beta\simeq\Lambda$, $f^\gamma:S^\gamma\simeq\Lambda$ for the bijections.
	The fusion decomposition of $V$ obtained by cutting along $\alpha$, $\beta$ and $\gamma$
	is identified via $f^\alpha\times f^\beta\times f^\gamma:S^\alpha\times S^\beta\times S^\gamma\simeq \Lambda^3$ with the eigenspace decomposition
	\[
		V=\bigoplus_{s^\alpha\in S^\alpha,s^\beta\in S^\beta ,s^\gamma\in S^\gamma}V(s^\alpha,s^\beta,s^\gamma).
	\]
	According to the assumption on $\Sigma_0^3\subset\Sigma_{g,n}$, $V(s^\alpha,s^\beta,s^\gamma)\neq 0$
	exactly when $(f^\alpha(s^\alpha),f^\beta(s^\beta),f^\gamma(s^\gamma))$ is admissible.
	However, as in \Cref{eq:eigenspaceoftwist}, we have
	\begin{equation*}
		V(s^\alpha,s^\beta,s^\gamma)=W_1(s_1^\alpha,s_1^\beta,s_1^\gamma)\otimes W_2(s_2^\alpha,s_2^\beta,s_2^\gamma)
	\end{equation*}
	where $s_i^\alpha\in S_i^\alpha$, $s_i^\beta\in S_i^\beta$, $s_i^\gamma\in S_i^\gamma$ for $i=1,2$ are the unique elements such that
	$s^\alpha=s_1^\alpha s_2^\alpha$, $s^\beta=s_1^\beta s_2^\beta$ and $s^\gamma=s_1^\gamma s_2^\gamma$.
	Hence, $(f^\alpha(s^\alpha),f^\beta(s^\beta),f^\gamma(s^\gamma))$
	is admissible if and only if $W_1(s_1^\alpha,s_1^\beta,s_1^\gamma)\neq 0$ and $W_2(s_2^\alpha,s_2^\beta,s_2^\gamma)\neq 0$.

	\begin{claim}[2]
		The bijection $(f^\beta)^{-1}\circ f^\alpha:S_1^\alpha\times S_2^\alpha\simeq S_1^\beta\times S_2^\beta$
		is of the form $f^{\beta\alpha}_1\times f^{\beta\alpha}_2$
		with $f^{\beta\alpha}_i:S_i^\alpha\simeq S_i^\beta$. Similarly for $(f^\alpha)^{-1}\circ f^\gamma$ and $(f^\gamma)^{-1}\circ f^\beta$.
	\end{claim}
	\begin{proof}
		Define $0_i^\gamma\in S_i^\gamma$ by $f^\gamma(0_1^\gamma0_2^\gamma)=0\in \Lambda$, and similarly for $\alpha$ and $\beta$.
		Consider $s^\alpha=s_1^\alpha s_2^\alpha$ in $S^\alpha$
		and $s^\beta=s_1^\beta s_2^\beta$ in $S^\beta$. Then $f^\alpha(s^\alpha)=f^\beta(s^\beta)$
		if and only if $(f^\alpha(s^\alpha),f^\beta(s^\beta),0)$
		is admissible. So, by the above derivation: $(*)$ $f^\alpha(s^\alpha)=f^\beta(s^\beta)$ exactly when
		$W_1(s_1^\alpha,s_1^\beta,0_1^\gamma)\neq 0$ and $W_2(s_2^\alpha,s_2^\beta,0_2^\gamma)\neq 0$.
		Note that as $(0,0,0)$ is admissible, $W_1(0_1^\alpha,0_1^\beta,0_1^\gamma)\neq 0$ and $W_2(0_2^\alpha,0_2^\beta,0_2^\gamma)\neq 0$.
		Hence, applying $(*)$ twice, $f^\alpha(s^\alpha)=f^\beta(s^\beta)$ implies $f^\alpha(s^\alpha_10^\alpha_2)=f^\beta(s^\beta_1 0^\beta_2)$.
		In fact, we even have: $(**)$ $f^\alpha(s^\alpha)=f^\beta(s^\beta)$ if and only if $f^\alpha(s^\alpha_10^\alpha_2)=f^\beta(s^\beta_1 0^\beta_2)$
		and $f^\alpha(0^\alpha_1s^\alpha_2)=f^\beta(0^\beta_1 s^\beta_2)$.
		In particular, $(f^\beta)^{-1}\circ f^\alpha$ induces bijections $S_1^\alpha\times \{0_2^\alpha\}\simeq S_1^\beta\times \{0_2^\beta\}$
		and $\{0_1^\alpha\}\times S_2^\alpha\simeq \{0_1^\beta\}\times S_2^\beta$.
		Define $f^{\beta\alpha}_1:S_1^\alpha\simeq S_1^\beta$ and $f^{\beta\alpha}_2:S_2^\alpha\simeq S_2^\beta$ to be these bijections.
		Then $(**)$ is exactly the statement that $f^{\beta\alpha}_1\times f^{\beta\alpha}_2=(f^\beta)^{-1}\circ f^\alpha$, as desired.
	\end{proof}

	Thanks to Claim (2), we have canonical identifications $S_1^\alpha\simeq S_1^\beta\simeq S_1^\gamma$
	and $S_2^\alpha\simeq S_2^\beta\simeq S_2^\gamma$.
	We shall denote the first set by $S_1$, the second by $S_2$ and the canonical bijection $S_1\times S_2\simeq \Lambda$ by $f$.
	For each $i=1,2$, the elements $0_i^\alpha$, $0_i^\beta$ and $0_i^\gamma$ are identified and will be denoted $0_i\in S_i$.

	We call a subset $X$ of $\Lambda$ \emph{saturated} if for any $x_1,x_2\in X$ and $\mu\in\Lambda$, $(x_1,x_2,\mu)$ admissible implies $\mu\in X$.
	\begin{claim}[3]
		The subsets $f(0_1,S_2)$ and $f(S_1,0_2)$ of $\Lambda$ are saturated.
	\end{claim}
	\begin{proof}
		The claim is symmetric in $S_1$ and $S_2$, so we need only check it for $S_1$.
		Choose $t,t'\in S_1$ and $(s_1,s_2)\in S_1\times S_2$ such that $(f(t,0_2),f(t',0_2),f(s_1,s_2))$ is admissible.
		To prove the claim, we need to deduce that $s_2=0_2$. By the discussion above Claim (2), the admissibility implies that
		$W_2(0_2,0_2,s_2)\neq 0$. Together with $W_1(0_1,0_1,0_1)\neq 0$, this in turn implies that
		$(f(0_1,0_2),f(0_1,0_2),f(0_1,s_2))=(0,0,f(0_1,s_2))$ is admissible.
		Hence, $f(0_1,s_2)=0$ and $s_2=0_2$.
	\end{proof}

	\begin{claim}[4]
		The only non-empty saturated subsets of $\Lambda$ are $\{0\}$ and $\Lambda$.
	\end{claim}
	\begin{proof}
		Denote by $i_0\in\Lambda$ the even integer equal to $\frac{p-1}{2}$ if $p\equiv 1\pmod 4$ or to $\frac{p-3}{2}$ if $p\equiv 3\pmod 4$.
		Assume that $X\subset \Lambda$ is saturated and different from $\varnothing$ and $\{0\}$.
		Then it contains some $0<i\leq p-3$. If $i>i_0$, as $i+i+2\leq 2p-4$, $(i,i,2)$ is admissible and so $2\in X$.
		Hence, we may assume that $i\leq i_0$.
		If $i<i_0$, then $i+i+(i+2)\leq 3\frac {p-3} 2+2=\frac 3 2 p-\frac 5 2\leq 2p-4$ as $p\geq 3$. So, $(i,i,i+2)$ is admissible and hence $i+2\in X$.
		Hence, we may assume that $i_0\in X$.
		Now, for any $j\in \Lambda$, $(i_0,i_0,j)$ is admissible. Hence, $X=\Lambda$.
	\end{proof}

	By Claims (3) and (4), either $f(S_1,0_2)=\{0\}$ or $f(S_1,0_2)=\Lambda$.
	In the first case, $|S_1|=1$, so the Dehn twists $T_\alpha$, $T_\beta$, $T_\gamma$ act trivially on $W_1$.
	Now, one of $\alpha$, $\beta$, $\gamma$ is non-separating by assumption \textit{(1)}.
	If $g\geq 1$, the group $\PMod(\Sigma_{g,n})$ is normally generated by any Dehn twist along a
	non-separating simple closed curve \cite[§4.4.4]{farbPrimerMappingClass2011}.
	So, in all cases, $\eta_1$ is trivial. As $\rho$ is irreducible,
	$W_1$ must have dimension $1$. In the second case, because $|S_1|\cdot|S_2|=|\Lambda|$,
	we must have $|S_2|=1$ and thus $W_2$ has dimension $1$. This concludes the proof of \Cref{thm:tensor-indec}.
\end{proof}

\begin{corollary}\label{cor:tensor-indec}
	Let $g\geq 1$, $n\geq 0$ and $\ul\in\Lambda^n$. Assume that $g$, $n$ and $\ul$ satisfy one of the following conditions:
	\begin{enumerate}
		\item $g\geq 3$;
		\item $g=2$ and $V_{0,\ul}\neq 0$;
		\item $g=1$ and $\ul$ can be partitioned into $2$ permissive tuples $\ul^1$ and $\ul^2$.
	\end{enumerate}
	Then $\rho_{g,\ul}$ and $\rho_{J,g,\ul}$ are \emph{tensor-indecomposable}
	as representations of $\PMod(\Sigma_{g,n})$ over the algebraic closure of their field of definition.
\end{corollary}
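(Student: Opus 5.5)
The plan is to apply \Cref{thm:tensor-indec} directly. In each of the three cases, we exhibit an embedding $\Sigma_0^3\subset\Sigma_{g,n}$ such that one boundary curve is non-separating, and such that the complement $\Sigma'$ gives a nonzero summand $V(\Sigma',\ul,\mu_1,\mu_2,\mu_3)$ for every admissible triple $(\mu_1,\mu_2,\mu_3)$. The converse direction of condition (2) is automatic, since $V(\Sigma_0^3,\mu_1,\mu_2,\mu_3)=0$ when the triple is not admissible. So we only need $V(\Sigma',\ul,\mu_1,\mu_2,\mu_3)\neq 0$ for all $\mu_i\in\Lambda$.

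\textbf{Case $g\geq 3$.} Take three disjoint non-separating curves $\alpha,\beta,\gamma$ bounding a pair of pants, cutting off one handle each. Concretely, $\Sigma'$ is the disjoint union of two copies of $\Sigma_1^1$ and one $\Sigma_{g-2}^{n+1}$, with $g-2\geq 1$ and the punctures on the last piece. Then $V(\Sigma')=V_{1,\mu_1}\otimes V_{1,\mu_2}\otimes V_{g-2,(\ul,\mu_3)}$, and each factor is nonzero by \Cref{lemma:dimAtLeastOne}. Alternatively, a connected $\Sigma'$ of genus $g-2$ with extra boundaries works similarly. The boundary curves are non-separating in $\Sigma_{g,n}$ only if their complement is connected, so instead I will choose the pants so that $\Sigma'$ is connected. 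For example, take the boundary of a regular neighborhood of an arc joining two sides of a non-separating curve; then $\Sigma'=\Sigma_{g-1}^{n+3}$ with $g-1\geq 1$, which is nonzero by \Cref{lemma:dimAtLeastOne}. This construction in fact works for all $g\geq 2$ and makes condition (1) hold.

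\textbf{Case $g=2$.} With the same embedding, $\Sigma'$ is $\Sigma_{1}^{n+3}$, which is again nonzero by \Cref{lemma:dimAtLeastOne}. So the hypothesis $V_{0,\ul}\neq 0$ may only be needed if one insists on a different embedding. Since $\Sigma_{g-1}$ with $g-1\geq 1$ handles everything, cases (1) and (2) both follow from this single construction.

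\textbf{Case $g=1$, the main obstacle.} A connected $\Sigma'$ would have genus $0$, with $V_{0,(\ul,\mu_1,\mu_2,\mu_3)}$ possibly zero. Instead, choose the pants so that $\alpha,\beta$ are parallel copies of a non-separating curve on the two sides of the handle. The third curve $\gamma$ separates the punctures into two groups, corresponding to $\ul^1$ and $\ul^2$. Then $\Sigma'$ is a disjoint union $\Sigma_{0}^{n_1+1}\sqcup\Sigma_0^{n_2+2}$ with boundary colors $(\ul^1,\mu_3)$ and $(\ul^2,\mu_1,\mu_2)$, up to relabeling. The first factor is nonzero because $\ul^1$ is permissive. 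For the second, fuse $\mu_1,\mu_2$ into some $\nu$ with $(\mu_1,\mu_2,\nu)$ admissible (such $\nu$ exists, as in \Cref{lemma:permissive1}); the term $V_{0,(\ul^2,\nu)}\neq 0$ by permissivity of $\ul^2$. Hence the summand is nonzero, and condition (1) holds via $\alpha$.

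Checking that this pants embedding exists topologically is the step I expect to require the most care. It is the boundary of a neighborhood of $\alpha\cup$ an arc from $\alpha$ to itself passing around the handle and separating the punctures.
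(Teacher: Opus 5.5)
Your case (3) is correct and is exactly the paper's argument: the same embedding (two parallel copies of a non-separating curve, plus a curve enclosing the punctures of one permissive subtuple), with both genus-$0$ factors shown non-zero using permissivity and \Cref{lemma:permissive1}. The problems are in cases (1) and (2).

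\textbf{The genus count is off by one.} If $\Sigma_0^3\subset\Sigma_{g,n}$ has connected complement $\Sigma'$, then $\chi(\Sigma')=\chi(\Sigma_{g,n})+1=3-2g-n$, and $\Sigma'$ has $n+3$ ends. Hence $\Sigma'\cong\Sigma_{g-2}^{n+3}$, not $\Sigma_{g-1}^{n+3}$.

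Your construction is also wrong as described. A regular neighbourhood of a non-separating curve together with an arc joining its \emph{two sides} is a one-holed torus, not a pair of pants. A valid choice is the ``theta'' configuration: three curves cutting a subsurface $\Sigma_2^1$ into a pair of pants and a four-holed sphere.

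For $g\geq 3$ this slip is harmless. Since $g-2\geq 1$, \Cref{lemma:dimAtLeastOne} still gives $V_{g-2,(\ul,\mu_1,\mu_2,\mu_3)}\neq 0$ for every triple, so case (1) goes through once corrected. This is essentially the paper's $i_0$-coloured graph, since that lemma is itself proved with $i_0$-colourings.

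\textbf{Case (2) is a genuine gap.} For $g=2$ the complement is $\Sigma_0^{n+3}$, and the relevant factor is $V_{0,(\ul,\mu_1,\mu_2,\mu_3)}$. This can vanish for admissible triples: for $\ul=(2)$ and $(\mu_1,\mu_2,\mu_3)=(0,0,0)$ it equals $V_{0,(2)}=0$. So your claim that the hypothesis $V_{0,\ul}\neq 0$ is superfluous is false, and case (2) is not proved.

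The missing step is exactly where that hypothesis enters. Fuse $V_{0,(\ul,\mu_1,\mu_2,\mu_3)}$ along a curve enclosing all $n$ punctures, and keep the summand with colour $0$ on that curve. That summand is $V_{0,\ul}\otimes V_{0,(\mu_1,\mu_2,\mu_3)}$, which is non-zero whenever $(\mu_1,\mu_2,\mu_3)$ is admissible. This is what the paper's genus-$2$ coloured graph, with internal colours $a_1,\dotsc,a_{n-3}$, encodes.
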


Note that for fixed $p$, the assumptions on $\ul$ only exclude a finite number of genus $1$ and genus $2$ cases; see \Cref{prop:permissiveTuple}.
Using \Cref{lemma:permColor,lemma:permissive1}, one can get examples satisfying the conditions with a reasonable number of punctures.
In fact,
for any $n\geq 0$ and $\ul\in\Lambda^n$, $\rho_{1,(\ul,i_0,i_0,i_0,i_0)}$ and $\rho_{2,(\ul,i_0,i_0)}$ are tensor-indecomposable.

\begin{proof}
	Let us use the notation of \Cref{thm:tensor-indec}. We need to show that, for each of the cases \textit{(1)}-\textit{(3)},
	there is a choice of $\Sigma_0^3\subset\Sigma_{g,n}$
	with at least $1$ of the boundary components of $\Sigma_0^3$ non-separating in $\Sigma_{g,n}$,
	such that the fusion decomposition summand
	\begin{equation*}
		V(\Sigma',\ul,\mu_1,\mu_2,\mu_3)\otimes V(\Sigma_{0,3},\mu_1,\mu_2,\mu_3)
	\end{equation*}
	is non-zero if and only if $(\mu_1,\mu_2,\mu_3)$ is admissible.
	If the summand is non-zero, then $V(\Sigma_{0,3},\mu_1,\mu_2,\mu_3)\neq 0$ and so $(\mu_1,\mu_2,\mu_3)$ is admissible by definition.
	For the converse, we note that any choice of $\Sigma_0^3\subset \Sigma_{g,n}$ as above can be extended to a pair of pants decomposition.
	This pair of pants decomposition can be represented by a dual graph and admissible colorings of this dual graph index a basis of $V$.
	Hence, to prove the converse, it is enough to exhibit a trivalent graph and for each choice of
	admissible $(\mu_\alpha,\mu_\beta,\mu_\gamma)\in\Lambda^3$, an extension
	of this triplet to an admissible coloring of the graph. Let us consider the three cases.
	\begin{enumerate}
		\item For $g\geq 3$, the following colored graph is admissible when $(\mu_1,\mu_2,\mu_3)$ is, where the color $i_0$
		is defined in Lemma \ref{lemma:permColor}.
			\begin{center}
				\ctikzfig{graph_genus_at_least_three}
			\end{center}
		\item For $g=2$, the colored graph below works, where the existence of $a_1,\dotsc,a_{n-3}$ is enabled by the assumption that $V_{0,\ul}\neq 0$.
			\begin{center}
				\ctikzfig{graph_genus_two}
			\end{center}
		\item For $g=1$ and $\ul$ partitioned into $2$ permissive tuples $\ul^1$ and $\ul^2$ of respective lengths $n_1$ and $n_2$,
		consider an embedding $\Sigma_0^3\subset \Sigma_{1,n}$
		such that the complement $\Sigma'$ has $2$ components: a sphere $\Sigma_{0,n_1}^2$ marked by $\ul^1$ and a sphere $\Sigma_{0,n_2}^1$ marked
		by $\ul^2$. Then $V(\Sigma',\ul,\mu_1,\mu_2,\mu_3)$ is the tensor product of $V_{0,(\ul^1,\mu_1,\mu_2)}$ and $V_{0,(\ul^2,\mu_3)}$.
		These are non-zero for any $\mu_1,\mu_2,\mu_3\in \Lambda$: $V_{0,(\ul^2,\mu_3)}\neq 0$ as $\ul^2$ is permissive and
		$V_{0,(\ul^1,\mu_1,\mu_2)} \neq 0$ as $(\ul^1,\mu_1)$ is permissive by \Cref{lemma:permissive1}.
	\end{enumerate}
\end{proof}

\section{Simplicity of the images of quantum representations}
\label{sec:simplicity}
In this section, we adapt arguments of Larsen and Wang \cite{LW05}
to prove that for any prime $p$, any surface $\Sigma_{g,n}$ with $g\geq 1$ and any coloring $\ul$ of the boundary components
such that $\rho_{p,g,\ul}$ is irreducible and tensor-indecomposable, the closure of $\mathrm{Im}\rho_{p,g,\ul}$
is a simple centerless Lie subgroup of $\PSU_{d_{p,g,\ul}}$ (see \Cref{thm:simpleGroup}).

By the same method, we also prove that for any maximal ideal $J$ of norm $q$ coprime to $p$, the image of $\rho_{J,g,\ul}$
is a simple subgroup of $\PGL_{d_{p,g,\ul}}(\F_q)$.

\Cref{thm:simpleGroup} will serve as in input in the proofs of \Cref{thm:density,thm:surjectivity}.

\subsection{Preliminary linear algebra lemmas}
\label{sec:prelimLinAlg}

We begin by recalling an important lemma from linear algebra that appeared in \cite[Lemma 1.2]{FLW},
which we extend to the case of finite characteristic:

\begin{lemma}\label{lemma:spectrumAdditiveDecomp}
	Let $V$ be a non-zero vector space over a field $\F$ that admits a decomposition:
	$$V=\underset{i=1}{\overset{n}{\bigoplus}}W_i,$$
	and let $t\in \GL(V)$ be semisimple and such that for some non-trivial permutation $\sigma \in S_n$, $t(W_i)\subset W_{\sigma(i)}$
	for all $1\leq i \leq n$. Then the spectrum of $t$ in the algebraic closure $\overline{\F}$ of $\F$ contains a coset of
	a non-trivial group of roots of unity.
\end{lemma}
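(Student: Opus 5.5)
Throughout, I take the summands $W_i$ to be non-zero, as is implicit in the intended use; otherwise a permutation moving only zero summands would make the statement vacuous. The plan is to locate a non-trivial cycle of $\sigma$, restrict $t$ to a small $t$-stable subspace built from one eigenvector of a power of $t$, and read off the spectrum there.

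\textbf{Step 1: a cycle on which $t$ acts.} Since $t$ is invertible, it maps each $W_i$ injectively into $W_{\sigma(i)}$, so $\dim W_i\leq \dim W_{\sigma(i)}$. Summing these inequalities along a cycle of $\sigma$ forces equality everywhere, hence $t(W_i)=W_{\sigma(i)}$. Choose a cycle $(i_1\, i_2\,\dotsc\, i_m)$ of $\sigma$ with $m\geq 2$, which exists because $\sigma$ is non-trivial. Then $t^m$ preserves $W_{i_1}\neq 0$.

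\textbf{Step 2: an eigenvector of $t^m$.} Extend scalars to $\overline{\F}$; semisimplicity of $t$ and the decomposition both survive this. The endomorphism $t^m$ is semisimple, and its restriction to the invariant subspace $W_{i_1}\otimes\overline{\F}$ is still semisimple. So pick an eigenvector $v\in W_{i_1}$ of $t^m$ with eigenvalue $\lambda\neq 0$. The vectors $v,tv,\dotsc,t^{m-1}v$ lie in the distinct summands $W_{i_1},\dotsc,W_{i_m}$, so they are linearly independent. Their span $U$ is $t$-stable, since $t\cdot t^{m-1}v=\lambda v$.

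\textbf{Step 3: the spectrum on $U$.} In the basis above, $t|_U$ is the companion matrix of $x^m-\lambda$, so this polynomial is both the characteristic and the minimal polynomial of $t|_U$. The restriction of a semisimple operator to an invariant subspace is semisimple, so $x^m-\lambda$ is separable over $\overline{\F}$. This is the one point where positive characteristic needs care: separability forces $\charac \F\nmid m$. Consequently the roots of $x^m-\lambda$ are exactly $\mu\zeta$, where $\mu$ is a fixed $m$-th root of $\lambda$ and $\zeta$ ranges over the full group $\mu_m$ of $m$-th roots of unity, which has order $m\geq 2$.

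Concretely, for each such $\zeta$ the vector $\sum_{j=0}^{m-1}(\mu\zeta)^{-j}t^jv$ is an eigenvector of $t$ with eigenvalue $\mu\zeta$. Therefore the spectrum of $t$ contains the coset $\mu\cdot\mu_m$ of the non-trivial group $\mu_m$, as required.

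The only delicate step is the characteristic-$q$ issue in Step 3. Without semisimplicity, when $q\mid m$ the group of $m$-th roots of unity could collapse (for instance, it is trivial when $m$ is a power of $q$). Semisimplicity of $t$, inherited by $t|_U$, is exactly what rules this out.
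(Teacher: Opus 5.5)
Your proof is correct, but it takes a different route from the paper's.

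The paper reduces to the case where $\sigma$ is an $n$-cycle and argues with the minimal polynomial $P$ of $t$. Since $t^i$ sends $W_j$ into $W_{\sigma^i(j)}$, the block-diagonal part of $P(t)=0$ is $\sum_{n\mid i}a_it^i$, which forces $P(X)=Q(X^n)$. Then $P'(X)=nX^{n-1}Q'(X^n)$ would vanish if $\mathrm{char}\,\F\mid n$, contradicting semisimplicity, so the whole spectrum is stable under multiplication by the $n$-th roots of unity.

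You instead build the explicit $t$-stable cyclic subspace spanned by $v,tv,\dots,t^{m-1}v$, for $v$ an eigenvector of $t^m$, and read off the companion polynomial $x^m-\lambda$. This is precisely the technique the paper uses for the tensor analogue, \Cref{lemma:spectrumTensorDecomp}. The paper's version is shorter and gives the stronger conclusion that the spectrum on the span of the cycle is a union of $\mu_n$-cosets. Yours is more elementary, exhibits explicit eigenvectors, and shows exactly where non-vanishing of the summands is needed.

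Your caveat on that point is warranted. As literally stated, the lemma fails: take $W_1=W_2=0$, $W_3=V$, $\sigma=(1\,2)$, $t=\mathrm{id}$. The paper's reduction to a cycle tacitly uses that the summands on the cycle are non-zero, which holds in its application since they are isotypic components.

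Two minor simplifications. Your Step~1 is superfluous, since $t^m(W_{i_1})\subseteq W_{\sigma^m(i_1)}=W_{i_1}$ directly. The existence of $v$ in Step~2 needs only $W_{i_1}\neq 0$, not semisimplicity.
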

\begin{proof}
	Without loss of generality, we can assume that $\sigma$ is an $n$-cycle for some $n>1$. Let $P\in \F[X]$ be the minimal polynomial of $t$.
	We claim that $P(X)=Q(X^n)$ for some polynomial $Q\in \F[X]$. Indeed, if
	$$P(X)=a_d X^d +\dotsb +a_0$$
	then the block diagonal part of $P(t)$ is the sum of the $a_i t^i$ such that $n|i$, and since the block diagonal part of $P(t)$ must
	vanish we get the claim by setting
	$$Q(X)=\underset{i\geq 0}{\sum} a_{ni}X^i.$$
	
	Now, if $n=pd$ where $p= \mathrm{char}(\F)$, then $P'(X)=nX^{n-1}Q'(X^n)=0$, which contradicts the fact that $t$ is semisimple. Hence, $p\nmid n$,
	and the spectrum of $t$, which is the set of roots of $P$, is stable under multiplication by $n$-th roots of unity. Note that the set of $n$-th
	roots of unity in $\overline{\F}$ is non-trivial if $n$ is not a power of $p$.
\end{proof}

We will also need a version of Lemma \ref{lemma:spectrumAdditiveDecomp} that holds for tensor product decompositions of a vector space.
\begin{lemma}\label{lemma:spectrumTensorDecomp}
	Let $V$ be a vector space over a field $\F$ with a decomposition
	$$V=W^{\otimes n}$$
	where $\dim W\geq 2$ and $n\geq 2$. Let $t\in \GL(V)$ be semisimple and such that for any $w_1,\ldots,w_n \in W$, one has:
	$$t(w_1\otimes \dotsb \otimes w_n)=t_1(w_{\sigma(1)})\otimes \dotsb \otimes t_n(w_{\sigma(n)})$$ for some $t_1,\ldots,t_n\in \GL(W)$ and
	for some non-trivial permutation $\sigma \in S_n$. Then the spectrum of $t$ in the algebraic closure $\overline{\F}$ of $\F$ contains a
	coset of a non-trivial group of roots of unity.
\end{lemma}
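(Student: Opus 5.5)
We reduce to \Cref{lemma:spectrumAdditiveDecomp}, or imitate its proof, by turning the tensor-permutation structure into a block-permutation structure. We may assume $\sigma$ has a cycle of length $m>1$ (the other cycles only contribute extra tensor factors). Set $U=W^{\otimes m}$ for the factors in that cycle and $U'$ for the rest. Then $t$ preserves the decomposition $V=U\otimes U'$, and $t=t_U\otimes t_{U'}$, where $t_U$ cyclically permutes the $m$ factors of $U$ twisted by the $t_i$.

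Both $t_U$ and $t_{U'}$ are semisimple, since their tensor product is semisimple and $\overline{\F}$-semisimplicity passes to tensor factors. The spectrum of $t$ is the product of the spectra of $t_U$ and $t_{U'}$. Hence it suffices to show that $\mathrm{Spec}(t_U)$ contains a coset $\lambda\mu_r$ with $\mu_r$ a non-trivial group of roots of unity: then $\lambda\lambda'\mu_r\subset\mathrm{Spec}(t)$ for any $\lambda'\in\mathrm{Spec}(t_{U'})$. So we reduce to the case where $\sigma$ is an $n$-cycle, say $\sigma(k)=k+1$ mod $n$.

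Next we exhibit a block decomposition permuted by $t$. Fix a basis $e_1,\dots,e_d$ of $W$ with $d\ge2$. The naive candidate is the decomposition of $V$ into lines $e_{j_1}\otimes\dots\otimes e_{j_n}$, but the $t_i$ mix basis vectors. So we work instead with $s=t^n$, which acts factorwise: $s=s_1\otimes\dots\otimes s_n$, with each $s_k$ a cyclic product of the $t_i$'s in some order. These cyclic products $s_k$ are conjugate to one another, e.g. $s_{k+1}=t_\ast s_k t_\ast^{-1}$ for an appropriate $t_\ast$.

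Replace $e_\bullet$ by an eigenbasis of $s_1$, which exists because $s$ is semisimple and hence so is each $s_k$. Transport this basis to the other factors by the $t_i$, that is, take $f^{(k)}_j=t_{\ast}\cdots t_\ast e_j$ in factor $k$. Since $t$ maps factor $k$ into factor $k+1$ by the corresponding $t_i$, $t$ then maps the line spanned by $f^{(1)}_{j_1}\otimes\dots\otimes f^{(n)}_{j_n}$ to the line with the multi-index cyclically shifted, up to the twist coming from closing the cycle. The closing step is exactly where $s_1$ acts, and it is diagonal in this basis, so the lines are genuinely permuted.

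Group these lines into blocks $W_J$ indexed by multi-indices $J$. The cyclic shift is a non-trivial permutation of the multi-indices, because $d\ge2$ gives multi-indices that are not constant. Applying \Cref{lemma:spectrumAdditiveDecomp} to this decomposition, with $t$ permuting the $W_J$ via the shift, yields the desired coset in $\mathrm{Spec}(t)$. \Cref{lemma:spectrumAdditiveDecomp} does not require every block to move, since we may restrict to the $t$-stable sum of the blocks in one non-trivial orbit. That restriction is semisimple and its spectrum lies in that of $t$.

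The main obstacle is the bookkeeping in the basis-transport step: checking that after going once around the cycle, the composed map really is diagonal, i.e. equal to $s_1$ on the first factor. This amounts to choosing the transports consistently, with the eigenbasis of $s_1$ on factor $1$ and its images under successive $t_i$ on factors $2,\dots,n$. I expect it to work because $t^n$ restricted to factor $1$ is exactly the composite of the transports.
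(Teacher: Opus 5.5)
Your proposal is correct and is essentially the paper's argument. Transporting an eigenbasis of $s_1$ along the $t_i$ is the same as the paper's conjugation by $\mathrm{id}_W\otimes t_1\otimes t_1t_2\otimes\dotsb\otimes t_1\dotsb t_{n-1}$, which reduces to $t_1=\dotsb=t_{n-1}=\mathrm{id}_W$ and $t_n=s$. Applying \Cref{lemma:spectrumAdditiveDecomp} to the orbit of lines through $e_0^{\otimes i-1}\otimes e_1\otimes e_0^{\otimes n-i}$ is exactly the paper's computation that $t$ acts on their span as a weighted cyclic shift with separable minimal polynomial $X^n-\lambda^{n-1}\mu$; your splitting $t=t_U\otimes t_{U'}$ also justifies the reduction to an $n$-cycle, which the paper only asserts.
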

\begin{proof}
	Again, without loss of generality, assume that $\sigma$ is an $n$-cycle, i.e.
	$$t(w_1\otimes \dotsb \otimes w_n)=t_1(w_2)\otimes \dotsb \otimes t_{n-1}(w_n)\otimes t_n(w_1).$$
	In fact, conjugating $t$ by $\mathrm{id}_W\otimes t_1 \otimes t_1t_2 \otimes \dotsb \otimes t_1\dotsb t_{n-1}$, one may assume
	that $t_1=\dotsb=t_{n-1}=\mathrm{id}_W$ and that $t_n=s\in \GL(W)$.
	
	Note that $t^n=s\otimes \dotsb \otimes s$, hence $s$ is semisimple as well. Since $\dim W\geq 2$, let $e_0,e_1\in W$ be
	non-collinear eigenvectors of $s$, of eigenvalues $\lambda$ and $\mu$. For $1\leq i \leq n$, let
	$$u_i=e_0^{\otimes i-1} \otimes e_1 \otimes e_0^{\otimes n-i}.$$
	It is clear that $t$ leaves the subspace $$U=\mathrm{Span}_{\overline{\F}}(u_1,\ldots,u_n)$$ invariant;
	in fact, one has $tu_i=\lambda u_{i-1}$ if $i>1$, and $tu_1=\mu u_n$.
	
	It follows that the minimal polynomial of $t_{\mid U}$ is $X^n-\lambda^{n-1}\mu$. If $ \mathrm{char}(\F)=p>0$, then $p\nmid n$ since $t$ is semisimple.
	Therefore, we conclude that the spectrum of $t$ contains a coset of the group of $n$-th roots of unity, which is a non-trivial group.
\end{proof}

\begin{proposition}\label{prop:IsoOfTensor}
	Let $N_1,\dotsc,N_k$ be groups and $\F$ be a field. For each $i$, consider $2$ irreducible projective representations $W_i^1$ and $W_i^2$ of $N_i$
	over $\F$.
	If $\bigotimes_i W_i^1$ and $\bigotimes_i W_i^2$ are isomorphic as projective representations of $N_1\times\dotsb\times N_k$,
	then for each $i$, $W_i^1$ and $W_i^2$ are isomorphic as projective representations of $N_i$.
\end{proposition}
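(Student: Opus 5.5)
The idea is to reduce to the case $k=2$ and use a Schur-lemma argument on $\mathrm{Hom}$-spaces. By induction, or by grouping $N_2\times\dotsb\times N_k$ into a single group, it suffices to show the following. If $W_1^1\otimes U^1\simeq W_1^2\otimes U^2$ projectively as representations of $N_1\times N'$, with all four factors irreducible, then $W_1^1\simeq W_1^2$ projectively. Here $U^j=\bigotimes_{i\geq 2}W_i^j$. For the induction we do not need $U^j$ to be irreducible, only that the $N'$-action is well defined projectively. So the first step is to restrict everything to $N_1$.

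Fix lifts of the projective representations to actual maps $\tilde w_i^j\colon N_i\to \GL(W_i^j)$. Projectivity means they satisfy a cocycle relation up to scalars. A projective isomorphism $\Phi\colon W^1_1\otimes U^1\to W^2_1\otimes U^2$ satisfies the following for all $n\in N_1$:
\[\Phi\circ(\tilde w_1^1(n)\otimes \id)=c(n)\,(\tilde w_1^2(n)\otimes\id)\circ\Phi,\]
with $c(n)\in \F^\times$. Restricting to elements $(n,1)$ of $N_1\times N'$ gives exactly this; the scalar ambiguity on $\id$ is absorbed into $c(n)$.

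Next, view $\Phi$ as an element of $\mathrm{Hom}(W_1^1,W_1^2)\otimes\mathrm{Hom}(U^1,U^2)$. Write $\Phi=\sum_r A_r\otimes B_r$ with the $B_r$ linearly independent. The displayed relation then gives, for each $r$ separately,
\[A_r\tilde w_1^1(n)=c(n)\tilde w_1^2(n)A_r.\]
Because $\Phi\neq0$, some $A_r\neq0$. Such an $A_r$ intertwines $W_1^1$ with $W_1^2$ twisted by the scalar function $c$. Its kernel is then $\tilde w_1^1(N_1)$-stable and its image is $\tilde w_1^2(N_1)$-stable, since scalars preserve subspaces. By irreducibility of $W_1^1$ and $W_1^2$, the map $A_r$ is injective and surjective, hence an isomorphism. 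This exhibits $W_1^1\simeq W_1^2$ projectively.

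The same argument applies symmetrically to each factor $N_i$ by regrouping, which yields the statement for every $i$. Note that irreducibility over $\F$ itself suffices, with no algebraic closure needed, because only the kernel/image argument is used and not Schur's lemma on endomorphism rings.

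The main obstacle I expect is bookkeeping with the projective scalars. Specifically, one must justify that the scalar $c(n)$ is a single function of $n$ in the twisted-intertwiner relation, independent of the $U$-factor. This follows because the relation holds as an identity of linear maps on the whole tensor product, and $\id_{U^1}$ and $\id_{U^2}$ carry no extra scalars beyond the one chosen for the element $(n,1)$.
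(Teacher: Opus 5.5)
Your proof is correct and is essentially the paper's argument. The paper lifts to central extensions $\widetilde{N}_i$ and absorbs the scalar discrepancy into a character $\chi=\chi_1\otimes\dotsb\otimes\chi_k$ (twisting each $W_i^2$ by $\chi_i$); it then uses $\bigl(\bigotimes_i X_i\bigr)^{\prod_i \widetilde{N}_i}\simeq\bigotimes_i X_i^{\widetilde{N}_i}$ to get a nonzero intertwiner on every factor at once. You avoid central extensions by working with set-theoretic lifts and a scalar function $c(n)$, and you extract the factor intertwiner directly from $\Phi=\sum_r A_r\otimes B_r$ with the $B_r$ independent, which is exactly the proof of that invariants identity; both arguments then finish with the same kernel/image irreducibility step, using irreducibility only of the $i$-th factors.
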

\begin{proof}
	For each $i$, choose a central extension $\widetilde{N}_i$ of $N_i$ such that $W_i^1$ and $W_i^2$ lift to linear representations of $\widetilde{N}_i$.
	Then we have an isomorphism $\bigotimes_i W_i^1\simeq\left(\bigotimes_i W_i^2\right)\otimes \chi$ of
	linear representations of $\widetilde{N}_1\times\dotsb\times \widetilde{N}_k$
	for some character $\chi$. Now, $\chi=\chi_1\otimes\dotsb\otimes\chi_k$ with $\chi_i$ a character of $\widetilde{N}_i$.
	Replacing each $W_i^2$ by $W_i^2\otimes\chi_i$,
	we may assume without loss of generality that $\bigotimes_i W_i^1\simeq\bigotimes_i W_i^2$ as linear representations of
	$\widetilde{N}_1\times\dotsb\times \widetilde{N}_k$.
	Then
	\begin{equation*}
		\bigotimes_i \left( W_i^1\otimes (W_i^2)^*\right)^{\widetilde{N}_i}\simeq \left(\left(\bigotimes_i W_i^1\right) \otimes
		\left(\bigotimes_i W_i^2\right)^*\right)^{\widetilde{N}_1\times\dotsb\times \widetilde{N}_k}\neq 0.
	\end{equation*}
	So, for each $i$, $\left( W_i^1\otimes (W_i^2)^*\right)^{\widetilde{N}_i}\neq 0$, i.e. $W_i^1\simeq W_i^2$
	as linear representations of $\widetilde{N}_i$ and thus also
	as projective representations of $N_i$.
\end{proof}

\begin{corollary}\label{cor:OuterAutomTensor}
	Let $\rho:N \longrightarrow \PGL(W)$ be an irreducible representation, where $W$ is a finite-dimensional vector space over some field $\F$.
	Let $W^{\otimes n}$ be the associated irreducible representation of $N^n$, for some $n\geq 2$. Assume that $\rho^{\otimes n}$
	extends to a projective representation $\widetilde{\rho}$ of an extension $G$ of $N^n$. Let $t\in G$ be an element that acts on $N^n$ by
	\begin{equation}\label{eq:OuterAutom}
		t(x_1,\ldots,x_n)t^{-1}=(\varphi_2(x_2),\ldots,\varphi_{n}(x_n),\varphi_1(x_1)),
	\end{equation}
	where $\varphi_1,\ldots,\varphi_n \in \mathrm{Aut}(G)$.
	Then $\widetilde{\rho}(t)$ is of the form
	$$\widetilde{\rho}(t)(w_1 \otimes \dotsb \otimes w_n)=t_2(w_2)\otimes \dotsb \otimes t_{n}(w_n)\otimes t_1(w_1),$$
	for some $t_1,\ldots,t_n\in \PGL(W)$.
\end{corollary}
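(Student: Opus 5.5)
The plan is to compare $\widetilde{\rho}(t)$ with a cyclic shift, using \Cref{prop:IsoOfTensor} to identify the two as projective representations of $N^n$. The linear map is defined by
$$\sigma(w_1\otimes\dotsb\otimes w_n)=w_2\otimes\dotsb\otimes w_n\otimes w_1 .$$
(Here I read the $\varphi_i$ in \eqref{eq:OuterAutom} as automorphisms of $N$, which is what the formula requires.)

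First I compute how conjugation by $\widetilde{\rho}(t)$ acts on $\rho^{\otimes n}(N^n)$. For $x=(x_1,\dots,x_n)$, \eqref{eq:OuterAutom} gives, in $\PGL(W^{\otimes n})$,
$$\widetilde{\rho}(t)\,\bigl(\rho(x_1)\otimes\dotsb\otimes\rho(x_n)\bigr)\,\widetilde{\rho}(t)^{-1}=\rho(\varphi_2(x_2))\otimes\dotsb\otimes\rho(\varphi_n(x_n))\otimes\rho(\varphi_1(x_1)).$$
Next I note that $\sigma\,(A_1\otimes\dotsb\otimes A_n)\,\sigma^{-1}=A_2\otimes\dotsb\otimes A_n\otimes A_1$. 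Therefore
$$u:=\sigma^{-1}\widetilde{\rho}(t)$$
satisfies, in $\PGL$,
$$u\,\bigl(\rho(x_1)\otimes\dotsb\otimes\rho(x_n)\bigr)\,u^{-1}=\rho(\varphi_1(x_1))\otimes\dotsb\otimes\rho(\varphi_n(x_n)).$$
In other words, $u$ is an isomorphism of projective representations of $N^n$ from $\bigotimes_i W$ to $\bigotimes_i (\rho\circ\varphi_i)$.

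Each twisted representation $\rho\circ\varphi_i$ is still irreducible, since $\varphi_i$ is an automorphism. \Cref{prop:IsoOfTensor} then gives, for each $i$, an element $t_i\in\PGL(W)$ with $t_i\rho(y)t_i^{-1}=\rho(\varphi_i(y))$ for all $y\in N$. Consequently $v:=t_1\otimes\dotsb\otimes t_n$ intertwines the same two projective representations as $u$.

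The remaining step, which I expect to be the main point to handle carefully, is to show that $u=v$ in $\PGL$. The element $v^{-1}u$ commutes projectively with the image of $N^n$. I lift the representations to linear representations of central extensions, as in the proof of \Cref{prop:IsoOfTensor}. Commuting projectively then means $v^{-1}u\,g\,(v^{-1}u)^{-1}=\chi(g)g$ for a character $\chi$. By absolute irreducibility of $W^{\otimes n}$ over $\overline{\F}$, Schur's lemma forces $v^{-1}u$ to be scalar, once I show that $\chi$ is trivial.

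To dispose of $\chi$, I would argue through twisted intertwiners. A nonzero intertwiner from $W^{\otimes n}$ to $W^{\otimes n}\otimes\chi$ must be compatible with the tensor factorisation, because the lifted group is a product of extensions $\widetilde N_i$ and $\chi=\prod_i\chi_i$. Such an intertwiner is then a tensor product of intertwiners $W\to W\otimes\chi_i$, and these can be absorbed into the choice of the $t_i$. Concretely, I would apply \Cref{prop:IsoOfTensor} directly to the pair of linear representations together with the character, exactly as in its proof: the invariant space
$$\bigl(W^{\otimes n}\otimes (W^{\otimes n})^*\otimes\chi\bigr)^{\widetilde N_1\times\dotsb\times\widetilde N_n}$$
is a tensor product of the factorwise invariant spaces, and each factor is one-dimensional by Schur's lemma. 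The whole intertwiner space is therefore one-dimensional and spanned by a pure tensor. This gives $u=t_1\otimes\dotsb\otimes t_n$ projectively.

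It follows that $\widetilde{\rho}(t)=\sigma(t_1\otimes\dotsb\otimes t_n)$. Evaluated on a pure tensor, this is $t_2(w_2)\otimes\dotsb\otimes t_n(w_n)\otimes t_1(w_1)$, as claimed.
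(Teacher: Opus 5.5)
Your argument is correct and is essentially the paper's: after undoing the cyclic shift, $\sigma^{-1}\widetilde{\rho}(t)$ is an isomorphism $\bigotimes_i\rho\simeq\bigotimes_i(\rho\circ\varphi_i)$ of projective representations of $N^n$, and one applies \Cref{prop:IsoOfTensor} with $W_i^1=\rho$, $W_i^2=\rho\circ\varphi_i$ (you are right that the $\varphi_i$ must be automorphisms of $N$). Your last step is what the paper compresses into ``by definition'': the character-twisted intertwiner space factorises and is one-dimensional, so this isomorphism is itself a pure tensor once the $t_i$ are re-chosen. That step is genuinely needed, because $\chi$ need not be trivial (projective self-intertwiners of an absolutely irreducible projective representation need not be scalar), so $u=v$ can fail for an arbitrary choice of the $t_i$.
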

\begin{proof}
	Let us apply \Cref{prop:IsoOfTensor}. Set $k=n$ and for each $i$, set $N_i=N$, $W_i^1=\rho$ and $W_i^2=\rho\circ\varphi_i$.
	Note that the underlying vector space of each $W_i^1$ and $W_i^2$ is $W$.
	Then $\widetilde{\rho}(t)$ induces an isomorphism $\bigotimes_iW_i^1\simeq\bigotimes_iW_i^2$ of projective representations of $N^n$.
	So, by \Cref{prop:IsoOfTensor}, for each $i$, there exists an isomorphism $t_i:W_{i}^1\simeq W_{i}^2$. Each $t_i$ is an element of $\PGL(W)$,
	and they satisfy the desired equation by definition.
\end{proof}

The following corollary of \Cref{prop:IsoOfTensor} will help us provide tensor decompositions of some projective representations.

\begin{corollary}\label{cor:OuterAutomTensor2}
	Let $N\simeq N_1\times N_2$ be a normal subgroup of a group $G$, and assume that a tensor product
	$\rho_1\otimes \rho_2:N_1\times N_2\longrightarrow \PGL(W_1\otimes W_2)$ of projective absolutely irreducible representations of
	$N_1$ and $N_2$ of dimensions $\geq 2$ over a field $\F$ extends to a projective representation $\rho$ of $G$. If the image of the morphism
	
	$$G\longrightarrow \mathrm{Aut}(N_1\times N_2)$$
	induced by conjugation has image in $\mathrm{Aut}(N_1)\times \mathrm{Aut}(N_2)$, then $\rho$ is tensor-decomposable as well.
\end{corollary}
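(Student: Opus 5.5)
The plan is to show that each $\rho(g)$ for $g\in G$ preserves the tensor decomposition $W_1\otimes W_2$, meaning it is of the form $a\otimes b$ with $a\in\PGL(W_1)$ and $b\in\PGL(W_2)$. Once this holds, the maps $g\mapsto a$ and $g\mapsto b$ give the required tensor decomposition of $\rho$.

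Fix $g\in G$. By hypothesis, conjugation by $g$ acts on $N=N_1\times N_2$ through some $(\varphi_1,\varphi_2)\in\mathrm{Aut}(N_1)\times\mathrm{Aut}(N_2)$. That is,
\[
g(x_1,x_2)g^{-1}=(\varphi_1(x_1),\varphi_2(x_2)).
\]
Since $\rho$ is a projective representation extending $\rho_1\otimes\rho_2$, we have
\[
\rho(g)\,(\rho_1(x_1)\otimes\rho_2(x_2))\,\rho(g)^{-1}=\rho_1(\varphi_1(x_1))\otimes\rho_2(\varphi_2(x_2))
\]
in $\PGL(W_1\otimes W_2)$. So $\rho(g)$ is an isomorphism of projective representations of $N_1\times N_2$, from $\rho_1\otimes\rho_2$ to $(\rho_1\circ\varphi_1)\otimes(\rho_2\circ\varphi_2)$.

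I then apply \Cref{prop:IsoOfTensor} with $k=2$, $W_i^1=\rho_i$ and $W_i^2=\rho_i\circ\varphi_i$. All four are irreducible, since twisting by an automorphism preserves irreducibility. This gives isomorphisms $a:W_1\to W_1$ and $b:W_2\to W_2$ with
\[
a\rho_1(x_1)a^{-1}=\rho_1(\varphi_1(x_1)),\qquad b\rho_2(x_2)b^{-1}=\rho_2(\varphi_2(x_2)).
\]
Hence $(a\otimes b)^{-1}\rho(g)$ commutes projectively with the image of $\rho_1\otimes\rho_2$.

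The main obstacle is to conclude from this that $(a\otimes b)^{-1}\rho(g)$ is trivial in $\PGL$, i.e. a scalar. I would lift to linear representations of central extensions $\widetilde N_1\times\widetilde N_2$, as in the proof of \Cref{prop:IsoOfTensor}. A projective commutation then means that $c:=(a\otimes b)^{-1}\rho(g)$ satisfies $c\,\sigma(x)\,c^{-1}=\chi(x)\sigma(x)$ for a character $\chi$, where $\sigma$ denotes the lifted $\rho_1\otimes\rho_2$.

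To dispose of $\chi$, I would argue as follows. The tensor product $W_1\otimes W_2$ is absolutely irreducible, since $\rho_1$ and $\rho_2$ are. So $c$ is an isomorphism $\sigma\simeq\sigma\otimes\chi$. By Schur's lemma, the set of such $c$, as $\chi$ varies, forms a group that is finite modulo scalars.

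I would sidestep this set-of-$\chi$ issue by redoing the argument directly. In \Cref{prop:IsoOfTensor} one may absorb $\chi$ into the choice of $a$ and $b$: write $\chi=\chi_1\otimes\chi_2$, then replace $a$ by an isomorphism $\rho_1\simeq(\rho_1\circ\varphi_1)\otimes\chi_1$, and similarly for $b$, which the proposition's proof provides. With this choice $c$ commutes linearly with $\sigma$, and Schur's lemma over the algebraic closure makes $c$ a scalar. Thus $\rho(g)=a\otimes b$ in $\PGL$.

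Finally, I check that $g\mapsto a$ is well defined and multiplicative in $\PGL(W_1)$. This holds because $a\otimes b$ determines $a$ and $b$ up to scalars. Indeed, if $a\otimes b=\lambda\,a'\otimes b'$, then $(a'^{-1}a)\otimes(b'^{-1}b)$ is a scalar, which forces each factor to be a scalar. The hypothesis $\dim W_i\geq 2$ ensures the resulting decomposition is nontrivial.
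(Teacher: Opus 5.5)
Your proof is correct and follows the same route as the paper. In both, $\rho(g)$ is regarded as an isomorphism $\rho_1\otimes\rho_2\simeq(\rho_1\circ\varphi_1)\otimes(\rho_2\circ\varphi_2)$, \Cref{prop:IsoOfTensor} produces $a$ and $b$, one concludes $\rho(g)=a\otimes b$ in $\PGL(W_1\otimes W_2)$, and then checks that $g\mapsto a$ and $g\mapsto b$ are homomorphisms.

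The one difference is at the step $\rho(g)=a\otimes b$. The paper compresses it into the claim that the projective isomorphisms $\rho_i\simeq\rho_i\circ\varphi_i$ are unique by absolute irreducibility. That is literally true only for the linear isomorphisms $W_i^1\simeq W_i^2\otimes\chi_i$ constructed inside the proof of \Cref{prop:IsoOfTensor}, or when the $N_i$ are perfect, as in \Cref{thm:simpleGroup}. You make exactly this point explicit: you choose $a,b$ compatibly with the character $\chi=\chi_1\otimes\chi_2$ attached to a linear lift of $\rho(g)$ and apply Schur's lemma at the linear level. You also derive multiplicativity from the uniqueness of the factorization $a\otimes b$ up to scalars, which makes your version valid in the full generality of the statement.
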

\begin{proof}
	Let $g\in G$. Let $\varphi_1\in \mathrm{Aut}(N_1)$ and $\varphi_2\in \mathrm{Aut}(N_2)$ be the automorphisms induced by conjugation by $g$.
	Then $\rho(g)$ induces an isomorphism $\rho_1\otimes\rho_2\simeq (\rho_1\circ\varphi_1)\otimes(\rho_2\circ\varphi_2)$ of
	projective representations of $N_1\times N_2$.
	By \Cref{prop:IsoOfTensor}, there are isomorphisms $\rho_1(g):\rho\simeq \rho\circ\varphi_1$ and $\rho_2(g):\rho\simeq \rho\circ\varphi_2$.
	As $\rho_1$ and $\rho_2$ are absolutely irreducible, $\rho_1(g)\in \PGL(W_1)$ and $\rho_2(g)\in \PGL(W_2)$ are unique.
	Hence, $\rho(g)=\rho_1(g)\otimes\rho_2(g)$ in $\PGL(W_1\otimes W_2)$. One sees that $\rho_1$ and $\rho_2$
	thus defined are representations of $G$ on $W_1$ and $W_2$
	extending those of $N_1\times N_2$, and that $\rho=\rho_1\otimes\rho_2$ is a non-trivial tensor decomposition, as desired.
\end{proof}

We end this section by recalling the following proposition, well-known to representation theory experts, about projective
irreducible representations of direct products of groups:

\begin{proposition}
	\label{prop:projIrredProduct} Let $G$ and $H$ be two compact Lie groups, such that at least one is perfect.
	Then any finite-dimensional projective irreducible representation of $G\times H$ over a field $\F$ is of the form $\rho\otimes \rho'$,
	where $\rho$ is a projective irreducible representation of $G$, and $\rho'$ is a projective irreducible representation of $H$.
\end{proposition}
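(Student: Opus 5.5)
We reduce the projective statement to a statement about linear representations of a central extension, and then apply the classical fact that irreducible representations of a product of groups over an algebraically closed field are exterior tensor products. First we make the meaning precise: a finite-dimensional projective representation $\rho\colon G\times H\to\PGL(V)$ is continuous, $V$ is a vector space over $\F$, and we may extend scalars to $\overline{\F}$. Irreducibility is then understood absolutely, as elsewhere in the paper.

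The first step is to linearize. We pull back the central extension $\GL(V)\to\PGL(V)$ along $\rho$. This gives a central extension $E$ of $G\times H$ by $\overline{\F}^\times$ and a linear representation $\tilde\rho\colon E\to\GL(V)$. Let $\tilde G$ and $\tilde H$ be the preimages of $G\times 1$ and $1\times H$ in $E$. The key point is that these two preimages commute with each other. The commutator map $(x,y)\mapsto[\tilde x,\tilde y]$, for $\tilde x,\tilde y$ lifts of $x\in G$ and $y\in H$, is well defined with values in the central subgroup $\overline{\F}^\times$. It is bimultiplicative, i.e. a homomorphism in each variable separately, because the images of $G\times1$ and $1\times H$ commute in $G\times H$.

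Suppose, say, that $G$ is perfect. Then for fixed $y$, the map $x\mapsto[\tilde x,\tilde y]$ is a homomorphism from a perfect group to an abelian group, hence trivial. So $\tilde G$ and $\tilde H$ commute in $E$, and the product map $\tilde G\times\tilde H\to E$ is surjective with central kernel, namely the antidiagonal copy of $\overline{\F}^\times$. Therefore $\tilde\rho$ becomes a linear irreducible representation of $\tilde G\times\tilde H$ on which the two copies of the scalars act compatibly.

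The second step is to decompose. Let $W$ be an irreducible $\tilde G$-submodule of $V$. By Schur's lemma over $\overline{\F}$, the multiplicity space $\mathrm{Hom}_{\tilde G}(W,V)$ carries an action of $\tilde H$, because $\tilde H$ commutes with $\tilde G$. The evaluation map $W\otimes\mathrm{Hom}_{\tilde G}(W,V)\to V$ is then $\tilde G\times\tilde H$-equivariant and nonzero. By irreducibility of $V$ it is surjective, and it is injective by a Jacobson density (Burnside) argument. Hence $V\cong W\otimes W'$, with $W'$ irreducible for $\tilde H$, since any subrepresentation of $W'$ would give a subrepresentation of $V$. Projecting back, $W$ and $W'$ define projective irreducible representations $\rho$ of $G$ and $\rho'$ of $H$, because the scalars act by scalars, and their tensor product is the original representation.

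The main obstacle is the commutation step. This is precisely where perfectness is used; without it, Heisenberg-type projective representations of abelian products give counterexamples. Compactness and the Lie structure serve only to guarantee finite-dimensionality and continuity, so that Schur's lemma applies, and, over $\C$, complete reducibility.
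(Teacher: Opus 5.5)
Your proof is correct and follows the paper's outline: linearize on a central extension of $G\times H$, use perfectness to kill the part of the extension that mixes the two factors, then decompose. The real difference is in the middle step. The paper appeals to a K\"unneth-type splitting $H^2(G\times H,\F^*)=H^2(G,\F^*)\oplus H^2(H,\F^*)\oplus\mathrm{Hom}(G^{ab}\otimes H^{ab},\F^*)$, justified only briefly in a footnote for continuous cohomology, and concludes that the extension ``is a product''. Your bimultiplicative commutator pairing $G\times H\to\overline{\F}^\times$ is exactly the concrete form of that last summand, and killing it needs no cohomology at all.

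Your version also records the structure more accurately: $E$ is the quotient of $\widetilde G\times\widetilde H$ by an antidiagonal copy of $\overline{\F}^\times$, not literally a product. It also makes explicit the Schur-lemma decomposition $V\cong W\otimes\mathrm{Hom}_{\widetilde G}(W,V)$, which the paper asserts in one line.

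Two by-products are worth noting. First, your argument never uses compactness or continuity, so it holds for arbitrary groups with one factor perfect. Second, your reading of irreducibility as absolute is necessary rather than merely convenient, and the paper's last step tacitly needs it too. Over $\R$ the statement fails: the irreducible projective representation of $\SO(3)\times\SO(3)\cong\SO(4)/\{\pm1\}$ on $\R^4\simeq\mathbb{H}$ is not a tensor product of real irreducible projective representations of the factors.
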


We note that the above proposition will be used later on for $G$ and $H$ that are either finite groups or subgroups of $\PSU_d$ for some $d$.
\begin{proof}
	A projective irreducible representation $r$ of $G\times H$ lifts to a linear irreducible representation $\widetilde{r}$
	of a central extension of $G\times H$. However, since%
\footnote{This follows from the fact that $H^*(G\times H,M)=H^*(G,H^*(H,M))$ for any abelian group $M$ with trivial $G\times H$-action.
	We are working with continuous cohomology.}
	$$H^2(G\times H,\F^*)=H^2(G,\F^*)\oplus H^2(H,\F^*)\oplus \mathrm{Hom}(G^{ab}\otimes H^{ab},\F^*)=H^2(G,\F^*)\oplus H^2(H,\F^*)$$
	when $G$ and $H$ are perfect, any central extension of $G\times H$ is a product $\widetilde{G}\times \widetilde{H}$
	of central extensions of $G$ and $H$. Therefore, $\widetilde{r}$ is the product of irreducible linear representations
	of $\widetilde{G}$ and $\widetilde{H}$, and $r$ is a product of irreducible projective representations of $G$ and $H$.
\end{proof}

\subsection{Analysis of the restrictions of \texorpdfstring{$\bm{\rho_{p,g,\ul}}$}{SO(3) quantum representations}
to normal subgroups}\label{sec:normalSubgroups}

For the remainder of this section, let us fix a prime number $p\geq 5$ and a maximal ideal $J$ in $\Z[\zeta_p]$ of norm $q$ coprime to $p$.

For $g\geq 0$ and $\ul\in \Lambda^n$ for some $n\geq 0$, we denote by $G_{g,\ul}$
the image of the representation $\rho_{p,g,\ul}\subset \mathrm{PSU}_{d_{p,g,\ul}}$
of $\PMod(\Sigma_{g,n})$, and similarly by $G_{J,g,\ul}\subset \mathrm{PGL}_{d_{p,g,\ul}}(\F_q)$
the image of $\rho_{J,g,\ul}$. The TQFT space $V_{p,g,\ul}$
will be viewed as a $\C$-representation of $G_{g,\ul}$ or as a $\F_q$-representation of $G_{J,g,\ul}$,
depending on context.

Recall that a (projective) representation of a group $G$ is called \emph{isotypic} if it is isomorphic to $V^{\oplus n}$
for some irreducible representation $V$ of $G$.
We also recall that in any finite-dimensional representation $W$ of a group $G$, there exists a unique maximal semisimple $G$-subrepresentation $U$:
the sum of all semisimple subrepresentations. If $W\neq 0$, then $U\neq 0$.

The following lemma controls the decomposition of $V_{p,g,\ul}$ when restricted to normal subgroups:
\begin{lemma}\label{lemma:normalIsotypic}
	Let $N$ be a normal subgroup of $G_{g,\ul}$ or of $G_{J,g,\ul}$. Then $V_{p,g,\ul}$
	is isotypic as an $N$-representation.
\end{lemma}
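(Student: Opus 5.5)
This is a Clifford-theory statement, and I would prove it by the standard argument, adapted so that it also works in positive characteristic where semisimplicity is not automatic.

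Write $G$ for $G_{g,\ul}$ or $G_{J,g,\ul}$, and $V$ for the corresponding space over the field $k$. Here $k$ is $\C$, or the finite field $\F_q=\Z[\zeta_p]/J$. If one wants absolute statements, pass to $\overline{\F}_q$; by \Cref{thm:irreducibility} the representation $V$ is absolutely irreducible as a $G$-representation.

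To handle the projectivity, choose the preimage $\widetilde{G}\subset \GL(V)$ (or $\GL_d(\F_q)$) of $G$, and let $\widetilde{N}$ be the preimage of $N$. Then $\widetilde{N}$ is normal in $\widetilde{G}$, and $N$-subrepresentations of $V$ are exactly $\widetilde{N}$-submodules. In the complex case $V$ is unitary, so it is automatically semisimple for $\widetilde{N}$. In positive characteristic we argue as follows.

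\textbf{Step 1: the socle.} Let $U\subset V$ be the sum of all simple $\widetilde{N}$-submodules; it is nonzero because $V\neq 0$. For $x\in\widetilde{G}$ and a simple $\widetilde{N}$-submodule $S$, the space $xS$ is again an $\widetilde{N}$-submodule, since $n\,xS = x(x^{-1}nx)S = xS$. It is also simple, because $x$ carries $\widetilde{N}$-submodules of $xS$ bijectively to $\widetilde{N}$-submodules of $S$. Hence $U$ is $\widetilde{G}$-stable. By irreducibility of $V$ we get $U=V$, so $V$ is semisimple as an $\widetilde{N}$-module.

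\textbf{Step 2: isotypic components.} Decompose $V=\bigoplus_{S} V_{[S]}$ into isotypic components, indexed by the isomorphism classes of simple $\widetilde{N}$-modules that occur. An element $x\in\widetilde{G}$ maps $V_{[S]}$ onto $V_{[{}^xS]}$, where ${}^xS$ is $S$ twisted by conjugation by $x$. So $\widetilde{G}$ permutes the isotypic components. The subspace spanned by the $\widetilde{G}$-orbit of any one component is then a nonzero $\widetilde{G}$-subrepresentation. By irreducibility the orbit is transitive, but this alone does not give a single component, so one more argument is needed.

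\textbf{Step 3: one component.} To conclude that there is a single isotypic component, take any one component $V_{[S]}$. The sum of its $\widetilde{G}$-translates is $\widetilde{G}$-stable, hence equals $V$. If there were more than one component, the stabilizer of $V_{[S]}$ would be a proper subgroup, and $V$ would be induced from it. This contradicts nothing by itself, so for this step I would use the quantum-topological input of \Cref{lemma:spectrumAdditiveDecomp}, as the paper sets it up.

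Take a non-separating curve $\alpha$. The image $t$ of its Dehn twist is semisimple; its eigenvalues are distinct modulo $J$, and its spectrum is $\{\zeta_p^{k^2-1}\}$ by \Cref{prop:DehnTwistSpectrum}. Since $p\ge 5$ and the spectrum consists of $(p-1)/2$ values of the form $\zeta_p^{k^2-1}$, it contains no coset of a nontrivial group of roots of unity. The relevant groups would be $p$-th roots of unity, since everything lies in $\mu_p$, and a coset of $\mu_p$ has $p$ elements, while the spectrum has only $(p-1)/2$. So \Cref{lemma:spectrumAdditiveDecomp} shows that $t$, and likewise every conjugate of it, fixes each isotypic component.

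The twists along non-separating curves generate $\PMod(\Sigma_{g,n})$, because they are all conjugate and normally generate it. Hence all of $\widetilde{G}$ stabilizes each isotypic component. Irreducibility then forces exactly one component, so $V$ is isotypic.

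The main obstacle I expect is Step 3: making sure that transitivity really fails to be the only option, and applying the spectrum lemma correctly. This includes checking that permuting blocks forces the spectrum to be stable under a nontrivial group of roots of order prime to $\mathrm{char}$, and that this group sits inside $\mu_p$ up to a scalar, which is what the cardinality count rules out.
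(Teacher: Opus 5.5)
Your argument is correct and is essentially the paper's proof: the maximal semisimple $N$-submodule is $G$-stable and hence all of $V$; $G$ permutes the isotypic components; and \Cref{lemma:spectrumAdditiveDecomp} forces Dehn twists to fix every component, because their spectrum is contained, up to a scalar, in $\{\zeta_p^{k^2-1}\}$, which contains no coset of a non-trivial group of roots of unity. The one adjustment is that the paper uses all Dehn twists rather than only non-separating ones (every twist acts diagonally with eigenvalues among the $\zeta_p^{i(i+2)}$), which also covers genus $0$, where your claim that non-separating twists generate the group is not available.
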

\begin{proof}
	Let us denote $G_{g,\ul}$ or $G_{J,g,\ul}$ simply by $G$.
	First we argue that $V_{p,g,\ul}$ is semisimple as a representation of $N$.
	Let $U\subset V_{p,g,\ul}$ be the maximal semisimple $N$-subrepresentation.
	Then $U$ is preserved by $G$. As $V_{p,g,\ul}$ is irreducible as a $G$-representation by \Cref{thm:irreducibility},
	$U=V_{p,g,\ul}$ as desired.
	Let
	
	$$V_{p,g,\ul}=W_1^{\oplus n_1}\oplus \dotsb \oplus W_k^{\oplus n_k}$$
	be the decomposition of $V_{p,g,\ul}$ as a sum of irreducible $N$-representations, where distinct $W_i$ are non-isomorphic.
	
	We argue that $k=1$. Since $N$ is a normal subgroup, any element $t$ in $G$ is such that
	$$t(W_i^{n_i})\subset W_{\sigma_t(i)}^{n_{\sigma_t(i)}}$$
	for all $i$ and for some permutation $\sigma_t \in S_k$; that is, elements of $G$ permute the isotypic components of $V_{p,g,\ul}$.
	Since $V_{p,g,\ul}$ is an irreducible $G$-representation, and since $\PMod(\Sigma_{g,n})$ is generated by Dehn twists,
	if $k>1$ there must be a Dehn twist $t\in G$ such that $\sigma_t$ is a non-trivial permutation.
	
	However, by Proposition \ref{prop:DehnTwistSpectrum}, $t$ is semisimple and its spectrum (up to rescaling) is included in
	$\lbrace \zeta_p^{i^2} \ | \ 1 \leq i \leq \frac{p-1}{2} \rbrace$, which is not the full set of $p$-th roots
	of unity and hence does not contain any coset of a non-trivial group of roots of unity.
	By Lemma \ref{lemma:spectrumAdditiveDecomp}, this implies that the permutation $\sigma_t$ is trivial, which is a contradiction. Hence, $k=1$.
\end{proof}
In the case where $\rho_{p,g,\ul}$ (or $\rho_{J,g,\ul}$) is tensor-indecomposable,
we get an even better control of the restriction to normal subgroups:
\begin{lemma}
	\label{lemma:normalIrred} Let $g$ and $\ul$ be such that $\rho_{p,g,\ul}$ and $\rho_{J,g,\ul}$
	are tensor-indecomposable. Let $N$ be a non-trivial normal subgroup of $G_{g,\ul}$ or of $G_{J,g,\ul}$.
	Then $V_{p,g,\ul}$ is irreducible as an $N$-representation.
\end{lemma}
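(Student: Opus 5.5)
By \Cref{lemma:normalIsotypic}, the restriction of $V=V_{p,g,\ul}$ to $N$ is isotypic, so $V\simeq W^{\oplus m}$ for some irreducible $N$-representation $W$. We must show that $m=1$. The plan is to use the classical Clifford-theoretic fact that an isotypic restriction to a normal subgroup yields a tensor factorization of the projective representation of the whole group, and then to invoke tensor-indecomposability.

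We work over the algebraic closure $k$ of the field of definition. After extending scalars, $V$ remains irreducible, since it is absolutely irreducible by \Cref{thm:irreducibility}. Moreover, $V|_N$ is still semisimple and isotypic over $k$: the argument of \Cref{lemma:normalIsotypic} applies verbatim over $k$, because it uses only irreducibility and the spectra of Dehn twists. So write $V\simeq W\otimes M$ with $W$ an absolutely irreducible projective $N$-representation and $M=\mathrm{Hom}_N(W,V)$ of dimension $m$, where $N$ acts trivially on $M$ (projectively).

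For $t\in G$, conjugation by $t$ sends $W$ to $W\circ \mathrm{Ad}_t$. This twisted representation appears in $V|_N$, so it is isomorphic to $W$ as a projective representation. Let $a(t)\in\PGL(W)$ be the isomorphism, which is unique by absolute irreducibility (Schur's lemma). Then $\rho(t)\circ(a(t)^{-1}\otimes \id_M)$ commutes with the $N$-action, hence lies in $\id_W\otimes \GL(M)$ up to scalar; call the resulting element $b(t)\in\PGL(M)$.

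By uniqueness, $t\mapsto a(t)$ and $t\mapsto b(t)$ are projective representations of $G$, and $\rho=a\otimes b$. This is essentially \Cref{cor:OuterAutomTensor2} with $N_2$ trivial, but it is better to argue directly as above, since the hypothesis there that both factors have dimension at least $2$ is unavailable here. Tensor-indecomposability (\Cref{thm:tensor-indec}, in the version over $k$) then forces $\dim W=1$ or $\dim M=1$.

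If $\dim M=1$, we are done. Otherwise $\dim W=1$, so $N$ acts by scalars on $V$, i.e.\ trivially in $\PGL(V)$. Since $N\subset G\subset \PGL(V)$, this means $N$ is trivial, contradicting the hypothesis.

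The main obstacle is making the factorization rigorous at the projective level, namely checking that $a$ and $b$ are genuine projective homomorphisms. This follows from the uniqueness of $a(t)$ up to scalars: both $a(tt')$ and $a(t)a(t')$ are isomorphisms $W\simeq W\circ\mathrm{Ad}_{tt'}$, so they agree in $\PGL(W)$, and the identity for $b$ follows. In positive characteristic one should also note that $N$ is finite there, and that semisimplicity was already established in \Cref{lemma:normalIsotypic}.
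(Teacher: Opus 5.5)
Your proposal is correct and is essentially the paper's proof: both write $V\simeq W\otimes M$ using \Cref{lemma:normalIsotypic}, factor $\rho(t)=a(t)\otimes b(t)$, and conclude by tensor-indecomposability together with faithfulness of $G\subset\PGL(V)$. The only difference is that the paper obtains $a(g)$ by applying Skolem--Noether to the conjugation action on $\F[\widetilde{N}]=\mathrm{End}(W)\otimes\mathrm{id}$, whereas you use a Schur-lemma intertwiner $W\simeq W\circ\mathrm{Ad}_t$; as the paper does, lift to a central extension $\widetilde{G}$ and take $a(t)$ to be an intertwiner of \emph{linear} $\widetilde{N}$-representations, since otherwise neither the uniqueness of $a(t)$ nor the claim $\rho(t)(a(t)^{-1}\otimes\mathrm{id}_M)\in\mathrm{id}_W\otimes\GL(M)$ holds (the centralizer in $\PGL(W)$ of a merely projective irreducible image can be non-trivial).
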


\begin{proof}
	Let $G:=G_{g,\ul}$ or $G_{J,g,\ul}$, and let $\F$ be either $\C$ or $\F_q$ accordingly.
	
	By Lemma \ref{lemma:normalIsotypic}, we know that $V_{p,g,\ul}$ is isotypic as an $N$-representation;
	let us write $V_{p,g,\ul}\simeq W \otimes \F^n$ where $W$ is an irreducible $N$-representation and $\F^n$
	is a trivial $N$-representation.
	
	We lift the projective $G$-representation to a linear representation of a central extension $\widetilde{G}$ of $G$;
	denote also by $\widetilde{N}$ the pre-image of $N$. View $\F[\widetilde{N}]$ as a subalgebra of $\mathrm{End}(W\otimes \F^n)$
	(here $\mathrm{End}$ just denotes linear maps); then $\F[\widetilde{N}]=\mathrm{End}(W)\otimes \mathrm{id}_{\F^n}$,
	since $W$ is (absolutely) irreducible as an $N$-representation. Since $\widetilde{N}$ is normal,
	and since $\widetilde{G}$ is a central extension, conjugation by $g\in G$ induces an automorphism of $\F[\widetilde{N}]$.
	Thus, for any $g\in G$ and $f\in \mathrm{End}(W)$, we have
	$$g(f\otimes \mathrm{id}_{\F^n}) g^{-1}=\psi_g(f)\otimes \mathrm{id}_{\F^n},$$
	for some $\psi_g \in \mathrm{Aut}(\mathrm{End}(W))$. By Skolem-Noether, we can write $\psi_g(f)=a(g)fa(g)^{-1}$ for a unique $a(g) \in \PGL(W)$.
	
	Now, we get that $(a(g)^{-1}\otimes \mathrm{id}_{\F^n})g$ commutes with $f \otimes \mathrm{id}_{\F^n}$ for any $f\in \mathrm{End}(W)$.
	Thus we can write
	$$g=a(g)\otimes b(g)$$
	for some $b(g)\in \PGL(\F^n)$.
	
	It is easy to verify that $g\mapsto a(g)$ and $g\mapsto b(g)$ are morphisms. Then, since $\rho_{p,g,\ul}$
	(or $\rho_{J,g,\ul}$) is tensor-indecomposable, one must have $n=1$ or $\dim W=1$.
	The latter is not possible, unless $N$ is the trivial subgroup, as $G$ and $N$ are embedded in $\PGL(V_{p,g,\ul})$.
	Therefore, $n=1$ and $V_{p,g,\ul}$ is irreducible as an $N$-representation.
\end{proof}

Finally, we have the following easy lemma:

\begin{lemma}
	\label{lemma:perfect} Assume that $g\geq 1$ and $p$ is a prime number $\geq 7$. Let $G=G_{g,\ul}$ or
	$G=G_{J,g,\ul}$. Then $G$ is perfect.
\end{lemma}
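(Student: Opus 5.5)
Since $G$ is the image of $\PMod(\Sigma_{g,n})$, it suffices to show that the image of the abelianization of $\PMod(\Sigma_{g,n})$ in $G^{ab}$ vanishes. The plan is to find a generating set of conjugate Dehn twists whose images in $G^{ab}$ are killed by two coprime integers.

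\textbf{Generators.} For $g\geq 1$, $\PMod(\Sigma_{g,n})$ is normally generated by the Dehn twist $t_\gamma$ along any single non-separating simple closed curve $\gamma$; this fact was already used at the end of the proof of \Cref{thm:tensor-indec}. All non-separating twists are conjugate, so their images in $G^{ab}$ coincide. Hence $G^{ab}$ is cyclic, generated by the class $[\rho(t_\gamma)]$.

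\textbf{First bound: order divides $p$.} Choose a lift $\widetilde{\rho}(t_\gamma)$. By the fusion decomposition along $\gamma$ it is diagonalizable, with eigenvalues $p$-th roots of unity (up to a scalar), as in the proof of \Cref{prop:DehnTwistSpectrum}; this also holds for $g=1$ and modulo $J$. Therefore $\rho(t_\gamma)^p=1$ in $\PGL$, since $t_\gamma^p$ acts as a scalar. It follows that $[\rho(t_\gamma)]$ has order dividing $p$ in $G^{ab}$.

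\textbf{Second bound: order divides $12$ or $10$.} I will use a relation in $\PMod(\Sigma_{g,n})$ that expresses a product of non-separating twists as trivial or as a power of a twist in a way that forces another order. The cleanest choice is the chain relation in a torus with one boundary component $\Sigma_1^1\subset\Sigma_{g,n}$, which exists since $g\geq1$: $(t_at_b)^6=t_\delta$, where $\delta$ is the boundary curve. The twists $t_a,t_b$ are non-separating, so in $G^{ab}$ the relation reads $12[t_\gamma]=[t_\delta]$.

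\textbf{Handling $t_\delta$ (the delicate step).} I need to control $[t_\delta]$, and I plan to use the lantern relation. When $g\geq 3$, or more generally when a sphere with four holes with all boundary curves non-separating can be embedded, the lantern relation gives $4[t_\gamma]=3[t_\gamma]$, so $[t_\gamma]=0$ directly. For $g=1,2$ this is not available, so I would instead use the $2$-chain relation above together with a $5$-chain relation $(t_{c_1}\dotsm t_{c_5})^6=t_{\delta_1}t_{\delta_2}$, or the $4$-chain relation $(t_{c_1}\cdots t_{c_4})^{10}=t_{\delta'}$, where the boundary curve $\delta'$ is the same separating curve when the genus is $2$. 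For $g=1$, or with punctures, I would use the relation $(t_at_b)^6=t_\delta$ with $\delta$ bounding a disc, so that $t_\delta=1$ when $n=0$. The goal in each case is to get $m[t_\gamma]=0$ with $m\in\{12,10,\dots\}$ built only from the primes $2,3,5$.

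\textbf{Conclusion.} Since $p\geq 7$ is coprime to $2$, $3$ and $5$, combining $p[t_\gamma]=0$ with $m[t_\gamma]=0$ gives $[t_\gamma]=0$, so $G^{ab}=0$ and $G$ is perfect. The main obstacle is the case of genus $1$ with boundary, where $t_\delta$ acts nontrivially. There I would compute $[t_\delta]$ through the pair-of-pants chain relations, expressing $t_\delta$ via twists along non-separating curves in $\Sigma_{1,n}$, and in the worst case use the explicit spectrum of $\widetilde{\rho}(t_\delta)$, which consists of $p$-th roots of unity, so that its class is again killed by $p$.
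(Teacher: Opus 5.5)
Your overall strategy is the paper's: $G$ is a quotient of $\PMod(\Sigma_{g,n})$, the abelianization of $\PMod(\Sigma_{g,n})$ is cyclic and generated by the class $[t_\gamma]$ of a non-separating twist, and $\rho(t_\gamma)$ has order $p$. The paper does not rederive the torsion of $H_1(\PMod(\Sigma_{g,n}),\Z)$; it quotes Korkmaz's computation, which is trivial for $g\geq 3$, $\Z/10\Z$ for $g=2$, and $\Z/12\Z$ for $g=1$, for any number of punctures. You instead try to recover this torsion from relations, and that derivation has a real hole exactly where the punctures matter.

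\emph{Genus $1$ with $n\geq 2$.} The $2$-chain relation only gives $[t_\delta]=12[t_\gamma]$. Your fallback adds nothing: knowing that $\widetilde{\rho}(t_\delta)$ has $p$-th roots of unity as eigenvalues gives $p[t_\delta]=0$, which already follows from $p[t_\gamma]=0$. What you actually need is $[t_\delta]=0$ in $H_1(\PMod(\Sigma_{1,n}))$. Chain relations will not supply this once $n\geq 3$: the $3$-chain relations, with the punctures split as $S\sqcup S^c$ between the two boundary discs, only yield $[t_{d_S}]+[t_{d_{S^c}}]=12[t_\gamma]$.

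To close the gap, either cite Korkmaz as the paper does, or use lantern relations as follows.
\begin{enumerate}
\item Fix a non-empty set $S$ of punctures and a puncture $x\notin S$.
\item Take disjoint non-separating curves $c,c'$ cutting off an annulus that contains exactly the punctures of $S\cup\{x\}$. If these are all the punctures, $c$ and $c'$ are isotopic.
\item Take a curve $d_S$ in that annulus enclosing $S$.
\item The lantern relation on the four-holed sphere bounded by $c,c',d_S,x$ has interior curves $d_{S\cup\{x\}}$ and two non-separating curves. Hence $[t_{d_{S\cup\{x\}}}]=[t_{d_S}]$.
\item Induction from $[t_{d_{\{x\}}}]=0$ gives $[t_\delta]=0$, hence $12[t_\gamma]=0$.
\end{enumerate}

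\emph{Genus $2$ with $n\geq 1$.} Your $4$- and $5$-chain relations close up only if you place all punctures in one of the two discs bounded by the $5$-chain. Then $\delta_2$ is inessential, and $\delta_1$ lies in the same $\PMod$-orbit as the boundary $\delta'$ of the $4$-chain. This needs saying, because curves enclosing different sets of punctures are not conjugate in $\PMod$. With that choice, $30[t_\gamma]=[t_{\delta_1}]=[t_{\delta'}]=40[t_\gamma]$, which gives $10[t_\gamma]=0$.

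With these two points filled in, your argument becomes a self-contained version of the paper's.
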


\begin{proof}
	Assume first that $g\geq 2$.
	It is well known (see for example \cite[§5]{Korkmaz}) that $H_1(\Mod(\Sigma_{g}^{n}),\Z)$ is trivial if $g\geq 3$,
	and is $\simeq \Z/10\Z$ if $g=2$. In the latter case, the abelianization is generated by the image of the Dehn
	twist along any non-separating simple closed curve.
	
	However, the image of such a Dehn twist by $\rho_{p,g,\ul}$ or $\rho_{J,g,\ul}$ has order $p$,
	which is coprime to $10$ since $p\geq 7$. We thus get that $G$ is perfect if $g\geq 2$.
	
	Now, if $g=1$, we use that $\rho_{p,g,\ul}$ factors through the pure mapping class group of the surface with
	genus $1$ and $n$ marked points.
	The latter has abelianization $\Z/12\Z$ (again see \cite[§5]{Korkmaz}), which is generated by the image of the Dehn twist
	along any non-separating simple closed curve.
	Since the latter is sent to an element of order $p$ by $\rho_{p,g,\ul}$,
	we conclude that $G$ is perfect in that case as well.
\end{proof}

We are now ready to prove the main result of this section:

\begin{theorem}
	\label{thm:simpleGroup} Let $p \geq 7$ be a prime. Let $g\geq 1$ and $\ul\in \Lambda^n$ for some $n\geq 0$.
	Assume that either $g\geq 3$, or that $g=2$ and $V_{0,\ul}\neq 0$, or that $g=1$ and $\ul$
	can be partitioned into $2$ permissive tuples. Then:

	\begin{enumerate}
		\item[(1)] $\overline{\rho_{p,g,\ul}(\PMod(\Sigma_{g,n}))}$ is a centerless simple Lie subgroup of
		$\mathrm{PSU}_{d_{p,g,\ul}}$;
		\item[(2)] For any maximal ideal $J\subset \Z[\zeta_p]$ of norm $q$ coprime to $p$, the image $\rho_{J,g,\ul}(\PMod(\Sigma_{g,n}))$ is a simple subgroup
		of $\mathrm{PGL}_{d_{p,g,\ul}}(\F_q)$.
	\end{enumerate}
\end{theorem}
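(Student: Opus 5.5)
The plan follows Larsen and Wang. Write $G$ for either $\overline{G_{g,\ul}}$ (a compact subgroup of $\PSU_{d}$, $d=d_{p,g,\ul}$) or $G_{J,g,\ul}$ (a finite group). Under the hypotheses of the theorem, \Cref{cor:tensor-indec} shows that $\rho_{p,g,\ul}$ and $\rho_{J,g,\ul}$ are tensor-indecomposable, and \Cref{thm:irreducibility} shows they are absolutely irreducible. Hence \Cref{lemma:normalIrred} applies: every non-trivial normal subgroup $N$ of $G$ acts irreducibly on $V=V_{p,g,\ul}$. In the compact case, \Cref{lemma:normalIrred} is stated for the discrete image $G_{g,\ul}$, so I first extend it to the closure. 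Irreducibility and Dehn-twist spectra pass to the closure, and \Cref{lemma:normalIsotypic} and \Cref{lemma:normalIrred} only used irreducibility of $V$ under $G$, semisimplicity of Dehn twists, and that $G$ is topologically generated by the images of Dehn twists. So their proofs go through verbatim. By \Cref{lemma:perfect}, $G$ is perfect, and so is its closure. Centerlessness is automatic, since $G\subset\PGL(V)$ acts irreducibly and central elements act as scalars by Schur.

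Next I take a minimal non-trivial normal subgroup; in the compact case, this means a minimal non-trivial closed connected normal subgroup of the identity component $G^0$ (or of $G$ if $G^0$ is trivial). First I rule out $G$ being finite in the complex case, or more generally abelian normal subgroups. If $A\triangleleft G$ is abelian and non-trivial, then $A$ acts irreducibly on $V$ of dimension $\geq 2$, which is impossible for an abelian group over an algebraically closed field. One has to be a bit careful because $A$ lives in $\PGL$: a non-trivial abelian subgroup of $\PGL(V)$ acting irreducibly lifts to a Heisenberg-type group. I would handle this with the tensor trick of \Cref{lemma:normalIrred}, or with a Heisenberg/Clifford-theory argument showing that a Dehn twist would then permute weight spaces, contradicting \Cref{lemma:spectrumAdditiveDecomp}, since the twist spectrum $\{\zeta_p^{k^2}\}$ contains no coset of a non-trivial group of roots of unity.

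So the socle-type normal subgroup $M$ (the connected semisimple $G^0$ in the Lie case, or a minimal normal subgroup in the finite case) is a product $S_1\times\dots\times S_k$ of copies of a non-abelian simple group (or simple factors of the Lie algebra), and $G$ permutes the factors by conjugation. By \Cref{prop:projIrredProduct}, $V|_M\simeq W_1\otimes\dots\otimes W_k$. If $G$ normalizes each factor, \Cref{cor:OuterAutomTensor2} makes $\rho$ tensor-decomposable unless $k=1$. If $G$ permutes the factors non-trivially, some Dehn twist $t$ does, because Dehn twists generate. Then \Cref{cor:OuterAutomTensor} puts $\rho(t)$ in the form of \Cref{lemma:spectrumTensorDecomp}, whose conclusion again contradicts the twist spectrum. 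The $W_i$ all have the same dimension $\geq 2$ because the factors are conjugate. Hence $k=1$, and $M=S$ is simple.

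Finally, I show $G=S$ (up to closure in the Lie case). The centralizer $C_G(S)$ is normal and commutes with an irreducible $S$-action, so by Schur it is trivial in $\PGL$. Thus $G\hookrightarrow\mathrm{Aut}(S)$, and $G/S$ embeds in $\mathrm{Out}(S)$. In the Lie case, $G^0$ is then simple and $G/G^0$ is a finite group of outer (diagram) automorphisms; I would use perfectness together with the fact that Dehn twists have order $p\geq 7$ coprime to $|\mathrm{Out}|$ for the relevant groups. In the finite case, $\mathrm{Out}(S)$ is solvable (Schreier), and a perfect group with solvable quotient $G/S$ forces $G=S$. Since $G$ is perfect, $G/G^0$ is also trivial in the Lie case. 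I expect the main obstacle to be the abelian and Heisenberg-type normal subgroups and the precise handling of projective lifts. I would first try reducing to \Cref{lemma:normalIrred}: for an abelian normal $A$ acting irreducibly on $V$, $V$ would have to be induced from an eigencharacter, and the permutation of eigenlines by a Dehn twist contradicts \Cref{lemma:spectrumAdditiveDecomp}.
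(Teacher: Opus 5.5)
Your overall architecture is the paper's. You use \Cref{lemma:normalIrred} for normal subgroups, write $V|_N$ as a tensor product via \Cref{prop:projIrredProduct}, force a single simple factor with \Cref{cor:OuterAutomTensor2}, \Cref{cor:OuterAutomTensor} and \Cref{lemma:spectrumTensorDecomp}, and conclude $G=N$ by perfectness and solvability of $\mathrm{Out}$. Working with one minimal normal subgroup $S$ and $C_G(S)=1$, instead of the whole socle, is a harmless variant. Your Schur step there is valid because $S$ is perfect: the commutator character of a lift of $c\in C_G(S)$ on the preimage of $S$ is then trivial.

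\textbf{The gap.} The missing step is the one that makes $N$ non-abelian. You flag it correctly, but you do not close it.
\begin{enumerate}
\item ``Centerlessness is automatic by Schur'' is false in $\PGL(V)$. A projectively central $z$ only satisfies $\tilde z\tilde g\tilde z^{-1}=\chi(g)\tilde g$. The image in $\PGL_{\ell^b}$ of an extraspecial group is abelian, yet acts irreducibly. In the paper, centerlessness comes out at the end, from simplicity.
\item Neither of your sketched fixes excludes an abelian normal $A\cong\F_\ell^{2b}$ acting irreducibly. The argument of \Cref{lemma:normalIrred} only reproduces irreducibility of $A$, which is exactly the Heisenberg situation. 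The permutation argument needs a twist-stable isotropic $I\subset A$ whose $\tilde I$-eigenspaces the twist $t$ permutes non-trivially. Suppose $t$ acts on $A$ by a symplectic transvection $a\mapsto a+\langle v,a\rangle v$. Then every $t$-stable isotropic subgroup lies in $v^\perp$ and is fixed pointwise. Applying \Cref{lemma:spectrumAdditiveDecomp} to single $\tilde a$ only shows that the commutator character of $\tilde t$ is trivial on $C_A(t)$. So $t$ permutes none of these decompositions, and no contradiction arises. Excluding this needs a genuinely finer spectral argument, for instance counting distinct eigenvalues, or comparing $|\operatorname{tr}\tilde t^{\,k}|^2=|C_A(t)|$ with the twist spectrum. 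Your plan contains no such argument.
\item Excluding abelian normal subgroups does not show that $G$ is infinite in the complex case, contrary to what you write.
\end{enumerate}

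\textbf{Compact case.} Here you can avoid all of this by doing what the paper does: take $N=G^\circ$. For $z\in Z(N)$ with a lift $\tilde z$, consider the map $x\mapsto[\tilde z,x]$ from the identity component of the preimage of $N$ in $\SU(V)$ to the finite group of scalars. It is continuous on a connected group, hence trivial. So $\tilde z$ is scalar by Schur, $Z(N)=1$, and $N$ is a product of simple adjoint groups.

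\textbf{Finite case.} The paper takes $N=\mathrm{soc}(G)$ and asserts $Z(N)=1$ from irreducibility and faithfulness alone. By your own Heisenberg remark, this is precisely the point that needs more than those two facts. Your proposal still leaves that point unproved.
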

Before embarking on the proof, we recall that the \emph{socle} of a finite group $G$ is the subgroup $\mathrm{soc}(G)$
which is the product of all minimal normal subgroups of $G$. It is well known that for any finite group $G$, the subgroup
$\mathrm{soc}(G)$ is characteristic in $G$, and is isomorphic to a direct product of finite simple groups (some of which may be abelian).
We refer to \cite[§2.A]{Isaacs} for an introduction to this notion.

The proof of \Cref{thm:simpleGroup} follows closely the proof of Step 6 of \cite[Theorem 2]{LW05}, with some simplifications due to
Lemma \ref{lemma:spectrumTensorDecomp}.
\begin{proof}
	Let $G:=G_{g,\ul}$ or $G_{J,g,\ul}$. In the former case, let $N=G^{\circ}$ be the connected
	component of the identity in $G$; in the latter case, let $N=\mathrm{soc}(G)$.
	Since by \Cref{lemma:normalIrred}, $V_{g,\ul}$ is irreducible as a
	projective $N$-representation, and furthermore, is a faithful $G$-representation, we get that $Z(N)=1$.
	Hence, in the former case, $N$ is a direct product of
	connected simple centerless Lie groups; in the latter, $N$ is a direct product of non-abelian finite simple groups.
	Let
	$$N\simeq K_1^{n_1}\times \dotsb \times K_k^{n_k}$$
	be the decomposition of $N$ into simple factors, where $K_i$ is non-isomorphic to $K_j$ for $i\neq j$.
	Since the $K_i$'s are simple and centerless, they are perfect. By Proposition \ref{prop:projIrredProduct},
	the irreducible $N$-representation $V_{g,\ul}$ is a tensor product $W_1^{\otimes n_1}\otimes \dotsb \otimes W_k^{\otimes n_k}$,
	where $W_i$ is an irreducible $K_i$-representation for each $i$.
	
	We claim that $k=1$; otherwise the image of $G\longrightarrow \mathrm{Aut}(N)$ leaves each factor $K_i^{n_i}$
	invariant and \Cref{cor:OuterAutomTensor2} provides a tensor decomposition for $V_{g,\ul}$,
	which is a contradiction, since $V_{g,\ul}$ is tensor-indecomposable by Theorem \ref{thm:tensor-indec}.
	
	Now, consider the morphism:	
	\begin{equation}\label{eq:morphismOuterAutom}
		G\longrightarrow \mathrm{Aut}(N)\longrightarrow \mathrm{Out}(N)\simeq \mathrm{Out}(K_1)^{n_1} \rtimes S_{n_1}.
	\end{equation}	
	First, we claim that the image lands in $\mathrm{Out}(K_1)^{n_1}$. Otherwise, since $G$ is (topologically)
	generated by the image of Dehn twists,
	there would be a Dehn twist $t\in G$ such that conjugation by $t$ permutes the factors of $K_1^{n_1}$ non-trivially.
	However, by \Cref{cor:OuterAutomTensor} and Lemma \ref{lemma:spectrumTensorDecomp},
	this would imply that the spectrum of $t$ contains a coset of a non-trivial group of roots of unity,
	which it does not by Proposition \ref{prop:DehnTwistSpectrum}.
	
	By Lemma \ref{lemma:perfect}, $G$ is perfect. However, for $K_1$ a finite simple group or a simple Lie group,
	$\mathrm{Out}(K_1)$ is solvable (see \cite[§4.7 p. 133]{dixonPermutationGroups1996} and \cite[§10.6.10]{LieGroups2007}).
	We conclude that the morphism in \Cref{eq:morphismOuterAutom} is the trivial morphism.
	
	Hence, conjugation by an element $g\in G$ induces an inner automorphism on $N$, say by $n$.
	Then $gn^{-1}$ acts trivially on $N$, but since $V_{g,\ul}$ is a projective irreducible $N$-representation,
	this implies $g=n$. Hence, $G=N$.
	Since $V_{g,\ul}$ is tensor-indecomposable, we have $n_1=1$ and $G=K_1$; so $G$ is a finite simple group or a simple Lie group.
\end{proof}

\section{End of proof of density for surfaces with boundary}
\label{sec:densityProof}

In this section, we prove the following using simplicity of the image (\Cref{thm:simpleGroup}), weight-multiplicity-freeness
(\Cref{prop:weightMultiplicityFree}) and fusion rules (\Cref{prop:notSymmetricPower,prop:notExteriorPower}).

\begin{theorem}\label{thm:density}
	Let $p\geq 7$, $g\geq 1$, $n\geq 0$ and $\ul\in\Lambda^n$ such that either $g\geq 3$, or $g=2$ and $V_{0,\ul}\neq 0$,
	or $g=1$ and $\dim V_{1,\ul}$ is not a power of $4$ and $\ul$ can be partitioned into $2$ permissive tuples.
	Then the image of $\rho_{g,\ul}$ is dense in $\PU_{d_{p,g,\ul}}$.
\end{theorem}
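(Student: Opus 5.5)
Let $G$ denote the closure of $\rho_{g,\ul}(\PMod(\Sigma_{g,n}))$ in $\PSU_{d}$, with $d=d_{p,g,\ul}$. By \Cref{thm:simpleGroup}, $G$ is a compact connected simple centerless Lie group. By \Cref{thm:irreducibility} and \Cref{lemma:normalIrred}, $V_{g,\ul}$ is an irreducible projective representation of $G$, hence a faithful irreducible representation $\pi$ of the simply connected cover $\widetilde G$, which is simple. The plan is to show $\pi(\widetilde G)=\SU_d$ by narrowing down the possible pairs $(\widetilde G,\pi)$ through three successive steps.

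\emph{Step 1: weight-multiplicity-freeness.} Take a maximal family of disjoint simple closed curves $\gamma_1,\dots,\gamma_{3g-3+n}$, i.e. a pants decomposition. The Dehn twists $t_{\gamma_i}$ commute, so their images lie in a torus $T'$ of $G$. The joint eigenspaces of the $t_{\gamma_i}$ are the fusion summands indexed by admissible colorings. Each such summand is one-dimensional, and distinct colorings give distinct eigenvalue tuples, since by \Cref{prop:DehnTwistSpectrum} $j\mapsto\zeta_p^{j(j+2)}$ is injective on $\Lambda$ for $p$ prime. Hence $T'$, and therefore any maximal torus $T\supset T'$, has only one-dimensional weight spaces on $V$; this is the claimed \Cref{prop:weightMultiplicityFree}. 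Howe's classification of weight-multiplicity-free irreducible representations of simple groups then restricts $(\widetilde G,\pi)$ to a short list: the standard representation of $\SU_d$, $\mathrm{Sp}_{2m}$ or $\SO_d$ (all with dimension $d$); symmetric powers $S^k$ and exterior powers $\Lambda^k$ of the standard representation of $\SU_m$; spin representations of $\mathrm{Spin}$ groups (dimension a power of $2$); a few representations of $\mathrm{Sp}_{2m}$ (e.g. the minimal fundamental ones, the Weil-type ones); and finitely many exceptional cases ($G_2$ in dimension $7$, $E_6$ in dimension $27$, $E_7$ in dimension $56$, and so on).

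\emph{Step 2: excluding orthogonal, symplectic and most sporadic cases.} If $\pi$ were self-dual, $V$ would carry an invariant bilinear form, so the spectrum of each Dehn twist would be stable under inversion. But $\{\zeta_p^{k^2}\}$ (the quadratic residues times a scalar, up to rescaling) is not stable under $x\mapsto c/x$, because $-1$ times a square is not a square pattern matching up for $p$ prime. This kills $\SO_d$, $\mathrm{Sp}$ and all self-dual entries. For the remaining spin-type and exceptional cases, I would use dimension constraints. \Cref{lemma:dimensionDivisibleByP} gives $p\mid d$ for $g\ge2$, which rules out powers of $2$ and the fixed exceptional dimensions once $p\ge7$; genus $1$ is handled by the hypothesis that $\dim V_{1,\ul}$ is not a power of $4$. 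One can also compare orders: a Dehn twist has exactly $\frac{p-1}{2}$ distinct eigenvalues with the multiplicities given by fusion, and these must fit the weight structure.

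\emph{Step 3: excluding $S^k$ and $\Lambda^k$ of $\SU_m$ with $2\le k\le m-2$.} This is where I expect the main obstacle, corresponding to \Cref{prop:notSymmetricPower,prop:notExteriorPower}. My idea is to use fusion along a non-separating curve $\alpha$. The centralizer of $t_\alpha$ acts irreducibly on each eigenspace $V_{g-1,(\ul,i,i)}$ by \Cref{thm:irreducibility}, and these eigenspaces are $\frac{p-1}{2}$ in number, pairwise non-isomorphic, and all nonzero. On the other hand, for an element $x$ of a torus of $\SU_m$ acting on $\Lambda^k\C^m$ or $S^k\C^m$, the eigenspaces are determined by how the eigenvalues of $x$ on $\C^m$ distribute. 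Matching $\frac{p-1}{2}$ eigenvalues that form a set $\{\zeta^{k^2}\}$ against sums or products of $k$ eigenvalues of $x$ should force incompatible multiplicity patterns. Alternatively, the multiplicities could be compared with the dimensions $d_{p,g-1,(\ul,i,i)}$, which by fusion all grow comparably. A cleaner route might be to use the centralizer: the stabilizer of an eigenvalue decomposition in $\SU_m$ acting on $\Lambda^k$ is a product of unitary groups, so each eigenspace splits as a tensor product of exterior powers. Irreducibility together with tensor-indecomposability of the pieces (via \Cref{cor:tensor-indec} applied to the cut surface) should then yield a contradiction. Once these families are excluded, only the standard representation of $\SU_d$ remains, so $G=\PSU_d$, i.e. the image is dense in $\PU_{d}$.
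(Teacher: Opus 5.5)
Your overall strategy is the paper's: simplicity, then weight-multiplicity-freeness plus Howe, then exclusion of self-dual and exceptional cases, then fusion arguments against $S^m$ and $\Lambda^m$. However, two steps fail as stated and the third is not yet an argument.

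\textbf{Step 1.} Commuting elements of $G$ need not lie in a torus. Moreover, weight-multiplicity-freeness is a statement about $\widetilde G$, so you need the preimage of $A=\langle t_{\gamma_i}\rangle$ in $\widetilde G$ to be abelian and toral. This can genuinely fail. For $p\mid k$, the images in $\PU_k$ of the (block) clock and shift matrices generate a $(\Z/p\Z)^2$ whose preimage in $\SU_k$ is non-abelian, and you cannot exclude this a priori since $p\mid d_{p,g,\ul}$ for $g\geq 2$.

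The paper takes all pants curves non-separating and proves (\Cref{lemma:liftTorusToSL}) that the commutator pairing $\Lambda^2A\to Z(\widetilde H)$ is killed by $2$, because some mapping class swaps any two of the curves, and also by the odd prime $p$. For $\PSO_{4k+2}$ and $\mathrm{E}_6/Z(\mathrm{E}_6)$ it uses that $|A|$ is odd and that $p\nmid |W(\mathrm{E}_6)|$. This case analysis only works after the self-dual groups have been excluded \emph{first}. With your ordering, you would need toral containment in every simple group.

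\textbf{Step 2.} Your inversion argument is false for $p\equiv 1\pmod 4$: then $-1$ is a square modulo $p$, so the set $\{\zeta_p^{k^2}\}$ is stable under $x\mapsto x^{-1}$. The paper instead cuts along a separating curve $\alpha$ bounding a one-holed torus. An isomorphism $V\simeq V^*$ must send each irreducible summand $V_{1,\mu}\boxtimes V_{g-1,(\ul,\mu)}$ to the dual of a summand with the same $\dim V_{1,\cdot}$. These dimensions $\frac{p-1-\mu}{2}$ are pairwise distinct (\Cref{lemma:dimTori}), so each summand is matched with its own dual. This gives $\zeta_p^{i^2}=u\zeta_p^{-i^2}$ for all $i$, which is impossible.

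Also, in genus $1$ the power-of-$4$ hypothesis does not exclude the $27$-dimensional $\mathrm{E}_6$ representations. The paper needs a separate lower bound on $\dim V_{1,\ul}$ and a computer check for $p=7$.

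\textbf{Step 3.} This step lists heuristics rather than giving a proof, and your non-separating route runs into two obstacles.
\begin{enumerate}
\item Elements commuting in $\PU(W)$ need not have commuting lifts in $\SU(W)$. So you cannot assume the centralizer of $t_\alpha$ preserves the eigenspaces of a lift of $t_\alpha$ on $W$.
\item \Cref{cor:tensor-indec} does not cover the cut surface in general. It is planar when $g=1$. When $g=3$ it requires $V_{0,(\ul,i,i)}\neq 0$, which fails for $i=0$ if $V_{0,\ul}=0$.
\end{enumerate}

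The paper again uses the separating curve bounding $\Sigma_{1,1}$, so the relevant subgroup is the product $\Mod(\Sigma_1^1)\times\PMod(\Sigma_{g-1,n}^1)$. \Cref{lem:tensorProduct} uses that the twist spectrum contains no coset of a non-trivial group of roots of unity. It shows that lifts of the two factors commute in $\SU(W)$, whence $W=\bigoplus_i W_i^1\boxtimes W_i^2$.

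Then $S^m(W)$ (resp.\ $\Lambda^m(W)$) splits into pieces $\bigotimes_i S^{f(i)}(W_i^1\boxtimes W_i^2)$ (resp.\ with $\Lambda^{f(i)}$). Since $T_\alpha$ acts by a scalar on each piece, each piece must coincide with a single summand $V_{1,\mu}\boxtimes V_{g-1,(\ul,\mu)}$. The Cauchy formula (\Cref{prop:CauchyDecomp}) rules out factors with $\dim W_i^1,\dim W_i^2\geq 2$. The final contradiction comes from two facts: the $\dim V_{1,\mu}$ are pairwise distinct, and $\dim V_{g-1,(\ul,\mu)}\geq 2$ (\Cref{lemma:dimAtLeastTwo,lemma:dimensionGenusZero}, apart from a degenerate case handled separately). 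These are the ingredients your Step 3 is missing.
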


The case $n=0$, $g\geq 2$ of the theorem corresponds to the main results \cite[Theorems 2 and 3]{LW05} of Larsen-Wang.
Note that we provide a different proof which does not rely on any inductive arguments and also covers cases with $n>0$ or with $g=1$.
For fixed $p$ and $g=2$, our assumption on $\ul$ only excludes a finite number of cases; see \Cref{prop:permissiveTuple}.

As mentioned in \Cref{sec:prelim},
the mapping class group representation $\rho_{g,\ul}$
can be linearized into a representation $\widetilde{\rho}_{g,\ul}$ of the universal central extension $\Modtilde(\Sigma_g^n)$.
When $g\geq 2$, the image of $\widetilde{\rho}_{g,\ul}$ lands in $\SU_{d_{p,g,\ul}}$ (\Cref{prop:imageInSU})
and we can deduce the following from \Cref{thm:density}.

\begin{corollary}\label{cor:density}
	Let $p\geq 7$, $g\geq 2$, $n\geq 0$ and $\ul\in\Lambda^n$ such that either $g\geq 3$, or $g=2$ and $V_{0,\ul}\neq 0$.
	Then the image of $\widetilde{\rho}_{g,\ul}$ is dense in $\SU_{d_{p,g,\ul}}$.
\end{corollary}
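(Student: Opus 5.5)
Our plan is to deduce \Cref{cor:density} from \Cref{thm:density} using the central extension structure together with \Cref{prop:imageInSU}, which we take as given. Write $H=\overline{\widetilde{\rho}_{g,\ul}(\Modtilde(\Sigma_g^n))}\subset \SU_{d}$, where $d=d_{p,g,\ul}$. Let $\pi:\SU_d\to\PSU_d=\PU_d$ be the projection, whose kernel is the finite center $\mu_d$. By \Cref{thm:density}, $\pi(H)$ is dense in $\PU_d$. Since $H$ is compact, $\pi(H)$ is closed, so $\pi(H)=\PU_d$. Hence $H\cdot\mu_d=\SU_d$, and $H$ is a closed subgroup of finite index in $\SU_d$.

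Because $\SU_d$ is connected, it has no proper closed subgroup of finite index: such a subgroup would be open, and an open subgroup of a connected group is the whole group. Therefore $H=\SU_d$. Explicitly, the identity component $H^\circ$ is a closed subgroup of finite index in $\SU_d$, so $H^\circ=\SU_d$.

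The only point requiring care is that $\widetilde{\rho}_{g,\ul}$ really lands in $\SU_d$ and lifts $\rho_{g,\ul}$. Both facts are supplied by \Cref{prop:imageInSU} and by the definition of the linearization: the image of $\Modtilde(\Sigma_g^n)$ maps onto the image of $\PMod(\Sigma_{g,n})$ in $\PU_d$. I expect no genuine obstacle. If \Cref{prop:imageInSU} only gave unitarity, one would instead use that $\Modtilde(\Sigma_g^n)$ is perfect for $g\geq 3$ (and argue with determinants of order dividing $p$ in genus $2$). The connectedness argument above is the whole proof.
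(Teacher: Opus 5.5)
Your proof is correct. It matches the paper's up to the last step and then closes the argument differently.

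Both arguments use \Cref{prop:imageInSU} and \Cref{thm:density} to show that the closure $H$ of the image surjects onto $\PSU_d$. From there the paper argues algebraically. The commutator of two elements of $\SU_d$ depends only on their images in $\PSU_d$, so $H$ contains every commutator of $\SU_d$, and perfectness of $\SU_d$ then forces $H=\SU_d$.

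You argue topologically instead. From $H\cdot\mu_d=\SU_d$, $H$ is a closed subgroup of finite index, hence open, hence all of $\SU_d$ because $\SU_d$ is connected.

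Your route is more elementary. It needs neither perfectness of $\SU_d$ nor the stronger claim, as the paper phrases it, that the commutator map onto $\SU_d$ is surjective.

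The cost is that it is intrinsically topological. The paper's commutator argument is purely group-theoretic, and that is what carries over to the finite setting, where connectedness is unavailable. There the paper uses instead that $\SU(h_{g,\ul})(\Zrp/I_1)$ equals its own derived subgroup (see the proof of \Cref{lemma:Frattini}).

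Your closing remark about perfectness of the central extension is unnecessary, given \Cref{prop:imageInSU}.
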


\begin{remark}\label{rk:tensorProdNot}
	In this whole section and the next one, we will make use of two different notations for tensor products of representations,
	$U\otimes V$ or $U\boxtimes V$.
	We use the latter when we wish to stress that they are representations of two distinct groups $G_1$ and $G_2$.
\end{remark}

\subsection{Reduction to the case where \texorpdfstring{$\bm{V_{g,\ul}}$}{the TQFT vector space} is a symmetric or exterior power}
\label{sec:multiplicityFreeness}

\begin{proposition}\label{prop:notSelfDual}
	Let $p\geq 5$, $g\geq 1$, $n\geq 0$ and $\ul\in\Lambda^n$ such that either $g\geq 2$, or $g=1$ and $\ul$ is permissive.
	Then $V_{g,\ul}$ is not self-dual as a projective $\PMod(\Sigma_{g,n})$-representation.
\end{proposition}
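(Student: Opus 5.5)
We argue by contradiction. Suppose $V_{g,\ul}\simeq V_{g,\ul}^*$ as projective representations of $\PMod(\Sigma_{g,n})$. Then there is $A\in\PGL(V_{g,\ul})$ with $A\rho(\phi)A^{-1}=\rho(\phi)^{*,-1}$ for all $\phi$, where $\rho(\phi)^{*,-1}$ is the contragredient. We apply this to a Dehn twist $t_\gamma$ along a non-separating curve $\gamma$. Fix a lift of $\rho(t_\gamma)$. Its spectrum is then some $c\cdot\{\zeta_p^{k^2}\mid 1\le k\le \frac{p-1}{2}\}$, possibly up to an overall root of unity: this is Proposition~\ref{prop:DehnTwistSpectrum} for $g\geq 2$, and for $g=1$ it follows from the fusion decomposition along $\gamma$, since the summands $V_{0,(\ul,i,i)}$ are non-zero for every $i$ exactly because $\ul$ is permissive.

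The spectrum of the contragredient is the inverse set $c^{-1}\{\zeta_p^{-k^2}\}$. Projective isomorphism forces the two spectra to agree up to a scalar, so
$$\{\zeta_p^{k^2}\}_{1\le k\le (p-1)/2}=\lambda\cdot\{\zeta_p^{-k^2}\}_{1\le k\le (p-1)/2}$$
for some scalar $\lambda$. Both sets consist of $p$-th roots of unity, so $\lambda=\zeta_p^{a}$ for some $a\in\Z/p$. Writing $Q=\{k^2 \bmod p\}$ for the set of $\frac{p-1}{2}$ distinct values (the $k^2$ are distinct mod $p$ for $1\le k\le\frac{p-1}{2}$), the equation becomes $Q=a-Q$ in $\F_p$. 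Here $Q$ is the set of non-zero quadratic residues. This is only the eigenvalue sets; multiplicities could also be used, but should not be needed.

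The core step is the arithmetic claim that no $a\in\F_p$ satisfies $Q=a-Q$. Summing over $Q$ gives $\sum_{x\in Q}x=\frac{p-1}{2}a-\sum_{x\in Q}x$. Since $\sum_{x\in Q}x\equiv 0 \bmod p$ for $p\ge5$, this yields $\frac{p-1}{2}a\equiv0$, hence $a=0$. So $Q=-Q$, which means $-1$ is a square and $p\equiv1\bmod4$. This disposes of $p\equiv 3\bmod 4$ but not $p\equiv1\bmod 4$, where the spectrum alone cannot work since $-Q=Q$.

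For $p\equiv 1\bmod 4$ we must track more carefully how $A$ matches eigenspaces. The eigenspace labelled $i$ (with $\zeta_p^{(i+1)^2-1}$) must go to the one labelled $i'$ with $(i'+1)^2\equiv-(i+1)^2$, possibly after a global shift. By the computation above the shift is trivial, so it is pinned down up to the framing normalization, and the $-1$ in the exponent must be accounted for: the eigenvalues are $\zeta_p^{k^2-1}$, so inverting gives $\zeta_p^{1-k^2}$ and the actual shift is $a=2$. Redoing the sum argument with the correct normalization gives $\frac{p-1}{2}a\equiv -(p-1)$, hence $a=2$, and then $Q-1=1-Q$, i.e. $Q=2-Q$. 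We then show $Q\neq 2-Q$ directly: $1\in Q$ would force $1\in Q$ (consistent), but $2-x$ runs over residues only if a character-sum identity holds. The cleanest route is to count solutions of $x+y=2$ with $x,y\in Q$; via Jacobi sums this count is about $p/4$, not $\frac{p-1}{2}$. This contradiction is the main obstacle. If it fails, we instead use the dimensions of eigenspaces, namely $\dim V_{g-1,(\ul,i,i)}$ versus the matched ones, or a second curve in a pants configuration as in Theorem~\ref{thm:tensor-indec}: $A$ must respect the fusion along $\alpha,\beta,\gamma$ simultaneously, and an induced involution $i\mapsto i'$ of $\Lambda$ would have to preserve admissibility of triples, which fails because it would send $0$ to a non-zero color.
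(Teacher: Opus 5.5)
\textbf{There is a genuine gap, for $p\equiv 1\pmod 4$.}

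Your treatment of $p\equiv 3\pmod 4$ is correct, and it is more elementary than the paper's route. The spectrum of a non-separating twist is, as a set, the set $Q$ of non-zero squares. The identity $\sum_{x\in Q}x\equiv 0\pmod p$ forces the projective shift $a$ to vanish. Then $Q=-Q$ would make $-1$ a square, which fails for these primes.

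For $p\equiv 1\pmod 4$, however, $Q=-Q$ really holds, so the eigenvalue \emph{set} of a single twist can never produce a contradiction. Your attempted repair rests on an error. The $-1$ in $\zeta_p^{(i+1)^2-1}$ is a global scalar, and projective equivalence absorbs it. If you redo your sum argument with exponent set $Q-1$ and equation $Q-1=a+(1-Q)$, you get $a=-2$, not $2$. That gives back $Q=-Q$, so the condition $Q=2-Q$ never arises, and the Jacobi-sum discussion is moot. Your two fallbacks are only named, not carried out. The pants-configuration one also needs non-vanishing of joint eigenspaces, which \Cref{thm:tensor-indec} supplies only under stronger hypotheses than those of the proposition.

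The missing ingredient is information beyond the eigenvalue set. The paper uses a separating curve $\alpha$ cutting off a one-holed torus. Its eigenspaces $V_{1,\mu}\boxtimes V_{g-1,(\ul,\mu)}$ are irreducible for $\Mod(\Sigma_1^1)\times\PMod(\Sigma_{g-1,n}^1)$. A self-duality must therefore match each of them with the dual of some eigenspace via a bijection $\sigma$. Since the dimensions $\dim V_{1,\mu}=\frac{p-1-\mu}{2}$ are pairwise distinct, $\sigma=\id$. This forces $\zeta_p^{2i^2}$ to be independent of $i$, which is impossible.

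Within your own non-separating setup, the multiplicity fallback can also be completed in a few lines:
\begin{enumerate}
\item Once $a=0$, the isomorphism matches the $i=0$ eigenspace, of dimension $\dim V_{g-1,\ul}$, with the $i'$ eigenspace where $(i'+1)^2\equiv -1$. Since $p\geq 5$, this gives $i'\neq 0$.
\item Cutting off the two $i'$-coloured points, and using that $(i',i',0)$ and $(i',i',2)$ are admissible, gives $\dim V_{g-1,(\ul,i',i')}\geq \dim V_{g-1,\ul}+\dim V_{g-1,(\ul,2)}$.
\item The last term is positive: by \Cref{lemma:dimAtLeastOne} if $g\geq 2$, and by permissiveness of $\ul$ if $g=1$.
\end{enumerate}
The resulting strict inequality of eigenspace dimensions is exactly the step your proposal lacks, and it handles both residue classes of $p$ at once.
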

\begin{proof}
	Consider a separating {\scc} $\alpha\subset\Sigma_{g,n}$ such that one of the components of $\Sigma_{g,n}\setminus \alpha$
	is homeomorphic to $\Sigma_{1,1}$, and the corresponding fusion decomposition
	\begin{equation}
		V_{g,\ul}\simeq \bigoplus_{\mu\in\Lambda} V_{1,\mu}\boxtimes V_{g-1,(\ul,\mu)}.
	\end{equation}
	All summands are non-zero thanks to \Cref{cor:nonTriviality} if $g\geq 2$ and thanks to $\ul$ being permissive if $g=1$.
	This decomposition is both a decomposition of $V_{g,\ul}$
	into irreducible $\Mod(\Sigma_1^1)\times\PMod(\Sigma_{g-1,n}^{1})$-representations and the eigenspace decomposition of the Dehn twist $T_\alpha$.
	Assume by contradiction that $V_{g,\ul}\simeq V_{g,\ul}^*$ as a projective $\PMod(\Sigma_{g,n})$-representation.
	Then there exists a bijection $\sigma:\Lambda\ra\Lambda$ such that for each $\mu\in\Lambda$,
	$V_{1,\mu}\boxtimes V_{g-1,(\ul,\mu)}$ is isomorphic to $V_{1,\sigma(\mu)}^*\boxtimes V_{g-1,(\ul,\sigma(\mu))}^*$
	as a $\Mod(\Sigma_1^1)\times\PMod(\Sigma_{g-1,n}^{1})$-representation. However, as recalled in \Cref{lemma:dimTori},
	$V_{1,\sigma(\mu)}$ and $V_{1,\mu}$ have the same dimension if and only if $\mu=\sigma(\mu)$.
	Hence, $\sigma$ is the identity. In terms of eigenspaces for $T_\alpha$, this means that
	there exists a constant $u$ such that for any $\mu\in\Lambda$, the eigenvalue of $T_\alpha$ on
	$V_{1,\mu}\boxtimes V_{g-1,(\ul,\mu)}$ is $u$ times that of $T_\alpha$ on $V_{1,\mu}^*\boxtimes V_{g-1,(\ul,\mu)}^*$.
	This means that $\zeta_p^{i^2}=u\zeta_p^{-i^2}$ for any $i\in \lbrace 1,\ldots,\frac{p-1}{2}\rbrace$, which is impossible for $p\geq 5$.
\end{proof}

\begin{corollary}\label{cor:groupsWithNonSelfDualReps}
	Let $p\geq 7$ be a prime, $g\geq 1$, $n\geq 0$ and $\ul\in\Lambda^n$ such that either $g\geq 3$, or $g=2$ and $V_{0,\ul}\neq 0$,
	or $g=1$ and $\ul$ can be partitioned into $2$ permissive tuples.
	Then $G_{g,\ul}$ is isomorphic to one of
	\begin{equation*}
		\PU_k,\; k\geq 3,\;\;\PSO_{4k+2},\; k\geq 2,\text{ or }\mathrm{E}_6/Z(\mathrm{E}_6).
	\end{equation*}
\end{corollary}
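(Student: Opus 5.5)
By \Cref{thm:simpleGroup}(1), whose hypotheses coincide with ours (note $p\geq 7$), the closure $G_{g,\ul}$ of the image is a compact connected centerless simple Lie subgroup of $\PSU_{d_{p,g,\ul}}$. The plan is to classify which such groups can carry $V_{g,\ul}$, using two inputs: this representation is irreducible, and it is not self-dual. First I will pass to the simply connected cover $\widetilde{G}$ of $G_{g,\ul}$. The projective representation $V_{g,\ul}$ lifts to an irreducible linear representation $\widetilde{V}$ of $\widetilde{G}$. It is irreducible because $G_{g,\ul}$ contains the image of $\PMod(\Sigma_{g,n})$, which acts irreducibly by \Cref{thm:irreducibility}, and it is faithful modulo the center because $G_{g,\ul}\subset\PSU$.

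Next I will transfer non-self-duality from the mapping class group to $\widetilde{G}$. Suppose $\widetilde{V}\simeq \widetilde{V}^*\otimes\chi$ as projective $\widetilde{G}$-representations. Restricting to the dense image of $\PMod(\Sigma_{g,n})$ would then give $V_{g,\ul}\simeq V_{g,\ul}^*$ projectively: an intertwiner for $G_{g,\ul}$ is in particular an intertwiner for the image of $\PMod$. This contradicts \Cref{prop:notSelfDual}. To apply that proposition I must check its hypotheses. For $g\geq 2$ they hold directly. For $g=1$, if $\ul$ splits into two permissive tuples, then $\ul$ itself is permissive by \Cref{lemma:permissive1}, applied repeatedly to append the colors of the second tuple to the first. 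So $\widetilde{V}$ is an irreducible representation of a compact simple simply connected group and is not self-dual, even up to twisting by a scalar.

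Now I bring in the standard fact that $-1$ lies in the Weyl group of $\widetilde{G}$, and hence every irreducible representation is self-dual, unless the Lie type is $A_k$ with $k\geq 2$, $D_{2k+1}$, or $E_6$. This leaves three possibilities. In type $A_{k-1}$ we have $\widetilde{G}=\SU_k$ with $k\geq 3$, so $G_{g,\ul}$ is a quotient $\SU_k/C$ by a central subgroup $C$. In type $D_{2k+1}$ we get a quotient of $\mathrm{Spin}_{4k+2}$ with $k\geq 2$; here $D_3=A_3$ is already covered by the first case. The remaining type is $E_6$. Because $G_{g,\ul}$ is centerless, the quotient must be by the full center, so $G_{g,\ul}$ is the adjoint group in each case: $\PU_k$, $\PSO_{4k+2}$, or $\mathrm{E}_6/Z(\mathrm{E}_6)$.

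I expect the main obstacle to be making the duality transfer rigorous at the projective level. The point to establish is that projective self-duality of $G_{g,\ul}$ is equivalent to linear self-duality of $\widetilde{V}$ up to a character. Since $\widetilde{G}$ is perfect, it has no nontrivial characters, so "up to a character" reduces to genuine self-duality of $\widetilde{V}$. I also need to confirm that \Cref{prop:notSelfDual} rules out exactly this projective isomorphism.
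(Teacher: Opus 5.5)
Your proposal is correct and follows essentially the paper's argument: simplicity from \Cref{thm:simpleGroup}, the classification of centerless simple compact Lie groups, the fact that $w_0=-\mathrm{id}$ makes every irreducible representation self-dual outside types $A_k$ ($k\geq 2$), $D_{2k+1}$ and $E_6$, and a contradiction with \Cref{prop:notSelfDual}. Your explicit check that a partition of $\ul$ into two permissive tuples makes $\ul$ itself permissive via \Cref{lemma:permissive1}, which is needed to invoke \Cref{prop:notSelfDual} in genus $1$, is a detail the paper leaves implicit.
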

\begin{proof}
	By \Cref{thm:simpleGroup}, $G_{g,\ul}$ is a simple centerless compact Lie group. These are classified and so $G_{g,\ul}$
	is one of $\PU_k$, $k\geq 2$, $\PSO_{k}$, $k\geq 5$, $\mathrm{PSp}_{2k}$, $k\geq 2$, $\mathrm{E}_6/Z(\mathrm{E}_6)$, $\mathrm{E}_7/Z(\mathrm{E}_7)$,
	$\mathrm{E}_8$, $\mathrm{F}_4$ or $\mathrm{G}_2$. Now, for all of these but the ones listed in the statement of the corollary, all representations
	are self-dual. Indeed, given a maximal torus, the dual of the finite-dimensional irreducible representation $V_\nu$ with highest weight $\nu$
	is $V_{-w_0(\nu)}$ with $w_0$ the longest element in the Weyl group \cite[§21.4, Exercise 6]{humphreysIntroductionLieAlgebras1972},
	and for all but the examples in the statement, $w_0$ acts by $-\id$ \cite[VI, §4-13, (XI)]{bourbakiSystemesRacines2007}.
	This concludes the proof, since if $V_{g,\ul}$ were self-dual as a $G_{g,\ul}$-representation,
	then it would also be as a $\PMod(\Sigma_{g,n})$-representation, contradicting \Cref{prop:notSelfDual}.
\end{proof}

A finite-dimensional representation $V$ of a simple compact Lie group $G$ is \emph{weight-multiplicity-free} if given a maximal torus $T\subset G$,
each character of $T$ has multiplicity at most one in $V$, i.e. for any $\chi:T\ra S^1$, $\dim\{v\in V\mid \forall t\in T,\: t\cdot v=\chi(t)v\}\leq 1$.
A finite-dimensional projective representation of a \emph{simply-connected} simple Lie group is always linearizable in a unique way.
We will say that a \emph{projective} representation of a simple compact Lie group $G$ is weight-multiplicity-free if it is weight-multiplicity-free
as a linear representation of the universal cover $\widetilde{G}$ of $G$.

\begin{proposition}\label{prop:weightMultiplicityFree}
	Let $p\geq 7$, $g\geq 1$, $n\geq 0$ and $\ul\in\Lambda^n$ such that either $g\geq 3$, or $g=2$ and $V_{0,\ul}\neq 0$,
	or $g=1$ and $\ul$ can be partitioned into $2$ permissive tuples.
	Then $V_{g,\ul}$ is a \emph{weight-multiplicity-free} projective representation of $G_{g,\ul}$.
\end{proposition}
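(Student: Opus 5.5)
Let $G=G_{g,\ul}$, a compact simple centerless Lie group by \Cref{thm:simpleGroup}, and let $\widetilde{G}$ be its universal cover, so that $V=V_{g,\ul}$ is a linear irreducible $\widetilde{G}$-representation. The plan is to build a maximal torus of $\widetilde{G}$ out of Dehn twists along a pants decomposition and to read off weight spaces from the fusion basis. Fix a pants decomposition $\{\gamma_1,\dots,\gamma_{3g-3+n}\}$ of $\Sigma_{g,n}$, taking boundary curves as well if convenient. The twists $t_{\gamma_j}$ pairwise commute, so the closure $A$ of the group generated by their images in $G$ is a compact abelian subgroup. Lift it to $\widetilde{A}\subset\widetilde{G}$. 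Its identity component lies in a maximal torus $T$, and $\widetilde{A}$ lies in the centralizer of that identity component.

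\textbf{Key step.} The joint eigenspaces of the $\widetilde{\rho}(t_{\gamma_j})$ on $V$ are one-dimensional. Indeed, the basis given by admissible colorings of the dual graph diagonalizes all $t_{\gamma_j}$, with eigenvalue $\zeta_p^{c_j(c_j+2)}$ on an edge colored $c_j$ (up to a common scalar). Since $p$ is prime, the map $c\mapsto\zeta_p^{c(c+2)}$ is injective on $\Lambda$, as already used in the proofs of \Cref{thm:irreducibility} and \Cref{prop:DehnTwistSpectrum}. Hence distinct colorings give distinct joint eigencharacters, and this statement is independent of the chosen lifts.

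It then remains to pass from the finite abelian group $\widetilde{A}$ to the torus $T$. The plan:
\begin{itemize}
\item Each $t_{\gamma_j}$ has finite order $p$ in $G$, so $\widetilde{A}$ is finite. A finite abelian subgroup need not lie in a torus, so the naive argument does not work directly.
\item Instead, take $H=Z_{\widetilde{G}}(\widetilde{A})$. Since the joint $\widetilde{A}$-eigenspaces of $V$ are lines, $V$ restricted to $H$ is a direct sum of characters on those lines, so $H$ acts diagonally in the fusion basis.
\item The image of $H$ in $\GL(V)$ is therefore abelian. As $V$ is faithful up to the center, $H^\circ$ is a torus and $H\supset Z(\widetilde{A})$.
\item Choose a maximal torus $T\supset$ some element of $\widetilde{A}$ (any element of a compact connected group lies in a maximal torus). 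Better: I expect $\widetilde{A}$ to lie in a maximal torus because its elements are simultaneously diagonalizable in $V$. Consequently $Z_{\widetilde{G}}(\widetilde{A})^\circ$ contains a maximal torus $T$ of $\widetilde{G}$ whenever $\widetilde{A}$ sits in some torus; one may use that for simply connected $\widetilde{G}$ the centralizer of a single element is connected.
\item Concretely, I would try an alternative route to put $\widetilde{A}$ in a torus: pick a generic element $a=\prod_j t_{\gamma_j}^{k_j}$ whose eigenvalues separate all colorings. This requires the products of $\zeta_p^{c_j(c_j+2)k_j}$ to be distinct, which can fail. If it fails, take instead a limit of the torus through one such $a$.
\end{itemize}

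\textbf{Main obstacle.} This torus step is where I expect the difficulty. The first approach I would try is the following. Every element of $\widetilde{G}$ lies in a maximal torus $T$. Take $a\in\widetilde{A}$ with $Z_{\widetilde{G}}(a)$ connected (simply connected case, Steinberg) and containing all of $\widetilde{A}$. Then iterate inside $Z(a)$, which is connected reductive, on the next generator, until $\widetilde{A}$ lies in a common maximal torus $T$.

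\textbf{Conclusion.} Once $\widetilde{A}\subset T$, each $T$-weight space is contained in a joint $\widetilde{A}$-eigenspace, which is a line. Hence every weight of $T$ occurs in $V$ with multiplicity at most one, and $V$ is weight-multiplicity-free.
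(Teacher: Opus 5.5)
Your key step (one-dimensional joint eigenspaces for the twists along a pants decomposition, from injectivity of $c\mapsto\zeta_p^{c(c+2)}$ on $\Lambda$) is the paper's starting point. However, the torus step you flag as the main obstacle is essentially the whole content of the proposition, and none of your routes closes it.

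\emph{The lifts need not commute.} The preimage $\widetilde{A}$ of $A$ in $\widetilde{G}$ need not be abelian: commuting elements of an adjoint group can have non-commuting lifts, as the clock and shift matrices in $\PU_p$ versus $\SU_p$ show. If this happens, $\widetilde{A}$ lies in no torus at all. You never address this, yet your iteration (``$Z_{\widetilde{G}}(a)$ containing all of $\widetilde{A}$'') presupposes it.

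\emph{Abelian does not imply toral.} A finite abelian subgroup of a simply connected group need not lie in a maximal torus; for example, $\mathrm{G}_2$ has rank $2$ but contains $(\Z/2)^3$. Your iteration breaks at the third generator. By Steinberg, $Z_{\widetilde{G}}(a)$ is connected, but its derived group need not be simply connected, so $Z_{Z(a)}(b)$ can be disconnected.

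\emph{Joint eigenspaces are not enough.} Simultaneous diagonalizability on $V$ with one-dimensional joint eigenspaces forces neither toral position nor weight-multiplicity-freeness. The Heisenberg subgroup $(\Z/p)^2\subset\PU_p$ acts on the adjoint representation $\mathfrak{sl}_p$ through all $p^2-1$ non-trivial characters, each exactly once. Yet the zero weight of $\mathfrak{sl}_p$ has multiplicity $p-1$.

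\emph{No generic element exists.} Every element of $A$ has at most $p$ eigenvalues up to a common scalar. So no element of $A$ separates the fusion basis once $\dim V>p$, and ``a limit of the torus through $a$'' has no content.

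The missing ingredient is input specific to this situation, which the paper supplies in two stages.

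\begin{enumerate}
\item Non-self-duality (\Cref{cor:groupsWithNonSelfDualReps}) reduces $G_{g,\ul}$ to $\PU_k$, $\PSO_{4k+2}$ or $\mathrm{E}_6/Z(\mathrm{E}_6)$.
\item Each case is then handled separately.
\begin{itemize}
\item In $\PSO(W)$, the odd order of $A$ splits $W$ into $A$-stable pieces $V_\chi\oplus V_{\overline{\chi}}$ plus a fixed part, on which $A$ acts through a torus.
\item In $\mathrm{E}_6/Z(\mathrm{E}_6)$, $A$ lies in the normalizer of a maximal torus, and $p\geq 7$ is prime to the order $2^73^45$ of the Weyl group.
\item In $\PU(W)$, \Cref{lemma:liftTorusToSL} shows that the preimage of $A$ in $\SU(W)$ is abelian, hence diagonalizable. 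A mapping class exchanging two pants curves makes the commutator pairing $\phi(t_{\alpha_i}\wedge t_{\alpha_j})$ equal to its own negative. It is therefore both $2$-torsion and $p$-torsion, hence trivial.
\end{itemize}
\end{enumerate}

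This swapping argument is why the paper takes all pants curves non-separating. Your choice of decomposition (arbitrary curves, possibly including boundary ones) does not guarantee this.
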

\begin{proof}
	Consider a pair of pants decomposition of $\Sigma_{g}^{n}$ whose corresponding {\scc s} $\alpha_1,\dotsc,\alpha_{3g-3+n}$ are non-separating.
	This is always possible as $g\geq 1$, for example by choosing the decomposition with the dual graph below.
	\begin{center}
		\ctikzfig{graph_only_nonsep_curves}
	\end{center}
	Denote by $A$ the finite abelian subgroup generated by the images $t_{\alpha_1},\dotsc,t_{\alpha_{3g-3+n}}$
	of the corresponding Dehn twists in $G_{g,\ul}$.
	Every element of $A$ is $p$-torsion. As $p$ is prime, the decomposition of $V_{g,\ul}$ into characters for $A$
	is given by the TQFT basis corresponding to the pair of pants
	decomposition. In particular, every character of $A$ has multiplicity at most $1$ in $V_{g,\ul}$.
	Hence, if we knew that $A$ was included in a maximal torus
	of $G_{g,\ul}$, we could conclude that $V_{g,\ul}$ is weight-multiplicity-free. It is not the case in general that every
	finite abelian subgroup of a simple
	compact Lie group is contained in a maximal torus. Nevertheless, we now prove that it holds for $A\subset G_{g,\ul}$.
	Thanks to \Cref{cor:groupsWithNonSelfDualReps}, we are reduced to the following three cases.
	
	\textbf{Case 1:} $G_{g,\ul}$ is $\PSO(W)$ for $\dim W=4k+2$, $k\geq 2$.
	Consider the decomposition $W\otimes_{\R}\C=\bigoplus_\chi V_\chi$ into characters for $A$.
	As $A$ has \emph{odd order}, each character appearing in the decomposition is either the trivial character $\chi_1$ or not real.
	Hence, we have a decomposition $W=\left(\bigoplus_{\{\chi,\overline{\chi}\}\neq\{\chi_1\}} V_\chi\oplus V_{\overline{\chi}}\right)\oplus V_{\chi_1}$
	preserved by $A$. On each $V_\chi\oplus V_{\overline{\chi}}$, $A$ acts via a torus. Hence, $A$ is included in a maximal torus, as desired.

	\textbf{Case 2:} $G_{g,\ul}$ is $\mathrm{E}_6/Z(\mathrm{E}_6)$. By \cite[§5.3, corollaire 3]{bourbakiGroupesAlgebresLie2007},
	$A$ is contained in the normalizer $N_{\mathrm{E}_6/Z(\mathrm{E}_6)}(T)$ of some maximal torus $T$.
	Now, $N_{\mathrm{E}_6/Z(\mathrm{E}_6)}(T)/T$ is the Weyl group of $\mathrm{E}_6$,
	which has order $2^73^45$ \cite[VI, §12, (IX)]{bourbakiSystemesRacines2007}. As $A$ is $p$-torsion and $p$ is prime and at least $7$,
	$A$ is included in $T$, as desired.

	\textbf{Case 3:} $G_{g,\ul}$ is $\PU(W)$ with $d=\dim W\geq 3$. Then $\widetilde{G}_{g,\ul}$ is $\SU(W)$.
	By spectral theory, any finite abelian subgroup of $\SU_d$ is conjugate in $\SU_d$ to a group of diagonal matrices.
	Hence, we need only show that the pre-image of $A\subset\PU(W)$ in $\SU(W)$ is abelian.
	Applying \Cref{lemma:liftTorusToSL} with $H=\PSU(W)$, $\widetilde{H}=\SU(W)$ and $m=p$ concludes.
\end{proof}

\begin{remark}
	The proof of \Cref{prop:weightMultiplicityFree} relies on the fact that characters of $A$ act on $V_{g,\ul}$ without multiplicity.
	This fact fails for general $\mathrm{SO}(3)$ quantum representations (when $p$ is not assumed to be prime)
	and also for general $\mathrm{PSU}(d)$ quantum representations when $d\geq 3$.
\end{remark}

\begin{lemma}\label{lemma:liftTorusToSL}
	Let $g\geq 1$ and $n\geq 0$.
	Consider \( T \subseteq \PMod(\Sigma_{g,n}) \) the subgroup generated by the images \( t_1, \dots, t_{3g-3+n} \)
	of some non-separating Dehn twists forming a pair of pants decomposition
	and a morphism from $\PMod(\Sigma_{g,n})/\langle t_i^m\mid i=1,\dotsc,3g-3+n\rangle$ to a group $H$ with $m\geq 1$ odd.
	Denote by $A$ the image of $T$ in $H$.
	Let $1\ra U\ra \widetilde{H}\ra H\ra 1$ be a central extension.
	Then the pre-image of $A$ in $\widetilde{H}$ is abelian.
\end{lemma}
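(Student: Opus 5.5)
The plan is to study the commutator pairing that the central extension induces on $A$, and to show that this pairing is killed both by $m$ and by $2$.

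\textbf{The pairing.} For $a,b\in A$ with lifts $\tilde a,\tilde b\in\widetilde H$, the commutator $[\tilde a,\tilde b]$ lies in $U$, because $A$ is abelian. Since $U$ is central, this commutator does not depend on the choice of lifts. The standard identity $[xy,z]=x[y,z]x^{-1}[x,z]$, together with centrality, shows that
\[
c\colon A\times A\to U,\qquad (a,b)\mapsto[\tilde a,\tilde b]
\]
is bimultiplicative and alternating. The pre-image of $A$ is generated by $U$ and lifts $\tilde a_i$ of the images $a_i$ of $t_i$. So it is abelian if and only if $c(a_i,a_j)=1$ for all $i,j$.

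\textbf{Two constraints on $c(a_i,a_j)$.}

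First, the morphism factors through the quotient by $\langle t_i^m\rangle$, so $a_i^m=1$. Bimultiplicativity then gives $c(a_i,a_j)^m=c(a_i^m,a_j)=1$.

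Second, I will show $c(a_i,a_j)^2=1$. Choose a mapping class $\phi\in\PMod(\Sigma_{g,n})$ with $\phi(\alpha_i)=\alpha_j$ and $\phi(\alpha_j)=\alpha_i$, where the $\alpha_i$ are the curves underlying the $t_i$. Then $\phi t_i\phi^{-1}=t_j$ and $\phi t_j\phi^{-1}=t_i$; Dehn twists are conjugated to Dehn twists with the same handedness, since $\phi$ is orientation preserving. Let $\tilde\phi$ be a lift of the image of $\phi$ in $H$. Conjugation by $\tilde\phi$ sends $\tilde a_i$ to a lift of $a_j$ and $\tilde a_j$ to a lift of $a_i$, and it fixes $U$ pointwise. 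Hence
\[
c(a_i,a_j)=\tilde\phi\,c(a_i,a_j)\,\tilde\phi^{-1}=c(a_j,a_i)=c(a_i,a_j)^{-1}.
\]
Since $m$ is odd, the two constraints together force $c(a_i,a_j)=1$.

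\textbf{Main obstacle: building the swap $\phi$.} I expect the one real difficulty to be constructing $\phi$ inside the pure mapping class group. Let $\alpha=\alpha_i$ and $\beta=\alpha_j$ be disjoint and non-separating. There are two cases.

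In the first case, $\alpha\cup\beta$ does not separate. By the change of coordinates principle, there is an orientation-preserving homeomorphism $\phi$ with $\phi(\alpha)=\beta$ and $\phi(\beta)=\alpha$. The complement of $\alpha\cup\beta$ is connected with boundary components $\alpha^\pm,\beta^\pm$, and we can choose the homeomorphism of the complement to exchange $\alpha^\pm$ with $\beta^\pm$ compatibly with the gluings.

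In the second case, $\alpha\cup\beta$ separates $\Sigma_{g,n}$ into two components $S_1$ and $S_2$, each having boundary $\alpha\sqcup\beta$. Each $S_k$ admits an orientation-preserving homeomorphism exchanging its two boundary circles. Concretely, embed $S_k$ symmetrically in $\R^3$ and rotate by $\pi$, matching the boundary parametrizations so that the two pieces glue to a global homeomorphism of $\Sigma_{g,n}$.

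This rotation may permute the marked points. To fix that, compose with a homeomorphism of each $S_k$ that is supported away from the boundary and moves the marked points back to their original positions. Such a homeomorphism exists because homeomorphisms of the interior of a connected surface act transitively on ordered tuples of distinct points. The result is a $\phi\in\PMod(\Sigma_{g,n})$ exchanging $\alpha$ and $\beta$, and the same marked-point correction applies in the first case.

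With $\phi$ in hand for every pair $i,j$, the argument above shows the pre-image of $A$ in $\widetilde H$ is abelian.
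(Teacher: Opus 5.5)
Your proposal is correct and takes essentially the same approach as the paper. The paper also uses the alternating commutator pairing $\Lambda^2 A\to U$, constructs a pure mapping class swapping $\alpha_i$ and $\alpha_j$ by cutting along both curves and regluing a homeomorphism of the complement, and concludes that each $c(a_i,a_j)$ is both $2$-torsion and $m$-torsion; your case split on whether $\alpha_i\cup\alpha_j$ separates, and your correction of the marked points, just spell out details the paper leaves implicit.
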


\begin{proof}
	Consider the commutator map $H\times H\ra Z(\widetilde{H})$,
	$(x,y)\mapsto [\widetilde{x},\widetilde{y}]=\widetilde{x}\widetilde{y}\widetilde{x}^{-1}\widetilde{y}^{-1}$,
	where $\widetilde{x}$ and $\widetilde{y}$ are any lifts of $x$ and $y$ to $\widetilde{H}$.
	It restricts to a map of abelian groups $\phi:\Lambda^2 A\ra U$.
	The pre-image of $A$ in $\widetilde{H}$ is abelian if and only if $\phi$ is trivial.

	We claim that for any $1\leq i<j\leq 3g-3+n$ there exists $f\in \PMod(\Sigma_{g,n})$ such that $f(\alpha_i)=\alpha_j$ and $f(\alpha_j)=\alpha_i$.
	Indeed, consider the surface $\Sigma'\ra \Sigma_{g,n}$ obtained by cutting along $\alpha_i$ and $\alpha_j$. It
	contains $4$ boundary components: the $2$ pre-images $\alpha_i^1$ and $\alpha_i^2$ of $\alpha_i$, and the $2$ pre-images
	$\alpha_j^1$ and $\alpha_j^2$ of $\alpha_j$. As $\alpha_i$ and $\alpha_j$ are non-separating, $\Sigma'$ has $1$ or $2$ components,
	and in each case we may order the pre-images so that for each $k\in\{1,2\}$, $\alpha_i^k$ and $\alpha_j^k$ lie in the same component.
	Then there exists a self-homeomorphism $f'$ of $\Sigma'$ such that $f'(\alpha_i^k)=\alpha_j^k$ and $f'(\alpha_j^k)=\alpha_i^k$ for $k=1,2$.
	Gluing back, we get the desired $f\in \PMod(\Sigma_{g,n})$.

	Conjugation by the image of $f$ in $H$ is an inner automorphism $\varphi$ of $\widetilde{H}$
	which sends a lift of $t_{\alpha_i}$ to one of $t_{\alpha_j}$
	and vice-versa.
	Hence, in $\widetilde{H}$,
	\begin{equation*}
		\phi(t_{\alpha_i}\wedge t_{\alpha_j})=[\widetilde{t_{\alpha_i}},\widetilde{t_{\alpha_j}}]=%
		\varphi\left([\widetilde{t_{\alpha_i}},\widetilde{t_{\alpha_j}}]\right)=[\varphi(\widetilde{t_{\alpha_i}}),\varphi(\widetilde{t_{\alpha_j}})]%
		=[\widetilde{t_{\alpha_j}},\widetilde{t_{\alpha_i}}]=-\phi(t_{\alpha_i}\wedge t_{\alpha_j}).
	\end{equation*}
	So, for any $1\leq i<j\leq 3g-3+n$, $\phi(t_{\alpha_i}\wedge t_{\alpha_j})$ is both $2$-torsion and $m$-torsion, hence $0$ as $m$ is odd.
	We conclude that $\phi$ is $0$, as desired.
\end{proof}

We now make use of the following classification of weight-multiplicity-free representations due to Howe. The statement in the reference is written in
terms of simply-connected simple algebraic groups,
which have the same representation theory as simply-connected simple compact Lie groups.

\begin{theorem}[{\cite[Theorem 4.6.3]{gelbartSchurLectures19921995}}]\label{thm:HoweMultiplicityFree}
	The only non-trivial weight-multiplicity-free projective representations of $\PU(W)$, $\dim W\geq 2$ are
	the symmetric powers $S^m(W)$ and $S^m(W^*)$, $m\geq 1$,
	and the exterior powers $\Lambda^m(W)$, $1\leq m\leq \dim W-1$. For $\PSO_{4k+2}$, $k\geq 2$,
	the only ones are the regular representation and the two half-spin representations of dimension $2^{2k}$.
	For $\mathrm{E}_6/Z(\mathrm{E}_6)$, the only ones are the two $27$-dimensional representations.
\end{theorem}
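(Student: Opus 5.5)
This is a cited classification theorem (Howe's), so the plan is to reconstruct it from the standard theory of weight-multiplicity-free modules rather than derive it from the TQFT material in this paper.

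\textbf{Step 1: reduction to minuscule-type weights.} We work with the simply-connected cover $\widetilde G$ and an irreducible representation $V_\nu$ of highest weight $\nu$. The key input is that if $\nu$ dominates some dominant weight $\nu'\neq\nu$ with $\nu-\nu'$ in the root lattice, then $\nu'$ occurs in $V_\nu$. Its multiplicity can be computed via Freudenthal's formula or via Kostant's partition function near the top. The typical source of multiplicity $\ge 2$ is then the following: if $\nu$ has two nonzero coordinates $\nu_i,\nu_j$ in the fundamental-weight basis, then the weight $\nu-\alpha_i-\ldots-\alpha_j$ (sum along the path between the nodes $i$ and $j$) has multiplicity equal to the number of distinct orderings compatible with the Dynkin path. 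This is at least $2$ unless the nodes are the two extremities of a type $A$ diagram with special coefficients. Similarly, $\nu=m\omega_i$ with $m\ge 2$ at a non-extremal node, or at a node with a short or long root adjacency, gives $\nu-2\alpha_i-\ldots$ with multiplicity $\ge2$.

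\textbf{Step 2: case analysis by type.} This step splits according to the type of $\widetilde G$.
\begin{itemize}
\item For type $A_{d-1}$, the argument should force $\nu=m\omega_1$, $m\omega_{d-1}$, or $\omega_k$. These correspond to $S^m(W)$, $S^m(W^*)$ and $\Lambda^k(W)$, and one checks directly, via monomial bases, that these are weight-multiplicity-free.
\item For $D_{2k+1}$, only minuscule weights survive ($\omega_1$, and the two spin weights), together with an exclusion of the weights whose zero weight has multiplicity $>1$, such as the adjoint.
\item For $E_6$, only $\omega_1$ and $\omega_6$ survive, since these are minuscule, and every other weight has a dominant weight beneath it with multiplicity $\ge2$ (e.g.\ the adjoint has zero weight multiplicity $6$).
\end{itemize}

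\textbf{Step 3: projective condition.} Finally, we restrict to representations that factor projectively through the centerless group; this imposes no condition beyond what is already listed.

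\textbf{Main obstacle.} I expect the main obstacle to be the exhaustive multiplicity bookkeeping in Step 1 for general dominant weights, that is, producing, for each excluded $\nu$, a specific weight of multiplicity $\ge2$.

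For the use in this paper, though, the cleanest route is simply to quote \cite[Theorem 4.6.3]{gelbartSchurLectures19921995} (Howe's theorem, equivalently the Berenstein–Zelevinsky / Stembridge classification of weight-multiplicity-free modules). The only adaptation needed is translating simply-connected algebraic groups to compact groups, which is immediate via unitary trick.
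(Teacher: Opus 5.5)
Your closing paragraph is exactly the paper's treatment. The paper gives no argument of its own: it quotes \cite[Theorem 4.6.3]{gelbartSchurLectures19921995} and notes that simply-connected simple algebraic groups and simply-connected compact Lie groups have the same representation theory. Your Step 3 is also built into the paper's setup, since it calls a projective representation weight-multiplicity-free when its linearization to the universal cover is. If you rely on the citation, nothing more is needed.

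The self-contained reconstruction in Steps 1--2, however, does not work as written, so do not present it as a proof.

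\emph{The exception in Step 1 is false.} You allow ``the two extremities of a type $A$ diagram with special coefficients'', but no such exception exists. It also contradicts your own type $A$ conclusion in Step 2. For $d=\dim W\geq 3$, a highest weight $a\omega_1+b\omega_{d-1}$ with $a,b\geq 1$ is never weight-multiplicity-free; already the adjoint $\omega_1+\omega_{d-1}$ has zero weight of multiplicity $d-1$.

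\emph{Step 1 misses the cases that carry the content for $\PSO_{4k+2}$ and $\mathrm{E}_6/Z(\mathrm{E}_6)$.} These are the multiples $m\omega_i$, $m\geq 2$, at an \emph{extremal} node, and the non-minuscule fundamental weights other than the adjoint one. Your local path and rank-$2$ arguments cannot exclude them. For example, take $2\omega_1$ for $D_{2k+1}$, the traceless $S^2$ of the vector representation. For every proper Levi subalgebra, the highest-weight submodule it sees is trivial or a symmetric square of a standard type $A$ representation, hence multiplicity-free. Its only repeated weight is
\[
0=2\omega_1-(2\alpha_1+\dotsb+2\alpha_{2k-1}+\alpha_{2k}+\alpha_{2k+1}),
\]
which has multiplicity $2k$. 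The support of this weight is the whole diagram, so it is invisible to any proper Levi.

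Consequently, the claim in Step 2 that ``only minuscule weights survive'' is the full content of the theorem, stated rather than derived. Your reconstruction therefore ends up relying on the cited result anyway.
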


Combining \Cref{thm:HoweMultiplicityFree}, \Cref{prop:weightMultiplicityFree} and \Cref{lemma:dimensionDivisibleByP}, we get the following.

\begin{proposition}\label{prop:isSymmetricOrExteriorPower}
	Let $p\geq 7$, $g\geq 1$, $n\geq 0$ and $\ul\in\Lambda^n$ such that either $g\geq 3$, or $g=2$ and $V_{0,\ul}\neq 0$,
	or $g=1$, $\dim V_{1,\ul}$ is not a power of $4$ and $\ul$ can be partitioned into $2$ permissive tuples.
	Then $G_{g,\ul}$ is some $\PU(W)$ with $\dim W\geq 3$, and $V_{g,\ul}$ is a symmetric power $S^m(W)$ or $S^m(W^*)$, $m\geq 1$,
	or an exterior power $\Lambda^m(W)$, $1\leq m\leq \dim W-1$.
\end{proposition}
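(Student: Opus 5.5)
By \Cref{cor:groupsWithNonSelfDualReps}, $G_{g,\ul}$ is isomorphic to $\PU_k$ with $k\geq 3$, to $\PSO_{4k+2}$ with $k\geq 2$, or to $\mathrm{E}_6/Z(\mathrm{E}_6)$. By \Cref{prop:weightMultiplicityFree}, $V_{g,\ul}$ is a weight-multiplicity-free projective representation of $G_{g,\ul}$. It is non-trivial, and in fact faithful, since $G_{g,\ul}$ is by definition a subgroup of $\PU(V_{g,\ul})$ and $\dim V_{g,\ul}\geq 2$ by \Cref{cor:nonTriviality} in all the cases considered. So \Cref{thm:HoweMultiplicityFree} applies, and the plan is to run through its list and discard the two non-unitary families using dimension arithmetic.

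\emph{The $\mathrm{E}_6$ case.} Here $V_{g,\ul}$ would have dimension $27$. If $g\geq 2$, \Cref{lemma:dimensionDivisibleByP} says $p$ divides $\dim V_{g,\ul}$, and $p\geq 7$ does not divide $27$. If $g=1$, $27$ is not divisible by $p$ either, so I cannot use the same lemma. Instead I would use the splitting along a separating curve that cuts off a torus with one hole, as in the proof of \Cref{prop:notSelfDual} (applicable since a union of two permissive tuples is permissive by \Cref{lemma:permissive1}). This gives $\dim V_{1,\ul}=\sum_{\mu}\dim V_{1,\mu}\dim V_{0,(\ul,\mu)}$. By \Cref{lemma:dimTori} and \Cref{lemma:dimensionGenusZero}, this is at least $2\sum_{\mu\in\Lambda}\frac{p-1-\mu}{2}=\frac{(p-1)(p+1)}{4}\cdot 2$, which is $\geq 24$ at $p=7$. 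That is borderline, so I would sharpen it slightly. For example, I would show that the $\mu=0$ term has $\dim V_{0,(\ul,0)}\geq 2$ with the coefficient $\frac{p-1}{2}$, and add the remaining terms to reach $>27$. Alternatively, I could use the weight-multiplicity-free structure: the abelian group $A$ would then be a subgroup of a maximal torus of $\mathrm{E}_6$ whose $27$ weights are distinct. If the dimension bound turns out insufficient at $p=7$, I would fall back on the eigenvalue constraint: a Dehn twist has exactly $\frac{p-1}{2}$ distinct eigenvalues whose multiplicities are the dimensions $\dim V_{0,\dots}$ of the fusion pieces, and I would compare these multiplicities with the possible eigenvalue multiplicities of an order-$p$ element of $\mathrm{E}_6$ acting on the $27$-dimensional representation.

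\emph{The $\PSO_{4k+2}$ case.} Two possibilities occur. The first is the half-spin representations, of dimension $2^{2k}=4^k$. For $g=1$ this is excluded by the hypothesis that $\dim V_{1,\ul}$ is not a power of $4$. For $g\geq 2$ it is excluded by \Cref{lemma:dimensionDivisibleByP}, since $p$ is odd. The second is the standard representation of dimension $4k+2$. This cannot occur as a projective representation of $\PSO_{4k+2}$ on which the group acts faithfully through its image, because the image of $\SO_{4k+2}$ there is $\SO_{4k+2}$, not $\PSO_{4k+2}$. More concretely, that representation would be self-dual, contradicting \Cref{prop:notSelfDual}. I would apply the same self-duality argument to discard any other self-dual candidate appearing in Howe's list.

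What remains is $G_{g,\ul}=\PU(W)$ with $\dim W\geq 3$, and Howe's list gives exactly $S^m(W)$, $S^m(W^*)$ or $\Lambda^m(W)$. The main obstacle is the genus-one $\mathrm{E}_6$ exclusion at small $p$; everything else is immediate bookkeeping.
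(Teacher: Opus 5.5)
Your reduction follows the paper's proof: \Cref{cor:groupsWithNonSelfDualReps}, then Howe's classification via \Cref{prop:weightMultiplicityFree}. The standard representation of $\PSO_{4k+2}$ is excluded by self-duality (\Cref{prop:notSelfDual}), the half-spin representations by the power-of-$4$ hypothesis or divisibility by $p$, and $\mathrm{E}_6$ for $g\geq 2$ by \Cref{lemma:dimensionDivisibleByP}. Your first reason for discarding the standard representation of $\PSO_{4k+2}$ is wrong: $-1$ acts by a scalar, so this is a faithful projective representation of $\PSO_{4k+2}$. The self-duality argument you give next is the correct one. The genuine gap is the genus-one $\mathrm{E}_6$ case, which you do not close.

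First, your estimate is miscomputed. We have $\sum_{\mu\in\Lambda}\frac{p-1-\mu}{2}=1+2+\dots+\frac{p-1}{2}=\frac{p^2-1}{8}$, so your bound is $\dim V_{1,\ul}\geq\frac{p^2-1}{4}$. That is $30>27$ at $p=11$, so $p\geq 11$ is fine, but it is only $12$ at $p=7$, not $24$. The ``sharpening'' you propose (the $\mu=0$ term with coefficient $\frac{p-1}{2}$ and $\dim V_{0,(\ul,0)}\geq 2$) is already part of that bound.

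More importantly, no lower bound drawn from the two-permissive-tuples hypothesis alone can work at $p=7$. The natural refinement counts admissible triples $(\nu,\mu_1,\mu_2)$ and gives $3\cdot 3+2\cdot 6+1\cdot 5=26$. This value is attained: at $p=7$ we have $i_0=2$, so $(2,2)$ is permissive, and $\ul=(2,2,2,2)$ gives $\dim V_{7,1,\ul}=26$. So the exact value $27$ must be ruled out by other means. Your fallbacks do not do this. Containment of $A$ in a maximal torus is already known and gives no contradiction by itself. The comparison of Dehn twist eigenvalue multiplicities with those of order-$7$ elements of $\mathrm{E}_6$ on the $27$-dimensional representation is announced but not carried out.

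The paper handles $p=7$ by a dichotomy. If $\ul$ can be partitioned into $3$ permissive tuples, fusion gives $\dim V_{1,\ul}\geq\sum_{\mu_1,\mu_2,\mu_3\in\Lambda}\dim V_{1,(\mu_1,\mu_2,\mu_3)}=129$. Otherwise, by \Cref{prop:permissiveTuple} (any $p-3=4$ non-zero colors form a permissive tuple), $\ul$ has at most $11$ non-zero entries. A finite computer check then shows $\dim V_{7,1,\ul}\neq 27$ in all those cases. You need this, or a completed alternative argument, for the proposition to be proved.
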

\begin{proof}
	By \Cref{cor:groupsWithNonSelfDualReps}, we know that $G_{g,\ul}$ is of the form $\PU(W)$, $\dim W\geq 3$, $\PSO(W)$, $\dim(W)=4k+2, k\geq 2$,
	or $\mathrm{E}_6/Z(\mathrm{E}_6)$. We now apply \Cref{thm:HoweMultiplicityFree} and \Cref{prop:weightMultiplicityFree}. If $G_{g,\ul}$ is $\PSO(W)$,
	$V_{g,\ul}$ is $W$ or a half-spin representation of dimension $2^{2k}$, and if $G_{g,\ul}$
	is $\mathrm{E}_6/Z(\mathrm{E}_6)$, $V_{g,\ul}$ has dimension $27$.
	None of these cases are possible. Indeed $W$ is self-dual but $V_{g,\ul}$ is not by \Cref{prop:notSelfDual},
	by \Cref{lemma:dimensionDivisibleByP} the dimension of $V_{g,\ul}$ is divisible by $p$ for $g\geq 2$, hence not $27$ or a power of $4$.
	If $g=1$, the dimension of $V_{g,\ul}$ cannot be $27$: as $\ul$ can be partitioned into $2$ admissible tuples by assumption,
	$$\dim V_{1,\ul}\geq \sum_{\substack{\mu_1,\mu_2\\\in\Lambda}}\sum_{\substack{(\nu,\mu_1,\mu_2)\\\text{ admissible}}} \dim V_{1,\nu}=
	\sum_{\substack{\mu_1,\mu_2\\\in\Lambda}}\sum_{\substack{(\nu,\mu_1,\mu_2)\\\text{ admissible}}} \frac{p-1-\nu}{2}$$
	which exceeds $27$ for $p\geq 11$ (the lower bound above is $155$ for $p=11$ and increases with $p$); for $p=7$,
	if $\ul$ can be partitioned into $3$ admissible tuples,
	by a similar argument, the dimension exceeds $\sum_{\mu_1,\mu_2,\mu_3\in\Lambda}\dim V_{1,(\mu_1,\mu_2,\mu_3)}=129>27$,
	and if it cannot be partitioned into $3$ admissible tuples,
	then by \Cref{prop:permissiveTuple}, $\ul$ has at most $11$ non-zero entries and we ran a SageMath script to
	check that the dimension of $V_{7,1,\ul}$ is never $27$ for $\ul\in\Lambda^n$ with $n\leq 11$.
	Hence, $G_{g,\ul}$ is of the form $\PU(W)$ and $V_{g,\ul}$ is a symmetric or exterior power.
\end{proof}

\subsection{\texorpdfstring{$\bm{V_{g,\ul}}$}{The TQFT vector space} is not a non-trivial symmetric or exterior power}
\label{sec:notSymmetricOrExteriorPower}

Consider $W$ as in \Cref{prop:isSymmetricOrExteriorPower}.
In this section, we prove that $V_{g,\ul}$ cannot be a non-trivial symmetric or exterior power of $W$
by showing that this would not be compatible with fusion rules along a non-separating curve.

To start with, we recall a standard formula for the decomposition of symmetric or exterior powers of the tensor product of two representations.
We will write $y\vdash m$ if $y$ is a Young diagram of size $m$ and we denote the dual diagram of $y$ by $y'$. For $y$ a Young diagram of size $m$,
we denote by $S^y$ the corresponding Schur functor, defined, according to Schur-Weyl duality, by:
$$U^{\otimes m}=\underset{y \vdash m}{\bigoplus} S^y(U)\otimes V_y,$$
for any finite-dimensional $\C$-vector space, where $V_y$ is the irreducible representation of the symmetric group $\mathfrak{S}_n$ associated to $y$.
We note that for $U$ a finite-dimensional $\C$-vector space, $S^y(U)\neq 0$ whenever the number of rows of $y$ is $\leq \dim U$.
The following is a consequence of Schur-Weyl duality (see \cite[Exercise 6.11]{fultonRepresentationTheory2004} for instance).

\begin{proposition}[Cauchy decomposition formula] \label{prop:CauchyDecomp} Let $U_1$ and $U_2$ be two finite-dimensional $\C$-vector spaces.
	Then, for any $m\geq 0$, one has the following decompositions into irreducible $\GL(U_1)\times \GL(U_2)$-representations:
	$$S^m(U_1\boxtimes U_2)=\bigoplus_{y\vdash m}S^y(U_1)\boxtimes S^y(U_2)$$
	and
	$$\Lambda^m(U_1 \boxtimes U_2)=\bigoplus_{y\vdash m}S^y(U_1)\boxtimes S^{y'}(U_2).$$
	In particular, $S^m(U_1\boxtimes U_2)$ is a reducible $\GL(U_1)\times \GL(U_2)$-representation when $\dim U_1$, $\dim U_2$ and $m$
	are each at least $2$.
	Similarly, $\Lambda^m(U_1 \boxtimes U_2)$ is a reducible $\GL(U_1)\times \GL(U_2)$-representation when $\dim U_1$, $\dim U_2$ are at least $2$
	and $2\leq m\leq \dim (U_1 \boxtimes U_2)-2$.
\end{proposition}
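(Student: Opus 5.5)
The decomposition formulas follow from Schur--Weyl duality applied to $U=U_1\boxtimes U_2$, after which the reducibility statements reduce to exhibiting two nonzero summands. I would first apply Schur--Weyl duality to $U_1$ and $U_2$ separately:
$$(U_1\boxtimes U_2)^{\otimes m}=U_1^{\otimes m}\boxtimes U_2^{\otimes m}=\bigoplus_{y,z\vdash m}\bigl(S^y(U_1)\boxtimes S^z(U_2)\bigr)\otimes (V_y\otimes V_z),$$
where $\mathfrak S_m$ acts diagonally on $V_y\otimes V_z$. The symmetric power $S^m(U_1\boxtimes U_2)$ is the $\mathfrak S_m$-invariant part, and $\Lambda^m(U_1\boxtimes U_2)$ is the sign-isotypic part. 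Hence the multiplicity of $S^y(U_1)\boxtimes S^z(U_2)$ in $S^m$ is $\dim\mathrm{Hom}_{\mathfrak S_m}(V_y^*,V_z)$. Since representations of $\mathfrak S_m$ are self-dual and absolutely irreducible, this multiplicity is $1$ if $z=y$ and $0$ otherwise. For $\Lambda^m$ the multiplicity is $\dim\mathrm{Hom}_{\mathfrak S_m}(V_y\otimes\mathrm{sgn},V_z)$, and since $V_y\otimes\mathrm{sgn}\simeq V_{y'}$, it is $1$ exactly when $z=y'$. Finally, the $S^y(U_i)$ are irreducible (or zero) $\GL(U_i)$-modules, so the summands $S^y(U_1)\boxtimes S^z(U_2)$ are irreducible $\GL(U_1)\times\GL(U_2)$-modules. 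This gives both formulas.

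For reducibility of $S^m$, it suffices to exhibit two Young diagrams $y\vdash m$ with $S^y(U_1)\neq 0$ and $S^y(U_2)\neq0$, using the criterion that $S^y(U)\ne0$ when the number of rows of $y$ is at most $\dim U$. When $m\ge2$ and $\dim U_i\ge2$, the one-row diagram $(m)$ and the two-row diagram $(m-1,1)$ both qualify.

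For $\Lambda^m$ with $d_i=\dim U_i\ge2$, we need two diagrams $y\vdash m$ with at most $d_1$ rows and at most $d_2$ columns, that is, two diagrams fitting in a $d_1\times d_2$ box. This step is the main (mild) obstacle and needs a short combinatorial argument. The diagrams of size $m$ fitting in the box correspond to the monomial basis of $\Lambda^m$ graded by weights, and there is exactly one such diagram only when $m\in\{0,1,d_1d_2-1,d_1d_2\}$.

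To show this, I would argue as follows for $2\le m\le d_1d_2-2$. Fill the box row by row to obtain one diagram $y$. If $y$ is not a rectangle, some box can be moved from its last row to a lower position, or a corner can be moved, giving a second diagram. If $y$ is a rectangle of $a<d_1$ full rows, then $m=ad_2$ with $1\le a$ and $m\le d_1d_2-2$. The diagram $(d_2,\dots,d_2,d_2-1,1)$ then fits, using $a+1\le d_1$ rows, and is distinct from $y$. The cases with $a=d_1$ are excluded by $m\le d_1d_2-2$.

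Alternatively, one can use the symmetry $y\leftrightarrow$ the complement in the box, reducing to $m\le d_1d_2/2$. In that range both the filled-row diagram and the filled-column diagram fit, and they differ unless $m\le1$ or the box is degenerate.

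Each case yields at least two nonzero irreducible summands, which completes the reducibility claims.
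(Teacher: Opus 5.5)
Your proposal is correct and follows the route the paper indicates: both formulas come from Schur--Weyl duality applied to $U_1^{\otimes m}\otimes U_2^{\otimes m}$, for which the paper simply cites Fulton--Harris, Exercise 6.11. Reducibility then follows, in both your argument and the paper's intended one, from exhibiting two Young diagrams with nonzero summands, using the criterion that $S^y(U)\neq 0$ when $y$ has at most $\dim U$ rows. Your box-counting argument for $\Lambda^m$ supplies a detail the paper leaves implicit. Your non-rectangle case is only sketched, but your alternative argument is complete on its own: reduce to $m\le d_1d_2/2$ by complement symmetry, then compare the row-filled and column-filled diagrams.
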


We will need the following lemma.

\begin{lemma}\label{lem:tensorProduct}
	Let $W$ be a finite-dimensional vector space over an algebraically closed field.
	Consider semisimple $t_1,t_2\in \SU(W)$ that commute in $\PU(W)$.
	Set $V=S^m(W)$ with $m\geq 1$ or $V=\Lambda^m(W)$ with $1\leq m\leq \dim W-1$.
	Assume that the actions of $t_1$ and $t_2$ on $V$ commute and that the action of $t_1$ on $V$
	does not contain a coset of a non-trivial group of roots of unity.
	Then $t_1$ and $t_2$ commute.
\end{lemma}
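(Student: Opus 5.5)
Since $t_1$ and $t_2$ commute in $\PU(W)$, their commutator in $\SU(W)$ is a scalar: $t_2 t_1 t_2^{-1} = c\, t_1$ for some $c$ with $c^{\dim W}=1$. The plan is to show $c=1$ by looking at how this relation acts on $V$ and on the eigenvalues of $t_1$.

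\textbf{Step 1: the action on $V$.} On $V$ (either $S^m(W)$ or $\Lambda^m(W)$), the scalar $c$ acts by $c^m$. The relation therefore becomes $t_2 t_1 t_2^{-1} = c^m t_1$ as operators on $V$. By hypothesis, the actions of $t_1$ and $t_2$ on $V$ commute, so $c^m t_1 = t_1$ on $V$. Since $V\neq 0$, this gives $c^m=1$.

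\textbf{Step 2: the action on $W$.} Conjugation by $t_2$ maps $t_1$ to $c\,t_1$. Hence the spectrum $S$ of $t_1$ on $W$, counted with multiplicities, is stable under multiplication by $c$. It follows that $S$ is a disjoint union of cosets of the cyclic group $\mu=\langle c\rangle$, say of order $r$. The goal is now to show $r=1$.

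\textbf{Step 3: transfer to $V$.} Suppose $r>1$ and pick an eigenvalue $\lambda$ of $t_1$ on $W$. Its whole coset $\lambda\mu$ consists of eigenvalues. I would then produce a coset of $\mu$ inside the spectrum of $t_1$ on $V$. Take an eigenvector basis $e_1,\dots$ of $W$ for $t_1$, and a monomial $e_{i_1}\cdots e_{i_m}$ (a wedge in the exterior case) whose eigenvalue is some $\nu$. Replace one factor $e_{i_1}$, with eigenvalue $\lambda$, by an eigenvector with eigenvalue $\lambda c^k$. This yields eigenvalues $\nu c^k$ for every $k$, so the spectrum on $V$ contains the coset $\nu\mu$. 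That contradicts the hypothesis, so $r=1$, i.e.\ $c=1$, and $t_1,t_2$ commute.

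\textbf{Main obstacle.} The delicate point is Step 3 in the exterior case: the replacement vector must not already appear in the wedge, or the wedge vanishes. The cleaner route is to apply $t_2$ itself instead of replacing factors by hand. Since $t_2$ permutes the eigenspaces of $t_1$ via $\lambda\mapsto c\lambda$, it maps a $\nu$-eigenvector of $t_1$ in $V$ to a $c^m\nu$-eigenvector. That only gives $c^m=1$ again, which is not enough. So I would instead act on a single tensor factor: start from a nonzero decomposable vector $v_{\lambda}\wedge u$, where $u$ is a wedge of $m-1$ eigenvectors chosen from eigenspaces outside the orbit $\lambda\mu$ wherever possible. Then vary $v$ along $\lambda\mu$. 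The counting argument needed is that the union of the eigenspaces in $\lambda\mu$ has dimension at least $r$, so enough linearly independent choices exist. When $m$ is large, so that $u$ must use some eigenvectors from $\lambda\mu$, I would pass to the complement: use $\Lambda^m(W)\simeq \Lambda^{d-m}(W^*)\otimes\det$. The symmetric case has no such vanishing issue, since replacing a factor always gives a nonzero monomial.
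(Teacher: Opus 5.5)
Your Steps 1 and 2 and the symmetric-power case of Step 3 are correct and match the paper. In $S^m(W)$, the monomials $w_k w_0^{m-1}$ with $0\neq w_k\in W_{\lambda c^k}$ are nonzero and have eigenvalues $\lambda^m c^k$.

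The gap is in the exterior case. There you need $r$ eigenvectors $w_k\in W_{\lambda c^k}$ ($0\le k\le r-1$) and $m-1$ further eigenvectors $u_1,\dots,u_{m-1}$ of $t_1$, with all $r+m-1$ vectors linearly independent. Then the vectors $w_k\wedge u_1\wedge\dots\wedge u_{m-1}$ are nonzero and their eigenvalues form a coset of $\mu$. This requires $r+m-1\le\dim W$. That the eigenspaces in $\lambda\mu$ span a space of dimension at least $r$ is automatic, and it is not the relevant count.

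Your fallback does not supply this count. Take $\dim W=4$, $t_1$ with eigenvalues $\lambda,\lambda,-\lambda,-\lambda$, $c=-1$, and $m=2$. There are no eigenspaces outside the orbit, so your plan sends you to the complement. But $\Lambda^2(W)\simeq\Lambda^2(W^*)\otimes\det$ reproduces exactly the same configuration. More generally, if the spectrum is a single $\mu$-orbit with multiplicity $\ge 2$ and $2\le m\le \dim W-2$, neither branch of your plan applies.

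The missing idea is the fact you dismissed: $c^m=1$ is enough. From $t_2t_1t_2^{-1}=ct_1$, $t_2$ maps each eigenspace $W_x$ of $t_1$ isomorphically onto $W_{c^{-1}x}$. So all eigenspaces along a $\mu$-orbit have the same dimension, and $r\mid\dim W$. Step 1 gives $r\mid m$. Together with $m<\dim W$, this forces $m\le\dim W-r$, hence $r+m-1\le\dim W-1$.

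Now take $w_0\in W_\lambda\setminus\{0\}$ and $w_k=t_2^{-k}w_0\in W_{\lambda c^k}$. These lie in distinct eigenspaces, so by semisimplicity of $t_1$ you can extend them to an eigenbasis of $W$. Choose $u_1,\dots,u_{m-1}$ among the remaining basis vectors. This is the paper's argument, and it makes the passage to the complement unnecessary.
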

\begin{proof}
	Assume by contradiction that $t_1t_2=u t_2t_1$ where $u\neq 1$ is a scalar.
	Let $W=\bigoplus_x W_x$ be the eigenspace decomposition for $t_1$. Then for any $x$, $t_2 W_x = W_{ux}$.
	Hence, $u$ is a root of unity, say primitive of order $d$, and the spectrum of $t_1$ contains
	a coset of the group of $d$-th roots of unity. Choose $x$ such that $W_x\neq 0$ and $w\in W_x\setminus\{0\}$, and set $w_i=t_2^i(w)$.
	Let us first deal with the case $V=S^m(W)$.
	Then the $w_iw_0^{m-1}$ for $0\leq i\leq d-1$ are eigenvectors for the action of $t_1$ on $S^m(W)$ and the corresponding
	eigenvalues form a coset of the set of $d$-th roots of unity, a contradiction. Let us now turn to the case $V=\Lambda^m(W)$.
	As the actions of $t_1$ and $t_2$ on $V$ commute and satisfy $t_1t_2=u^m t_2t_1$, we must have $u^m=1$. Hence, $d$ divides $m$.
	As $m<\dim W$ and both are multiples of $d$, we have $m\leq \dim W-d$. Hence, there exist $u_1,\dotsc,u_{m-1}\in W$
	linearly independent and independent of $w_0,\dotsc,w_{d-1}$. As $t_1$ is semisimple, we may and will assume that
	$u_1,\dotsc,u_{m-1}$ are eigenvectors for $t_1$. Then the $w_i\wedge u_1\wedge\dotsb\wedge u_{m-1}$ for
	$0\leq i\leq d-1$ are eigenvectors for the action of $t_1$ on $\Lambda^m(W)$ and the corresponding
	eigenvalues form a coset of the set of $d$-th roots of unity, a contradiction.
\end{proof}

We are now ready to prove the main results of this section.
\begin{proposition}\label{prop:notSymmetricPower}
	Using the notation of \Cref{prop:isSymmetricOrExteriorPower}, $V_{g,\ul}$ is not one of $S^m(W)$ or $S^m(W^*)$ for $m\geq 2$.
\end{proposition}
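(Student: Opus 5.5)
We argue by contradiction: suppose $V_{g,\ul}\simeq S^m(W)$ with $m\geq 2$ as a projective representation of $G_{g,\ul}=\PU(W)$; the case $S^m(W^*)$ is identical after replacing $W$ by $W^*$. The contradiction will come from fusion along a non-separating {\scc} $\alpha$. Since $\rho_{g,\ul}$ has image dense in $\PU(W)$ acting through $S^m$, the closure of the image of the stabilizer of $\alpha$ gives a subgroup $H\subset\PU(W)$. Let $T_\alpha$ denote the image of the Dehn twist, and lift it to $\tilde t_\alpha\in\SU(W)$.

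\textbf{Step 1: describe the centralizer of $T_\alpha$ in $\SU(W)$.} By \Cref{prop:DehnTwistSpectrum}, the action of $T_\alpha$ on $V_{g,\ul}$ has spectrum a set of $\frac{p-1}{2}$ values $\zeta_p^{k^2}$ (up to scalar), and this set contains no coset of a non-trivial group of roots of unity. By \Cref{lem:tensorProduct}, any $h$ in the closure of the image of $\PMod(\Sigma_{g,n}\setminus\alpha)$, which commutes with $T_\alpha$ on $V$, lifts to an element of $\SU(W)$ commuting with $\tilde t_\alpha$. Let $W=\bigoplus_x W_x$ be the eigenspace decomposition of $\tilde t_\alpha$. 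The lifted stabilizer then preserves each $W_x$, so it lies in $S(\prod_x \mathrm{U}(W_x))$.

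\textbf{Step 2: decompose $S^m(W)$ accordingly.} As a representation of $\prod_x \GL(W_x)$,
\[ S^m(W)=\bigoplus_{\sum_x m_x=m}\ \bigotimes_x S^{m_x}(W_x), \]
and the $\tilde t_\alpha$-eigenvalue on the summand indexed by $(m_x)$ is $\prod_x x^{m_x}$. Compare this with fusion: $V_{g,\ul}=\bigoplus_{i\in\Lambda}V_{g-1,(\ul,i,i)}$, where the summands are the eigenspaces of $T_\alpha$ and each is an \emph{irreducible} representation of the stabilizer, by \Cref{thm:irreducibility} applied to the cut surface. Each $T_\alpha$-eigenspace in $S^m(W)$ is a direct sum of the pieces $\bigotimes_x S^{m_x}(W_x)$ with the given eigenvalue, and each such piece is invariant under $\prod_x\GL(W_x)$, hence under the stabilizer. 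Irreducibility therefore forces each eigenvalue to be attained by exactly one tuple $(m_x)$.

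\textbf{Step 3: count eigenvalues to derive the contradiction.} Let $r$ be the number of distinct eigenvalues of $\tilde t_\alpha$ on $W$. The $T_\alpha$-eigenspaces on $V$ are in bijection with $\Lambda$, so there are $\frac{p-1}{2}\geq 3$ of them; hence $r\geq 2$, since otherwise $T_\alpha$ would act by a scalar on $V$. If some $\dim W_x\geq 2$, then the summand with $m_x=m$ is $S^m(W_x)$; take instead a different $y$ and compare $(m_x,m_y)=(m-1,1)$ and similar tuples. I expect the main obstacle to be ruling out multiplicative coincidences $\prod_x x^{m_x}=\prod_x x^{m'_x}$. Since all eigenvalues are $p$-th roots of unity up to a common scalar, writing $x=c\,\zeta_p^{a_x}$ reduces these equalities to linear relations $\sum_x m_x a_x\equiv\sum_x m'_x a_x \pmod p$. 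The number of tuples with $\sum_x m_x=m$ is $\binom{m+r-1}{r-1}$, and by Step 2 this must equal $\frac{p-1}{2}$. Together with the dimension identity $\dim V=\binom{\dim W+m-1}{m}$, this has to be checked against the eigenvalue set $\{\zeta_p^{k^2}\}$, which contains no arithmetic progressions of the needed shape.

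For the decisive argument I would pick distinct $x,y$ and use $m\geq 2$: the exponents for $(m-2,2),(m-1,1),(m,0)$ in positions $(x,y)$, with the other entries fixed, form an arithmetic progression $\{\ell,\ell+\delta,\ell+2\delta\}$ in $\Z/p$ of length $3$. More generally, the exponents $\{\ell+j\delta\}_{0\leq j\leq m}$ must all lie in the translate of the set of squares determined by $\{\zeta_p^{k^2-1}\}$. Using this progression structure, together with eigenvalue multiplicities (the summand with $m_x=m$ has dimension $\binom{\dim W_x+m-1}{m}$, which must equal some $\dim V_{g-1,(\ul,i,i)}$), I would aim to show that the resulting eigenvalue set cannot be the required set of squares. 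If this is too delicate, the fallback is to produce, via the Cauchy formula (\Cref{prop:CauchyDecomp}) applied to a subsurface splitting $W$ as a tensor product, a reducible summand contradicting the irreducibility of fusion summands.
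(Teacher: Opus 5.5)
\textbf{There is a genuine gap: Step 3 never derives a contradiction.} Your Steps 1 and 2 are sound. Write the eigenvalues of $\tilde t_\alpha$ on $W$ as $c\,\zeta_p^{a_x}$. Irreducibility of the fusion summands $V_{g-1,(\ul,i,i)}$ then does force $(m_x)_x\mapsto\sum_x m_xa_x \bmod p$ to be a bijection. Its domain is the tuples of non-negative integers with $\sum_x m_x=m$, and its image is $s+Q$, where $Q$ is the set of non-zero squares mod $p$.

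The mechanism you propose for Step 3 cannot work. Translates of $Q$ do contain arithmetic progressions (for instance $3,4,5$ are all squares mod $11$), so finding progressions $\ell+j\delta$ inside $s+Q$ is not contradictory. Only $r=2$ is easy: the whole image is then a progression of length $\frac{p-1}{2}$, which a Gauss-sum bound on the Fourier coefficients of $1_Q$ excludes. For $r\geq 3$ you face a genuine additive-combinatorics problem. Already for $m=2$ and $p=r(r+1)+1$ (e.g.\ $p=31$, $r=5$), you would have to show that no Sidon set $\{a_x\}$ mod $p$ has $\{a_x+a_y\}_{x\leq y}$ equal to a translate of $Q$. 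Neither the count $\binom{m+r-1}{r-1}=\frac{p-1}{2}$ nor the dimension identities you list settles this, so as written the argument has no conclusion.

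\textbf{The missing idea is to cut along a separating curve instead.} The paper takes $\alpha$ bounding a one-holed torus. Then the commuting subgroups $\Mod(\Sigma_1^1)$ and $\PMod(\Sigma_{g-1,n}^1)$ act, and the fusion summands are $V_{1,\mu}\boxtimes V_{g-1,(\ul,\mu)}$. \Cref{lem:tensorProduct}, applied to lifts of Dehn twists in the two factors, makes the lifted factors commute in $\SU(W)$. So $W=\bigoplus_{i}W_i^1\boxtimes W_i^2$ and $S^m(W)=\bigoplus_f\bigotimes_i S^{f(i)}(W_i^1\boxtimes W_i^2)$. By your Step 2 argument, each summand is irreducible and matched with one $\mu\in\Lambda$.

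Now the tensor structure does the work that number theory could not:
\begin{enumerate}
\item Taking $f(i)=m$, \Cref{prop:CauchyDecomp} excludes any $i$ with $\dim W_i^1,\dim W_i^2\geq 2$.
\item \Cref{lemma:dimAtLeastTwo,lemma:dimensionGenusZero} exclude $\dim W_i^2=1<\dim W_i^1$.
\item Hence every $W_i^1$ is a line, so $\dim V_{1,\mu}=1$ for all $\mu$. This contradicts $\dim V_{1,\mu}=\frac{p-1-\mu}{2}$ from \Cref{lemma:dimTori}.
\end{enumerate}
Your fallback gestures in this direction, but with a non-separating curve the stabilizer does not split $W$ as a tensor product, so the Cauchy formula has nothing to act on. The curve itself has to change.
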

\begin{proof}
	Replacing $W$ by $W^*$ if necessary, we may assume without loss of generality that $V_{g,\ul}=S^m(W)$ for some $m\geq 2$.
	Consider a separating simple closed curve $\alpha\subset\Sigma_{g,n}$ such that $\Sigma_{g,n}\setminus \alpha$ contains a component
	homeomorphic to $\Sigma_{1,1}$ and consider the corresponding fusion decomposition
	\begin{equation}\label{eq:fusionDecomposition}
		V_{g,\ul}=\bigoplus_{\mu\in\Lambda}V_{1,\mu}\boxtimes V_{g-1,(\ul,\mu)}.
	\end{equation}
	All summands are non-zero by \Cref{cor:nonTriviality} if $g\geq 2$ and by \Cref{lemma:dimensionGenusZero} if $g=1$.
	This is both the decomposition into eigenspaces for the Dehn twist $T_\alpha$ and the decomposition into
	irreducible $\Mod(\Sigma_1^1)\times\PMod(\Sigma_{g-1,n}^1)$-representations. We may choose a decomposition
	\begin{equation}\label{eq:decompositionOfW}
		W=\bigoplus_{i\in I}W_i^1\boxtimes W_i^2
	\end{equation}
	of $W$ into irreducible $\Mod(\Sigma_1^1)\times\PMod(\Sigma_{g-1,n}^1)$-representations.
	Indeed, let $\widetilde{G}_1$ and $\widetilde{G}_2$
	be the pre-images in $\SU(W)$ of the images in $\PU(W)$ of $\Mod(\Sigma_1^1)$ and $\PMod(\Sigma_{g-1,n}^1)$.
	Then, by \Cref{prop:DehnTwistSpectrum,lem:tensorProduct}, for any $\widetilde{t}_i\in \widetilde{G}_i$, $i=1,2$ lifts of images of Dehn twists,
	$\widetilde{t}_1$ and $\widetilde{t}_2$ commute.
	As $\Mod(\Sigma_1^1)$ and $\PMod(\Sigma_{g-1,n}^1)$ are generated by Dehn twists, $\widetilde{G}_1$ and $\widetilde{G}_2$ commute.
	Hence, we may obtain a decomposition as in \Cref{eq:decompositionOfW} by considering the decomposition
	of $W$ into irreducible $\widetilde{G}_1\times \widetilde{G}_2$-modules.

	Then $V_{g,\ul}=S^m(W)$ gives the following decomposition
	of $V_{g,\ul}$ into $\Mod(\Sigma_1^1)\times\PMod(\Sigma_{g,n}^1)$-representations
	\begin{equation}\label{eq:symmetricPowerDecomposition}
		V_{g,\ul}=\bigoplus_{\substack{f:I\ra \N\\\sum_i f(i)=m}} \bigotimes_{i\in I}S^{f(i)}\left(W_i^1\boxtimes W_i^2\right).
	\end{equation}
	Now, $T_\alpha$ acts by a scalar on each summand in \Cref{eq:symmetricPowerDecomposition}. Hence, each summand is an irreducible
	$\Mod(\Sigma_1^1)\times\PMod(\Sigma_{g,n}^1)$-representation and we have a bijection between $\Lambda$
	and $F=\{f:I\ra\N\mid \sum_if(i)=m\}$ identifying summands in \Cref{eq:symmetricPowerDecomposition} and in \Cref{eq:fusionDecomposition}.

	Let us decompose $I$ according to dimensions. Set $I_{12}=\{i\mid \dim W_i^1=\dim W_i^2=1\}$,
	$I_1=\{i\mid \dim W_i^1=1\}\setminus I_{12}$, $I_2=\{i\mid \dim W_i^2=1\}\setminus I_{12}$, $I_0=I\setminus (I_{12}\sqcup I_1\sqcup I_2)$.
	If $I_0\neq \varnothing$, we may choose $f\in F$ such that $f(i)=m$ for some $i\in I_0$; then the corresponding summand in
	\Cref{eq:symmetricPowerDecomposition} would be reducible by Proposition \ref{prop:CauchyDecomp}. So, $I_0=\varnothing$. If $I_2\neq\varnothing$,
	then setting $f\in F$ such that $f(i)=m$ for some $i\in I_2$, the corresponding summand
	$V_{1,\mu}\boxtimes V_{g-1,(\ul,\mu)}$ would satisfy $\dim V_{g-1,(\ul,\mu)}=1$ and $\dim V_{1,\mu}\geq 2$. This is impossible:
	if $g=2$ and $\ul=(0,\dotsc,0)$, this is absurd as $\dim V_{1,\mu}$ would be both $=1$ and $\geq 2$, and otherwise
	this contradicts either \Cref{lemma:dimAtLeastTwo} or \Cref{lemma:dimensionGenusZero}. So, $I_2=\varnothing$.

	If $g=2$ and $\ul=(0,\dotsc,0)$, we have $I_1=\varnothing$ for the same reason. So, $I=I_{12}$ and each summand in \Cref{eq:symmetricPowerDecomposition}
	has dimension $1$, contradicting \Cref{lemma:dimTori}. In the other cases, the same reasoning gives $I_{12}=\varnothing$ and thus
	$I=I_1$. So, each summand $V_{1,\mu}\boxtimes V_{g-1,(\ul,\mu)}$ satisfies $\dim V_{1,\mu}=1$, contradicting \Cref{lemma:dimTori}.
	Both cases lead to contradictions, as desired.
\end{proof}

\begin{proposition}\label{prop:notExteriorPower}
	Using the notation of \Cref{prop:isSymmetricOrExteriorPower}, $V_{g,\ul}$ is not one of $\Lambda^m(W)$ for $2\leq m\leq \dim W-2$.
\end{proposition}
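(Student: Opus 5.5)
We follow the proof of \Cref{prop:notSymmetricPower} closely, replacing symmetric powers by exterior powers. Suppose for contradiction that $V_{g,\ul}=\Lambda^m(W)$ with $2\leq m\leq \dim W-2$. Take the same separating curve $\alpha$ cutting off a copy of $\Sigma_{1,1}$, with its fusion decomposition \eqref{eq:fusionDecomposition}. All summands are non-zero, and each is an irreducible $\Mod(\Sigma_1^1)\times\PMod(\Sigma_{g-1,n}^1)$-representation on which $T_\alpha$ acts by a scalar, with pairwise distinct scalars. \Cref{lem:tensorProduct} applies to exterior powers as well. Combined with \Cref{prop:DehnTwistSpectrum}, it shows that lifts to $\SU(W)$ of the images of the two commuting subgroups commute. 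Hence we again get a decomposition $W=\bigoplus_{i\in I}W_i^1\boxtimes W_i^2$ into irreducibles.

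Expanding the exterior power then gives
\[
V_{g,\ul}=\bigoplus_{\substack{f:I\ra\N,\ \sum_i f(i)=m\\ f(i)\leq \dim W_i}}\ \bigotimes_{i\in I}\Lambda^{f(i)}\left(W_i^1\boxtimes W_i^2\right).
\]
Since $T_\alpha$ is scalar on each summand, each summand must be irreducible. This gives a bijection between $\Lambda$ and the set $F$ of admissible $f$, matching summands with the fusion summands $V_{1,\mu}\boxtimes V_{g-1,(\ul,\mu)}$.

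Split $I$ into $I_{12},I_1,I_2,I_0$ according to which factors are one-dimensional, as before. The main obstacle is that exterior powers truncate: we cannot put all of $m$ on a single index $i$ unless $\dim W_i\geq m$. By the Cauchy formula (\Cref{prop:CauchyDecomp}), $\Lambda^k(W_i^1\boxtimes W_i^2)$ is reducible for $i\in I_0$ and $2\leq k\leq \dim W_i-2$, and irreducible only for $k\in\{0,1,\dim W_i-1,\dim W_i\}$. To rule out $I_0\neq\varnothing$, we choose $f$ that puts $k=2$ on some $i\in I_0$ (if $\dim W_i\geq 4$) and distributes the remaining $m-2$ elsewhere. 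This is possible since $m\leq \dim W-2$ gives room. If $\dim W_i\leq 3$, then $\dim W_i=4$ is forced because both factors have dimension $\geq 2$, and then $k=2$ works. Some care is needed to check that the remaining $m-2$ can be placed using $m\geq2$ and $\dim W-m\geq 2$. In the edge case where everything else is forced, we instead use $k=2$ on $i$ with the complementary filling.

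Next, exclude $I_2\neq\varnothing$. Choose an $f$ whose summand has $V_{g-1,(\ul,\mu)}$-part one-dimensional but $V_{1,\mu}$-part of dimension $\geq 2$; such an $f$ puts weight only on indices in $I_2\cup I_{12}$. This contradicts \Cref{lemma:dimAtLeastTwo} or \Cref{lemma:dimensionGenusZero}, or gives an absurdity in the case $g=2$, $\ul=0$. We need to ensure such an $f$ exists. We handle this by using that $W=\bigoplus W_i$ and, when necessary, combining weight on $I_2$ with weight on $I_{12}$, since exterior powers of one-dimensional spaces are trivial.

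We are then left with $I=I_1\cup I_{12}$, or $I=I_{12}$ when $g=2$, $\ul=0$. In that case every summand has $\dim V_{1,\mu}=1$, or is entirely one-dimensional. This contradicts \Cref{lemma:dimTori}, since $\dim V_{1,0}=\frac{p-1}{2}\geq 3$.
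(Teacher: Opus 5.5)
Your setup (the decomposition of $W$ obtained from \Cref{lem:tensorProduct}, the bijection between $\Lambda$ and $F$, the splitting $I=I_{12}\sqcup I_1\sqcup I_2\sqcup I_0$) matches the paper's. Your exclusion of $I_0$ is correct in substance, up to one detail. Writing $W_i=W_i^1\boxtimes W_i^2$ as you do, your two choices $f(i)=2$ or $f(i)=\dim W_i-2$ do not always suffice: take $\dim W_i=8$, $\dim W=10$, $m=5$. The fix is easy. The feasible values of $f(i)$ form the interval $[\max(0,m-\dim W+\dim W_i),\min(m,\dim W_i)]$, and this always meets $[2,\dim W_i-2]$ because $2\le m\le \dim W-2$ and $\dim W_i\ge 4$.

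\textbf{The genuine gap is the step excluding $I_2\neq\varnothing$.} A summand has one-dimensional $V_{g-1,(\ul,\mu)}$-part exactly when $f(i)\in\{0,\dim W_i^2\}$ for every $i\in I_1$. It has $\dim V_{1,\mu}\ge 2$ exactly when $0<f(i)<\dim W_i^1$ for some $i\in I_2$. Let $d_1,d_2,d_{12}$ be the total dimensions of the $W_i$ over $I_1,I_2,I_{12}$. Such an $f$ exists when $m<d_2+d_{12}$ or $m>d_1$, but in general not when $d_2+d_{12}\le m\le d_1$. In that range you cannot put all the weight on $I_2\cup I_{12}$ while leaving an index of $I_2$ partially filled, and weight on $I_1$ must come in full blocks. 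Shifting weight onto $I_{12}$ does not help.

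Concretely, take $I_1=\{j\}$, $I_2=\{a\}$, $I_{12}=\varnothing$, $\dim W_j^2=\dim W_a^1=2$ and $m=2$. Then $F$ consists of $(f(j),f(a))=(2,0),(1,1),(0,2)$. The outer two summands have both parts one-dimensional, and the middle one has both parts two-dimensional. So the $f$ you need does not exist, and your argument never reaches the case $I_2=\varnothing$ handled in your last paragraph.

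\textbf{What is missing is the paper's treatment of this remaining range.} The paper does not try to show $I_2=\varnothing$. Instead it proves the following.
\begin{enumerate}
\item $I_1\neq\varnothing$ and $I_2\neq\varnothing$; otherwise all $V_{g-1}$-parts, respectively all $V_1$-parts, would be one-dimensional.
\item Your type of argument then forces $d_2+d_{12}\le m\le d_1$.
\item It takes $f_1\in F$ vanishing on $I_2$ (possible since $m\le d_1$) and $f_2\in F$ with $f_2(i)=\dim W_i^1$ for all $i\in I_2$ (possible since $m\ge d_2$).
\end{enumerate}
These $f_1,f_2$ are distinct, and both give summands with $\dim V_{1,\mu}=1$. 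This contradicts \Cref{lemma:dimTori}, because $\mu=p-3$ is the \emph{only} color with $\dim V_{1,\mu}=1$. In the example above, this is exactly the contradiction coming from $(2,0)$ and $(0,2)$. Your proposal only uses the weaker consequence of \Cref{lemma:dimTori} that not every $\dim V_{1,\mu}$ equals $1$, and that cannot dispose of this configuration.
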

\begin{proof}
	The proof follows the same outline as that of \Cref{prop:notSymmetricPower} but is a little more technical.
	Again, we consider $\alpha\subset \Sigma_{g,n}$ and corresponding decompositions of $V_{g,\ul}$ and $W$ into
	irreducible representations given in \Cref{eq:fusionDecomposition,eq:decompositionOfW} (again applying \Cref{lem:tensorProduct} to obtain the latter).
	The latter induces the decomposition
	\begin{equation}\label{eq:exteriorPowerDecomposition}
		V_{g,\ul}=\bigoplus_{\substack{f:I\ra \N\\\sum_i f(i)=m\\\forall i,\: f(i)\leq \dim W_i^1\boxtimes W_i^2}}%
		\bigotimes_{i\in I}\Lambda^{f(i)}\left(W_i^1\boxtimes W_i^2\right).
	\end{equation}
	Similarly to the case of symmetric powers, each summand in \Cref{eq:exteriorPowerDecomposition}
	is an irreducible $\Mod(\Sigma_1^1)\times\PMod(\Sigma_{g,n}^1)$-representation and
	we have a bijection between $\Lambda$ and
	$F=\{f:I\ra\N\mid \sum_i f(i)=m\text{ and }\forall i,\: f(i)\leq \dim W_i^1\boxtimes W_i^2\}$ identifying the summands in \Cref{eq:fusionDecomposition}
	with those in \Cref{eq:exteriorPowerDecomposition}.

	As in the proof of \Cref{prop:notSymmetricPower}, consider the decomposition $I=I_{12}\sqcup I_1\sqcup I_2\sqcup I_0$.
	Assume by contradiction that $I_0\neq \varnothing$. Choose some $f\in F$. We split the proof into the following cases:
	
	\noindent\textbf{(0)} If there exists $i\in I_0$ such that $2\leq f(i)\leq \dim (W_i^1\boxtimes W_i^2) - 2$,
	then the corresponding summand in \Cref{eq:exteriorPowerDecomposition}
	is reducible by Proposition \ref{prop:CauchyDecomp}, contradiction.
	
	\noindent\textbf{(1)} Assume there exists $i\in I_0$ such that $f(i)=1$.
	
	\noindent\textbf{(1.1)} If there exists $i '\in I\setminus\{i\}$ such that $f(i')\geq 1$, consider $\widetilde{f}\in F$
	which coincides with $f$ outside
	of $\{i,i'\}$ and satisfies $\widetilde{f}(i)=2$ and $\widetilde{f}(i')=f(i')-1$. Then $\widetilde{f}$
	leads to a contradiction as in case \textbf{(0)}.
	
	\noindent\textbf{(1.2)} If for any $i '\in I\setminus\{i\}$, $f(i')=0$, then $m=\sum_jf(j)=1$, contradicting $m\geq 2$.
	So, for any $i\in I_0$, $f(i)\neq 1$.
	
	\noindent\textbf{(2)} Assume there exists $i\in I_0$ such that $f(i)=\dim(W_i^1\boxtimes W_i^2)-1$.
	This case is symmetric to \textbf{(1)} and hence leads to a contradiction.
	More precisely, replacing $W$ by $W^*$, we have $\Lambda^m(W^*)\simeq \Lambda^{\dim W-m}(W)$ and this induces a bijection between corresponding
	decompositions in \Cref{eq:exteriorPowerDecomposition} sending the summand corresponding to $f$ to that corresponding to
	$i\mapsto (\dim(W_i^1\boxtimes W_i^2)-f(i))$.
	
	\noindent\textbf{(3)} Assume there exists $i\in I_0$ such that $f(i)=0$.
	
	\noindent\textbf{(3.1)} If there exists $i'\in I\setminus\{i\}$ such that $f(i')\geq 2$, consider $\widetilde{f}\in F$
	which coincides with $f$ outside
	of $\{i,i'\}$ and satisfies $\widetilde{f}(i)=2$ and $\widetilde{f}(i')=f(i')-2$.
	Then $\widetilde{f}$ leads to a contradiction as in case \textbf{(0)}.
	
	\noindent\textbf{(3.2)} If for any $i'\in I\setminus\{i\}$, $f(i')\leq 1$, as $\sum_{j\neq i}f(j)=\sum_{j}f(j)=m\geq 2$,
	there exist distinct elements $i'$ and $i''$ of $I\setminus\{i\}$ such that $f(i')=f(i'')=1$.
	Consider $\widetilde{f}\in F$ which coincides with $f$ outside
	of $\{i,i',i''\}$ and satisfies $\widetilde{f}(i)=2$ and $\widetilde{f}(i')=\widetilde{f}(i'')=0$.
	Then $\widetilde{f}$ leads to a contradiction as in case \textbf{(0)}.
	
	\noindent\textbf{(4)} Assume there exists $i\in I_0$ such that $f(i)=\dim(W_i^1\boxtimes W_i^2)$.
	By the same reasoning as in \textbf{(2)}, this case is symmetric
	to case \textbf{(3)} and hence also leads to a contradiction.
	As all cases lead to contradictions, \emph{we have $I_0=\varnothing$.}

	If $I_1=\varnothing$, then for any $f\in F\simeq \Lambda$, the corresponding summand $V_{1,\mu}\boxtimes V_{g-1,(\ul,\mu)}$ satisfies
	$\dim V_{g-1,(\ul,\mu)}=1$, contradicting \Cref{cor:nonTriviality} or \Cref{lemma:dimensionGenusZero}. So, $I_1\neq\varnothing$.
	Similarly, $I_2\neq\varnothing$.

	Let $d_{12}=|I_{12}|=\sum_{i\in I_{12}} \dim(W_i^1\boxtimes W_i^2)$, $d_1=\sum_{i\in I_{1}} \dim(W_i^1\boxtimes W_i^2)$
	and $d_2=\sum_{i\in I_{2}} \dim(W_i^1\boxtimes W_i^2)$.
	If $m< d_2+d_{12}$, then there exists $f\in F$ which vanishes on $I_1$ and such that $0<f(i)<\dim W_i^1=\dim W_i^1\boxtimes W_i^2$
	for some $i$ in $I_2$.
	If $m>d_1$, then there exists $f\in F$ such that for any $i$ in $I_1$,
	$f(i)=\dim W_i^2=\dim W_i^1\boxtimes W_i^2$, and such that $1<f(i)<\dim W_i^1$ for some $i$ in $I_2$.
	In both cases, the summand $V_{1,\mu}\boxtimes V_{g-1,(\ul,\mu)}$ corresponding to $f$ will satisfy
	$\dim V_{g-1,(\ul,\mu)}=1$ and $\dim V_{1,\mu}\geq 2$, contradicting \Cref{cor:nonTriviality} or \Cref{lemma:dimensionGenusZero}.
	So, $d_1\geq m\geq d_2+d_{12}$.
	
	As $m\leq d_1$, there exists $f_1\in F$ which vanishes on $I_2$. As $m\geq d_2$, there exists $f_2\in F$
	such that for any $i$ in $I_2$, $f_2(i)=\dim W_i^1=\dim W_i^1\boxtimes W_i^2$.
	The respective summands $V_{1,\mu_1}\boxtimes V_{g-1,(\ul,\mu_1)}$ and $V_{1,\mu_2}\boxtimes V_{g-1,(\ul,\mu_2)}$
	then satisfy $\mu_1\neq \mu_2$, $\dim V_{1,\mu_1}=1$ and $\dim V_{1,\mu_2}=1$. This contradicts \Cref{lemma:dimTori}.
	
	All cases lead to contradictions, as desired.
\end{proof}

Combining \Cref{prop:isSymmetricOrExteriorPower,prop:notSymmetricPower,prop:notExteriorPower} proves \Cref{thm:density}.
Let us now prove \Cref{cor:density}.

\begin{proposition}\label{prop:imageInSU}
	Let $g\geq 2$, $n\geq 0$, $p\geq 7$ and $\ul\in\Lambda^n$.
	Then $\widetilde{\rho}_{g,\ul}$ is valued in $\SU_{d_{p,g,\ul}}$.
\end{proposition}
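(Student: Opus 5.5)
We need to show that $\widetilde{\rho}_{g,\ul}$ takes values in $\SU_{d_{p,g,\ul}}$. That is, it preserves the Hermitian form and has determinant $1$. Unitarity is already part of the TQFT structure recalled in \Cref{sec:prelimDedekindDom}, since $\widetilde{\rho}_{g,\ul}$ preserves $\langle,\rangle$. So the content is the determinant condition: $\det\circ\widetilde{\rho}_{g,\ul}$ should be the trivial character of $\Modtilde(\Sigma_g^n)$.

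The plan is to use the abelianization of the universal central extension.

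First, $\det\circ\widetilde{\rho}_{g,\ul}$ is a character, so it factors through $H_1(\Modtilde(\Sigma_g^n),\Z)$. For $g\geq 3$, $\Mod(\Sigma_g^n)$ is perfect and $\Modtilde$ is the universal central extension, hence also perfect, so the character is trivial. For $g=2$ I would argue directly. The group $\Modtilde(\Sigma_2^n)$ is generated by lifts $\widetilde{t}_\gamma$ of Dehn twists together with the central generator $c$. Its abelianization is generated by the images of a single non-separating twist lift and of $c$, because all non-separating twists are conjugate.

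The key step is to compute $\det$ on these two generators.

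\emph{Determinant of a twist lift.} Take the canonical lift of a non-separating Dehn twist, acting diagonally with eigenvalues $\zeta_p^{j(j+2)}$ on the fusion decomposition of \Cref{prop:DehnTwistSpectrum}. Its determinant is a $p$-th root of unity.

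\emph{Determinant of the central generator.} The central generator $c$ acts by a scalar root of unity, determined by the framing anomaly; its order divides a small power of $p$ times $4$. Its determinant is that scalar raised to $d_{p,g,\ul}$.

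\emph{Exploiting relations.} In genus $2$ the abelianization of $\Modtilde(\Sigma_2^n)$ is finite: it is an extension of $\Z/10$ by the image of $\Z$, and the image of $\Z$ is torsion there since $H_1(\Mod)$ is finite. The lantern and chain relations lift to $\Modtilde$ with a central correction. Hence $\det(\widetilde{t}_\gamma)$ has order dividing both $p$ and some number built from $10$ and the relation coefficients. With $p\geq 7$ prime, this should force $\det(\widetilde{t}_\gamma)=1$, after possibly rescaling the lift to $\widetilde{t}_\gamma\zeta_p^{k}$, which is permitted by the choice of linearization. Then $\det(c)$ is determined by the relations. Using \Cref{lemma:dimensionDivisibleByP}, which gives $p\mid d_{p,g,\ul}$, $\det(c)$ should be trivial, since the scalar is a $p$-power root of unity.

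I expect the main obstacle to be this bookkeeping in genus $2$: keeping track of which normalization of $\widetilde{\rho}$ is used and of the exact scalar by which $c$ acts. The cleanest route I would try first is structural. Any character of $\Modtilde$ valued in $p$-power roots of unity must be trivial when $H_1(\Modtilde(\Sigma_g^n),\Z)$ has order prime to $p$; for $g=2$ this order divides $10$ times a power of $2$ coming from the Maslov index factor. It then remains to check that $\det\circ\widetilde{\rho}$ takes $p$-power-order values. That holds because $\widetilde{\rho}$ is defined over $\Z[\zeta_p,\frac1p]$, so its determinants are units of that ring, and the image of a finite group there is a root of unity in $\pm\mu_p$. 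The sign $-1$ must then be excluded separately, using the explicit twist eigenvalues.
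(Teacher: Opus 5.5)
\textbf{Genus $g\geq 3$.} This part of your argument is sound, and it takes a different route from the paper's. Perfectness of $\Modtilde(\Sigma_g^n)$ kills $\det\circ\widetilde{\rho}_{g,\ul}$ outright. Because $c$ acts by $\zeta_p^{-6}$, it even gives $p\mid d_{p,g,\ul}$ for free. You should justify perfectness by primitivity of the extension class, not by the word ``universal''. The five-term exact sequence gives $H_1(\Modtilde(\Sigma_g^n))\cong \Z/\omega(H_2(\Mod(\Sigma_g^n)))$, and the paper records primitivity in Claim (2) of the proof of \Cref{prop:typeANotSemiTransverse}.

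\textbf{Genus $g=2$: the gap.} Your claim that $H_1(\Modtilde(\Sigma_2^n),\Z)$ is finite is false in general. By the same five-term sequence, the image of the central $\Z$ in the abelianization is $\Z$ modulo the evaluation of the extension class on $H_2(\Mod(\Sigma_2^n),\Z)$. For $n=0$, that group is torsion, since the rational cohomology of $\mathcal{M}_2$ vanishes in positive degrees. So $[c]$ has infinite order, and finiteness of $H_1(\Mod)$ is irrelevant. Consequently the lifted lantern and chain relations only yield identities of the form $\det\widetilde{\rho}(\widetilde t_\gamma)^{10}=\det\widetilde{\rho}(c)^{k}$. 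These force nothing about $\det\widetilde{\rho}(\widetilde t_\gamma)$ until $\det\widetilde{\rho}(c)$ is known. Your ``cleanest route'' fails for the same reason: the image of $\det$ is not a priori finite, and units of $\Z[\zeta_p,\frac1p]$ need not be roots of unity. Rescaling individual lifts is also not available. $\widetilde{\rho}_{g,\ul}$ is a fixed representation, and changing the lift only multiplies by powers of $\widetilde{\rho}(c)$.

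\textbf{The fix.} Reverse your order, as the paper does.
\begin{enumerate}
\item $c$ acts by the scalar $\zeta_p^{-6}$, a $p$-th root of unity. Your hedge allowing a factor of $4$ in its order would break the argument.
\item Twists along boundary components also act by $p$-th roots of unity.
\item Since $p\mid d_{p,g,\ul}$ by \Cref{lemma:dimensionDivisibleByP}, all these scalars have determinant $1$. Hence $\det\circ\widetilde{\rho}_{g,\ul}$ descends to a character of $\PMod(\Sigma_{g,n})$.
\item The abelianization of $\PMod(\Sigma_{g,n})$ is $0$ or $\Z/10$, generated by a non-separating twist. A lift of that twist acts with determinant a $p$-th root of unity, and $\gcd(10,p)=1$ finishes.
\end{enumerate}
In genus $2$ this use of $p\mid d_{p,2,\ul}$ is indispensable, because nothing in the group structure forces $\det\widetilde{\rho}(c)=1$. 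It must therefore come first, not after the relations as in your sketch.
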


\begin{proof}
	Let us show that $\det(\widetilde{\rho}_{g,\ul})$ is trivial.
	Let $c$ be a generator of the central extension $\Z\subset\Modtilde(\Sigma_g^n)$.
	Then $c$ and Dehn twists along boundary components act on $V_{g,\ul}$ by scalars that are $p$-th roots of unity.
	As $g\geq 2$, by \Cref{lemma:dimensionDivisibleByP},
	$\dim V_{g,\ul}$ is a multiple of $p$. Hence, these elements act with trivial determinant and
	$\det(\widetilde{\rho}_{g,\ul})$ factors through $\PMod(\Sigma_{g,n})$.
	Now, $\PMod(\Sigma_{g,n})^\mathrm{ab}$ is either trivial or $\Z/10\Z$ generated by the image of a non-separating twist,
	see \cite[§5]{Korkmaz}.
	A non-separating twist has order $p$ in the image of $\rho_{g,\ul}$, which is prime to $10$ since $p\geq 7$.
	Hence, $\det(\widetilde{\rho}_{g,\ul})$ is trivial.
\end{proof}

\begin{proof}[Proof of \Cref{cor:density}]
	By \Cref{prop:imageInSU}, $\widetilde{\rho}_{g,\ul}$ is valued in $\SU_{d_{p,g,\ul}}$.
	Let $H$ denote the closure of its image. By \Cref{thm:density}, $H$ surjects onto $\PSU_{d_{p,g,\ul}}$.
	Now, $\SU_{d_{p,g,\ul}}$ is perfect, hence the commutator map $(x,y)\mapsto xyx^{-1}y^{-1}$,
	$\PSU_{d_{p,g,\ul}}\times \PSU_{d_{p,g,\ul}}\ra \SU_{d_{p,g,\ul}}$ is surjective.
	Thus, the commutator map $H\times H\ra \SU_{d_{p,g,\ul}}$ is surjective and $H=\SU_{d_{p,g,\ul}}$, as desired.
\end{proof}

\section{Surjectivity mod \texorpdfstring{$J$}{J} of the \texorpdfstring{representations $\rho_{p,g,\ul}$}{SO(3) quantum representations}}
\label{sec:caracFinie}

The aim of this section is to prove the following.

\begin{theorem}\label{thm:surjectivity}
	Let $p\geq 7$ be a prime, $g\geq 3$, $n\geq 0$, $\ul\in\Lambda^n$ and $J\subseteq\Z[\zeta_p]$ a maximal ideal prime to $p$.
	Then the image $G_{J,g,\ul}$ of $\rho_{J,g,\ul}:\PMod(\Sigma_{g,n})\ra \PSL(V_{J,g,\ul})$ is
	all of $\PSL(V_{J,g,\ul})$ if the degree of $\F_q=\Z[\zeta_p]/J$ over its prime field is odd, and $\PSU(V_{J,g,\ul})$ otherwise.
\end{theorem}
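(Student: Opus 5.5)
By \Cref{thm:simpleGroup}(2), $G:=G_{J,g,\ul}$ is a finite simple subgroup of $\PGL(V_{J,g,\ul})$ acting absolutely irreducibly. By \Cref{thm:tensor-indec} it is also tensor-indecomposable. The plan mirrors \Cref{sec:densityProof}, with the classification of finite simple groups replacing the classification of compact Lie groups, in three stages.

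\textbf{Stage 1: $G$ is of Lie type in characteristic $q$.} This is the main obstacle. Let $d=d_{p,g,\ul}$. I would use three properties of $G$.
\begin{itemize}
\item[(a)] $G$ contains a large elementary abelian $p$-group $A$. Take $A$ to be the image of the $3g-3+n$ commuting Dehn twists of a pants decomposition. On $V$, $A$ acts with distinct characters, so $|A|\geq p^{3g-3+n}$, and $A$ is diagonalisable with multiplicity-free characters.
\item[(b)] The projective representation does not linearize to any small cover. By \Cref{lemma:liftTorusToSL}, $A$ lifts to an abelian group in any central extension. However, eigenvalue ratios such as $\zeta_p^{k^2}$, together with the relation in $\SL$, should give a Schur-multiplier obstruction of order divisible by $p$. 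Concretely, take the mod-$J$ analogue of \Cref{prop:imageInSU}: the central element $c$ of $\Modtilde$ acts by a primitive $p$-th root of unity that is not killed. This gives a projective representation whose obstruction class has order $p$.
\item[(c)] $d$ is huge compared with $p$, since it grows like $p^{3g-3}$.
\end{itemize}
Then I would run through the classification. For alternating groups, the Schur multiplier has order at most $6$ (apart from small exceptions), which contradicts (b) since $p\geq 7$. The same argument rules out sporadic groups, with a finite check for the few whose multipliers have order divisible by $7$ or $11$, using (c). For a group of Lie type in cross characteristic $r\neq q$, the multiplier is mostly the diagonal part, of order dividing something like $\gcd(n,r^a\pm1)$. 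Here I would combine (b) with the Seitz--Zalesskii lower bounds on projective degrees in non-defining characteristic: a $p$-part of the multiplier forces rank at least $p-1$, which makes the minimal degree exceed even $d$, or else contradicts the existence of the rank-$(3g-3+n)$ $p$-torus with multiplicity-free action.

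\textbf{Stage 2: reduce to a symmetric or exterior power.} Once $G$ is of Lie type in characteristic $q$, write $G=\mathbf{G}(\F_{q^a})$ or a twisted form, with $V$ an absolutely irreducible restricted module up to Steinberg twists. Tensor-indecomposability together with Steinberg's tensor product theorem forces $V$ to be a single Frobenius twist of a restricted module. The torus $A$ has $p\nmid q$, so it is semisimple and lies in a maximal torus, by the argument of \Cref{prop:weightMultiplicityFree}: use the Weyl group order and $p\geq7$, or use \Cref{lemma:liftTorusToSL} for type $A$. Hence $V$ is weight-multiplicity-free. The Zalesskii--Suprunenko classification then plays the role of Howe's theorem. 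The orthogonal, $E_6$ and similar cases are excluded by non-self-duality (\Cref{prop:notSelfDual}, which only uses Dehn twist eigenvalues mod $J$), by \Cref{lemma:dimensionDivisibleByP}, and by $g\geq3$. What remains is $G\cong \PSL(W)$ or $\PSU(W)$, with $V=S^m W$, $S^m W^*$ or $\Lambda^m W$. In positive characteristic the restricted symmetric powers have $m<q$.

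\textbf{Stage 3: conclude.} Rerun \Cref{prop:notSymmetricPower,prop:notExteriorPower} over $\overline{\F}_q$. \Cref{lem:tensorProduct} holds over any algebraically closed field. The Cauchy decomposition reducibility statements remain valid for restricted degrees $m<q$, possibly via Doty's results on the relevant Schur functors in characteristic $q$. This forces $m=1$, so $V=W$ and $G$ is a full $\PSL$ or $\PSU$ over a subfield. The field is then pinned down by traces. The image contains the twist eigenvalues $\zeta_p^{k^2}$ and their symmetric functions, which generate $\F_q=\Z[\zeta_p]/J$, so no proper subfield is possible. Finally, invariance of the Hermitian form when $[\F_q:\F_{q_0}]$ is even shows the image is $\PSU$. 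In the odd-degree case, the absence of any invariant form (non-self-duality, together with the Galois action not preserving the $\PSU$-structure) gives $\PSL$.
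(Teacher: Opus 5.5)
Your architecture is the paper's: an order-$p$ linearization obstruction plus Schur multipliers forces type A in characteristic prime to $p$; Seitz--Zalesskii plus the Dehn-twist torus excludes cross characteristic; then non-self-duality, weight-multiplicity-freeness, Zalesskii--Suprunenko, tensor-indecomposability, fusion rules and a field-of-definition step finish. However, the argument has gaps at three points.

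\textbf{Cross characteristic (Stage~1).} Knowing $p\mid k$ does not make the Seitz--Zalesskii bound $\frac12\ell^{k-1}$ exceed $\dim V_{J,g,\ul}$. At $k=p$ that bound does not depend on $g$, whereas $\dim V_{J,g,\ul}$ grows exponentially in $g$ for fixed $p$. Your item (c) is also used in the wrong direction here. What is needed is the \emph{upper} bound $\dim V_{J,g,\ul}\leq((p-1)/2)^{3g-3+n}$, obtained by counting colorings of a pants decomposition. It must be played against a lower bound on $k$ with the same exponent.

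That lower bound comes from the $p$-rank of the torus, not from its multiplicity-freeness. By \Cref{lemma:liftTorusToSL}, the preimage of $A\cong(\Z/p\Z)^{3g-3+n}$ in $\SL_k(\F_\ell)$ is abelian. An abelian $p$-subgroup of $\SL_k(\F_\ell)$ with $p\nmid\ell$ has $p$-rank at most $k-1$, or at most $k/d$ if $\ell$ has order $d>1$ modulo $p$ (\Cref{lemma:miminalDimOfRepOfAbGroup}). Once you add this, your multiplier observation gives a genuine simplification. Indeed, $p\mid\gcd(k,\ell-1)$ (or its unitary analogue) puts the $p$-th roots of unity in the matrix field, so $d=1$, $3g-3+n\leq k-1$ and $\ell\geq p-1$. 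Hence $\frac12\ell^{k-1}>((p-1)/2)^{3g-3+n}$ directly, and the paper's case $d>1$ is not needed.

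Also, the order-$p$ claim in (b) requires two inputs: the class of $\Modtilde(\Sigma_g^n)$ must be primitive in $H^2(\Mod(\Sigma_g^n),\Z)$, and $p\mid\dim V_{g,\ul}$. This is Claim~(2) in the proof of \Cref{prop:typeANotSemiTransverse}; ``$c$ is not killed'' does not give it by itself.

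\textbf{Truncated symmetric powers (Stage~2).} Your list of modules is incomplete. \Cref{thm:SeitzMultiplicityFree} allows every truncated symmetric power $TS^m(W)$ with $1\leq m\leq(q'-1)\dim W-1$, where $q'$ is the characteristic. Take $q'$ odd and $q'\leq m\leq(q'-1)\dim W-q'$, and write $m=a(q'-1)+r'$ with $0\leq r'<q'-1$. The highest weight is then $(q'-1-r')\omega_a+r'\omega_{a+1}$, for instance $(q'-2)\omega_1+\omega_2$ when $m=q'$. These modules are neither $S^mW$ or $S^mW^*$ with $m<q'$ nor exterior powers, and your Stage~3 never rules them out.

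The paper handles them with \Cref{lemma:DecompTSofTensor}. That lemma proves, by a highest-weight argument, that $TS^m(U_1\boxtimes U_2)$ is reducible under $\GL(U_1)\times\GL(U_2)$ for $2\leq m\leq(q'-1)\dim(U_1\boxtimes U_2)-2$. The paper then reruns the combinatorics of \Cref{prop:notExteriorPower}. This transfers because $TS^m(U)$ is non-zero exactly for $0\leq m\leq(q'-1)\dim U$, is one-dimensional exactly at the endpoints when $\dim U\geq 2$, and satisfies $TS^m(U)^*\simeq TS^{(q'-1)\dim U-m}(U)$.

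\textbf{Field of definition (Stage~3).} The subfield cannot be excluded by ``traces''. Twist eigenvalues of a projective representation are defined only up to a common scalar. Moreover, the set $\{\zeta_p^{k^2}\}$ is stable under raising to any power that is a square modulo $p$, so Frobenius-stability alone does not force $\zeta_p\in\F_{q'}$. What works is \Cref{lemma:fieldOfDefinition}. The fusion decomposition along a genus-one separating curve is multiplicity-free, and its summands are distinguished by $\dim V_{1,\mu}=\frac{p-1-\mu}{2}$. It is therefore Galois-stable and defined over $\F_{q'}$, so ratios of twist eigenvalues such as $\zeta_p^3$ lie in $\F_{q'}$.
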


\begin{remark}
	In the case where the degree of $\F_q$ over its prime field is even, the sesquilinear form on $V_{J,g,\ul}$
	preserved by $\rho_{J,g,\ul}$ is induced by the TQFT sesquilinear form defined over $\Z[\zeta_p]$.
	When the degree of $\F_q$ over its prime field is odd, this form disappears as $J$ is then not stable
	under complex conjugation.
\end{remark}

\begin{remark}
	\Cref{thm:surjectivity}, unlike \Cref{thm:density}, is limited to genus $g\geq 3$ because we use that
	the obstruction to linearizing $\rho_{p,g,\ul}$ has order divisible by $p$ in \Cref{prop:typeANotSemiTransverse}.
	That proposition is crucial to exclude the possibility that $p\mid \ell$ in \Cref{sec:nonDefiningCharacteristic}.
\end{remark}

We now explain an improvement of \Cref{thm:surjectivity} to the case where the non-zero ideal $J\subset \Z[\zeta_p]$ is not necessarily maximal.
To state it, we need to work over $\Zrp=\Z[\zeta_p]\cap \R$ rather than $\Z[\zeta_p]$.
Let us make some reminders from \Cref{sec:prelim}.
The mapping class group representation $\rho_{g,\ul}$
can be linearized into a representation $\widetilde{\rho}_{g,\ul}$ of the universal central extension $\Modtilde(\Sigma_g^n)$.
The vector space $V_{J,g}$ has a natural $\Z[\zeta_p]$-lattice preserved by the $\Modtilde(\Sigma_g^n)$-action
and is equipped with an invariant Hermitian form $h_{g,\ul}$, defined over $\Z[\zeta_{p}]$ and non-degenerate over $\Z[\zeta_p,\frac{1}{p}]$.%
\footnote{The Hermitian form is sometimes skew over $\Z[\zeta_{p}]$; see comments at the end of \Cref{sec:prelimDedekindDom}.
This detail is irrelevant except for the comments made in \Cref{sec:modp}.}

Then by \Cref{prop:imageInSU}, for $g\geq 2$,
$\widetilde{\rho}_{g,\ul}$ is valued in the group $\SU(h_{g,\ul})$ of elements of $\SL(V_{J,g})$ preserving $h_{g,\ul}$.
This is an algebraic group defined over $\Zrp$.

For any ideal $I$ of $\Zrp$, one can consider the reduction $\widetilde{\rho}_{I,g,\ul}$
of $\widetilde{\rho}_{g,\ul}$ modulo $I$, which is valued in $\SU(h_{g,\ul})(\Zrp/I)$.
For example, when $I$ is maximal, for any $J\subset \Zzp$ maximal with $I=J\cap \Zrp$, $\widetilde{\rho}_{I,g,\ul}$ is a linearization
of $\rho_{J,g,\ul}$ and $\SU(h_{g,\ul})(\Zrp/I)$ is $\SU(V_{J,g,\ul})$ if $\overline{J}=J$ and $\SL(V_{J,g,\ul})$ if $\overline{J}\neq J$.
Note that in the latter case $\rho_{J,g,\ul}$ and $\rho_{\overline{J},g,\ul}$ are dual representations
and that $J$ and $\overline{J}$ are the only ideals over $I$.
The improvement of \Cref{thm:surjectivity} is the following.

\begin{theorem}\label{thm:surjectivityImproved}
	Let $p\geq 7$ be a prime, $g\geq 3$, $n\geq 0$, $\ul\in\Lambda^n$ and $I\subseteq\Zrp$ a non-zero ideal prime to $p$.
	Then the reduction
	\begin{equation*}
		\widetilde{\rho}_{I,g,\ul}:\Modtilde(\Sigma_g^n)\lra \SU(h_{g,\ul})(\Zrp/I)
	\end{equation*}
	is surjective.
\end{theorem}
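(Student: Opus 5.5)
The plan is to deduce \Cref{thm:surjectivityImproved} from \Cref{thm:surjectivity} by two reductions, a Chinese remainder theorem reduction and a lifting argument from $I$ to its powers. The latter is the standard strong-approximation-style argument for perfect groups over a local ring.

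\textbf{Reduction to prime powers.} Write $I=\prod_k I_k^{e_k}$ with distinct maximal ideals $I_k$ of $\Zrp$, all prime to $p$. Then $\SU(h_{g,\ul})(\Zrp/I)\simeq\prod_k \SU(h_{g,\ul})(\Zrp/I_k^{e_k})$.

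Assume surjectivity onto each factor. The image $H$ of $\Modtilde(\Sigma_g^n)$ is then a subgroup of the product projecting onto each factor. Each factor $\SU(h_{g,\ul})(\Zrp/I_k^{e_k})$ has a unique nonabelian simple quotient, namely $\PSL$ or $\PSU$ over the residue field $\Zrp/I_k$ (or over a quadratic extension of it). For distinct $k$ these simple quotients are non-isomorphic. If two of them share the same characteristic and the same field size, we distinguish them by noting that a Goursat-type identification would force $\rho_{J,g,\ul}\simeq\rho_{J',g,\ul}\circ\sigma$ for an automorphism $\sigma$ of $\PSL_d$. Comparing Dehn twist spectra, which are $\zeta_p^{k^2}$ reduced mod $J$ versus mod $J'$, this would force $J$ and $J'$ to be Galois-conjugate through a square automorphism, which can be ruled out. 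In any case the factors are perfect, so Goursat's lemma applied to $H$ projected onto their quotients by Frattini-type subgroups gives $H$ equal to the full product.

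\textbf{Lifting from $\Zrp/I_k$ to $\Zrp/I_k^{e}$.} The base case $e=1$ follows from \Cref{thm:surjectivity} together with a linearization step. The image of $\widetilde{\rho}_{I,g,\ul}$ maps onto $\PSL$ or $\PSU$. Since $\Modtilde(\Sigma_g^n)$ is perfect for $g\geq 3$ and $\SL$, $\SU$ are perfect central extensions of these groups, the image must be all of $\SL$ or $\SU$ (as in the proof of \Cref{cor:density}). For exceptionally small fields this uses perfectness, which holds here since $d$ is large.

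For the inductive step, let $K_e$ be the kernel of $\SU(h)(\Zrp/I_k^{e+1})\to\SU(h)(\Zrp/I_k^{e})$. It is an abelian group isomorphic to the Lie algebra $\mathfrak{su}(h)\otimes (I_k^e/I_k^{e+1})$, which is an irreducible module over the residue group when the characteristic does not divide $d$, and otherwise nearly so. The image $H_{e+1}$ surjects onto level $e$. Its intersection with $K_e$ is therefore a submodule stable under the adjoint action.

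\textbf{Main obstacle.} I expect the hardest point to be showing that this intersection is nonzero, i.e.\ that $H_{e+1}$ is not a splitting of the extension. I would first try the element-wise argument. Take the image $x$ of a non-separating Dehn twist, which has order exactly $p$, prime to the residue characteristic. Take a transvection-like or unipotent-type element $y$ of the level-$e$ image, and consider its lift. The $\ell$-th power of a lift of a unipotent element that is congruent to $1$ modulo $I_k^{e-1}$ lands in $K_e$ and is nonzero. This is the standard argument that a closed subgroup of $\SL_d(\Z_\ell)$ surjecting mod $\ell^2$ is everything; its mod-$\ell^2$ input requires a separate check, for instance that the image contains an element $\exp(\ell X)$ with $X$ nonzero mod $\ell$. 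I would obtain that element from a commutator of lifts of elements of $\PMod$ acting nontrivially mod $I_k^2$. For example, a suitable power of a product of Dehn twists whose mod-$I_k$ image is unipotent would have its $\ell$-th power nontrivial mod $I_k^2$ because the integral representation has infinite image. Irreducibility of the adjoint module then gives the whole of $K_e$, concluding the induction. The small-$\ell$ exceptions ($\ell=2,3$), where this lifting argument classically fails, would need separate care.
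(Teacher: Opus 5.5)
Your architecture is the paper's: the Chinese remainder theorem, a Goursat/Hall argument across distinct primes, perfectness to pass from $\PSU$ to $\SU$, then lifting to prime powers. The paper, however, does the lifting in one stroke. \Cref{lemma:Frattini} uses \cite[Theorem 1.3]{vasiuSurjectivityCriteriaPadic2003}, which applies because $\SU(h_{g,\ul})$ is simply connected of rank $\geq 5$. It shows that \emph{any} subgroup of $\SU(h_{g,\ul})(\Zrp/I)$ surjecting onto $\PSU(h_{g,\ul})(\Zrp/\sqrt{I})$ is everything.

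\textbf{Main gap: the lift to level $2$.} Your hand-made substitute for Vasiu's theorem has a genuine gap exactly where you locate it. The steps $e\geq 2$ are automatic once level $e$ is reached: lifts of elements of the previous congruence kernel have $\ell$-th powers filling $K_e$, at least for $\ell$ odd. So everything rests on $e=1$, i.e.\ on excluding that the image in $\SU(h_{g,\ul})(\Zrp/I_k^2)$ is a partial splitting. Your justification does not achieve this. Infinite image of the integral representation shows $\tilde u^{\ell}\neq 1$, not $\tilde u^{\ell}\not\equiv 1 \pmod{I_k^2}$. Lifts $\tilde u$ of unipotents with $\tilde u^{\ell}\equiv 1\pmod{I_k^2}$ are precisely what a splitting consists of, and nothing topological rules them out. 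The order-$p$ twist images are also irrelevant, since $p\neq\ell$.

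For $\ell\geq 5$ no topology is needed. Since $\ell$ is unramified in $\Zrp$, it generates $I_k$ locally. Any lift $\tilde u=1+N_0+\ell Y$ of a transvection $1+N_0$ then satisfies $\tilde u^{\ell}\equiv 1+\ell N_0\pmod{I_k^2}$, and the conjugates of $N_0$ span the whole kernel, even when $\ell\mid d$. For $\ell=3$, however, the term $\ell N_0YN_0$ survives and can cancel $\ell N_0$. For $\ell=2$, both the $e=1$ and $e=2$ steps break. These residue characteristics do occur (primes of $\Zzp$ above $2$ or $3$) and are covered by the theorem, but you leave them open. This is exactly what Vasiu's theorem supplies.

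\textbf{Smaller gap: the Goursat step.} Distinct $I_k$ can have isomorphic simple quotients, since Galois-conjugate primes have isomorphic residue fields. So you must exclude that two reductions differ by an automorphism of the target, i.e.\ by a Galois twist possibly composed with duality. Comparing the spectrum of a single twist cannot do this. Indeed, $\zeta_p\mapsto\zeta_p^{c}$ with $c$ a non-zero square mod $p$ preserves $\{\zeta_p^{k^2}\}$. That is why your argument ends at a ``square automorphism'', and you do not say how to exclude it.

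The paper (\Cref{lemma:surjectivityProduct}) instead uses the fusion decomposition along a curve cutting off $\Sigma_1^1$. Its summands $V_{1,\mu}\boxtimes V_{g-1,(\ul,\mu)}$ are pairwise non-isomorphic irreducible $\Mod(\Sigma_1^1)\times\PMod(\Sigma^1_{g-1,n})$-modules. Since $\dim V_{1,\mu}=\frac{p-1-\mu}{2}$, any isomorphism must respect the labels $\mu$. Comparing the twist eigenvalues $\zeta_p^{(\mu+1)^2}$ for two values of $\mu$ then gives $\xi_1^{q'^k}=\xi_2$. Hence $J_1=J_2$, after replacing $J_1$ by $\overline{J_1}$ to absorb duality.
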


\begin{remark}\label{rmk:surjectivityProduct}
	In particular, \Cref{thm:surjectivityImproved} implies that given distinct maximal ideals $J_1,\dotsc,J_r$, $J'_1,\dotsc,J'_s\subset\Zzp$
	with $J_u\neq\overline{J_v}$ for any $1\leq u,v\leq r$ and $J'_u=\overline{J'_u}$ for $1\leq u \leq s$, the product representation
	\begin{equation*}
		\prod_{i=1}^r\rho_{J_i,g,\ul}\times\prod_{i=1}^s\rho_{J'_i,g,\ul} :\PMod(\Sigma_{g,n})\lra
		\prod_{i=1}^r\PSL(V_{J_i,g,\ul})\times\prod_{i=1}^s\PSU(V_{J'_i,g,\ul})
	\end{equation*}
	is surjective.
\end{remark}

Let us now make some comments on the proofs of \Cref{thm:surjectivity,thm:surjectivityImproved}.
Thanks to \Cref{thm:simpleGroup}, we know that, under the assumptions of \Cref{thm:surjectivity}, $G_{J,g,\ul}$
is a simple finite group. Then the proof of \Cref{thm:surjectivity} is in two steps.
We first show in \Cref{sec:nonDefiningCharacteristic} that $G_{J,g,\ul}$ is of Lie type A of the same characteristic as $\F_q=\Z[\zeta_p]/J$.
The main inputs are knowledge about the linearity defect of $\rho_{J,g,\ul}$ and
lower bounds of Seitz-Zalesskii on the dimensions of non-trivial representations
of a finite simple group of Lie type in non-defining characteristic \cite{seitzMinimalDegreesProjective1993}.
We then show that $G_{J,g,\ul}$ must be $\PSL(V_{J,g,\ul})$ or $\PSU(V_{J,g,\ul})$. This is the content
of \Cref{sec:definingCharacteristic} and the arguments parallel those of \Cref{sec:densityProof}:
this is essentially enabled by the close proximity between the modular representation theory of finite groups of Lie type
in defining characteristic and representation theory of Lie groups in positive characteristic.

\Cref{thm:surjectivityImproved} is deduced from \Cref{thm:surjectivity} in two steps: we first prove the theorem for reduced $I$
using fusion decomposition arguments, and then deduce the case of general $I$ using a result
\cite[Theorem 1.3]{vasiuSurjectivityCriteriaPadic2003} of Vasiu
relating surjectivity modulo a maximal ideal to surjectivity modulo its powers. The proof is given in
\Cref{sec:conclusionSurjectivity}.

\subsection{Excluding non-Lie type and non-defining characteristic}
\label{sec:nonDefiningCharacteristic}

Let $p\geq 7$ be a prime, $g\geq 3$, $n\geq 0$, $\ul\in \Lambda^n$ and $J\subset\Z[\zeta_p]$ maximal ideal prime to $p$.
Set $\F_q=\Z[\zeta_p]/J$.
\emph{We fix these notations until the end of this section.}

Thanks to \Cref{thm:simpleGroup}, we know that $G_{J,g,\ul}$ is a simple finite group.
In this section, we prove that $G_{J,g,\ul}$ is some $\PSL_k(\F_\ell)$ or $\PSU_k(\F_\ell)$ for some integer $k\geq 2$ and some field $\F_\ell$ of the
\emph{same characteristic} as $\F_q$.

As a first step, we show that $G_{J,g,\ul}$ is of Lie type A in characteristic prime to $p$ using knowledge
on the linearity defect of $\rho_{J,g,\ul}$.

\begin{proposition}\label{prop:typeANotSemiTransverse}
	Let $p\geq 7$ be a prime and $J\subset\Z[\zeta_p]$ maximal ideal prime to $p$.
	Let $g\geq 3$, $n\geq 0$ and $\ul\in \Lambda^n$.
	Then $G_{J,g,\ul} = \PSL_k(\mathbb{F}_\ell)$ or $G_{J,g,\ul}=\PSU_k(\mathbb{F}_\ell)$ with $p \nmid \ell$
	and for some integer $k$.
\end{proposition}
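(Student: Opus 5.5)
We will exploit the linearity defect of $\rho_{J,g,\ul}$. By \Cref{thm:simpleGroup}, $G:=G_{J,g,\ul}$ is a non-abelian finite simple group, and $V_{J,g,\ul}$ is an absolutely irreducible projective representation of $G$ by \Cref{thm:irreducibility}. The plan is to show that the projective representation $G\to\PGL(V_{J,g,\ul})$ does not lift to a linear representation of $G$, and more precisely that the obstruction class in $H^2(G,\overline{\F}_q^\times)$ has order divisible by $p$. Then $p$ divides the order of the Schur multiplier $M(G)$ (the obstruction factors through $M(G)$, since $G$ is perfect and so its universal central extension is defined). We then read off from the classification of Schur multipliers of finite simple groups which $G$ have $p\mid |M(G)|$ for a prime $p\geq 7$.

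\textbf{Step 1: the obstruction has order divisible by $p$.} Linearize via $\widetilde{\rho}_{g,\ul}$ on $\Modtilde(\Sigma_g^n)$ with $g\geq 3$. For $g\geq 4$ the extension class generates $H^2(\Mod,\Z)\simeq\Z$. The generator $c$ of the central $\Z$ acts by a scalar which is a primitive $p$-th root of unity (this is the framing anomaly: in the BHMV normalization it is a power of $\zeta_p$, nontrivial because $p$ is prime and the central charge is nonzero mod $p$), and this persists mod $J$ since $J\nmid p$. Suppose the projective representation lifted to $\sigma:G\to\GL(V)$. Then $\sigma\circ\rho_{J}$ and $\widetilde{\rho}_{J}$ would differ by a character $\chi$ of $\Modtilde(\Sigma_g^n)$. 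Since $\Mod(\Sigma_g^n)$ is perfect for $g\geq3$ and the extension is perfect, $\chi$ is trivial, so $c$ acts trivially, which is a contradiction. The same argument applied to $p'$-parts shows that the obstruction class has order exactly divisible by $p$: twisting by any lift changes scalars only through characters. For $g=3$ we instead use the known extension class and the fact that $H_1$ vanishes, so the argument still goes through; in this case I would cite \cite{MR12} for the order of $c$.

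\textbf{Step 2: classification.} Schur multipliers of alternating groups are of order $\le 6$ (apart from $A_6,A_7$), and those of sporadic groups only involve the primes $2$ and $3$. Exceptional multipliers of groups of Lie type involve only $2$ and $3$. The only groups of Lie type with generic multiplier divisible by a prime $\geq 7$ are $\PSL_k(\F_\ell)$, with multiplier $\gcd(k,\ell-1)$, and $\PSU_k(\F_\ell)$, with multiplier $\gcd(k,\ell+1)$. Other types have multipliers dividing $4$, $3$ or $2$: types $B,C,D$ give $2$-groups, $E_6$ gives $3$, $E_7$ gives $2$, and twisted forms behave similarly. Hence $G=\PSL_k(\F_\ell)$ or $\PSU_k(\F_\ell)$ with $p\mid \gcd(k,\ell\mp1)$. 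In particular $p\nmid\ell$, since $\ell$ is a power of a prime and $p\mid \ell\mp 1$.

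The main obstacle is Step 1: pinning down rigorously that the scalar action of the central generator has order exactly $p$ modulo $J$ for all $g\geq 3$ (including $g=3$ and boundary colorings), and ruling out that a character twist absorbs it. I would handle this using perfectness of $\Modtilde$ together with the explicit anomaly scalar from \cite{bhmv,MR12}.
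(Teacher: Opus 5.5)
Your proposal is correct and follows essentially the same route as the paper. You show that the linearization obstruction of $G_{J,g,\ul}$ has order divisible by $p$, using that the central generator $c$ of $\Modtilde(\Sigma_g^n)$ acts by the nontrivial scalar $\zeta_p^{-6}$ modulo $J$ together with primitivity of the extension class (this is exactly your perfectness of $\Modtilde(\Sigma_g^n)$, valid for all $g\geq 3$, and it does not follow from $H_1=0$ alone), and you then conclude from the classification of Schur multipliers. The one step to state cleanly is the $p$-divisibility. Your character-twist argument shows that the pullback of the obstruction to $\Mod(\Sigma_g^n)$ is nonzero, and this pullback is $p$-torsion because $\widetilde{\rho}_{J,g,\ul}$ factors through $\Modtilde(\Sigma_g^n)/\langle c^p\rangle$; so it has order exactly $p$, and hence $p$ divides the order of the obstruction on $G_{J,g,\ul}$. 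The paper does the same computation with $\mathcal{Z}$-coefficients and the universal coefficient theorem.
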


\begin{proof}
	We shall use the notations \( r = \dim V_{g,\ul} \) and \( \mathcal{Z} = Z(\SL_r(\mathbb{F}_q)) \).
	Note that \( \mathcal{Z} \cong \mathbb{Z}/r' \mathbb{Z} \),
	where \( r' \) is the largest divisor of \( r \) prime to \( q \).
	As $p\nmid q$, we have that \( p \mid r \) and hence \( p \mid r' \).
	The proof will follow from the two claims below.

	\begin{claim}[1]
		If \( G_{J,g,\ul} \) is not of Lie type A or if \( G_{J,g,\ul} \) is of Lie type A in characteristic \( p \),
			then \( \abs{H^2(G_{J,g,\ul}, \mathcal{Z})} \) is prime to \( p \).
	\end{claim}
	\begin{proof}
		By \Cref{thm:simpleGroup}, $G_{J,g,\ul}$ is a simple non-cyclic finite group. First assume that \( G_{J,g,\ul} \) is not of Lie type A.
		By the classification of simple finite groups, the Schur multiplier \( \abs{H_2(G_{J,g,\ul}, \mathcal{Z})} \)
		is of the form $2^a3^b$ (see \cite[§1.3, §3.3]{conwayAtlasFiniteGroups1985}).
		Then by the Universal Coefficient Theorem and the fact that \( G_{J,g,\ul} \) is perfect (\Cref{lemma:perfect}):
		\[ H^2(G_{J,g,\ul}, \mathcal{Z}) \cong \operatorname{Hom}_{\mathbb{Z}}(H_2(G_{J,g,\ul}, \mathbb{Z}), \mathcal{Z}). \]
		This proves the claim in that case. Now, assume that \( G_{J,g,\ul} \) is of Lie type A in characteristic \( p \).
		Then \( G_{J,g,\ul} \) is some $\PSL_k(\mathbb{F}_{p^e})$ or some $\PSU_k(\mathbb{F}_{p^e})$. Now
		the Schur multipliers \( \abs{H_2(\PSL_k(\mathbb{F}_{p^e}), \mathbb{Z})} = k \wedge (p^e - 1) \)
		and \( \abs{H_2(\PSU_k(\mathbb{F}_{p^e}), \mathbb{Z})} = k \wedge (p^{\frac{e}{2}} + 1) \) are both prime to \( p \),
		see \cite[§3.3]{conwayAtlasFiniteGroups1985}.
		Again, we conclude using the Universal Coefficient Theorem.
	\end{proof}

	\begin{claim}[2]
	The obstruction \( \omega \in H^2(G_{J,g,\ul}, \mathcal{Z}) \) to linearizing \( G_{J,g,\ul}\ra \PSL_r(\F_q) \) has order divisible by \( p \).
\end{claim}
\begin{proof}
	Let \( \Modtilde(\Sigma_{g}^{n}) \) denote the universal central extension of the mapping class group \( \Mod(\Sigma_{g}^{n}) \).
	It is the pullback along $\Mod(\Sigma_{g}^{n})\ra\Mod(\Sigma_g)$
	of the central extension denoted $\widetilde{\Gamma}_g^{++}$ in \cite{gilmerMaslovIndexLagrangians2013}.
	The corresponding extension class $\widetilde{\omega}\in H^2(\Mod(\Sigma_{g}^{n}),\Z)$ is
	primitive in the free $\Z$-module $H^2(\Mod(\Sigma_{g}^{n}),\Z)/\mathrm{torsion}$. Indeed,
	the pullback of $\widetilde{\omega}$ along the map $\Mod(\Sigma_g^1)\ra\Mod(\Sigma_{g}^{n})$ is a generator of
	$H^2(\Mod(\Sigma_g^1),\Z)\simeq\Z$; see \cite[§6]{Korkmaz}.
	
	The representation $\rho_{J,g,\ul}:\Mod(\Sigma_{g}^{n})\ra \PSL_r(\F_q)$ can be linearized into
	$\widetilde{\rho}_{J,g,\ul}:\Modtilde(\Sigma_{g}^{n})\ra \SL_r(\F_q)$ and the generator of the central extension
	then acts by $\zeta_p^{- 6}I_r$; see \cite[§11]{gilmerMaslovIndexLagrangians2013}.
	Consider the commutative diagram with exact rows
	\begin{center}
		\begin{tikzcd}
			1 \arrow[r] & \mathbb{Z} \arrow[r] \arrow[d] & \Modtilde(\Sigma_{g}^{n}) \arrow[r] \arrow[d] & \Mod(\Sigma_{g}^{n}) \arrow[r] \arrow[d] & 1 \\
			1 \arrow[r] & \mathcal{Z} \arrow[r] & \SL_r(\mathbb{F}_q) \arrow[r] & \PSL_r(\mathbb{F}_q) \arrow[r] & 1.
		\end{tikzcd}
	\end{center}
	For any choice of set-theoretic section $\widetilde{s}:\Mod(\Sigma_{g}^{n})\ra \Modtilde(\Sigma_{g}^{n})$, the assignment
	$\widetilde{\varphi}(x,y)=\widetilde{s}(x)\widetilde{s}(y)\widetilde{s}(xy)^{-1}$ defines a group cohomology cocycle representing
	$\widetilde{\omega}\in H^2(\Mod(\Sigma_{g}^{n}),\Z)$. Similarly, for any set-theoretic section $s':\PSL_r(\mathbb{F}_q)\ra \SL_r(\mathbb{F}_q)$,
	$\varphi'(x,y)=s'(x)s'(y)s'(xy)^{-1}$ defines a cocycle representing
	the universal obstruction in $H^2(\PSL_r(\mathbb{F}_q), \mathcal{Z})$ to linearization.
	The obstruction $\omega'\in H^2(\Mod(\Sigma_{g}^{n}), \mathcal{Z})$ to linearizing $\rho_{J,g,\ul}$
	is then represented by the cocycle $(x,y)\mapsto \varphi'(\rho_{J,g,\ul}(x),\rho_{J,g,\ul}(y))$.
	Taking compatible choices of $\widetilde{s}$ and $s'$ and making use of the above commutative diagram,
	we see that $\varphi'(\rho_{J,g,\ul}(x),\rho_{J,g,\ul}(y))=\pi (\widetilde{\varphi}(x,y))$
	where $\pi$ is the map $1 \in \mathbb{Z} \mapsto \zeta_p^{-6} I_r \in \mathcal{Z}$.
	Hence, $\omega'$ is just the image of \( \widetilde{\omega} \) under the map
	\[ H^2(\Mod(\Sigma_{g}^{n}), \mathbb{Z}) \xrightarrow{\alpha} H^2(\Mod(\Sigma_{g}^{n}), \mathcal{Z})\]
	induced by $\pi$.
	We have a factorization \( \mathbb{Z} \twoheadrightarrow \mathbb{Z}/p\mathbb{Z} \subseteq \mathbb{Z}/r'\mathbb{Z} \cong \mathcal{Z} \) of $\pi$.
	As \( H_1(\Mod(\Sigma_{g}^{n}), \mathbb{Z}) = 0 \), by the Universal Coefficient Theorem
	\begin{equation}\label{eq:universalCoefficientsObstruction}
		H^2(\Mod(\Sigma_{g}^{n}), \mathbb{Z}) \otimes_{\mathbb{Z}} \mathbb{Z}/p\mathbb{Z} \xrightarrow{\sim} H^2(\Mod(\Sigma_{g}^{n}),%
		\mathbb{Z}/p\mathbb{Z}).
	\end{equation}
	Consider \( 0 \to \mathbb{Z}/p\mathbb{Z} \to \mathcal{Z} \to \mathbb{Z}/(r'/p)\mathbb{Z} \to 0 \).
	We have an exact sequence
	\begin{equation*}
		H^1(\Mod(\Sigma_{g}^{n}), \mathbb{Z}/(r'/p)\mathbb{Z}) \to H^2(\Mod(\Sigma_{g}^{n}), \mathbb{Z}/p\mathbb{Z}) \to H^2(\Mod(\Sigma_{g}^{n}),%
		\mathcal{Z}).
	\end{equation*}
	As $\Mod(\Sigma_{g}^{n})$ is perfect, $H^1(\Mod(\Sigma_{g}^{n}), \mathbb{Z}/(r'/p)\mathbb{Z})=0$.
	Hence, $H^2(\Mod(\Sigma_{g}^{n}), \mathbb{Z}/p\mathbb{Z})$ injects into $H^2(\Mod(\Sigma_{g}^{n}), \mathcal{Z})$.
	
	As $\widetilde{\omega}$ is primitive, by \Cref{eq:universalCoefficientsObstruction},
	it maps to a non-zero element of $H^2(\Mod(\Sigma_{g}^{n}), \mathbb{Z}/p\mathbb{Z})$.
	By the above injection, the image $\omega'$ of $\widetilde{\omega}$
	in $H^2(\Mod(\Sigma_{g}^{n}), \mathcal{Z})$ has order $p$.
	Now, \( \omega' \) is the image of \( \omega \) under
	\( H^2(G_{J,g,\ul}, \mathcal{Z}) \to H^2(\Mod(\Sigma_{g}^{n}), \mathcal{Z}) \),
	so \( \omega \) has order divisible by \( p \).
\end{proof}

Back to the proof of \Cref{prop:typeANotSemiTransverse}.
By Claim (2), $p$ divides $|H^2(G_{J,g,\ul},\mathcal{Z})|$. By Claim (1), this is only possible
when $G_{J,g,\ul}$ is of Lie type $A$ in characteristic prime to $p$, as desired.
\end{proof}

In the remainder of this section, we prove that $G_{J,g,\ul}$ cannot
be of Lie type in characteristic different from that of $\F_q=\Z[\zeta_p]/J$.
To do so, we bound below the rank of $G_{J,g,\ul}$ by considering a large abelian subgroup
generated by Dehn twists (\Cref{cor:lowerBoundOnK}), and bound it above by approximating $\dim V_{g,\ul}$
and applying bounds of Seitz-Zalesskii on minimal dimensions of representations of simple groups of Lie type
\cite{seitzMinimalDegreesProjective1993} (\Cref{prop:upperBoundonK}).
We then show the lower and upper bounds are incompatible (\Cref{prop:excludingUnequalCharacteristic}).

Consider \( A \subseteq G_{J,g,\ul} \) a subgroup generated by the images
\( t_1, \dots, t_{3g-3+n} \) of Dehn twists along non-separating {\scc}s forming a pair of pants decomposition.
By \Cref{rmk:vacuum}, we may and will assume that $\ul\in(\Lambda\setminus\{0\})^n$.

\begin{lemma}\label{lemma:independenceOfTwists}
	We have $A\simeq (\Z/p\Z)^{3g-3+n}$.
\end{lemma}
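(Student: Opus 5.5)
The plan is to show two things: that $A$ is abelian of exponent $p$ and generated by $3g-3+n$ elements, so it is a quotient of $(\Z/p\Z)^{3g-3+n}$; and that it is no smaller.

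For the first point, the curves $\alpha_1,\dots,\alpha_{3g-3+n}$ of the pants decomposition are disjoint, so the Dehn twists commute in $\PMod(\Sigma_{g,n})$. Hence $A$ is abelian. Each $t_i$ has order dividing $p$ in $\PGL(V_{J,g,\ul})$ by \Cref{prop:DehnTwistSpectrum}, since its eigenvalues are $p$-th roots of unity up to a common scalar. This gives a surjection $(\Z/p\Z)^{3g-3+n}\twoheadrightarrow A$.

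For injectivity, I would use the TQFT basis attached to the pants decomposition. This basis is defined over $\Z[\zeta_p,\frac1p]$ and hence persists modulo $J$. Lift each $t_i$ to the diagonal operator acting on the basis vector indexed by an admissible coloring $c$ by $\zeta_p^{c_i(c_i+2)}$, where $c_i\in\Lambda$ is the color of the edge dual to $\alpha_i$.

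Suppose $\prod_i t_i^{a_i}$ is trivial in $\PGL$, with $a_i\in\Z/p\Z$. Then the function
$$c\mapsto \sum_i a_i\, c_i(c_i+2)\in \Z/p\Z$$
must be constant on the set of admissible colorings. Here I use that $\zeta_p$ has order exactly $p$ modulo $J$, because $J\neq(1-\zeta_p)$, so equality of powers of $\zeta_p$ mod $J$ means equality of exponents mod $p$.

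To conclude $a_j=0$ for each fixed $j$, it suffices to find two admissible colorings that differ only on the edge dual to $\alpha_j$, with values $c_j\neq c_j'$. Then $a_j\bigl(c_j(c_j+2)-c_j'(c_j'+2)\bigr)\equiv 0 \pmod p$. The factor in parentheses is nonzero mod $p$ because the values $(c+1)^2$ are distinct for $c\in\Lambda$ (the same fact used in \Cref{thm:irreducibility}). Hence $a_j=0$.

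The main step is producing these two colorings for every $j$, and this is where I expect to spend the effort. I would take the explicit pants decomposition into non-separating curves used in \Cref{prop:weightMultiplicityFree}, with the boundary colors $\ul\in(\Lambda\setminus\{0\})^n$. Color every internal edge by the permissive color $i_0$ of \Cref{lemma:permColor}, as in \Cref{lemma:dimAtLeastOne}; this is admissible. Then change the edge dual to $\alpha_j$ to $i_0'=i_0\pm2$ as in \Cref{lemma:dimAtLeastTwo}. Every triple $(i_0,i_0',x)$ with $x\neq0$ is admissible, and $(i_0,i_0,i_0')$ is admissible, so the modified coloring stays admissible provided no vertex containing edge $j$ involves a $0$ color or two copies of $i_0'$.

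Vertices where the same edge appears twice (loops) need separate care. At a vertex $(i_0',i_0',x)$ I would check admissibility directly; if it fails, I would instead vary $\alpha_j$ together with its mirror edge, or pick a different admissible pair. Alternatively, I would choose a different pants decomposition in which each edge is non-loop, as in the graphs for $g\geq3$ of \Cref{cor:tensor-indec}, and then conjugate: all such twist subgroups are conjugate by the change-of-coordinates principle, so the isomorphism type of $A$ does not depend on the decomposition.
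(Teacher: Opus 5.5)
Your argument is the paper's: color every internal edge by $i_0$, change the edge dual to $\alpha_j$ to $i_0'$, and use that these two basis vectors lie in the same joint eigenspace of the other twists but in different eigenspaces of $t_j$. Your $\Z/p\Z$-linear formulation is equivalent to the paper's observation that $x_1$ would act by a scalar on each joint eigenspace of $x_2,\dots$.

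The loop case you worry about cannot occur. If a pair of pants had two of its boundary curves identified with $\alpha_j$, its third boundary would cut off a one-holed torus and so be separating. Hence every vertex on the edge dual to $\alpha_j$ has the form $(i_0',i_0,x)$ with $x=i_0$ or $x\in\Lambda\setminus\{0\}$, and both triples are admissible. You should also drop the fallback via conjugacy: pants decompositions into non-separating curves need not all lie in a single $\Mod$-orbit, so that step would not be valid in general.
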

\begin{proof}
Let us denote by $x_1,\ldots,x_{3g-3+n}$ the images of $t_1,\ldots,t_{3g-3+n}$ in $G_{J,g,\ul}$, and by $\alpha_1,\ldots,\alpha_{3g-3+n}$
the associated curves. We know that each $x_i$ has order $p$, and that the $x_i$'s commute (since the Dehn twists $t_i$ commute).
Therefore, if $A$ were not isomorphic to $(\Z/p\Z)^{3g-3+n}$, up to relabeling, one would have a relation of the form
\begin{equation}
	\label{eq:relation}x_1=\underset{2\leq i \leq 3g-3+n}{\prod}x_i^{n_i}
\end{equation}
for some $n_i \in \Z/p\Z$. This would imply that $x_1$ acts as a scalar on each joint eigenspace of $x_2,\ldots,x_{3g-3+n}$.

Since $g\geq 1$, consider two colorings $c,c'$ of the edges of a banded trivalent graph dual to the pants decomposition
$\lbrace \alpha_1,\ldots,\alpha_{3g-3+n}\rbrace$ defined as follows. The color $c$ simply colors each interior edge by $i_0$.
The color $c'$ is obtained from $c$ by replacing the color along the edge dual to $\alpha_1$ by $i_0'=i_0+2$ if $p\equiv 3 \mod 4$
and $i_0'=i_0-2$ if $p\equiv 1 \mod 4$. We recall that $(i_0,i_0,j)$ is admissible for any $j\in \Lambda$ and $(i_0,i_0',j)$ is
admissible for any $j\in \Lambda \setminus \lbrace 0 \rbrace$; this implies that both $c$ and $c'$ are admissible colorings.

However, the corresponding basis vectors $\varphi_c,\varphi_c' \in V_{p,g,\ul}$ are in the same joint eigenspace of $x_2,\ldots,x_{3g-3+n}$,
but in distinct eigenspaces of $x_1$.

Therefore, there is no such relation as \Cref{eq:relation} and $A\simeq (\Z/p\Z)^{3g-3+n}$.
\end{proof}

Define $B$ to be the $p$-primary part of the pre-image of \( A \) in $\SL_k(\F_\ell)$ if $G_{J,g,\ul}=\PSL_k(\F_\ell)$
or in $\SU_k(\F_\ell)$ if $G_{J,g,\ul}=\PSU_k(\F_\ell)$. Note that thanks to \Cref{lemma:liftTorusToSL},
the pre-image is abelian.
By \Cref{lemma:independenceOfTwists}, we have \( \dim_{\mathbb{F}_p}(B/pB) \geq 3g-3+n \).

From now on, \emph{denote by \( d \) the order of \( \ell \pmod p \)}, i.e., \( p \mid \ell^d - 1 \) but \( p \nmid \ell^\delta - 1 \)
for \( 1 \leq \delta < d \).

\begin{lemma}\label{lemma:miminalDimOfRepOfAbGroup}
	If a \( p \)-primary abelian group \( B \) has a faithful representation in \( \SL_k(\mathbb{F}_\ell) \)
	\emph{with $p\nmid \ell$}, then
	\( \dim_{\mathbb{F}_p}(B/pB) \leq k-1 \) if \( d=1 \),
	and \( \dim_{\mathbb{F}_p}(B/pB) \leq \frac{k}{d} \) if \( d>1 \).
\end{lemma}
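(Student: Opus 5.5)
Since $p\nmid \ell$, the group algebra $\overline{\F}_\ell[B]$ of the finite abelian group $B$ is semisimple, and $B$ is diagonalizable over $\overline{\F}_\ell$. It therefore suffices to work with the elementary abelian subgroup $B[p]=\{b\in B\mid pb=0\}$. For a finite abelian $p$-group, $\dim_{\F_p}(B/pB)=\dim_{\F_p}B[p]$, so we may replace $B$ by $E:=B[p]\simeq(\Z/p\Z)^r$, which still acts faithfully on $\F_\ell^k$ with determinant $1$. The goal is then to show $r\leq k-1$ when $d=1$ and $r\leq k/d$ when $d>1$.

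\emph{Case $d=1$.} Here $\F_\ell$ contains the $p$-th roots of unity, so $E$ is diagonalizable over $\F_\ell$ itself: $\F_\ell^k=\bigoplus_{j=1}^k L_j$ with each $L_j$ a line and $E$ acting on $L_j$ via a character $\chi_j\colon E\to\mu_p(\F_\ell)\simeq\Z/p\Z$. Faithfulness means the $\chi_j$ span $\widehat{E}\simeq\F_p^r$. The determinant condition gives $\sum_j\chi_j=0$ in $\widehat{E}$, so the span has dimension at most $k-1$, and hence $r\leq k-1$.

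\emph{Case $d>1$.} Decompose $\F_\ell^k$ into irreducible $\F_\ell[E]$-modules, which is possible by semisimplicity. Each irreducible module $M$ has the form $\F_\ell(\chi)$ for a character $\chi\colon E\to\mu_p(\overline{\F}_\ell)$, and $\dim_{\F_\ell}M$ is the degree of the field generated by the values of $\chi$. That degree is $1$ if $\chi$ is trivial and $d$ otherwise, since $\F_\ell(\zeta_p)=\F_{\ell^d}$. The kernel of $E$ acting on $M$ is $\ker\chi$, which has codimension at most $1$. Faithfulness forces the intersection of these kernels over the nontrivial summands to be trivial, so $r$ is at most the number of nontrivial summands. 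That number is at most $k/d$, because each nontrivial summand has dimension $d$. The determinant condition is not needed in this case.

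The only steps requiring care are the identification $\dim B/pB=\dim B[p]$ and the dimension count $[\F_\ell(\zeta_p):\F_\ell]=d$ for nontrivial characters, together with the determinant argument for $d=1$. None of these is a serious obstacle, and the main thing to check is that passing to $B[p]$ preserves faithfulness and the $\SL$ condition, which is immediate.
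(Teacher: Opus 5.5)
Your proof is correct, and its outline matches the paper's: replace $B$ by its elementary abelian subgroup $B[p]$, then split according to $d$.

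For $d=1$, your argument is the dual form of the paper's. You use that the characters $\chi_j$ span $\widehat{E}$ and satisfy $\sum_j\chi_j=0$. The paper instead places $E$ in the diagonal torus of $\SL_k(\F_\ell)$ and notes that the $p$-torsion of that torus is $(\Z/p\Z)^{k-1}$.

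The real difference is in the case $d>1$. The paper writes each irreducible summand as $V_j^1\otimes\dotsb\otimes V_j^{\alpha}$, with $V_j^i$ simple over the $i$-th cyclic factor of $E$. It then bounds $\prod_i\dim V_j^i$ below by $d$ times the number of nontrivial $V_j^i$, and uses that each cyclic factor acts nontrivially on some summand.

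Over $\F_\ell$ with $d>1$, that factorization is not available in general. Take $E=(\Z/p\Z)^2$ and the simple module $\F_{\ell^d}$ on which $(a,b)$ acts by $\zeta_p^{a+b}$. It has dimension $d$ and is nontrivial on both factors. Yet restricting to either factor forces the other tensor factor to be one-dimensional, hence trivial, which is a contradiction.

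Your route avoids this problem entirely. You describe the simple $\F_\ell[E]$-modules as $\F_\ell(\chi)$, of dimension exactly $d$ when $\chi\neq 1$. Faithfulness then forces the hyperplanes $\ker\chi_j$ attached to the $m$ nontrivial summands to intersect trivially, which gives $r\le m\le k/d$ directly. This is the cleaner and more reliable way to do the count.
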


\begin{proof}
	The group $B$ is of the form $\prod_{i=1}^\alpha (\mathbb{Z}/p^{n_i}\mathbb{Z})$
	with \( \alpha = \dim_{\mathbb{F}_p}(B/pB) \).
	Consider \( B' \subseteq B \) isomorphic to \( (\mathbb{Z}/p\mathbb{Z})^\alpha \)
	and a decomposition into irreducible \( B' \)-modules:
	\[ \F_\ell^k = \bigoplus_{j \in J} V_j. \]

	\textbf{Case 1:} \( d > 1 \). Set \( V_j = V_j^1 \otimes \dots \otimes V_j^\alpha \),
	with the \( V_j^i \)'s simple \( (\mathbb{Z}/p\mathbb{Z}) \)-modules.
	Consider \( V \) any non-trivial \( (\mathbb{Z}/p\mathbb{Z}) \)-module.
	Choose \( g \in \mathbb{Z}/p\mathbb{Z} \) generator such that \( g \) acts non-trivially.
	Then \( \F_\ell[g] \cdot v \simeq \F_\ell[\zeta_p] \).
	But \( \dim_{\F_\ell} \F_\ell[\zeta_p] = d \).
	So, each \( V_j^i \) is either trivial or of dimension at least $d$.
	Now
	\begin{align*}
		k &= \sum_{j \in J} \prod_{i=1}^\alpha \dim(V_j^i) \\
		&\geq \sum_{j \in J} \sum_{i=1}^\alpha \begin{cases} 0 & \text{if } \dim(V_j^i) = 1 \\ \dim(V_j^i) & \text{if } \dim(V_j^i) > 1 \end{cases} \\
		&\geq \sum_{j \in J} \sum_{i=1}^\alpha d \cdot \mathbf{1}_{V_j^i \text{ is non-trivial}}.
	\end{align*}

	As the representation is faithful, for each $i$, there exists $j$ such that \( V_j^i \) is non-trivial.
	Hence, \( k \geq \alpha d \), as desired.

	\textbf{Case 2:} \( d = 1 \). Each \( V_j \) has dimension $1$ and is a character \( \chi \colon (\mathbb{Z}/p\mathbb{Z})^\alpha \to \F_\ell^\times \).
	So, up to change of basis, \( (\mathbb{Z}/p\mathbb{Z})^\alpha \subseteq (\F_\ell^\times)^k \cap \SL_k(\F_\ell) \subseteq \GL_k(\F_\ell) \).
	But \( (\F_\ell^\times)^k \cap \SL_k(\F_\ell) \simeq (\F_\ell^\times)^{k-1} \) and the \( p \)-torsion part of \( (\F_\ell^\times)^{k-1} \)
	is isomorphic to \( (\mathbb{Z}/p\mathbb{Z})^{k-1} \).
	So, \( (\mathbb{Z}/p\mathbb{Z})^\alpha \hookrightarrow (\mathbb{Z}/p\mathbb{Z})^{k-1} \).
	Hence, \( \alpha \leq k-1 \), as desired.
\end{proof}

Applying \Cref{lemma:miminalDimOfRepOfAbGroup} to our situation, we have:

\begin{corollary}\label{cor:lowerBoundOnK}
	With $k$ such that $G_{J,g,\ul}=\PSL_k(\F_\ell)$ or $G_{J,g,\ul}=\PSU_k(\F_\ell)$,
 	\( 3g-3+n \leq k-1 \) if \( d=1 \) and \( 3g-3+n \leq \frac{k}{d} \) if \( d>1 \).
\end{corollary}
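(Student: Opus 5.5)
The plan is to apply \Cref{lemma:miminalDimOfRepOfAbGroup} directly to the group $B$ defined just above. Three inputs are needed: $B$ is a $p$-primary abelian group; it embeds faithfully in $\SL_k(\F_\ell)$ with $p\nmid\ell$; and $\dim_{\F_p}(B/pB)\geq 3g-3+n$. Once these are checked, the lemma gives $\dim_{\F_p}(B/pB)\leq k-1$ if $d=1$ and $\leq k/d$ if $d>1$. Combined with the lower bound, this is exactly the statement.

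\emph{Abelianness.} By \Cref{prop:typeANotSemiTransverse}, $G_{J,g,\ul}$ is $\PSL_k(\F_\ell)$ or $\PSU_k(\F_\ell)$ with $p\nmid\ell$. The preimage of $A$ in $\SL_k(\F_\ell)$ (resp.\ $\SU_k(\F_\ell)$) is abelian by \Cref{lemma:liftTorusToSL}. To apply that lemma we take $H=G_{J,g,\ul}$ with its central extension by $\SL_k$ or $\SU_k$, and $m=p$ odd. This is legitimate because $\rho_{J,g,\ul}$ kills the $p$-th powers of the twists $t_i$, as each image has order $p$ (\Cref{prop:DehnTwistSpectrum}). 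Consequently $B$, being the $p$-primary part of a finite abelian group, is a $p$-primary abelian subgroup.

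\emph{Faithfulness.} $B$ sits in $\SL_k(\F_\ell)$, since $\SU_k(\F_\ell)\subset\SL_k(\F_{\ell})$, and it acts on $\F_\ell^k$ (or $\F_{\ell}^k$ for the unitary field) by inclusion, which is faithful. In the unitary case the natural field is the quadratic extension of the base field. Here I would check that $\ell$ is interpreted consistently as the field over which the matrices live, so that $d$ is the order of that $\ell$ mod $p$; this is the one point needing care but not real difficulty.

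\emph{Lower bound on $B/pB$.} The composite $B\to A$ is surjective. Indeed the preimage $\widetilde A$ surjects onto $A\simeq(\Z/p\Z)^{3g-3+n}$ (\Cref{lemma:independenceOfTwists}), and since $A$ is a $p$-group, the $p$-primary part of $\widetilde A$ already surjects onto it (the prime-to-$p$ part maps to the identity). Hence $B/pB\twoheadrightarrow A/pA=A$, giving $\dim_{\F_p}(B/pB)\geq 3g-3+n$. Plugging into \Cref{lemma:miminalDimOfRepOfAbGroup} concludes.

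I expect no serious obstacle; the only delicate point is the surjectivity of the $p$-primary part onto $A$, together with the field bookkeeping in the unitary case.
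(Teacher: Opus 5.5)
Your proof is correct and follows the paper's argument. The paper likewise takes $B$ to be the $p$-primary part of the preimage of $A$, which is abelian by \Cref{lemma:liftTorusToSL}; it obtains $\dim_{\F_p}(B/pB)\geq 3g-3+n$ from \Cref{lemma:independenceOfTwists}; and it applies \Cref{lemma:miminalDimOfRepOfAbGroup}, using $p\nmid\ell$ from \Cref{prop:typeANotSemiTransverse}. Your extra details are exactly the points the paper leaves implicit: why the $p$-part surjects onto $A$, and the convention (as in \Cref{sec:prelimDedekindDom}) that $\SU_k(\F_\ell)\subset\SL_k(\F_\ell)$ with $\F_\ell$ the field of matrix entries.
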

\begin{proof}
	As $p\nmid \ell$ by \Cref{prop:typeANotSemiTransverse}, this follows from \Cref{lemma:miminalDimOfRepOfAbGroup}.
\end{proof}

Note that as \( g \geq 3 \), the corollary implies \( k \geq 3 \).

\begin{proposition}\label{prop:upperBoundonK}
	With $k\geq 3$ and $\ell$ such that $G_{J,g,\ul}=\PSL_k(\F_\ell)$ or $G_{J,g,\ul}=\PSU_k(\F_\ell)$,
	if \( \ell \wedge q = 1 \) (non-defining characteristic), then
	\[ \ell^{k-1} \leq 2 \left( p/2 \right)^{3g-3+n}. \]
\end{proposition}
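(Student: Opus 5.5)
The bound will come from squeezing $r=\dim V_{g,\ul}$ from both sides: a Seitz--Zalesskii lower bound on $r$, and a fusion-rule upper bound on $r$.

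\textbf{Lower bound on $r$.} The group $G_{J,g,\ul}$ is $\PSL_k(\F_\ell)$ or $\PSU_k(\F_\ell)$ with $k\geq 3$. Through $\rho_{J,g,\ul}$ it acts faithfully and projectively on $V_{J,g,\ul}$. This representation is absolutely irreducible by \Cref{thm:irreducibility}, and it has dimension $r$ over a field of characteristic different from that of $\F_\ell$. By \cite{seitzMinimalDegreesProjective1993}, a non-trivial projective representation of $\PSL_k(\F_\ell)$ in cross characteristic has dimension at least $\frac{\ell^k-\ell}{\ell-1}-1$, apart from finitely many small exceptions. For $\PSU_k(\F_\ell)$ the bound is of the same order, roughly $\ell^{k-1}$ up to a factor; for $\PSU$ one should be careful whether $\ell$ denotes $q_0^2$ or $q_0$. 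In every case $r\geq \ell^{k-1}/2$, since $\frac{\ell^k-\ell}{\ell-1}-1\geq \ell^{k-1}$ for $k\geq 3$. The exceptional cases ($\PSL_3(\F_2)$, $\PSL_3(\F_4)$, $\PSL_4(\F_2)$, $\PSU_4(\F_2)$, $\PSU_4(\F_3)$, \dots) I would check separately. They have tiny $\ell^{k-1}$, so the target inequality holds trivially once $r$ is at least something like $(p/2)^{3g-3+n}$, and $r$ is automatically that large.

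\textbf{Upper bound on $r$.} I would aim for $r\leq (p/2)^{3g-3+n}$, by counting admissible colorings of a trivalent graph dual to a pants decomposition of $\Sigma_{g,n}$. Such a graph has $3g-3+n$ interior edges, each carrying one of $|\Lambda|=\frac{p-1}{2}<\frac p2$ colors. Hence $r\leq |\Lambda|^{3g-3+n}\leq (p/2)^{3g-3+n}$.

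\textbf{Combining.} Putting the two bounds together gives $\ell^{k-1}\leq 2r\leq 2(p/2)^{3g-3+n}$. I expect the main obstacle to be the precise bookkeeping of the Seitz--Zalesskii bounds: the unitary case with its normalization of $\ell$, the small exceptions, and checking that the lower bound really is at least $\ell^{k-1}/2$ in all cases (for $\PSU_k(q_0)$ it is about $q_0^{k-1}$, which with $\ell=q_0^2$ would be too weak). So the first thing to settle is the convention $\PSU_k(\F_\ell)$ with $\F_\ell$ the field of the Hermitian form, and then adjust the constant $2$ using the crude slack $|\Lambda|<p/2$ if needed.
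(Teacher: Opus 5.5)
Your outline is the paper's proof: a Seitz--Zalesskii lower bound $\dim V_{J,g,\ul}\geq\frac12\ell^{k-1}$, the coloring count $\dim V_{J,g,\ul}\le(\frac{p-1}{2})^{3g-3+n}$, and a comparison of the two. However, two steps do not work as you wrote them.

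First, for the small exceptions you rely on ``$r$ is automatically at least $(p/2)^{3g-3+n}$''. That is false: your own count gives $r\le(\frac{p-1}{2})^{3g-3+n}<(p/2)^{3g-3+n}$. The exceptions are harmless for a different reason. The right-hand side does not involve $r$, and since $g\geq 3$ and $p\geq 7$, one has $2(p/2)^{3g-3+n}\geq 2(7/2)^6>3600$. This is far above $\ell^{k-1}$ for each of the finitely many exceptional groups. (The paper simply quotes the bounds without exceptions, which is harmless for the same reason.)

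Second, the unitary case cannot be closed by ``adjusting the constant''. Suppose $\ell$ is the order of the field carrying the Hermitian form. Then the group is $\PSU_k(\sqrt{\ell})$ in Seitz--Zalesskii's notation, and their bound is only of size $\ell^{(k-1)/2}$. The loss is in the exponent, so no constant extracted from $|\Lambda|<p/2$ helps, and the stated inequality does not follow.

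It does follow when $\ell$ is the order of the \emph{fixed} field of the involution, which is how the paper's proof reads the bounds. In that normalization the bounds are $\frac{\ell^k-\ell}{\ell+1}$ for $k$ odd and $\frac{\ell^k-1}{\ell+1}$ for $k$ even. (The paper prints $\ell-1$ in the odd case; this slip does not affect the conclusion.) Both are $\geq\frac12\ell^{k-1}$, because the inequality reduces to $\ell^{k-2}(\ell-1)\geq 2$, which holds for $k\geq 3$ and $\ell\geq 2$.

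So you should commit to the fixed-field normalization rather than the ``field of the Hermitian form'' one you propose. Your worry was well founded. Elsewhere the paper uses the other normalization: in \Cref{prop:typeANotSemiTransverse} it writes $\PSU_k(\F_{p^e})$ with Schur multiplier $k\wedge(p^{e/2}+1)$. The proof of this proposition reads the Seitz--Zalesskii bounds in the fixed-field normalization instead.
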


\begin{proof}
	We realize the left-hand side of the inequality as a lower bound on $\dim V_{J, g, \ul}$
	and the right-hand side as an upper bound on $\dim V_{J, g, \ul}$.
	As \( \ell \wedge q = 1 \), we have the following lower bounds \cite{seitzMinimalDegreesProjective1993}
	of Seitz-Zalesskii on projective representations
	of $\PSL_k(\F_\ell)$ or $\PSU_k(\F_\ell)$ in non-defining characteristic:
	\[ \dim V_{J, g, \ul} \geq \begin{cases}
		\frac{\ell^k - 1}{\ell - 1} - k & \text{if } G_{J, g, \ul} = \PSL_k(\mathbb{F}_\ell); \\
		\frac{\ell^k - \ell}{\ell - 1} & \text{if } k \text{ odd}, G_{J, g, \ul} = \PSU_k(\mathbb{F}_\ell); \\
		\frac{\ell^k - 1}{\ell + 1} & \text{if } k \text{ even}, G_{J, g, \ul} = \PSU_k(\mathbb{F}_\ell).
	\end{cases} \]
	In all cases we have \( \dim V_{J, g, \ul} \geq \frac{1}{2} \ell^{k-1} \).
	Indeed:
	\begin{itemize}
		\item \( \frac{\ell^k - 1}{\ell - 1} = 1 + \ell + \dots + \ell^{k-1} \geq 1 + \ell(k-1) \geq 1 + 2k - 1 = 2k \); \\
		So, \( \frac{\ell^k - 1}{\ell - 1} - k \geq \frac{1}{2} \frac{\ell^k - 1}{\ell - 1} \geq \frac{1}{2} \ell^{k-1} \) as \( \ell^k \geq \ell \);
		\item \( \frac{\ell^k - \ell}{\ell - 1} = \ell \frac{\ell^{k-1} - 1}{\ell - 1} \geq \ell \frac{\ell^{k-1}}{\ell}\geq \frac{1}{2} \ell^{k-1} \)
		as \( k \geq 2 \) (Note: \( a \geq b>0 \implies \frac{a+1}{b+1} \leq \frac{a}{b} \));
		\item \( \frac{\ell^k - 1}{\ell + 1} = \frac{\ell - 1}{\ell + 1} \frac{\ell^k - 1}{\ell - 1} \geq \frac{1}{2} \ell^{k-1}\).
	\end{itemize}

	Consider a pair of pants decomposition of $\Sigma_{g,n}$ with associated {\scc}s $\alpha_1,\dotsc,\alpha_{3g-3+n}$.
	The corresponding basis of $V_{g,\ul}$ is indexed by assignments
	$\{\alpha_1,\dotsc,\alpha_{3g-3+n}\}\ra\Lambda$ subject to admissibility conditions.
	In particular, as $|\Lambda|=\frac{p-1}{2}$, we have the upper bound
	\begin{equation*}
		\dim V_{J, g, \ul} \leq \left( \frac{p-1}{2} \right)^{3g-3+n}\leq \left( \frac{p}{2} \right)^{3g-3+n}.
	\end{equation*}
	Hence, \( \frac{1}{2} \ell^{k-1} \leq \left( \frac{p}{2} \right)^{3g-3+n} \), as desired.
\end{proof}

\begin{proposition}\label{prop:excludingUnequalCharacteristic}
	Let $p\geq 7$ be a prime and $J\subset\Z[\zeta_p]$ maximal ideal prime to $p$.
	Let $g\geq 3$, $n\geq 0$ and $\ul\in \Lambda^n$.
	Then $G_{J,g,\ul} \simeq \PSL_k(\F_\ell)$ or $G_{J,g,\ul}\simeq\PSU_k(\F_\ell)$ for some $k\geq 3$
	with $\F_\ell$ and $\F_q=\Z[\zeta_p]/J$ of the \emph{same characteristic}.
\end{proposition}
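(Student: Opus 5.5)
The plan is to argue by contradiction, combining the lower bound on $k$ from \Cref{cor:lowerBoundOnK} with the upper bound from \Cref{prop:upperBoundonK}. By \Cref{prop:typeANotSemiTransverse}, $G_{J,g,\ul}$ is $\PSL_k(\F_\ell)$ or $\PSU_k(\F_\ell)$ with $p\nmid \ell$, and the remark after \Cref{cor:lowerBoundOnK} gives $k\geq 3$. Suppose that $\F_\ell$ and $\F_q$ have different characteristics, so that $\ell\wedge q=1$. \Cref{prop:upperBoundonK} then gives
\[ \ell^{k-1}\leq 2\left(\tfrac p2\right)^{N},\qquad N:=3g-3+n\geq 6 \text{ (as } g\geq 3\text{)}. \]
It then suffices to show that the lower bound on $k$ forces $\ell^{k-1}>2(p/2)^N$. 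As before, $d$ denotes the order of $\ell$ modulo $p$.

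\textbf{Case $d=1$.} Here $p\mid \ell-1$, so $\ell\geq p+1$, and \Cref{cor:lowerBoundOnK} gives $k-1\geq N$. Hence
\[ \ell^{k-1}\geq (p+1)^N>p^N=2^N\left(\tfrac p2\right)^N>2\left(\tfrac p2\right)^N, \]
which is a contradiction.

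\textbf{Case $d>1$.} The key arithmetic input is a lower bound on $\ell^{d-1}$ in terms of $p$. Since $p\mid \ell^d-1$ but $p\nmid \ell-1$, the prime $p$ divides
\[ \frac{\ell^d-1}{\ell-1}=1+\ell+\dotsb+\ell^{d-1}\leq 2\ell^{d-1}, \]
where the last inequality uses $\ell\geq 2$. So $\ell^{d-1}\geq p/2$. \Cref{cor:lowerBoundOnK} gives $k\geq dN$, hence
\[ k-1\geq dN-1=(d-1)N+(N-1). \]
Therefore
\[ \ell^{k-1}\geq \left(\ell^{d-1}\right)^N\ell^{N-1}\geq \left(\tfrac p2\right)^N 2^{N-1}\geq 32\left(\tfrac p2\right)^N>2\left(\tfrac p2\right)^N, \]
again contradicting the inequality above. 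This case is the main obstacle. A naive estimate using only $\ell^d\geq p+1$ loses a factor of roughly $\ell$, which is not controlled when $d$ is small and $\ell$ is large (for instance $d=2$ with $\ell\approx\sqrt p$). The divisibility of the cyclotomic-type quotient $(\ell^d-1)/(\ell-1)$ by $p$ is what closes that gap, together with the surplus factor $\ell^{N-1}\geq 2^{5}$ coming from $N\geq 6$.

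Both cases are contradictory, so $\F_\ell$ and $\F_q$ must have the same characteristic. Together with $k\geq 3$, this gives the statement.
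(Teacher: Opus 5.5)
Your proof is correct and is essentially the paper's argument. It uses the same three inputs: the divisibility $p\mid \ell-1$ (when $d=1$) or $p\mid\frac{\ell^d-1}{\ell-1}\le 2\ell^{d-1}$ (when $d>1$), the lower bound from \Cref{cor:lowerBoundOnK}, and the upper bound from \Cref{prop:upperBoundonK}, combined in the same two cases $d=1$ and $d>1$. The only difference is cosmetic: you bound $\ell^{k-1}$ from below directly, whereas the paper extracts roots to reach the contradictions $2\le 2^{1/6}$ and $\ell^5\le 2$.
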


\begin{remark}
	This proposition does not give the value of $k$.
	The proof that under the same assumptions $k=\dim V_{J,g,\ul}$ is the main result of \Cref{sec:definingCharacteristic}.
\end{remark}

\begin{proof}[Proof of \Cref{prop:excludingUnequalCharacteristic}]
	Assume \(\ell \wedge q = 1\), then we have
	\begin{alignat}{2}
		\label{eq:inequality1} & p \mid \frac{\ell^d - 1}{\ell - 1}\text{ if }d>1\text{ and } p\mid \ell-1\text{ if }d=1
			& \quad & \text{(Definition of } d\text{)}, \\[6pt]
		\label{eq:inequality2} & 3g-3+n \leq \begin{cases} k-1 & \text{if } d=1 \\ \frac{k}{d} & \text{if } d>1 \end{cases}
			& \quad & \text{(\Cref{cor:lowerBoundOnK})}, \\[6pt]
		\label{eq:inequality3} & \ell^{k-1} \leq 2(p/2)^{3g-3+n}
			& \quad & \text{(\Cref{prop:upperBoundonK})}.
	\end{alignat}

	\textbf{Case 1:} \(d=1\). By \Cref{eq:inequality2,eq:inequality3},
	\[ \ell^{k-1} \leq 2(p/2)^{3g-3+n} \leq 2(p/2)^{k-1},\text{ and hence } \ell \leq 2^{\frac{1}{k-1}}\frac{p}{2}. \]
	Applying \Cref{eq:inequality1}, this implies $p \leq 2^{\frac{1}{k-1}} \frac{p}{2}$ and thus $2 \leq 2^{\frac{1}{k-1}}$.
	But \(k-1 \geq 3g-3+n\geq 6\) as $g\geq 3$ and hence \(2 \leq 2^{\frac{1}{6}}\), which is a contradiction.

	\textbf{Case 2:} \(d>1\). By \Cref{eq:inequality2,eq:inequality3},
	\[ \ell^{k-1} \leq 2(p/2)^{3g-3+n} \leq 2(p/2)^{\frac{k}{d}},\text{ and hence } \ell^{d-\frac{d}{k}}\leq 2^{\frac{d}{k}}\frac{p}{2}.\]
	So, $\ell^d \leq (2\ell)^{\frac{d}{k}} \frac{p}{2}$.
	Since \(\frac{k}{d} \geq 3g-3+n \geq 6\), we have \(\ell^d \leq (2\ell)^{\frac{1}{6}} \frac{p}{2}\).

	But by \Cref{eq:inequality1}, \(p \leq \frac{\ell^d - 1}{\ell - 1} \leq \frac{\ell}{\ell - 1} \ell^{d-1} \leq 2 \ell^{d-1}\).
	So:
	\[ \ell^d \leq (2\ell)^{\frac{1}{6}} \ell^{d-1} = \ell^d \left( \frac{2^{\frac{1}{6}}}{\ell^{\frac{5}{6}}} \right) \]
	Thus $\ell^5 \leq 2$. However $\ell \geq 2$, contradiction.
\end{proof}

\subsection{Defining characteristic: \texorpdfstring{$\bm{G_{J,g,\ul}}$}{the image} is \texorpdfstring{$\bm{\PSL(V_{J,g,\ul})}$}{PSL(V)} or
\texorpdfstring{$\bm{\PSU(V_{J,g,\ul})}$}{PSU(V)}}
\label{sec:definingCharacteristic}

In this section, we work under the assumption that $G_{J,g,\ul}$ is a simple finite group of Lie type in characteristic
\emph{equal} to that of $\F_q=\Z[\zeta_p]/J$. Under this assumption, for $p\geq 7$ with $g\geq 3$
or $g=2$ and $V_{0,\ul}\neq 0$,
we prove that $G_{J,g,\ul}$ is $\PSL(V_{J,g,\ul})$ or $\PSU(V_{J,g,\ul})$ (\Cref{prop:definingCharacteristicImage}).

We first show that $G_{J,g,\ul}$ is $\PSL(W)$ or $\PSU(W)$ for some $W$ with $\dim W\geq 3$
such that $V_{J,g,\ul}$ is an exterior or truncated symmetric power of $W$ (\Cref{prop:isTruncatedSymmetricOrExteriorPower}).
We then rule out non-trivial exterior and truncated symmetric powers (\Cref{prop:notTruncatedSymmetricPower}).

The proofs of this section are adaptations in positive characteristic of the proofs of \Cref{sec:multiplicityFreeness,sec:notSymmetricOrExteriorPower}:
the result is also deduced from weight-multiplicity-freeness, with a classification result for such representations
in positive characteristic due to Zalesskii-Suprunenko and Seitz (\Cref{thm:SeitzMultiplicityFree})
replacing that of Howe in characteristic $0$ (\Cref{thm:HoweMultiplicityFree}).

Note that we work here with slightly weaker assumptions than the conclusion of \Cref{prop:excludingUnequalCharacteristic}.
Namely we do not assume that $G_{J,g,\ul}$ is type A; hence some of our results apply in genus $2$ as well.
This extra generality is included for possible future use.

\begin{proposition}\label{prop:noTits}
	For $p\geq 7$ and $g\geq 2$, $G_{J,g,\ul}$ is not isomorphic to ${}^2\mathrm{F}_4(2)'$ (Tits group).
\end{proposition}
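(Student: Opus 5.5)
The plan is to exploit the large elementary abelian $p$-subgroup supplied by Dehn twists, together with the fact that $p\geq 7$ does not divide the order of the Tits group. Recall that
$$|{}^2\mathrm{F}_4(2)'| = 2^{11}\cdot 3^3\cdot 5^2\cdot 13 = 17971200.$$
For $g\geq 2$, take a pair of pants decomposition of $\Sigma_{g,n}$ by non-separating curves, as in the proof of \Cref{prop:weightMultiplicityFree}. Let $A\subseteq G_{J,g,\ul}$ be the subgroup generated by the images of the corresponding Dehn twists.

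\textbf{Step 1.} By \Cref{rmk:vacuum} we may assume $\ul\in(\Lambda\setminus\{0\})^n$. The argument of \Cref{lemma:independenceOfTwists} then shows that $A\simeq(\Z/p\Z)^{3g-3+n}$. That argument only uses $g\geq 1$ and the admissible colorings $c,c'$ built from $i_0$ and $i_0'$, so it applies verbatim for $g=2$. Alternatively, we only need that a single Dehn twist has image of order exactly $p$. This follows from \Cref{prop:DehnTwistSpectrum}: the spectrum of $\rho_{J,g,\ul}(t_\gamma)$ consists of distinct $p$-th roots of unity modulo $J$, and there are at least two of them since $(p-1)/2\geq 3$. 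Hence the image is not a scalar, and its order in $\PGL$ is exactly $p$.

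\textbf{Step 2.} It follows that $p$ divides $|G_{J,g,\ul}|$. The prime divisors of $|{}^2\mathrm{F}_4(2)'|$ are $2,3,5,13$. So the only case that survives is $p=13$.

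\textbf{Step 3: the case $p=13$.} Here we use the rank of $A$. For $g\geq 2$ we have $3g-3+n\geq 3$, so $G_{J,g,\ul}$ would contain $(\Z/13\Z)^3$. However, the $13$-part of $|{}^2\mathrm{F}_4(2)'|$ is exactly $13$, so its Sylow $13$-subgroup is cyclic of order $13$. This is a contradiction, and it concludes the proof. In fact the rank of $A$ is at least $2$ as soon as $3g-3+n\geq 2$, which is already enough.

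The main point to check carefully is that \Cref{lemma:independenceOfTwists} really applies for $g=2$ and arbitrary $n$, i.e.\ that the colorings $c$ and $c'$ are admissible in the $g=2$ dual graph. This holds because each vertex meets edges colored $i_0$ or $i_0'$ on at least two sides. If that step caused trouble, a fallback is to use just two disjoint non-separating curves $\alpha_1,\alpha_2$. Independence of their twist images modulo scalars follows from fusion along $\alpha_2$: the twist along $\alpha_1$ acts non-scalarly on the summand of $\alpha_2$ colored by $i_0$, because $V_{g-1,(\ul,i_0,i_0)}$ again splits along $\alpha_1$ into at least two non-zero pieces by \Cref{lemma:dimAtLeastOne}.
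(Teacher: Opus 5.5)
Your proposal is correct and takes essentially the same approach as the paper. The paper also uses two commuting twists along disjoint non-separating curves, whose images generate a subgroup of order $p^2$ by \Cref{lemma:independenceOfTwists}, and observes that $p^2\nmid 2^{11}3^35^213$ for $p\geq 7$. Your split into the cases $p\neq 13$ and $p=13$ is just an unpacking of that single divisibility check.
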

\begin{proof}
	As $g\geq 2$ there exist $2$ distinct Dehn twists along disjoint non-separating simple closed curves
	in $\PMod(\Sigma_{g,n})$ and these will generate a subgroup of order $p^2$ in $G_{J,g,\ul}$ (see \Cref{lemma:independenceOfTwists}).
	As $p\geq 7$ and ${}^2\mathrm{F}_4(2)'$ has order $2^{11} 3^3 5^2 13$ \cite[p. 74]{conwayAtlasFiniteGroups1985},
	$G_{J,g,\ul}$ is not isomorphic to ${}^2\mathrm{F}_4(2)'$.
\end{proof}

\begin{notation}
	Under the assumptions of this section,
	as $G_{J,g,\ul}$ is not ${}^2\mathrm{F}_4(2)'$ by \Cref{prop:noTits}, there exists a simple simply-connected algebraic group $\bfG$
	over $\overline{\F}_q$ and a Frobenius or Steinberg automorphism $F$ of $\bfG$ such that $G_{J,g,\ul}=\bfG^F/Z(\bfG)^F$
	(The Tits group is the only exception, see \cite[§3]{rouquierModularRepresentationsFinite2024}).
	
	We will denote $V_{J,g,\ul}\otimes_{\F_q}\oF_q$ by $\oV_{J,g,\ul}$.
	As $\oV_{J,g,\ul}$ is an irreducible projective representation of $G_{J,g,\ul}$,
	there exists a simple linear rational representation $\bfV$ of $\bfG$ such that $\oV_{J,g,\ul}$ is isomorphic to the restriction
	of $\bfV$ to $G_{J,g,\ul}$ (see \cite[§4]{rouquierModularRepresentationsFinite2024}).
\end{notation}

\begin{proposition}\label{prop:notSelfDualModJ}
	Let $g\geq 2$ and $p\geq 5$ prime, then $\oV_{J,g,\ul}$ is not self-dual as a projective $\PMod(\Sigma_{g,n})$-representation.
\end{proposition}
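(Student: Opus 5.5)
I would mimic the proof of \Cref{prop:notSelfDual}, working over $\oF_q$ instead of $\C$. First I choose a separating simple closed curve $\alpha\subset\Sigma_{g,n}$ cutting off a copy of $\Sigma_{1,1}$, and consider the fusion decomposition
\[\oV_{J,g,\ul}\simeq\bigoplus_{\mu\in\Lambda}\oV_{J,1,\mu}\boxtimes \oV_{J,g-1,(\ul,\mu)}.\]
This decomposition is defined over $\Z[\zeta_p,\frac1p]$, so it survives reduction modulo $J$ since $J$ is prime to $p$. Every summand is non-zero by \Cref{cor:nonTriviality}, because $g\geq 2$. Each summand is also an eigenspace of $\rho_{J,g,\ul}(T_\alpha)$, with eigenvalue $\zeta_p^{\mu(\mu+2)}$ mod $J$ up to a global scalar. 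These eigenvalues stay pairwise distinct modulo $J$, since the $\zeta_p^{j(j+2)}$ are distinct $p$-th roots of unity and $J\neq(1-\zeta_p)$; this is exactly the fact used in the proof of \Cref{thm:irreducibility}. Finally, each summand is an absolutely irreducible representation of $\Mod(\Sigma_1^1)\times\PMod(\Sigma_{g-1,n}^1)$. This follows from \Cref{thm:irreducibility} applied to each tensor factor, because an external tensor product of absolutely irreducible representations is absolutely irreducible.

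Now suppose there is an isomorphism $\oV_{J,g,\ul}\simeq\oV_{J,g,\ul}^*$ of projective $\PMod(\Sigma_{g,n})$-representations, given by some $\phi$. Since $\phi$ intertwines $T_\alpha$ with its dual action up to a scalar $u$, it carries eigenspaces of $T_\alpha$ to eigenspaces of the dual action. Since it is also equivariant for the subgroup $\Mod(\Sigma_1^1)\times\PMod(\Sigma_{g-1,n}^1)$, it induces a bijection $\sigma$ of $\Lambda$ together with isomorphisms
\[\oV_{J,1,\mu}\boxtimes \oV_{J,g-1,(\ul,\mu)}\simeq \oV_{J,1,\sigma(\mu)}^*\boxtimes \oV_{J,g-1,(\ul,\sigma(\mu))}^*.\]
Applying \Cref{prop:IsoOfTensor} over $\oF_q$, we get $\oV_{J,1,\mu}\simeq \oV_{J,1,\sigma(\mu)}^*$, so these two spaces have equal dimension. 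Dimensions are unchanged by reduction mod $J$, so \Cref{lemma:dimTori} gives $\sigma=\id$. This is the step where some care is needed: $\Lambda$ indexes both decompositions, and we need matching summands to force matching dimensions, which \Cref{prop:IsoOfTensor} provides.

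With $\sigma=\id$, the eigenvalue of $T_\alpha$ on the $\mu$-summand equals $u$ times the inverse of that same eigenvalue. Hence $\zeta_p^{2\mu(\mu+2)}\equiv u$ mod $J$ for all $\mu\in\Lambda$. Equivalently, $\zeta_p^{2(k^2-1)}$ is constant for $k=1,\dots,\frac{p-1}{2}$. Take $k=1$ and $k=2$: their ratio is $\zeta_p^{6}$, and $\zeta_p^6\not\equiv1$ mod $J$ because $p\geq5$ is prime and distinct $p$-th roots of unity remain distinct modulo $J$. This contradiction completes the argument.

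The main obstacle is purely technical: checking that every ingredient used over $\C$ (irreducibility of the summands, distinctness of the eigenvalues, the fusion decomposition) survives reduction modulo $J$. The results cited above already supply each of these facts.
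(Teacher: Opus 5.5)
Your proposal is correct and follows the paper's approach. The paper's proof simply says that the argument of \Cref{prop:notSelfDual} applies verbatim modulo $J$, and you have carried that out. You also make explicit two steps the paper leaves implicit: that the twist eigenvalues stay distinct modulo $J$, and that an isomorphism of the tensor-product summands forces $\dim V_{1,\mu}=\dim V_{1,\sigma(\mu)}$, which you obtain via \Cref{prop:IsoOfTensor} and absolute irreducibility.
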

\begin{proof}
	The proof of \Cref{prop:notSelfDual} applies as is: fusion rules and eigenvalue decompositions for Dehn twists still apply modulo $J$.
\end{proof}

\begin{corollary}\label{cor:groupsWithNonSelfDualRepsModJ}
	Let $g\geq 2$ and $p\geq 7$, $n\geq 0$ and $\ul\in\Lambda^n$
	such that $G_{J,g,\ul}$ is a simple finite group of Lie type in the same characteristic as $\F_q=\Z[\zeta_p]/J$.
	Then $\bfG$ is one of:
	\begin{equation*}
		\SL_k,\; k\geq 3,\;\;\SO_{4k+2},\; k\geq 2,\text{ or }\mathrm{E}_6.
	\end{equation*}
\end{corollary}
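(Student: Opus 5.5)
This is the positive-characteristic analogue of \Cref{cor:groupsWithNonSelfDualReps}, and I would follow the same approach. By \Cref{prop:noTits} and the Notation above, $G_{J,g,\ul}=\bfG^F/Z(\bfG)^F$ for a simple simply-connected algebraic group $\bfG$ over $\oF_q$. Moreover, $\oV_{J,g,\ul}$ is the restriction of a simple rational $\bfG$-module $\bfV=L(\nu)$ with $\nu$ a $q$-restricted dominant weight. By the classification of simple simply-connected algebraic groups, $\bfG$ is of type $\mathrm{A}_k$, $\mathrm{B}_k$, $\mathrm{C}_k$, $\mathrm{D}_k$, $\mathrm{E}_6$, $\mathrm{E}_7$, $\mathrm{E}_8$, $\mathrm{F}_4$ or $\mathrm{G}_2$. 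The aim is to show that in every type other than $\SL_k$ ($k\geq 3$), $\SO_{4k+2}$ (more precisely its simply-connected cover, $k\geq 2$) and $\mathrm{E}_6$, the module $\bfV$ is self-dual. This self-duality would then descend to $\oV_{J,g,\ul}$ as a projective $\PMod(\Sigma_{g,n})$-representation and contradict \Cref{prop:notSelfDualModJ}.

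The key input is the description of duals of simple modules in positive characteristic: $L(\nu)^*\simeq L(-w_0\nu)$, where $w_0$ is the longest Weyl group element. This holds because the dual of a simple module is simple and its highest weight is $-w_0$ of the lowest weight of $L(\nu)$, which is $w_0\nu$. By \cite[VI, §4-13, (XI)]{bourbakiSystemesRacines2007}, $w_0=-\id$ in types $\mathrm{A}_1$, $\mathrm{B}_k$, $\mathrm{C}_k$, $\mathrm{D}_{2k}$, $\mathrm{E}_7$, $\mathrm{E}_8$, $\mathrm{F}_4$ and $\mathrm{G}_2$. In all those cases every simple rational $\bfG$-module is self-dual, so the restriction $\oV_{J,g,\ul}$ to $\bfG^F$ is self-dual as a linear $\bfG^F$-module, hence as a projective $G_{J,g,\ul}$-module, and hence as a projective $\PMod(\Sigma_{g,n})$-module via $\rho_{J,g,\ul}$. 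This contradicts \Cref{prop:notSelfDualModJ}.

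The step I expect to need the most care is the bookkeeping of small and degenerate cases. First, $\mathrm{A}_1=\SL_2$ must be excluded, and it is, because $w_0=-\id$ there. Second, $\mathrm{D}_k$ with $k$ odd and $k\leq 3$ overlaps with type A ($\mathrm{D}_3=\mathrm{A}_3$), so it is already covered by $\SL_4$, which leaves $\SO_{4k+2}$ with $k\geq 2$. Third, exceptional isogenies in characteristic $2$ or $3$ and the twisted (Steinberg) forms such as ${}^2\mathrm{B}_2$, ${}^2\mathrm{G}_2$ and ${}^2\mathrm{F}_4$ change $F$ but not $\bfG$, so the argument, which only uses $\bfG$, still applies. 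I would also check that passing from self-duality of $\bfV$ to self-duality of the projective representation of $G_{J,g,\ul}$ is harmless: the isomorphism $\bfV\simeq\bfV^*$ is $\bfG$-equivariant, so it is equivariant for $\bfG^F$ and hence projectively for its image.
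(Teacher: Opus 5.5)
Your proposal is correct and follows the paper's proof essentially step for step. You classify the simple simply-connected $\bfG$, use $L(\nu)^*\simeq L(-w_0\nu)$ together with the fact that $w_0=-\id$ in every type outside the list, and derive a contradiction with the non-self-duality of $\oV_{J,g,\ul}$ as a projective $\PMod(\Sigma_{g,n})$-representation, citing \Cref{prop:notSelfDualModJ} (the paper's text invokes its characteristic-zero counterpart \Cref{prop:notSelfDual}); your one slip is verbal, since the highest weight of $L(\nu)^*$ is minus the lowest weight $w_0\nu$ of $L(\nu)$, i.e.\ $-w_0\nu$, which is the formula you state.
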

\begin{proof}
	The proof is essentially the same as that of \Cref{cor:groupsWithNonSelfDualReps}.
	By classification, the algebraic group $\bfG$ is one of
	$\SL_k$, $k\geq 2$, $\SO_{k}$, $k\geq 5$, $\mathrm{Sp}_{2k}$, $k\geq 2$, $\mathrm{E}_6$, $\mathrm{E}_7$,
	$\mathrm{E}_8$, $\mathrm{F}_4$ or $\mathrm{G}_2$. Now, for all of these but the ones listed in the statement of the corollary, all representations
	are self-dual. Indeed, given a maximal torus of $\bfG$, the dual of the finite-dimensional irreducible representation $\bfV_\nu$
	with highest weight $\nu$
	is $\bfV_{-w_0(\nu)}$ with $w_0$ the longest element in the Weyl group \cite[§2.2, Example 2]{humphreysModularRepresentationsFinite2005},
	and for all but the examples in the statement, $w_0$ acts by $-\id$ \cite[VI, §4-13, (XI)]{bourbakiSystemesRacines2007}.
	This concludes the proof, since if $\bfV$ were self-dual as a $\bfG$-representation,
	$\oV_{J,g,\ul}$ would also be self-dual as a $\PMod(\Sigma_{g,n})$-representation,
	contradicting \Cref{prop:notSelfDual}.
\end{proof}

A finite-dimensional representation $\bfV$ of a simply-connected simple algebraic group $\bfG$
is \emph{weight-multiplicity-free} if given a maximal torus $\bfT\subset \bfG$,
each character of $\bfT$ has multiplicity at most one in $V$, i.e. for any $\chi:\bfT\ra \mathbb{G}_m$,
the dimension of a subspace of $V$ on which $\bfT$ acts by $\chi$ is at most $1$.

\begin{proposition}\label{prop:weightMultiplicityFreeModJ}
	Let $p\geq 7$, $g\geq 2$, $n\geq 0$ and $\ul\in\Lambda^n$ such that $G_{J,g,\ul}$ is a simple finite group of Lie type
	of same characteristic as $\F_q=\Z[\zeta_p]/J$.
	Then $\bfV$ is a \emph{weight-multiplicity-free} representation of $\bfG$.
\end{proposition}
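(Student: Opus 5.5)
The plan is to follow the proof of \Cref{prop:weightMultiplicityFree}, replacing the compact Lie group by the algebraic group $\bfG$. As there, take a pair of pants decomposition of $\Sigma_{g,n}$ by non-separating curves $\alpha_1,\dotsc,\alpha_{3g-3+n}$. Let $A\subset G_{J,g,\ul}$ be the abelian subgroup generated by the images of the twists $t_{\alpha_i}$; it is $p$-torsion. Since $p$ is prime and $J$ is prime to $p$, the eigenvalues $\zeta_p^{j(j+2)}$, $j\in\Lambda$, remain distinct modulo $J$. The TQFT basis attached to the decomposition is still defined modulo $J$. Hence the decomposition of $\oV_{J,g,\ul}$ into characters of $A$ has every character with multiplicity at most one.

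Let $\widetilde A$ be the preimage of $A$ in $\bfG^F\subset\bfG$. I would first show that $\widetilde A$ is abelian by applying \Cref{lemma:liftTorusToSL} with $H=G_{J,g,\ul}$, $\widetilde H=\bfG^F$ and $m=p$ (odd); the twists $t_i^p$ act trivially, so the hypotheses hold. Next I would replace $\widetilde A$ by its $p$-primary part $\widetilde A_p$, which still surjects onto $A$. Then $\widetilde A_p$ is a finite abelian group of order prime to the characteristic of $\F_q$, since $J$ is prime to $p$. It therefore consists of commuting semisimple elements of $\bfG$.

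The key step is to show that $\widetilde A_p$ lies in a maximal torus $\bfT$ of $\bfG$. Granting this, the conclusion is quick. The characters of $\bfT$ restrict to characters of $\widetilde A_p$, and $\widetilde A_p$ acts on $\bfV|_{\bfG^F}\simeq\oV_{J,g,\ul}$ through the scalar twist of a linearization of $A$. A weight space of $\bfT$ of dimension at least $2$ would then lie inside a single character space of $\widetilde A_p$, and hence of $A$, which is impossible. So $\bfV$ is weight-multiplicity-free.

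To get the torus, I would use \Cref{cor:groupsWithNonSelfDualRepsModJ}, which reduces to the three cases $\bfG=\SL_k$, $\SO_{4k+2}$ and $\mathrm{E}_6$. For $\SL_k$, any finite abelian group of semisimple elements is simultaneously diagonalizable, so it lies in the diagonal torus. For $\mathrm{E}_6$, the Weyl group has order $2^73^45$, which is prime to $p\geq7$. Combining this with Steinberg's theorem that the centralizer of a semisimple element in a simply-connected group is connected and reductive, an induction puts the $p$-group $\widetilde A_p$ into a torus. The same argument also works uniformly in all three cases.

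The $\SO_{4k+2}$ case is the main obstacle, since $\bfG$ is really $\mathrm{Spin}_{4k+2}$, for which Steinberg applies, and the Weyl group order may be divisible by $p$. The first thing I would try is Steinberg's theorem: choose a generator $a_1$ of $\widetilde A_p$, pass to the connected reductive centralizer $C_{\bfG}(a_1)$, and iterate. The difficulty is that these centralizers are no longer simply connected, so the next centralizer may be disconnected. This is controlled because its component group is a subgroup of the fundamental group, which has order dividing $4$. Since $\widetilde A_p$ is a $p$-group with $p$ odd, its elements land in the identity component at each step, and the induction ends with $\widetilde A_p$ inside a maximal torus.
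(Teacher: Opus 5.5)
Your reduction is the same as the paper's. The TQFT basis of a pants decomposition by non-separating curves makes each character of $A$ occur at most once in $\oV_{J,g,\ul}$, so it suffices to put a lift of $A$ into a maximal torus. \Cref{cor:groupsWithNonSelfDualRepsModJ} then leaves three cases, and your $\SL$ case (\Cref{lemma:liftTorusToSL} plus simultaneous diagonalization) is exactly the paper's.

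You differ in the other two cases. You lift $A$ once to an abelian $p$-group $\widetilde A_p$ of semisimple elements of the simply connected $\bfG$ and run an iterated-centralizer argument. This amounts to reproving Steinberg's theorem that abelian $p$-subgroups of semisimple elements are toral when $p$ is not a torsion prime of $\bfG$. The torsion primes here are: none in type $A$, $2$ in type $D$, and $2,3$ for $\mathrm{E}_6$, so citing that theorem would settle all three cases at once.

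The paper instead works in $\bfG/Z(\bfG)$. For $\mathrm{E}_6$ it quotes Cabanes's result that Sylow $p$-subgroups of finite reductive groups lie in normalizers of maximal tori, then uses $p\nmid|W(\mathrm{E}_6)|$. For $\PSO_{4k+2}$ it uses an explicit orthogonal eigenspace decomposition. Your route is more uniform and stays inside the algebraic group; the paper's avoids any analysis of component groups.

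In your $\mathrm{E}_6$ sketch, you should state the fact that does the work. For connected reductive $H\supseteq\bfT\ni s$, one has $C_H(s)/C_H(s)^\circ\cong W_H(s)/W_H^\circ(s)$, a subquotient of $W_H\le W(\mathrm{E}_6)$. Hence $\widetilde A_p$ maps trivially to every component group. At the end, $\widetilde A_p$ is central in a full-rank connected reductive subgroup, hence lies in a maximal torus.

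The type-$D$ step does not hold as written. The component group of $C_{C_i}(a)$ is isomorphic to a subgroup of $\pi_1(\mathcal D C_i)$ for the current centralizer $C_i$. It is not a subgroup of a fixed group of order dividing $4$, and these groups grow along the iteration. For instance, a full-rank subgroup $C$ of type $D_{a_1}\times\dots\times D_{a_r}$ ($a_j\geq 2$) in $\mathrm{Spin}_{2n}$ has $\pi_1(\mathcal D C)\cong(\Z/2)^{r-1}$.

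What rescues your argument is that all these groups are $2$-groups, because $\mathrm{Spin}$ is simply connected and $2$ is the only torsion prime of $D_n$. You would need to cite this. It is simpler to argue as the paper does:
\begin{itemize}
\item map $\widetilde A_p$ to $\SO(W)$ and split $W=\bigoplus_\chi V_\chi$ into character spaces;
\item use $V_\chi\perp V_{\chi'}$ unless $\chi'=\chi^{-1}$, together with $\chi\neq\chi^{-1}$ for $\chi\neq 1$ (odd order), to place the image in a maximal torus of $\SO(W)$;
\item the preimage of that torus in $\mathrm{Spin}(W)$ is a maximal torus, since the kernel is central, and it contains $\widetilde A_p$.
\end{itemize}
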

\begin{proof}
	The proof is essentially the same as that of \Cref{prop:weightMultiplicityFree}.
	As in that proposition, consider the finite abelian $p$-torsion subgroup $A\subset G_{J,g,\ul}$ generated
	by the images $t_{\alpha_1},\dotsc,t_{\alpha_{3g-3+n}}$ of Dehn twists along a collection of non-separating {\scc}s
	forming a pair of pants decomposition of $\Sigma_{g}^{n}$. Then $A$ is also a finite subgroup of $\bfG/Z(\bfG)$.
	
	As $p$ is prime, the decomposition of $\oV_{g,\ul}$ into characters for $A$
	is given by the TQFT basis corresponding to the pair of pants
	decomposition. In particular, every character of $A$ has multiplicity at most $1$ in $\oV_{g,\ul}$, and hence also in $\bfV$.
	Thus, if we knew that $A$ was included in a maximal torus
	of $\bfG/Z(\bfG)$, we could conclude that $\bfV$ is weight-multiplicity-free.
	Thanks to \Cref{cor:groupsWithNonSelfDualReps}, we are reduced to the following three cases.
	
	\textbf{Case 1:} The group $\bfG$ is $\SO(W)$ for $\dim W=4k+2$, $k\geq 2$. Then $\bfG/Z(\bfG)$ is $\PSO(W)$.
	Consider the decomposition $W=\bigoplus_\chi V_\chi$ according to projective characters
	$\chi\in \mathrm{Hom}(A,\overline{\F}_q^\times)/\overline{\F}_q^\times$.
	Each $V_\chi$ is isotropic for $\chi\neq 1$ and $V_\chi$ is orthogonal to $V_{\chi'}$ whenever $\chi'\neq \chi^{-1}$.
	As $|A|$ is odd, if $V_\chi\neq 0$,
	either $\chi\neq \chi^{-1}$ or $\chi$ is the trivial character $\chi_1$.
	So, we have an orthogonal decomposition
	$W=\left(\bigoplus^{\bot}_{\{\chi,\chi^{-1}\}\neq\{\chi_1\}} V_\chi\oplus V_{\chi^{-1}}\right)\oplus^{\bot} V_{\chi_1}$.
	As the form is non-degenerate, we must have $\dim V_\chi=\dim V_{\chi^{-1}}$ for each $\chi$.
	On each $V_\chi\oplus V_{\chi^{-1}}$, $A$ acts by a torus. Hence, $A$ is included in a maximal torus of $\PSO(W)$, as desired.

	\textbf{Case 2:} The group $\bfG$ is $\mathrm{E}_6$. Then $\bfG^{\mathrm{ad}}:=\bfG/Z(\bfG)$ is $\mathrm{E}_6/Z(\mathrm{E}_6)$.
	As $A$ is finite, it is contained in some $\bfG^{\mathrm{ad}}(\F_{q'})$
	for some finite extension $\F_{q'}$ of $\F_q$. Now, as $\bfG^{\mathrm{ad}}$ is connected reductive,
	any $p$-Sylow of $\bfG^{\mathrm{ad}}(\F_{q'})$ is contained in $N_{\bfG^{\mathrm{ad}}}(\bfT)(\F_{q'})$
	for $N_{\bfG^{\mathrm{ad}}}(\bfT)$ the normalizer of some maximal torus $\bfT$ \cite{cabanes1994unicite}
	(see also \cite[§22, Exercise 6]{cabanesRepresentationTheoryFinite2004}).
	In particular, $A\subset N_{\bfG^{\mathrm{ad}}}(\bfT)$ for some maximal torus $\bfT$.
	Now, $N_{\mathrm{E}_6/Z(\mathrm{E}_6)}(\bfT)/\bfT$ is the Weyl group of $\mathrm{E}_6$,
	which has order $2^73^45$ \cite[VI, §12, (IX)]{bourbakiSystemesRacines2007}. As $A$ is $p$-torsion and $p$ is prime and at least $7$,
	$A$ is included in $\bfT$, as desired.

	\textbf{Case 3:} The group $\bfG$ is $\SL(W)$ with $d=\dim W\geq 3$. Then $\bfG/Z(\bfG)$ is $\PSL(W)$.
	By linear algebra, any finite abelian subgroup of $\SL(W)$ is conjugate in $\SL(W)$ to a group of diagonal matrices.
	Hence, we need only show that the pre-image of $A\subset\PSL(W)$ in $\SL(W)$ is abelian.
	Applying \Cref{lemma:liftTorusToSL} with $H=\PSL(W)$ and $\widetilde{H}=\SL(W)$ concludes.
\end{proof}

We now make use of a classification result for weight-multiplicity-free representations due to Zalesskii-Suprunenko,
improving a result of Seitz (\Cref{thm:SeitzMultiplicityFree} below).

Truncated symmetric powers $TS^m(W)$ of a vector space $W$ over a field of characteristic $q'>0$ are defined as follows.
Consider the symmetric algebra $S(W)$ of $W$ and its $\N$-grading.
As we are in positive characteristic, the graded ideal generated by the $q'$-th
powers $w^{q'}\in S(W)$ of elements $w$ of $W$ is stable under the action of $\GL(W)$. The quotient by this ideal is
the truncated symmetric algebra $TS(W)$, and $TS^m(W)$ is the subspace of elements with grading $m$.
Note that $TS^m(W)\neq 0$ exactly when $0\leq m\leq (q'-1)\dim W$.
See \cite{zalesskiiReducedSymmetricPowers1990} for more on truncated symmetric powers.

Given a rational representation $\bfV$ of an algebraic group $\bfG$ over $\overline{\F}_{q'}$, $q'$ prime,
and an integer $u\geq 1$, one can twist $\bfV$ by the $u$-th power of the absolute Frobenius $F:x\mapsto x^{q'}$
to obtain another representation $\bfV^{[u]}$. For $\bfG=\GL_n$, this amounts to
$\bfV^{[u]}=\bfV$ as a vector space and making $P\in \GL_n(\overline{\F}_{q'})$
act on $\bfV^{[u]}$ by the action of $F^u(P)$ on $\bfV$,
where $F^u(P)$ is the matrix whose entries are the $q'^u$-th powers of the entries of $P$.

\begin{theorem}[{\cite[Proposition 6.1]{MR934193}, see also \cite[Theorem 6.1]{seitzMaximalSubgroupsClassical1987}}]\label{thm:SeitzMultiplicityFree}
	All non-trivial weight-multiplicity-free representations of $\SL(W)$, $\dim W\geq 3$
	are of the form $\bfV_1^{[u_1]}\otimes\dotsb\otimes\bfV_u^{[u_s]}$ for $s\geq 1$, $0\leq u_1<u_2<\dotsb<u_s$ and with each $\bfV_i$
	either a truncated symmetric power $TS^m(W)$, $1\leq m\leq (\charac \F_q-1)\dim W-1$
	or an exterior power $\Lambda^m(W)$, $1\leq m\leq \dim W-1$. For $\SO_{4k+2}$, $k\geq 2$,
	these are of the same form with each $\bfV_i$ the regular representation or one of the two half-spin representations of dimension $2^{2k}$.
	For $\mathrm{E}_6$, these are of the same form with each $\bfV_i$ one of the two $27$-dimensional representations
	(of highest weights $\omega_1$ or $\omega_6$ in standard notation).
\end{theorem}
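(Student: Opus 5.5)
The approach is the classical one of Seitz and Zalesskii--Suprunenko: reduce to restricted highest weights via Steinberg's tensor product theorem, then pin down the restricted weight-multiplicity-free highest weights by computing multiplicities in rank-one and rank-two Levi subgroups. Write $q'=\charac\F_q$ and let $\bfV=L(\lambda)$ be a non-trivial irreducible rational representation of $\bfG$ (one of $\SL(W)$, $\SO_{4k+2}$, $\mathrm{E}_6$). By Steinberg's theorem,
\[ L(\lambda)\simeq L(\lambda_0)\otimes L(\lambda_1)^{[1]}\otimes\dotsb\otimes L(\lambda_r)^{[r]},\qquad \lambda=\textstyle\sum_i q'^i\lambda_i, \]
with each $\lambda_i$ restricted, i.e. with coordinates in $[0,q'-1]$. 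Discarding the trivial factors and relabelling the surviving indices as $u_1<\dotsb<u_s$ gives a product of the required shape, with $\bfV_j=L(\lambda_{u_j})$.

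\textbf{Step 1 (each factor is weight-multiplicity-free).} I will show that weight-multiplicity-freeness of the product forces each factor $L(\lambda_i)$ to be weight-multiplicity-free. Suppose a weight $\mu$ of $L(\lambda_i)$ had multiplicity at least $2$. Tensoring the corresponding weight vectors with highest weight vectors of the other factors produces the weight $q'^i\mu+\sum_{j\neq i}q'^j\lambda_j$ of $L(\lambda)$ with multiplicity at least $2$, a contradiction. This settles the reduction, and the theorem becomes the classification of restricted weight-multiplicity-free $L(\lambda)$.

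\textbf{Step 2 (Levi reduction).} For a Levi subgroup $\mathbf L$ generated by a set $S$ of simple roots, the weight spaces of $L(\lambda)$ for weights $\lambda-\sum_{\alpha\in S}c_\alpha\alpha$ form the irreducible $\mathbf L$-module with highest weight $\lambda|_{\mathbf L}$ (Smith's theorem). Hence every such Levi module is weight-multiplicity-free, and it suffices to do explicit computations in type $A_2$ and in longer $A_m$ strings, since all three groups $\bfG$ are simply laced.

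Two facts drive the computation. The first is that in $A_1$ every restricted module is weight-multiplicity-free. The second is that in an $A_m$ string with labels $(a,0,\dots,0,b)$, $a,b>0$, the weight $\lambda-(\alpha_1+\dotsb+\alpha_m)$ has multiplicity $m$ in characteristic $0$. In characteristic $q'$ this drops to $m-1$ exactly when $a+b+m\equiv 0\pmod{q'}$; this is the standard Weyl-module/Jantzen-filtration computation, equivalently Premet's and Suprunenko's results on weights of restricted modules. So multiplicity-freeness forces, for each pair of consecutive nonzero labels $a,b$ separated by $m-1$ zeros, $m\le 2$ together with the congruence $a+b+m\equiv0\pmod{q'}$; a case check (including strings containing three or more nonzero labels) rules out everything else.

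\textbf{Step 3 (identifying the modules).} For $\SL(W)$, Step 2 leaves $\lambda=c\,\omega_j$ with either $c=1$, giving $\Lambda^j(W)$, or $\lambda=(q'-1-c)\,\omega_j+c\,\omega_{j+1}$-type patterns, with $c=q'-1$ allowed. I then match the latter with truncated symmetric powers: $TS^m(W)$ with $m=(q'-1)j+c$ is irreducible with exactly this highest weight, and its weights are the monomials with exponents $<q'$, each of multiplicity one (Zalesskii's results on reduced symmetric powers). For $\SO_{4k+2}$ and $\mathrm{E}_6$, the same $A_2$ and string constraints, together with the branch-node analysis in $D$ and $E$ (where an extra zero weight or a $D_4$ Levi produces multiplicity $\ge2$), leave only the minuscule weights and $\omega_1$: the natural and half-spin representations, and the two $27$-dimensional modules.

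\textbf{Main obstacle.} The step I expect to be the main obstacle is the modular multiplicity computation in Step 2: identifying exactly when characteristic $q'$ lowers multiplicities enough to make a module multiplicity-free. This is where truncated symmetric powers appear that have no characteristic-$0$ analogue. I would first attempt it by computing Gram determinants of the relevant weight spaces in the Weyl module $V(\lambda)$ for $A_2$, then bootstrap to strings by induction on $m$, and only fall back on citing Premet if that becomes unwieldy.
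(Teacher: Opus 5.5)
Your Step 1 and the identification in Step 3 of the surviving $\SL(W)$-weights with $\Lambda^m(W)$ and $TS^m(W)$ are correct. They are exactly how the paper obtains the stated form. The paper takes the list of highest weights from Zalesskii--Suprunenko (Proposition 6.1) and Seitz (Theorem 6.1) as a black box. It then repackages that list using Steinberg's tensor product theorem and Zalesskii's identification of $L((q'-1-c)\omega_j+c\,\omega_{j+1})$ with $TS^{(q'-1)j+c}(W)$.

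The problem is Step 2, which is meant to replace that citation. First, your modular criterion is off by one. Take an $A_r$-string with end labels $a,b>0$ and $r-2$ zeros in between. The weight $\lambda-\alpha_1-\dotsb-\alpha_r$ has multiplicity $r$ in characteristic $0$, and multiplicity $r-1$ in $L(\lambda)$ exactly when $q'\mid a+b+r-1$. For $r=2$ you can check this directly: applying $e_1,e_2$ to $f_1f_2v$ and $f_2f_1v$ gives the determinant $(a+1)(b+1)-ab=a+b+1$.

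With your condition $a+b+m\equiv 0$ on an $m$-node string, $m=2$ forces $a+b=q'-2$. This contradicts your own Step 3. It would also exclude, for example, $\omega_1+\omega_2$ for $\SL_3$ in characteristic $3$, which is the $7$-dimensional, multiplicity-free $TS^3(W)$. Your second formulation (labels separated by $m-1$ zeros, $m\le 2$) is inconsistent with the first. It also admits one zero between the labels, where the multiplicity is always at least $2$. The correct local condition is that consecutive non-zero labels are adjacent and sum to $q'-1$.

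Second, and more seriously, computations in $A_2$ and in strings with two non-zero end labels do not suffice: several families pass all of those tests.
\begin{itemize}
\item \emph{A single interior label} $c\,\omega_j$ with $2\le c\le q'-2$. For $\SL_4$ and $q'\ge 5$, $L(2\omega_2)$ has only one non-zero label, and its $A_2$-Levi modules are $S^2$ and its dual. Yet the Jantzen sum formula shows $L(2\omega_2)$ equals its Weyl module, so its zero weight has multiplicity $2$. You need the $A_3$ computation with labels $(0,c,0)$ at the weight $\lambda-\alpha_1-2\alpha_2-\alpha_3$, where the multiplicity drops to $1$ only for $c=q'-1$.
\item \emph{Three consecutive labels.} $\omega_1+\omega_2+\omega_3$ for $\SL_4$ with $q'=3$ satisfies your pairwise condition. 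The Jantzen sum there is $\chi(2\omega_1)+\chi(2\omega_3)$, so the weight $\omega_2$ keeps multiplicity at least $4-2=2$.
\item \emph{Type $D$.} For $\SO_{4k+2}$, the weights $c\,\omega_1$ with $c\ge 2$ must be excluded. But on every proper Levi subgroup they restrict to multiplicity-free modules ($S^c$ on an $A$-type factor, or trivial). So neither strings nor $D_4$-Levis detect them, and a full-rank computation in $D_{2k+1}$ is unavoidable. That computation also feeds the $D_5$-Levi of $\mathrm{E}_6$.
\end{itemize}
These are exactly the computations carried out in Seitz's proof. You should either supply them, or do as the paper does: cite the classification of restricted weight-multiplicity-free modules and keep only your Step 1 and the identification in Step 3.
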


The half-spin representations of $\SO(W)$ and the representations $V_{\omega_1}$ and $V_{\omega_6}$ of $\mathrm{E}_6$
have the same dimensions in positive characteristic and in characteristic $0$ ($2^{2k}$ and $27$ respectively) because their highest
weights are minuscule; see \cite[Proposition II.2.2b and Introduction of §II.6]{jantzenRepresentationsAlgebraicGroups2007}.

Our formulation of \Cref{thm:SeitzMultiplicityFree} differs slightly from that of \cite[Proposition 6.1]{MR934193}.
In that reference, only the weights of the representations are given.
To get the statement above from the weights,
see Steinberg's Tensor Product Theorem \cite[§2.7]{humphreysModularRepresentationsFinite2005}
and the description of highest weights of truncated symmetric powers \cite{zalesskiiReducedSymmetricPowers1990}.

Combining the classification (\Cref{thm:SeitzMultiplicityFree}),
multiplicity-freeness (\Cref{prop:weightMultiplicityFreeModJ}),
tensor-indecomposability (\Cref{thm:tensor-indec}) and a fact on dimensions
(\Cref{lemma:dimensionDivisibleByP}), we get the following.

\begin{proposition}\label{prop:isTruncatedSymmetricOrExteriorPower}
	Let $p\geq 7$, $g\geq 2$, $n\geq 0$ and $\ul\in\Lambda^n$ such that $g\geq 3$ or $g=2$ and $V_{0,\ul}\neq 0$.
	If $G_{J,g,\ul}$ is a finite group of Lie type in the same characteristic as $\F_q=\Z[\zeta_p]/J$, then $\bfG$
	is isomorphic to some $\SL(W)$ with $\dim W\geq 3$, and $\bfV$ is a truncated symmetric power
	$TS^m(W)$, $1\leq m\leq (\charac \F_q-1)\dim W-1$ or an exterior power $\Lambda^m(W)$, $1\leq m\leq \dim W-1$.
\end{proposition}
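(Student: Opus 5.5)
The plan mirrors the proof of \Cref{prop:isSymmetricOrExteriorPower}, with \Cref{thm:SeitzMultiplicityFree} replacing \Cref{thm:HoweMultiplicityFree}. By \Cref{cor:groupsWithNonSelfDualRepsModJ}, $\bfG$ is one of $\SL(W)$ with $\dim W\geq 3$, $\SO_{4k+2}$ with $k\geq 2$, or $\mathrm{E}_6$. By \Cref{prop:weightMultiplicityFreeModJ}, $\bfV$ is weight-multiplicity-free. Hence \Cref{thm:SeitzMultiplicityFree} gives $\bfV\simeq\bfV_1^{[u_1]}\otimes\dotsb\otimes\bfV_s^{[u_s]}$, where each $\bfV_i$ is a basic piece of the listed type.

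The first step is to show $s=1$. The twisted factors $\bfV_i^{[u_i]}$ restrict to $G_{J,g,\ul}=\bfG^F/Z(\bfG)^F$ as projective representations. So if $s\geq 2$, restriction gives a tensor decomposition $\oV_{J,g,\ul}=\bigl(\bfV_1^{[u_1]}\bigr)\otimes\bigl(\bigotimes_{i\geq 2}\bfV_i^{[u_i]}\bigr)$ of the projective $\PMod(\Sigma_{g,n})$-representation, and both factors have dimension $\geq 2$ because each basic piece is non-trivial. This contradicts \Cref{thm:tensor-indec}, more precisely \Cref{cor:tensor-indec}, whose hypotheses are exactly $g\geq 3$, or $g=2$ with $V_{0,\ul}\neq 0$. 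One subtlety needs a check: the restriction of a factor to the finite group might a priori become trivial or one-dimensional. This cannot happen, because the $\bfV_i$ are nontrivial irreducible $\bfG$-modules with restricted highest weights, and Steinberg's restriction theorem keeps them irreducible and nontrivial on $\bfG^F$. Once $s=1$, we have $\bfV\simeq\bfV_1^{[u_1]}$. Since Frobenius twists do not change dimension or the isomorphism type of the image group, we may drop the twist.

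The second step excludes $\SO_{4k+2}$ and $\mathrm{E}_6$. For $\SO_{4k+2}$, the natural representation $W$ is self-dual, which contradicts \Cref{prop:notSelfDualModJ}. The half-spin representations have dimension $2^{2k}$, and $\mathrm{E}_6$'s minuscule representations have dimension $27$ (these dimensions are characteristic-independent, as noted after \Cref{thm:SeitzMultiplicityFree}). But $\dim\oV_{J,g,\ul}=d_{p,g,\ul}$ is divisible by $p\geq 7$ by \Cref{lemma:dimensionDivisibleByP}, since $g\geq 2$, so it is neither a power of $2$ nor $27$. Hence $\bfG\simeq\SL(W)$, and $\bfV$ is $TS^m(W)$ or $\Lambda^m(W)$ in the stated ranges.

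I expect the main obstacle to be the $s=1$ step. One must make sure that the tensor decomposition of $\bfV$ as a $\bfG$-module really descends to a decomposition of the projective representation of $\PMod(\Sigma_{g,n})$ through $G_{J,g,\ul}$, with both factors of dimension $\geq 2$, over $\oF_q$. This amounts to checking the compatibility of the identification $\oV_{J,g,\ul}\simeq\bfV|_{G_{J,g,\ul}}$ with the central quotient. I would handle it by pulling back along $\bfG^F\to G_{J,g,\ul}$, noting that the resulting decomposition is a decomposition into projective representations of $G_{J,g,\ul}$, and then applying \Cref{thm:tensor-indec} as stated over the algebraic closure.
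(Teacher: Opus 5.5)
Your proposal is correct and follows the paper's proof essentially step for step. You reduce to $\SL(W)$, $\SO_{4k+2}$ or $\mathrm{E}_6$ via \Cref{cor:groupsWithNonSelfDualRepsModJ}, then apply \Cref{thm:SeitzMultiplicityFree} to the weight-multiplicity-free $\bfV$. You force $s=1$ by tensor-indecomposability (\Cref{cor:tensor-indec}) and remove the Frobenius twist. Finally, you exclude the orthogonal and $\mathrm{E}_6$ cases using self-duality of the natural module together with $p\mid\dim V_{g,\ul}$ (\Cref{lemma:dimensionDivisibleByP}).

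The Steinberg-restriction check you worry about is harmless but not needed. Tensor-indecomposability only asks that both factors have vector-space dimension at least $2$. That already holds because a nontrivial irreducible module of the simple group $\bfG$ has dimension at least $2$, and restricting to $\bfG^F$ does not change dimension.
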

\begin{proof}
	By \Cref{cor:groupsWithNonSelfDualRepsModJ}, we know that $\bfG$ is of the form $\SL(W)$, $\dim W\geq 3$, $\SO(W)$, $\dim(W)=4k+2, k\geq 2$,
	or $\mathrm{E}_6$. We now apply \Cref{thm:SeitzMultiplicityFree} and \Cref{prop:weightMultiplicityFreeModJ}.
	In each case, $\bfV$ is some tensor product $\bfV_1^{[u_1]}\otimes\dotsb\otimes\bfV_u^{[u_s]}$.
	By \Cref{thm:tensor-indec}, $\oV_{J,g,\ul}$ is tensor-indecomposable as a $\PMod(\Sigma_{g,n})$-representation, hence so is $\bfV$ as a $\bfG$-module.
	Thus $s=1$ and $\bfV=\bfV_1^{[u_1]}$. Applying an inverse Frobenius twist to $\bfG$ if necessary,
	we may assume that $u_1=1$.
	
	If $\bfG$ is $\SO(W)$,
	$\bfV$ is $W$ or a half-spin representation of dimension $2^{2k}$, and if $\bfG$ is $\mathrm{E}_6$, $\bfV$ has dimension $27$.
	None of these cases are possible: $W$ is self-dual but $\bfV$ is not by \Cref{prop:notSelfDualModJ},
	and by \Cref{lemma:dimensionDivisibleByP}, $\dim\bfV=\dim V_{g,\ul}$ is divisible by $p$, which is prime and at least $7$.
	Hence, $\bfG$ is of the form $\SL(W)$ and $\bfV$ is a truncated symmetric power or an exterior power.
\end{proof}

We now prove that $\bfV$ cannot be a non-trivial exterior or truncated symmetric power
by showing that this would not be compatible with fusion rules along a non-separating curve.
To start with, we show the following replacements in positive characteristic for \Cref{prop:CauchyDecomp}.

\begin{lemma}\label{lemma:DecompTSofTensor} Let $U_1$ and $U_2$ be two finite-dimensional vector spaces over an algebraically closed field
	of odd positive characteristic $q'$.
	If $\dim U_1\geq 2$ and $\dim U_2\geq 2$,
	then for any $2\leq m\leq\dim U_1\boxtimes U_2-2$,
	$\Lambda^m(U_1\boxtimes U_2)$ is a reducible representation of $\GL(U_1)\times \GL(U_2)$
	and for any $2\leq m\leq (q'-1)\dim U_1\boxtimes U_2-2$,
	$TS^m(U_1\boxtimes U_2)$ is a reducible representation of $\GL(U_1)\times \GL(U_2)$.
\end{lemma}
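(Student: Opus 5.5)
The natural approach is to exhibit, in each case, a proper nonzero $\GL(U_1)\times\GL(U_2)$-stable subspace. Alternatively, one can show that the restriction to the product of maximal tori has a weight configuration incompatible with irreducibility. I will use a highest-weight argument. Fix bases $e_1,\dots,e_a$ of $U_1$ and $f_1,\dots,f_b$ of $U_2$ with $a,b\geq 2$, and let $B_1\times B_2$ be the product of upper-triangular Borels, with maximal torus $T_1\times T_2$. The vectors $e_i\otimes f_j$ form a weight basis of $U=U_1\boxtimes U_2$, and the weight of $e_i\otimes f_j$ is $(\varepsilon_i,\delta_j)$.

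In $\Lambda^m(U)$ and $TS^m(U)$, the monomials in the $e_i\otimes f_j$ are weight vectors. An irreducible rational $\GL(U_1)\times\GL(U_2)$-module has a unique $B_1\times B_2$-stable line. So it suffices, in each case, to produce two linearly independent vectors each spanning a $B_1\times B_2$-stable line, i.e.\ two distinct highest weight vectors. These cannot both lie in an irreducible module.

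For $\Lambda^m$ with $2\le m\le ab-2$, the first vector fills the "upper-left staircase": take the wedge of the first $m$ basis vectors $e_i\otimes f_j$ in an order filling row $1$ first ($e_1\otimes f_1,\dots,e_1\otimes f_b,e_2\otimes f_1,\dots$). Its span is a $B_1\times B_2$-stable flag piece, so this wedge is a highest weight vector. The second vector fills column $1$ first ($e_1\otimes f_1,\dots,e_a\otimes f_1,e_1\otimes f_2,\dots$), and it is likewise highest weight. Since $2\le m\le ab-2$ and $a,b\ge2$, the two spans are distinct: their weights differ, because the first contains $e_1\otimes f_2$ and the second contains $e_2\otimes f_1$. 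If $m\ge ab-1$ they could coincide, which explains the range. Here I will check that a subspace of the form "first $m$ vectors in an order that is an order ideal for the product order on $(i,j)$" is $B_1\times B_2$-stable.

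For $TS^m$, I would treat $2\le m\le (q'-1)$ and larger $m$ together by using "fill greedily" monomials. In the truncated symmetric algebra, a monomial $\prod x_{ij}^{c_{ij}}$ (with $c_{ij}\le q'-1$) is killed by all raising operators when it is maximally packed toward small indices. The first candidate puts exponent $q'-1$ on $x_{11}$, then on $x_{12},\dots$ along row $1$, then down the rows, stopping with remainder. The second candidate does the same along column $1$ first. Both are $B_1\times B_2$-highest vectors, because each raising operator $e_{i}\mapsto e_{i}+te_{i'}$ ($i'<i$) moves mass to an earlier variable that is already saturated, where $x^{q'}=0$, or to a slot ordered before it. This is the point needing care. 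The computation to do is that substituting $x_{ij}\mapsto x_{ij}+t x_{i'j}$ in $TS$ produces terms whose degree in an already saturated $x_{i'j}$ exceeds $q'-1$. The problem is when $x_{i'j}$ is not saturated, which happens only at the "boundary" partial slot. Choosing the order as a product-order ideal, as in the exterior case, ensures that any $x_{i'j}$ with $i'<i$ preceding a used slot is saturated, except possibly the last partial slot; but the partial slot is the last, so nothing after it is used.

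When $m\le q'-1$, the two candidates coincide ($x_{11}^m$), and I need a different second vector. I would use $x_{11}x_{22}-x_{12}x_{21}$ times $x_{11}^{m-2}$, the determinant-type highest vector giving the $S^{(m-1,1)}\boxtimes S^{(m-1,1)}$ component as in the Cauchy formula. Since $m\le q'-1<q'$, no truncation interferes in degrees $\le m$ with the single-variable exponents, so $TS^m=S^m$ in this range. I would verify directly that this vector is fixed by raising operators up to scalars: raising in $U_1$ sends $x_{2j}\mapsto x_{2j}+tx_{1j}$, and the $2\times2$ minor is invariant. The symmetric case $m\ge (q'-1)ab-(q'-2)$ follows by the duality $TS^m(U)\simeq TS^{(q'-1)\dim U-m}(U^*)\otimes\det^{q'-1}$. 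The main obstacle is exactly the bookkeeping in the $TS$ case, namely checking highest-weight-ness under truncation and that the two weights differ throughout $2\le m\le (q'-1)ab-2$.
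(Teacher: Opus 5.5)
Your argument is correct, and it takes a different route from the paper's.

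\textbf{The paper's route.} The paper argues by contradiction. If $G^m(U_1\boxtimes U_2)$ (with $G=\Lambda$ or $TS$) were irreducible, it would be of the form $V_1\boxtimes V_2$, hence isotypic on restriction to $\SL(U_1)$. There $U_1\boxtimes U_2\simeq U_1^{\oplus\dim U_2}$, so $G^m$ splits as $\bigoplus_f\bigotimes_i G^{f(i)}(U_1)$. After reducing by duality to $m$ at most half the top degree, a case analysis on the residue of $m$ produces two summands with different highest weights. In one subcase the mismatch is instead a composition factor of $S^{r-1}(U_1)\otimes U_1$ not isomorphic to $S^r(U_1)$.

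\textbf{Your route.} You work with the Borel of the whole group $\GL(U_1)\times\GL(U_2)$ and use that a simple rational module has a one-dimensional space of $U^+$-invariants. You then exhibit two invariant weight vectors of distinct weights:
\begin{itemize}
\item wedges or monomials supported on product-order ideals, filled in row-major versus column-major order;
\item the minor $(x_{11}x_{22}-x_{12}x_{21})x_{11}^{m-2}$ when $m\le q'-1$, where $TS^m=S^m$.
\end{itemize}
Your invariance check is right. If $x_{ij}$ has positive exponent, every $x_{i'j}$ and $x_{ij'}$ with $i'<i$, $j'<j$ precedes it in the linear extension, hence is saturated, so every cross term dies in $TS$. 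This treats $\Lambda$ and $TS$ uniformly. It avoids needing the highest weights of $TS^r(U_1)$ and the composition-factor comparison. The paper's version has the advantage of staying within the single-factor bookkeeping it reuses in the proof of \Cref{prop:notTruncatedSymmetricPower}.

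\textbf{Two small fixes.}
\begin{enumerate}
\item For $\Lambda^m$, the reason you give for the weights differing fails once $m>\max(a,b)$: both initial segments then contain $e_1\otimes f_2$ and $e_2\otimes f_1$. The correct reason is that an order ideal is determined by its row lengths, so distinct ideals have distinct $T_1$-weights. The row-major and column-major initial segments of length $K$ coincide only for $K\le 1$ or $K\ge ab-1$; near the top, one omits $(a,b-1)$ and the other omits $(a-1,b)$.
\item In the $TS$ case the two greedy monomials also coincide at $m=(q'-1)(ab-1)$, where both equal $\prod_{(i,j)\neq(a,b)}x_{ij}^{q'-1}$. This value is below your duality threshold $(q'-1)ab-(q'-2)$. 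Apply duality for all $m\ge(q'-1)(ab-1)$ instead; this lands in $2\le m'\le q'-1$, where the minor vector works. In between, the greedy monomials do have distinct weights for $q'\le m\le (q'-1)(ab-1)-1$.
\end{enumerate}

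For the duality itself, the perfect multiplication pairing $TS^m(U)\otimes TS^{D-m}(U)\to TS^{D}(U)\simeq{\det}^{q'-1}$, with $D=(q'-1)\dim U$, already shows that $TS^m(U)$ and $TS^{D-m}(U)$ are irreducible simultaneously. You therefore do not need to identify $TS^k(U)^*$ with $TS^k(U^*)$.
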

\begin{proof}
	We write a common proof for both cases, denoting $\Lambda$ or $TS$ by $G$.
	Let $d_2=\dim U_2$ and set
	$d_1'=\dim U_1$ if $G=\Lambda$ and $d_1'=(q'-1)\dim U_1$ if $G=TS$.

	Assume by contradiction that for some $2\leq m\leq d_1'd_2-2$, $G^m(U_1\boxtimes U_2)$ is irreducible
	as a representation of $\SL(U_1)\times \SL(U_2)$. Then it is of the form
	$V_1\boxtimes V_2$ with $V_i$ an irreducible $\SL(U_i)$-representation for $i=1,2$.
	In particular $G^m(U_1\boxtimes U_2)$ would be an isotypic $\SL(U_1)$-representation, i.e. semisimple with a unique irreducible composition factor.
	Then, as an $\SL(U_1)$-representation, $G^m(U_1\boxtimes U_2)$ is just
	$G^m(U_1^{\oplus d_2})$. As for any vector spaces $W$ and $W'$,
	$G^m(W\oplus W')=\bigoplus_{a+b=m}G^a(W)\otimes G^b(W')$, we have
	\begin{equation}\label{eq:decompExteriormodJ}
		G^m(U_1^{\oplus d_2})=\bigoplus_{\substack{f:\{1,\dotsc,d_2\}\ra \{0,\dotsc,d_1'\}\\ \sum_i f(i)=m}}\bigotimes_i G^{f(i)}(U_1).
	\end{equation}
	Set $m=sd_1'+r$ with $s\in\Z$ and $0\le r<d_1'$.
	As $G^m(U_1\boxtimes U_2)$ is dual to $G^{d_1'd_2-m}(U_1\boxtimes U_2)$ and dualizing preserves irreducibility,
	we may assume without loss of generality that $m\leq d_1'd_2/2$. Then either $s\leq d_2-2$ or $d_2=2$, $s=1$ and $r=0$.
	We consider the following cases.
	
	\textbf{(1)} If $r\geq 2$, then $s\leq d_2-2$ and there exists $f$ as in \Cref{eq:decompExteriormodJ} such that
	$f(1)=r$, $f(2)=0$ and $f(i)=0$ or $f(i)=d_1'$ for each $i\geq 3$. The corresponding summand is $G^r(U_1)$,
	which is irreducible and non-trivial. So, $V_1\simeq G^r(U_1)$.
	Now, consider $\tilde{f}$ which coincides with $f$ for $i\geq 3$ and such that $\tilde{f}(1)=r-1$, $\tilde{f}(2)=1$.
	The corresponding summand is $G^{r-1}(U_1)\otimes U_1$. If $G=\Lambda$, as $r\geq 2$,
	$\omega_1+\omega_{r-1}$ is the highest weight of $\Lambda^{r-1}(U_1)\otimes U_1$
	which is different from the highest weight $\omega_r$ of $\Lambda^r(U_1)$, so $\Lambda^m(U_1\boxtimes U_2)$ cannot be isotypic for $\SL(U_1)$,
	contradiction. If $G=TS$ and $r\geq q'$, then again there is a highest weight mismatch.
	Indeed, $TS^r(U_1)$ has highest weight $(q'-1-r')\omega_{a}+r'\omega_{a+1}$ with $r=a(q'-1)+r'$, $0\le r'<q'-1$.
	If $r'>0$, the highest weight of $TS^{r-1}(U_1)\otimes U_1$ has coordinate $<r'$ in $\omega_{a+1}$.
	If $r'=0$, then $a\geq 2$ and the highest weight of $TS^{r-1}(U_1)\otimes U_1$ is $\omega_1+(q'-1)\omega_{a}$,
	which is different from $(q'-1)\omega_{a}$.
	If $G=TS$ and $r<q'$, then $TS^r(U_1)=S^r(U_1)$
	and $TS^{r-1}(U_1)\otimes U_1=S^{r-1}(U_1)\otimes U_1$. Now, $S^r(U_1)$ appears in $S^{r-1}(U_1)\otimes U_1$ as
	a composition factor exactly once. As $S^{r-1}(U_1)\otimes U_1$ is strictly larger than $S^r(U_1)$, it must have a different factor,
	so $TS^m(U_1\boxtimes U_2)$ cannot be isotypic for $\SL(U_1)$, contradiction.

	\textbf{(2)} If $r=1$, then $0<s\leq d_2-2$ and there exists $f$ as in \Cref{eq:decompExteriormodJ} such that
	$f(1)=1$, $f(2)=d_1'$ and $f(i)=0$ or $f(i)=d_1'$ for each $i\geq 3$. The corresponding summand in the equation is $U_1$,
	so $V_1\simeq U_1$.
	If $d_1'\geq 3$, consider $\tilde{f}$ with $\tilde{f}(1)=2$, $\tilde{f}(2)=d_1'-1$ and $\tilde{f}(i)=f(i)$ for $i\geq 3$.
	The corresponding summand $G^2(U_1)\otimes U_1^*$ has a highest weight, $\omega_2+\omega_{n-1}$ for $G=\Lambda$
	and $2\omega_1+\omega_{n-1}$ for $G=TS$, which differs from the highest weight $\omega_1$ of $U_1$, contradiction.
	If $d_1'=2$, then $G=\Lambda$. If moreover $d_2=2$, then $\Lambda^m(U_1\boxtimes U_2)$ must be
	$\Lambda^2(U_1\boxtimes U_2)=U_1^{\oplus 2}\oplus\Lambda^2(U_1)$, which is not an isotypic $\SL(U_1)$-representation, contradiction.
	If $d_2\geq 3$, as $m\leq d_1'd_2/2=d_2$, there exists $i_0\geq 3$ such that $f(i_0)=0$.
	Consider $\tilde{f}$ such that $\tilde{f}(1)=1$, $\tilde{f}(2)=1$, $\tilde{f}(i_0)=1$ and $\tilde{f}(i)=f(i)$
	for other values of $i$. The corresponding summand is $U_1^{\otimes 3}$, with highest weight $3\omega_1$ different from that of $V_1=U_1$,
	contradiction.

	\textbf{(3)} If $r=0$, then there exists $f$ as in \Cref{eq:decompExteriormodJ} such that
	$f(i)=0$ or $f(i)=d_1'$ for each $i$. The corresponding summand is the trivial representation. So, $V_1$ is trivial.
	As $m\geq 2$, there exists $i_0$ such that $f(i_0)=d_1'$. Now, consider $\tilde{f}$ which matches $f$ outside $\{1,i_0\}$
	and such that $\tilde{f}(1)=1$, $\tilde{f}(i_0)=d_1'-1$. The corresponding summand is $U_1\otimes U_1^*$, which is non-trivial,
	contradiction.
\end{proof}

\begin{proposition}\label{prop:notTruncatedSymmetricPower}
	Using the notation of \Cref{prop:isTruncatedSymmetricOrExteriorPower}, $\bfV$ is not one of $\Lambda^m(W)$ for $2\leq m\leq \dim W-2$
	or $TS^m(W)$ for $2\leq m\leq (\charac\F_q-1)\dim W-2$.
\end{proposition}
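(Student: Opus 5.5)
I will follow the proofs of \Cref{prop:notSymmetricPower,prop:notExteriorPower}, replacing the Cauchy decomposition formula (\Cref{prop:CauchyDecomp}) by \Cref{lemma:DecompTSofTensor}. Write $G$ for $\Lambda$ or $TS$, and set $d(U)=\dim U$ if $G=\Lambda$ and $d(U)=(q'-1)\dim U$ if $G=TS$, where $q'=\charac\F_q$. Suppose $\bfV=G^m(W)$ with $2\leq m\leq d(W)-2$.

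\textbf{Step 1: splitting $W$.} Choose a separating simple closed curve $\alpha$ cutting off a copy of $\Sigma_{1,1}$. The fusion decomposition \Cref{eq:fusionDecomposition} of $\oV_{J,g,\ul}$ is both the eigenspace decomposition of $T_\alpha$ and a decomposition into absolutely irreducible $\Mod(\Sigma_1^1)\times\PMod(\Sigma^1_{g-1,n})$-modules. All summands are nonzero, by \Cref{cor:nonTriviality} and \Cref{thm:irreducibility}, which holds modulo $J$.

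Lift the images of the two factors to $\SL(W)(\oF_q)$. Applying \Cref{lem:tensorProduct}, which is valid over any algebraically closed field (it only needs the spectrum condition of \Cref{prop:DehnTwistSpectrum}), the lifts of twists from the two factors commute. Hence $W$ is a module over a product of two commuting groups.

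Modular semisimplicity is not automatic here, so I would argue as follows. The preimage groups act on $W$, and $\bfV=G^m(W)$ restricted to the product is semisimple, since it is a direct sum of irreducibles. From this I expect to deduce that $W$ is semisimple for the product, for instance by noting that $W$ or $W^*$ embeds into $\bfV\otimes G^{m-1}(W)^*$-type constructions, or more simply by replacing $W$ with a composition series. If semisimplicity fails, I would work with a filtration of $W$ and its associated graded. The natural filtration on $G^m$ then gives a filtration of $\bfV$ whose graded pieces are the summands below. The summand-counting argument should still apply to composition factors, because $\bfV$ is semisimple and its factors are exactly the fusion summands.

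This yields $W=\bigoplus_{i\in I}W_i^1\boxtimes W_i^2$, at least up to composition factors.

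\textbf{Step 2: matching summands.} Then
\[\bfV=\bigoplus_{f}\bigotimes_i G^{f(i)}(W_i^1\boxtimes W_i^2),\]
where $f$ ranges over functions with $\sum f(i)=m$ and $0\leq f(i)\leq d(W_i^1\boxtimes W_i^2)$. $T_\alpha$ acts by a scalar on each summand, so each summand must be irreducible, and the summands are in bijection with $\Lambda$.

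\textbf{Step 3: case analysis.} Define $I_{12},I_1,I_2,I_0$ as before.
\begin{itemize}
\item $I_0=\varnothing$: this follows by the case analysis \textbf{(0)}--\textbf{(4)} of \Cref{prop:notExteriorPower}, with $\dim$ replaced by $d(\cdot)$ and \Cref{lemma:DecompTSofTensor} giving reducibility whenever $2\leq f(i)\leq d(W_i^1\boxtimes W_i^2)-2$. The symmetry $G^m(W)^*\simeq G^{d(W)-m}(W)$ used in cases \textbf{(2)} and \textbf{(4)} holds for truncated symmetric powers too, up to a determinant twist, by the perfect pairing on $TS(W)$.
\item $I_1,I_2\neq\varnothing$: otherwise some fusion summand would have a one-dimensional factor, contradicting \Cref{cor:nonTriviality} and \Cref{lemma:dimAtLeastTwo}. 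These need $V_{0,\ul}\neq 0$ in genus $2$, or $g\geq3$.
\item The final counting step: this gives $d_1\geq m\geq d_2+d_{12}$, and produces two distinct colors $\mu_1\neq\mu_2$ with $\dim V_{1,\mu_1}=\dim V_{1,\mu_2}=1$. This contradicts \Cref{lemma:dimTori}. The same argument works with $d(\cdot)$ in place of dimensions.
\end{itemize}
For $G=TS$, the analogue of the statement ``$G^{f(i)}$ of a one-dimensional-factor module'' is $TS^{f(i)}(W_i^1)\boxtimes (W_i^2)^{\otimes f(i)}$ where $W_i^2$ is a character. This is irreducible, and its dimension exceeds $1$ exactly when $0<f(i)<d(W_i^1)$, which is what the counting needs.

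\textbf{Main obstacle.} I expect the main obstacle to be Step 1, namely obtaining a genuine direct-sum decomposition of $W$ in positive characteristic. There I would first try to show complete reducibility directly, using that $T_\alpha$ (of order $p$, prime to $q'$) lifts to a semisimple element whose eigenspaces on $W$ give a grading, and then to argue irreducibility of each graded piece. The remaining bookkeeping for $TS$ (ranges $0\leq f(i)\leq(q'-1)\dim$) is routine.
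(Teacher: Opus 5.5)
Your proposal is correct and is essentially the paper's proof. The paper also reruns the argument of \Cref{prop:notExteriorPower} with \Cref{lemma:DecompTSofTensor} in place of \Cref{prop:CauchyDecomp} and $(q'-1)\dim$ in place of $\dim$, relying on the same interval, endpoint and duality properties of $TS^m$ that you use.

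The one difference is that the paper takes the splitting $W=\bigoplus_i W_i^1\boxtimes W_i^2$ for granted in characteristic $q'$, whereas your composition-factor fallback justifies it. That fallback combines Jordan--H\"older with the fact that each $T_\alpha$-eigenspace of $\bfV$ is a single irreducible fusion summand, so each eigenvalue is carried by exactly one irreducible $f$-term. It even shows that $W$ is semisimple: two composition factors of $W$ with the same $T_\alpha$-scalar would let you move one unit of $f$ between them and produce two $f$-terms with the same eigenvalue. By contrast, your first idea of embedding $W$ into $\bfV\otimes G^{m-1}(W)^*$ would not by itself give semisimplicity.
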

\begin{proof}
	Denote by $q'$ the prime divisor of $q$.
	Replacing \Cref{prop:CauchyDecomp} by \Cref{lemma:DecompTSofTensor}, the proof of \Cref{prop:notExteriorPower}
	applies as is to show that $\bfV$ is not $\Lambda^m(W)$ for some $2\leq m\leq \dim W-2$.

	The situation is similar for truncated symmetric powers. Let us make some comments.
	As in \Cref{prop:notSymmetricPower}, considering $\alpha\subset \Sigma_{g,n}$ and corresponding decompositions of $\oV_{J,g,\ul}$ and $W$ into
	irreducible representations:\footnote{As in \Cref{prop:notSymmetricPower}, we need to apply \Cref{lem:tensorProduct} to obtain the second decomposition.}
	\begin{equation}\label{eq:decompositionsForTruncatedCase}
		\oV_{J,g,\ul}=\bigoplus_{\mu\in\Lambda}\oV_{J,1,\mu}\boxtimes \oV_{J,g-1,(\ul,\mu)},\;W=\bigoplus_{i\in I}W_i^1\boxtimes W_i^2.
	\end{equation}
	The latter induces the decomposition
	\begin{equation}\label{eq:truncatedSymmetricPowerDecomposition}
		\oV_{J,g,\ul}=\bigoplus_{\substack{f:I\ra \N\\\sum_i f(i)=m\\\forall i,\: f(i)\leq (q'-1)\dim W_i^1\boxtimes W_i^2}}%
		\bigotimes_{i\in I}TS^{f(i)}\left(W_i^1\boxtimes W_i^2\right).
	\end{equation}
	Now, truncated symmetric powers behave similarly to exterior powers:
	$TS^m(U)$ is non-zero exactly for $m$ in an interval, namely $0\leq m \leq (q'-1)\dim U$, has dimension $1$ exactly
	when $\dim U=1$ or when $m$ is at the ends
	of that interval, namely $m=0$ and $m=(q'-1)\dim U$, and duals are given by flipping the interval, i.e.
	$TS^m(U^*)\simeq TS^m(U)^*\simeq TS^{(q'-1)\dim U-m}(U)$.
	Thanks to this observation, we see that the rest of the proof of \Cref{prop:notExteriorPower}
	applies also to truncated symmetric powers, replacing
	\Cref{prop:CauchyDecomp} by \Cref{lemma:DecompTSofTensor} and each occurrence of ``$\dim W_i^1\boxtimes W_i^2$''
	by ``$(q'-1)\dim W_i^1\boxtimes W_i^2$''.
\end{proof}

Combining \Cref{prop:isTruncatedSymmetricOrExteriorPower,prop:notTruncatedSymmetricPower}, we arrive at the aim of this section.

\begin{proposition}\label{prop:definingCharacteristicImage}
	Let $p\geq 7$, $g\geq 2$, $n\geq 0$ and $\ul\in\Lambda^n$ such that $g\geq 3$ or $g=2$ and $V_{0,\ul}\neq 0$.
	If $G_{J,g,\ul}$ is a finite group of Lie type in the same characteristic as $\F_q=\Z[\zeta_p]/J$,
	then $G_{J,g,\ul}\simeq\PSL(V_{J,g,\ul})$ if the degree of $\F_q$ over its prime field is odd, and $G_{J,g,\ul}\simeq\PSU(V_{J,g,\ul})$ otherwise.
\end{proposition}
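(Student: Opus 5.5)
By \Cref{prop:isTruncatedSymmetricOrExteriorPower,prop:notTruncatedSymmetricPower}, $\bfG\simeq\SL(W)$ with $\dim W\geq 3$. Moreover $\bfV$ is one of $\Lambda^1(W)=W$, $\Lambda^{\dim W-1}(W)\simeq W^*$, $TS^1(W)=W$, or $TS^{(q'-1)\dim W-1}(W)\simeq W^*$ (up to a Frobenius twist). Replacing $W$ by $W^*$ if needed, $\oV_{J,g,\ul}$ is therefore the restriction of the natural representation of $\SL(W)$, twisted by a power of Frobenius, to $G_{J,g,\ul}=\bfG^F/Z(\bfG)^F$. In particular $\dim W=\dim V_{J,g,\ul}=:r$. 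Since $\bfG=\SL_r$, the finite group $\bfG^F$ is $\SL_r(\F_{\ell})$ or $\SU_r(\F_{\ell})$ for some power $\ell$ of $q'$, and $G_{J,g,\ul}$ is $\PSL_r(\F_\ell)$ or $\PSU_r(\F_\ell)$. It is embedded in $\PGL(\oV_{J,g,\ul})$ via the natural representation, up to a field automorphism. Thus $G_{J,g,\ul}\subseteq \PGL_r(\F_q)$ is, inside $\PGL_r(\oF_q)$, conjugate to the standard $\PSL_r(\F_\ell)$ or $\PSU_r(\F_\ell)$.

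It remains to compare $\F_\ell$ with $\F_q$, and the unitary versus linear type with the parity of $[\F_q:\F_{q'}]$. I would use the field of traces. Since the natural representation of $\SL_r(\F_\ell)$, or $\SU_r(\F_\ell)$ with $\F_\ell$ meaning $\F_{\ell^2}$ as the field of definition, is absolutely irreducible, its minimal field of definition (up to scalars) is the field generated by traces of suitable lifts, which is $\F_\ell$ (resp.\ $\F_{\ell^2}$). On the other side, $\rho_{J,g,\ul}$ is defined over $\F_q$. Lifted to $\widetilde{\rho}_{J,g,\ul}$, the traces generate $\F_q$, since $\Z[\zeta_p]/J=\F_q$ is generated by the image of $\zeta_p$, and $\zeta_p$ appears as an eigenvalue ratio of a Dehn twist (\Cref{prop:DehnTwistSpectrum}). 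So the trace field contains $\F_p(\zeta_p \bmod J)$, which is all of $\F_q$ because $[\F_q:\F_{q'}]$ is the order of $q'$ modulo $p$. This gives $\F_q=\F_\ell$ in the linear case, or $\F_q=\F_{\ell^2}$ in the unitary case.

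To decide between the two types, I would use the Hermitian form. If $[\F_q:\F_{q'}]$ is even, $J=\overline{J}$ and $\rho_{J,g,\ul}$ preserves a nondegenerate Hermitian form over $\F_q$, as recalled in \Cref{sec:prelimDedekindDom}. So $G_{J,g,\ul}\subseteq\PSU(V_{J,g,\ul})$, and a group $\PSL_r(\F_{q})$ cannot fit inside by order comparison. Hence $G_{J,g,\ul}=\PSU_r(\F_{q^{1/2}})=\PSU(V_{J,g,\ul})$. If the degree is odd, $\F_q$ has no subfield of index $2$, so the unitary case $\F_q=\F_{\ell^2}$ is impossible, and $G_{J,g,\ul}=\PSL_r(\F_q)=\PSL(V_{J,g,\ul})$; here I also use $G_{J,g,\ul}\subseteq\PSL$ by perfectness.

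The main obstacle is making the trace-field argument rigorous for projective representations. The traces are only defined up to scalars, and the Frobenius twists and Steinberg endomorphism must be handled so that the natural embedding $\bfG^F\to\GL(W)$ is realized over exactly the field $\F_\ell$ (resp.\ $\F_{\ell^2}$). My first step would be to pass to the linearization $\widetilde{\rho}$ on the perfect central extension, and to use the eigenvalues of the image of a non-separating Dehn twist as explicit elements generating $\F_q$.
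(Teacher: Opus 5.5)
Your reduction to $\bfG\simeq\SL(W)$ with $\bfV\in\{W,W^*\}$ (up to Frobenius twist) matches the paper, and so does your last step: the Hermitian form when $[\F_q:\F_{q'}]$ is even, and the absence of an index-$2$ subfield when it is odd.

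The gap is in the middle step, which carries the real content. You need that $\rho_{J,g,\ul}$ is not defined over a proper subfield of $\F_q$, so that $\F_\ell=\F_q$ (resp.\ $\F_{\ell^2}=\F_q$). Your justification is that $\zeta_p$ is a ratio of eigenvalues of a Dehn twist and therefore lies in the trace field. That inference fails. Eigenvalues of an element of $\PSL_r(\F_\ell)$ need not lie in $\F_\ell$, because the Frobenius of $\F_\ell$ may permute the eigenspaces.

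This really can happen with the data you use. The projective spectrum $\{\zeta_p^{k^2}\}$ of a twist is $\{\zeta_p^{a}\mid a \text{ a non-zero square mod } p\}$. That set is stable under $\zeta_p\mapsto\zeta_p^{q'}$ whenever the characteristic $q'$ is a quadratic residue mod $p$. So the spectrum of a single twist, without control of which eigenspace is sent to which, is compatible with definition over a proper subfield. Passing to the linearization $\widetilde{\rho}_{g,\ul}$ and using the same eigenvalues runs into the same problem. (Also, the prime field here is $\F_{q'}$, not $\F_p$.)

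What is missing is the paper's \Cref{lemma:fieldOfDefinition}. Cut along a curve $\gamma$ bounding a one-holed torus. The fusion summands $V_{J,1,\mu}\boxtimes V_{J,g-1,(\ul,\mu)}$ are pairwise non-isomorphic, absolutely irreducible $\Mod(\Sigma_1^1)\times\PMod(\Sigma_{g-1,n}^1)$-modules. Hence any Galois automorphism fixing a putative field of definition $K\subsetneq\F_q$ must permute them. The dimensions $\dim V_{1,\mu}=\frac{p-1-\mu}{2}$ are pairwise distinct, so this permutation is trivial. Each eigenspace of $T_\gamma$ is therefore individually rational over $K$. That forces the eigenvalue ratio $\zeta_p^4/\zeta_p=\zeta_p^3$, and hence $\zeta_p$, into $K$, so $K=\F_q$.

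With this lemma no trace computation is needed. The paper writes $V_{J,g,\ul}\simeq V'\otimes_K\F_q$ for the natural module $V'$ of $G_{J,g,\ul}=\PSL(V')$ or $\PSU(V')$ over a subfield $K\subseteq \F_q$, and concludes $K=\F_q$ directly.

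An alternative repair along your linearization idea might work for $g\geq 3$. The central generator of the perfect group $\Modtilde(\Sigma_g^n)$ acts by the scalar $\zeta_p^{-6}$, and this scalar would have to lie in $\F_\ell$ once the lift is conjugated into $\SL_r(\F_\ell)$. But that conjugation step needs its own proof, and the argument does not cover the case $g=2$, which the statement includes.
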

\begin{proof}
	By \Cref{prop:isTruncatedSymmetricOrExteriorPower}, $\bfG$ is $\SL(W)$ for some $W$ with $\dim W\geq 3$
	and $\bfV$ is an exterior or truncated symmetric power of $W$. By \Cref{prop:notTruncatedSymmetricPower},
	$\bfV$ is either $W$ or $W^*$. So, $\bfG$ is isomorphic to $\SL(\bfV)$.
	Now, $G_{J,g,\ul}=\SL(\bfV)^F/Z(\SL(\bfV))^F$ must be either $\PSL(V')$ or $\PSU(V')$ for a vector space $V'$
	over some finite field $\F_{q'}\subset \oF_q$ such that $V'\otimes_{\F_{q'}}\oF_q=\bfV$.
	Now, $G_{J,g,\ul}\subseteq \PSL(V_{J,g,\ul})$ and $V_{J,g,\ul}\otimes_{\F_q}\oF_q=\bfV$.
	So, $\F_{q'}\subseteq \F_q$ and $V_{J,g,\ul}\simeq V'\otimes_{\F_{q'}}\F_q$.
	By \Cref{lemma:fieldOfDefinition} below, $\F_{q'}=\F_q$.

	If the degree of $\F_q$ over its prime field is odd, then $\PSU(V_{J,g,\ul})$ is not defined, so $G_{J,g,\ul}\simeq\PSL(V_{J,g,\ul})$.
	Now, assume that $\F_q=\Z[\zeta_p]/J$ has even degree over its prime field.
	As explained in \Cref{sec:prelimDedekindDom},
	the TQFT sesquilinear form on $V_{g,\ul}$ then descends to a non-degenerate $\PMod(\Sigma_{g,n})$-invariant
	sesquilinear form on $V_{J,g,\ul}$.
	So, $G_{J,g,\ul}\subseteq \PSU(V_{J,g,\ul})$. As $G_{J,g,\ul}$ is either $\PSU(V_{J,g,\ul})$ or $\PSL(V_{J,g,\ul})$, $G_{J,g,\ul}= \PSU(V_{J,g,\ul})$.
\end{proof}

\begin{lemma}\label{lemma:fieldOfDefinition}
	Let $p\geq 5$ be a prime, $g\geq 1$, $n\geq 0$ and $\ul\in\Lambda^n$ such that $g\geq 2$ or $g=1$ and $\ul$ is permissive.
	Let $J\subset\Z[\zeta_p]$ be a maximal ideal prime to $p$ and set $\F_q=\Z[\zeta_p]/J$. Then $\rho_{J,g,\ul}$ cannot be defined over a strict
	sub-field $\F_{q'}\subset \F_q$.
\end{lemma}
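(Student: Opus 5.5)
The plan is to argue with traces (characters) of Dehn twists, which are forced to generate all of $\F_q$. Suppose that $\rho_{J,g,\ul}$ is defined over a subfield $\F_{q'}\subsetneq\F_q$. This means that, after conjugating by some element of $\PGL(V_{J,g,\ul}\otimes\oF_q)$, every $\rho_{J,g,\ul}(\phi)$ has a representative matrix with entries in $\F_{q'}$. Then for every mapping class $\phi$, the eigenvalues of any lift of $\rho_{J,g,\ul}(\phi)$, taken up to a common scalar, form a multiset stable under the Frobenius $x\mapsto x^{q'}$, again up to that scalar. Concretely, there is $c\in\oF_q^\times$ such that multiplying the spectrum by $c$ gives a Frobenius-stable multiset, with multiplicities preserved.

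We apply this to a Dehn twist $T_\alpha$ along a separating curve $\alpha$ that cuts off a $\Sigma_{1,1}$, exactly as in \Cref{prop:notSelfDual}. The hypothesis ($g\geq2$, or $g=1$ with $\ul$ permissive) is what guarantees, via \Cref{cor:nonTriviality}, that every summand $V_{1,\mu}\boxtimes V_{g-1,(\ul,\mu)}$ is nonzero. The eigenvalue of $T_\alpha$ on the $\mu$-summand is $\zeta_p^{(\mu+1)^2-1}$, read modulo $J$. These values are pairwise distinct because $p$ is prime and $J$ is prime to $p$, and the $\mu$-eigenspace has dimension $\frac{p-1-\mu}{2}\cdot\dim V_{g-1,(\ul,\mu)}$. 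Using the partial fusion along further curves, one can even arrange this to be an honest eigenvalue decomposition with distinguishable multiplicities.

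The key step uses the torus factor. The multiplicity pattern $\mu\mapsto\dim V_{1,\mu}=\frac{p-1-\mu}{2}$ (\Cref{lemma:dimTori}) is injective, which pins down the action of Frobenius twisted by $c$ on the eigenvalue set. If multiplicities alone do not separate the eigenspaces, because the factor $\dim V_{g-1,(\ul,\mu)}$ can scramble them, a cleaner route is to use the subgroup $\Mod(\Sigma_1^1)\times\PMod(\Sigma_{g-1,n}^1)$. The $T_\alpha$-eigenspaces are exactly its non-isomorphic irreducible summands, since their $\Mod(\Sigma_1^1)$-components have distinct dimensions. Frobenius twisting carries the $\mu$-summand to an irreducible summand of the same dimension, hence to itself. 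So $c\,\zeta^{(\mu+1)^2q'}=\zeta^{(\mu+1)^2}$ for all $\mu$, where $\zeta$ denotes the image of $\zeta_p$ in $\F_q$ and the exponents are shifted by a common constant that can be absorbed into $c$. Taking ratios for $\mu=0$ and $\mu=2$ gives $\zeta^{8(q'-1)}=1$, so $p\mid q'-1$, i.e. $\zeta_p\in\F_{q'}$. Since $\F_q=\Z[\zeta_p]/J$ is generated over its prime field by the image of $\zeta_p$, this forces $\F_{q'}=\F_q$, a contradiction.

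The main obstacle is making the claim ``Frobenius-stable up to scalar'' rigorous for a projective representation. Concretely, one must check that $\phi\mapsto\rho(\phi)^{[\mathrm{Frob}_{q'}]}$ is isomorphic to $\rho$ as a projective representation, and that the isomorphism respects the restriction to $\Mod(\Sigma_1^1)\times\PMod(\Sigma_{g-1,n}^1)$. With this in place, \Cref{prop:IsoOfTensor} shows that the summand isomorphism classes are permuted while dimensions are preserved. I would handle this first, by choosing lifts to $\GL$ over $\F_{q'}$ locally for $T_\alpha$ and for the commuting subgroup.
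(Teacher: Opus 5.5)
Your argument is correct and is essentially the paper's proof. The paper also uses the fusion decomposition along a curve cutting off a one-holed torus, and observes that $\Gal(\F_q/\F_{q'})$ must permute the multiplicity-free, absolutely irreducible $\Mod(\Sigma_1^1)\times\PMod(\Sigma_{g-1,n}^1)$-summands while preserving $\dim V_{1,\mu}$, so it fixes each summand by \Cref{lemma:dimTori}. It then reads $\zeta_p\in\F_{q'}$ off a ratio of Dehn twist eigenvalues: the paper uses $\zeta_p^{4}/\zeta_p=\zeta_p^3$, where you use $\zeta^{9}/\zeta=\zeta^{8}$.

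Two small points to tighten. In your key step, ``same dimension'' must mean the dimension of the torus factor $V_{1,\mu}$, obtained via \Cref{prop:IsoOfTensor}, not the dimension of the whole summand. And for $g=1$, the nonvanishing of the summands comes from permissiveness of $\ul$, not from \Cref{cor:nonTriviality}.
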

\begin{proof}
	Assume that $\rho_{J,g,\ul}$ can be defined over $\F_{q'}\subseteq \F_q$. Let us show that $\F_{q'}= \F_q$.
	Consider the fusion decomposition
	\begin{equation}\label{eq:fusionFieldOfDefinition}
		V_{J,g,\ul}=\bigoplus_{\mu\in\Lambda} V_{J,1,\mu}\boxtimes V_{J,g-1,(\ul,\mu)}
	\end{equation}
	over $\F_q$.
	By the assumptions on $g$ and $\ul$, for each $\mu\in\Lambda$, the corresponding summand is non-zero; see \Cref{lemma:dimAtLeastOne,def:permissive}.
	\Cref{eq:fusionFieldOfDefinition} is a decomposition of $V_{J,g,\ul}$ into absolutely irreducible
	$\Mod(\Sigma_1^1)\times\PMod(\Sigma_{g-1,n}^1)$-modules.
	This decomposition is unique as the irreducible summands appear with no multiplicity.
	As $V_{J,g,\ul}$ can be defined over $\F_{q'}$, it is endowed with an action of $\Gal(\F_q/\F_{q'})$, and for any $g\in\Gal(\F_q/\F_{q'})$,
	the decomposition
	\begin{equation*}
		V_{J,g,\ul}=\bigoplus_{\mu\in\Lambda} g\cdot \left(V_{J,1,\mu}\boxtimes V_{J,g-1,(\ul,\mu)}\right)
	\end{equation*}
	is also a decomposition into absolutely irreducible summands.
	Hence, there exists a permutation $\sigma\in\mathfrak{S}(\Lambda)$ such that for each $\mu\in\Lambda$,
	$g\cdot \left(V_{J,1,\mu}\boxtimes V_{J,g-1,(\ul,\mu)}\right)$ and $V_{J,1,\sigma(\mu)}\boxtimes V_{J,g-1,(\ul,\sigma(\mu))}$ are isomorphic $\Mod(\Sigma_1^1)\times\PMod(\Sigma_{g-1,n}^1)$-modules.
	So, for any $\mu\in \Lambda$, $V_{1,\sigma(\mu)}$ and the Galois twist $g\cdot V_{1,\mu}$ are isomorphic.
	In particular $\dim V_{1,\sigma(\mu)}=\dim V_{1,\mu}$.
	By \Cref{lemma:dimTori}, this implies $\sigma(\mu)=\mu$. Hence, $\sigma=\id_\Lambda$.

	We conclude that the decomposition in \Cref{eq:fusionFieldOfDefinition} is stable under $\Gal(\F_q/\F_{q'})$ and thus defined over $\F_{q'}$.
	But this decomposition is also the eigenspace decomposition of a Dehn twist $T_\gamma\in\PMod(\Sigma_{g,n})$.
	So, the projective spectrum of the action of this Dehn twist must be defined over $\F_{q'}$. In particular, for any $u_1,u_2\in \oF_q$
	eigenvalues of the action of $T_\gamma$,
	$\frac{u_1}{u_2}\in \F_{q'}$. Now, the spectrum is $\{\zeta_p^{k^2}\mid k\in\lbrace 1,\ldots,\frac{p-1}{2}\rbrace\}$,
	so $\zeta_p^3=\frac{\zeta_p^{2^2}}{\zeta_p^{1^2}}\in\F_{q'}$,
	and hence also $\zeta_p\in \F_{q'}$ since $p\geq 5$.
	But $\F_q$ is generated by $\zeta_p$ over its prime field, so $\F_{q'}=\F_q$, as desired.
\end{proof}

We can now conclude the proof of \Cref{thm:surjectivity}:

\begin{proof}[Proof of \Cref{thm:surjectivity}]
	By \Cref{prop:excludingUnequalCharacteristic}, $G_{J,g,\ul}$
	is a finite group of Lie type in the same characteristic as $\F_q=\Z[\zeta_p]/J$.
	Then \Cref{prop:definingCharacteristicImage} implies the theorem.
\end{proof}

\subsection{Proof of surjectivity modulo non-prime ideals}

\label{sec:conclusionSurjectivity}

Let us now turn to the proof of \Cref{thm:surjectivityImproved}. We begin by proving the following weaker statement.

\begin{lemma}\label{lemma:surjectivityProduct}
	Let $p$, $g$, $\ul$ and $I\subset \Zrp$ be as in \Cref{thm:surjectivityImproved}.\footnote{In particular, we also allow $p=5$ here.
	We recall that surjectivity for $p=5$ and maximal ideals will be addressed in Section \ref{sec:case-p=5}.}
	Assume that $I$ is reduced, i.e. $I=I_1\dotsb I_k$
	with $I_1,\dotsc,I_k$ distinct prime ideals. Then
	\begin{equation*}
		\Modgn \lra \PSU(h_{g,\ul})(\Zrp/I)=\prod_{i=1}^{k} \PSU(h_{g,\ul})(\Zrp/I_i)
	\end{equation*}
	is surjective.
\end{lemma}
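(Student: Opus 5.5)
We will reduce the statement to surjectivity onto each factor plus a Goursat-type argument. Each factor $\PSU(h_{g,\ul})(\Zrp/I_i)$ is, for $I_i$ prime, the image of $\SU(h_{g,\ul})(\Zrp/I_i)$. This group is $\SU(V_{J_i,g,\ul})$ if $J_i=\overline{J_i}$, and $\SL(V_{J_i,g,\ul})$ otherwise, where $J_i\subset\Zzp$ is any maximal ideal over $I_i$. By \Cref{thm:surjectivity} (and \Cref{thm:casep=5} when $p=5$), $\rho_{J_i,g,\ul}$ surjects onto the corresponding $\PSL$ or $\PSU$, so the composite to each factor is surjective. Moreover each factor $S_i$ is a non-abelian finite simple group. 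This needs $\dim V\ge 3$, which follows from \Cref{lemma:dimensionDivisibleByP}, and it excludes the small exceptional cases.

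The image $H$ is a subdirect product of $S_1\times\dotsb\times S_k$ with all $S_i$ simple. By the standard Goursat lemma for products of non-abelian simple groups, $H$ is the full product unless there are $i\neq j$ and an isomorphism $\theta:S_i\to S_j$ with $\pi_j|_H=\theta\circ\pi_i|_H$. So we must rule out such diagonal coincidences. Assume $\theta$ exists. Since $|S_i|=|S_j|$, Artin's theorem on orders of finite simple groups together with the known coincidences $\PSL_k(\F_\ell)$ versus $\PSU_k$ (no such coincidence occurs for $k\ge 3$ in our range) forces the two residue fields to have the same cardinality and the same type. By \Cref{lemma:fieldOfDefinition}, the isomorphism $\theta$ between the $\PSL$ or $\PSU$ groups is induced by a semilinear map: up to composing with a field automorphism $\sigma$ and possibly the inverse-transpose, it comes from an inner automorphism. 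So $\rho_{J_j,g,\ul}$ is isomorphic to $\sigma\circ\rho_{J_i,g,\ul}$ or to its dual, as projective representations over $\oF$.

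The main step is to derive a contradiction from this, using Dehn-twist spectra. Take a non-separating Dehn twist $t_\gamma$. By \Cref{prop:DehnTwistSpectrum}, its projective spectrum modulo $J_i$ is $\{\zeta_p^{k^2}\}$ reduced mod $J_i$. An isomorphism of the projective representations would identify these spectra up to a scalar, and also identify the fusion summands by dimension, as in the proofs of \Cref{prop:notSelfDual} and \Cref{lemma:fieldOfDefinition}. The fusion decomposition along a separating curve cutting off $\Sigma_{1,1}$ has summands of pairwise distinct dimensions (\Cref{lemma:dimTori}). This forces the matching of eigenvalues of the separating twist to be color-preserving, so $\zeta_p^{i^2}\mapsto u\,\zeta_p^{\pm i^2}$ exactly. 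It follows that the map identifying $\Zzp/J_i$ with $\Zzp/J_j$ through $\sigma$ sends $\zeta_p$ to $\zeta_p^{\pm1}$; for the dual case, $p\ge5$ rules it out as in \Cref{prop:notSelfDual}. Hence $J_j=J_i$ or $J_j=\overline{J_i}$, and in either case $I_i=I_j$, a contradiction.

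I expect the main obstacle to be the care needed to extract $\zeta_p\mapsto\zeta_p^{\pm1}$ from the projective spectral identification. One must account for the scalar ambiguity $u$ and the Galois twist $\sigma$ simultaneously, using ratios $\zeta_p^{(i+1)^2-i^2}$ as in \Cref{lemma:fieldOfDefinition}. One must also check that the automorphism groups of $\PSL_d$ and $\PSU_d$ are only diagonal–field–graph, so that $\theta$ is realized semilinearly on $V$.
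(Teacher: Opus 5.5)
Your proposal is correct and is essentially the paper's proof: a Hall/Goursat reduction to a diagonal pair, absence of exceptional isomorphisms (as $\dim V_{g,\ul}\geq 5$), the description of $\mathrm{Aut}(\PSL_d)$ and $\mathrm{Aut}(\PSU_d)$ by inner, field and graph automorphisms, and then the fusion decomposition along a curve cutting off $\Sigma_1^1$. That decomposition, together with \Cref{lemma:dimTori} and the ratio of the eigenvalues $\zeta_p^{4}$ and $\zeta_p$, forces $J_j\in\{J_i,\overline{J_i}\}$.

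Two small corrections:
\begin{enumerate}
\item The semilinearity of $\theta$ comes from that description of the automorphism groups, not from \Cref{lemma:fieldOfDefinition}.
\item The dual case is not ruled out by $p\geq 5$. It genuinely occurs, since $\rho_{\overline{J},g,\ul}\simeq\rho_{J,g,\ul}^*$, and it yields $J_j=\overline{J_i}$. Your concluding sentence already covers this, just as the paper does by replacing $J_1$ with $\overline{J_1}$.
\end{enumerate}
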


This lemma is the statement mentioned in \Cref{rmk:surjectivityProduct} and is weaker than the statement of
\Cref{thm:surjectivityImproved} for the same $I$ as it involves $\PSU(h_{g,\ul})$ rather than $\SU(h_{g,\ul})$.
Note that with $J_i$ such that $J_i\cap \Zrp=I_i$,
$\PSU(h_{g,\ul})(\Zrp/I_i)$ is $\PSU(V_{J_i,g,\ul})$ if $J_i=\overline{J_i}$ and $\PSL(V_{J_i,g,\ul})$ otherwise.

\begin{proof}
	Assume by contradiction that $\Modgn \lra \prod_{i=1}^{k} \PSU(h_{g,\ul})(\Zrp/I_i)$ is not surjective. We may assume that $k$ is minimal, i.e.
	that $\Modgn \ra \prod_{i=1}^{k-1} \PSU(h_{g,\ul})(\Zrp/I_i)$ is surjective.
	By \Cref{thm:surjectivity}, $k\geq 2$ and $\Modgn \ra \PSU(h_{g,\ul})(\Zrp/I_k)$ is surjective.
	Denote by $H$ the image of the map in \Cref{lemma:surjectivityProduct}. It is a subgroup of the product
	\begin{equation*}
		G_1\times G_2=\left(\prod_{i=1}^{k-1} \PSU(h_{g,\ul})(\Zrp/I_i)\right)\times \PSU(h_{g,\ul})(\Zrp/I_k)
	\end{equation*}
	and surjects onto both factors. By the diagonal Hall lemma (see \cite[Lemma 3.5]{Kup11}), we must have that
	$\PSU(h_{g,\ul})(\Zrp/I_i)\simeq \PSU(h_{g,\ul})(\Zrp/I_k)$ for some $1\leq i \leq k-1$
	and the image of the representation
	\begin{equation*}
		\Modgn \lra \PSU(h_{g,\ul})(\Zrp/I_i)\times \PSU(h_{g,\ul})(\Zrp/I_k)
	\end{equation*}
	factors through the diagonal. In particular, as we assumed $k$ minimal, $k=2$ and $i=1$.
	As $g\geq 3$ and $p\geq 5$, we have $\dim V_{g,\ul}\geq 5$. This guarantees that $\PSU(h_{g,\ul})(\Zrp/I_1)\simeq \PSU(h_{g,\ul})(\Zrp/I_2)$
	is not an
	exceptional isomorphism \cite[§3.5]{conwayAtlasFiniteGroups1985}.
	Hence, there is an isomorphism $\Zrp/I_1\simeq \Zrp/I_2$, i.e. $I_1$ and $I_2$ are in the same $\Gal(\Q(\zeta_p)/\Q)$-orbit.
	Choose lifts $J_1$ and $J_2$ of $I_1$ and $I_2$ to prime ideals of $\Zzp$. Then $\Zzp/J_1\simeq \Zzp/J_2$.
	Let us denote this common finite field by $\F_q$ and the group $\PSU(h_{g,\ul})(\Zrp/I_1)\simeq \PSU(h_{g,\ul})(\Zrp/I_2)$ by $G$.
	
	Now, the representations $\rho_{J_1,g,\ul}:\Modgn\ra G$ and $\rho_{J_2,g,\ul}:\Modgn\ra G$ are conjugate by an automorphism of $G$.
	With $d=\dim V_{g,\ul}$, $G$ is either $\PSL_d(\F_q)$ or $\PSU_d(\F_q)$. Hence, $\mathrm{Aut}(G)$ is generated by inner automorphisms,
	Galois action and, if $G=\PSL_d(\F_q)$, inverse-transposition \cite[§3.3]{conwayAtlasFiniteGroups1985}. So, $\rho_{J_1,g,\ul}$ and
	$\rho_{J_2,g,\ul}$ are either Galois conjugate or dual up to Galois conjugation. Changing $J_1$ to its complex conjugate $\overline{J_1}$
	if necessary,
	we may assume that $\rho_{J_1,g,\ul}$ and $\rho_{J_2,g,\ul}$ are Galois conjugate, say by $F^k$ where $F$ is the Frobenius of $\F_q$.
	
	Consider the following fusion decompositions along a {\scc} $\gamma$ separating a compact torus with one boundary component.
	\begin{equation}\label{eq:fusionSurjectivityProduct}
		V_{g,\ul}=\bigoplus_{\mu\in\Lambda} V_{1,\mu}\boxtimes V_{g-1,(\ul,\mu)}.
	\end{equation}
	All summands are non-zero by \Cref{cor:nonTriviality}.
	This is both the eigenspace decomposition for the action of the Dehn twist $T_\gamma$ and the multiplicity-free irreducible
	decomposition of $V_{g,\ul}$ as a $\Mod(\Sigma_1^1)\times\PMod(\Sigma_{g-1,n}^1)$-representation.
	Up to a scalar independent of $\mu$, the eigenvalue of $T_\gamma$ on the summand corresponding to $\mu\in\Lambda=\{0,2,\dotsc,p-3\}$
	is $\zeta_p^{(\mu+1)^2}$.
	The decomposition of \Cref{eq:fusionSurjectivityProduct} exists for both reductions $\rho_{J_1,g,\ul}$ and $\rho_{J_2,g,\ul}$.
	As they are conjugate by $F^k$,
	we must have
	\begin{equation}\label{eq:GaloisActionSurjectivityProduct}
		\forall \mu\in \Lambda,\; F^k(V_{J_1,1,\mu}\boxtimes V_{J_1,g-1,(\ul,\mu)})\simeq V_{J_2,1,\sigma(\mu)}\boxtimes V_{J_2,g-1,(\ul,\sigma(\mu))}
	\end{equation}
	for some permutation $\sigma:\Lambda\ra\Lambda$. For each $\mu$, we have $\dim V_{J_1,1,\mu}=\dim V_{J_2,1,\sigma(\mu)}$,
	and by \Cref{lemma:dimTori}, this implies that $\sigma$ is the identity. The isomorphism of \Cref{eq:GaloisActionSurjectivityProduct}
	is compatible with the action of $T_\gamma$.
	Hence, there exists $u\in \F_q^\times$ such that for each $\mu\in\{0,2,\dotsc,p-3\}$, $F^k(\xi_1^{(\mu+1)^2})=u\xi_2^{(\mu+1)^2}$
	where $\xi_i\in \F_q$ for $i=1,2$ denotes the image of $\zeta_p$ under $\Zzp\ra\Zzp/J_i\simeq \F_q$.
	
	Let $q'$ be the characteristic of $\F_q$. Setting $\mu=0$, we have $\xi_1^{q'^k}=u\xi_2$. Setting $\mu=1$, we have $\xi_1^{4q'^k}=u\xi_2^{4}$,
	hence we get $\xi_1^{3q'^k}=\xi_2^{3}$.
	As $p\geq 5$, $3$ is invertible modulo $p$ and hence $\xi_1^{q'^k}=\xi_2$.
	This means that the $2$ maps $\Z[\zeta_p]\ra\F_q$ with kernels $J_1$ and $J_2$ are conjugate by an automorphism of $\F_q$. So, $J_1=J_2$, contradiction.
\end{proof}

We will deduce \Cref{thm:surjectivityImproved} from \Cref{lemma:surjectivityProduct} using \Cref{lemma:Frattini} below.
The Frattini subgroup $\Phi(H)$ of a finite group $H$ is the intersection of the maximal subgroups of $H$.
It is always characteristic and in particular $\Phi(H)$ is trivial when $H$ is simple. It is not hard to check that
$\Phi(H_1\times H_2)=\Phi(H_1)\times\Phi(H_2)$ for any finite groups $H_1$, $H_2$ and that
for any subset $X$ of a finite group $H$, $\langle X\rangle=H$ if and only if $\langle X,\Phi(H)\rangle=H$.

We deduce the following lemma from \cite[Theorem 1.3]{vasiuSurjectivityCriteriaPadic2003}.
\begin{lemma}\label{lemma:Frattini}
	Let $I\subset \Zrp$ be a non-zero ideal prime to $p$ and $\sqrt{I}$ its radical (i.e. the largest square-free ideal dividing $I$).
	Then the Frattini subgroup of $\SU(h_{g,\ul})(\Zrp/I)$ is the kernel of
	\begin{equation*}
		\pi:\SU(h_{g,\ul})(\Zrp/I)\lra \PSU(h_{g,\ul})(\Zrp/\sqrt{I}).
	\end{equation*}
	In particular, the only subgroup of $\SU(h_{g,\ul})(\Zrp/I)$ which surjects onto $\PSU(h_{g,\ul})(\Zrp/\sqrt{I})$ is $\SU(h_{g,\ul})(\Zrp/I)$.
\end{lemma}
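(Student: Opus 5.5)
The plan is to reduce to one prime at a time via the Chinese remainder theorem. Then identify the kernel of reduction from level $I_i^{e}$ to level $I_i$ as lying in the Frattini subgroup, using Vasiu's criterion \cite[Theorem 1.3]{vasiuSurjectivityCriteriaPadic2003}. Finally, handle the centre.

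\textbf{Decomposition.} Write $I=\prod_i I_i^{e_i}$ with the $I_i$ distinct primes of $\Zrp$, all prime to $p$, and $\sqrt I=\prod_i I_i$. By the Chinese remainder theorem, $\Zrp/I\simeq\prod_i \Zrp/I_i^{e_i}$, so $\SU(h_{g,\ul})(\Zrp/I)\simeq\prod_i\SU(h_{g,\ul})(\Zrp/I_i^{e_i})$. The same decomposition holds for $\PSU(h_{g,\ul})(\Zrp/\sqrt I)$. Since $\Phi(H_1\times H_2)=\Phi(H_1)\times\Phi(H_2)$, it suffices to treat a single prime power $I_i^{e}$. Let $\mathcal O$ be the completion of $\Zrp$ at $I_i$, with residue field $k$. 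The group scheme $\SU(h_{g,\ul})$ is then smooth reductive over $\mathcal O$, because $h_{g,\ul}$ is non-degenerate away from $p$. It is a special linear group if $I_i$ splits in $\Zzp$, and a unitary group relative to the unramified quadratic extension otherwise. Its reduction modulo $I_i^{e}$ agrees with the reduction of the $\mathcal O$-points.

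\textbf{Kernel of reduction lies in $\Phi$.} Let $K$ be the kernel of $\SU(h_{g,\ul})(\Zrp/I_i^{e})\to\SU(h_{g,\ul})(k)$. We need $K\subseteq\Phi$. Suppose a subgroup $M$ surjects onto $\SU(h_{g,\ul})(k)$. Vasiu's theorem gives that $M$ is everything, provided the dimension $d=\dim V_{g,\ul}$ and the residue characteristic avoid a small list of exceptions; this is where $d\geq 5$ (as $g\geq 3$) is used. The main obstacle is checking exactly these hypotheses, in particular that the small-residue-field cases ($k=\F_2,\F_3$) either satisfy them or are excluded. Vasiu's conditions are essentially that the adjoint representation on $\mathfrak{sl}_d(k)$ or $\mathfrak{su}_d(k)$ is non-split enough that a lift of the whole residue group cannot lie in a proper subgroup. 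If a bad case arises, I would handle it directly: the $e=2$ layer is the adjoint module, which is irreducible when the residue characteristic $q'\nmid d$. This irreducibility, together with the standard induction on $e$ (commutators of lifts fill successive layers), gives surjectivity.

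\textbf{Centre and conclusion.} Next, the centre $Z$ of $\SU(h_{g,\ul})(k)$ lies in $\Phi(\SU(h_{g,\ul})(k))$. This holds because $\SU(h_{g,\ul})(k)$ is a perfect central extension of the simple group $\PSU(h_{g,\ul})(k)$. For such an extension, any subgroup surjecting onto the quotient generates the whole group together with $Z$, and by perfectness already generates it. Combining with the previous step, $\ker\pi\subseteq\Phi$. Conversely, $\Phi$ maps into $\Phi$ of a product of simple groups, which is trivial, so $\Phi\subseteq\ker\pi$, giving equality. The last assertion then follows from the standard fact that $\langle X,\Phi(H)\rangle=H$ implies $\langle X\rangle=H$.
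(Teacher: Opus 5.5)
Your proposal follows the paper's proof essentially step by step. The shared steps are: reduction to a single prime power via the Chinese remainder theorem and $\Phi(H_1\times H_2)=\Phi(H_1)\times\Phi(H_2)$; the inclusion $\Phi\subseteq\ker\pi$ from the triviality of the Frattini subgroup of the simple quotient; and the reverse inclusion from perfectness of $\SU(h_{g,\ul})(k)$ (so a subgroup surjecting onto $\PSU$ already surjects onto $\SU$) combined with Vasiu's Theorem 1.3. The only difference is that you separate the centre step from the congruence-kernel step, whereas the paper handles both in a single maximal-subgroup argument.

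Your fallback for "bad cases" is not needed. The residue fields of $\Zrp$ away from $p$ are never $\F_2$ or $\F_3$, since $q\not\equiv\pm1 \bmod p$ for $q\in\{2,3\}$ and $p\geq 5$, and the paper applies Vasiu directly using rank $\geq 5$ and simple connectedness. As stated, the fallback would not suffice in any case: irreducibility of the adjoint layer does not rule out a complement splitting the reduction map.
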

\begin{proof}
	In this proof, we will denote $\Zrp$ by $\Or$, $\SU(h_{g,\ul})$ by $\mathrm{G}$ and $\PSU(h_{g,\ul})$ by $\mathrm{PG}$.
	Decompose $I=I_1^{e_1}\dotsb I_k^{e_k}$ into a product of powers of distinct prime ideals $I_1,\dotsc,I_k$.
	Then $\Or/I= \Or/I_1^{e_1}\times\dotsb\times\Or/I_k^{e_k}$, $\sqrt{I}=I_1\dotsb I_k$ and $\Or/\sqrt{I}=\Or/I_1\times\dotsb\times\Or/I_k$.
	Thus $\mathrm{G}(\Or/I)$ is $\mathrm{G}(\Or/I_1^{e_1})\times\dotsb\times\mathrm{G}(\Or/I_k^{e_k})$.
	
	Let us first prove that $\Phi(\mathrm{G}(\Or/I_1^{e_1}))=\ker\pi_1$ with $\pi_1:\mathrm{G}(\Or/I_1^{e_1})\ra \mathrm{PG}(\Or/I_1)$.
	As $\mathrm{PG}(\Or/I_1)$ is simple, its Frattini subgroup is trivial. The Frattini subgroup of $\mathrm{G}(\Or/I_1^{e_1})$
	is thus contained in the kernel of $\pi_1$. Let us show that $\ker \pi_1$ is contained in $\Phi(\mathrm{G}(\Or/I_1^{e_1}))$.
	Assume by contradiction that there exists a maximal subgroup $H\subset \mathrm{G}(\Or/I_1^{e_1})$ which does not contain $\ker\pi_1$.
	Then $\langle H,\ker\pi_1\rangle=\mathrm{G}(\Or/I_1^{e_1})$, so $H$ surjects onto $\mathrm{PG}(\Or/I_1)$.
	Hence, as the kernel of $\mathrm{G}(\Or/I_1)\ra \mathrm{PG}(\Or/I_1)$ is the center,
	the image of $H$ in $\mathrm{G}(\Or/I_1)$ contains the derived subgroup of $\mathrm{G}(\Or/I_1)$, which is equal to $\mathrm{G}(\Or/I_1)$
	(see Schur multipliers \cite[§3.3]{conwayAtlasFiniteGroups1985}).
	Hence, $H$ surjects onto $\mathrm{G}(\Or/I_1)$. As $g\geq 3$ and $p\geq 5$,
	the rank of $\mathrm{G}$ is at least $5$. Moreover, $\mathrm{G}$ is simply connected. Hence, we can
	apply \cite[Theorem 1.3]{vasiuSurjectivityCriteriaPadic2003},
	which tells us that $H$ surjects onto $\mathrm{G}(\Or/I_1^{e_1})$. So, $H$ is not a proper subgroup, contradiction.
	Hence, $\Phi(\mathrm{G}(\Or/I_1^{e_1}))=\ker\pi_1$. Similarly for $i=2,\dotsc,k$. Now
	$\Phi(\mathrm{G}(\Or/I))=\Phi(\mathrm{G}(\Or/I_1^{e_1}))\times\dotsb\times\Phi(\mathrm{G}(\Or/I_k^{e_k}))$, so $\Phi(\mathrm{G}(\Or/I))$
	is $\ker \pi_1\times\dotsb\times\ker\pi_k=\ker\pi$, as desired.
	
	The second statement is deduced from the first one: if $H\subset \mathrm{G}(\Or/I)$ surjects onto $\mathrm{PG}(\Or/\sqrt{I})$,
	then $\langle H,\ker \pi\rangle=\mathrm{G}(\Or/I)$. But $\ker \pi=\Phi(\mathrm{G}(\Or/I))$, so $H=\mathrm{G}(\Or/I)$.
\end{proof}

We can now complete the proof of \Cref{thm:surjectivityImproved}.
Let $I\subset \Zrp$ be a non-zero ideal prime to $p$. By \Cref{lemma:surjectivityProduct},
$\Modtgn$ surjects onto $\PSU(h_{g,\ul})(\Zrp/\sqrt{I})$.
By \Cref{lemma:Frattini}, this implies that $\Modtgn$ surjects onto $\SU(h_{g,\ul})(\Zrp/I)$, as desired.

\subsection{A word on the images of the \texorpdfstring{representations $\bm{\rho_{p,g,\ul}}$}{SO(3) quantum representations} mod
\texorpdfstring{$\bm{p}$}{p}}
\label{sec:modp}

Let $p\geq 7$ be a prime and $g\geq 4$. Consider the Johnson subgroup $J_2(\Sigma_g)\subseteq \Mod(\Sigma_g)$ generated by
Dehn twists along separating {\scc}s. Separating curves act non-trivially modulo $J$ for any $J\subset \Z[\zeta_p]$
maximal ideal prime to $p$. As $J_2(\Sigma_g)$ is normal, by \Cref{thm:surjectivity},
this implies that $\rho_{J,g}(J_2(\Sigma_g))$ is
$\PSL(V_{J,g,\ul})$ if the degree of $\F_q=\Z[\zeta_p]/J$ over its prime field is odd, and $\PSU(V_{J,g,\ul})$ otherwise.
The situation is very different modulo $h=1-\zeta_p-1$: $\rho_{(h),g}$ is trivial on $J_2(\Sigma_g)$,
see \cite[Corollary 2.4(ii)]{detcherryKernel$mathrmSO3$WittenReshetikhinTuraevQuantum2025}.

However, as $\rho_{p,g}$ has dense image in $\PU_{d_{p,g}}$, so does ${\rho_{p,g}}_{\mid J_2(\Sigma_g)}$.
Hence, if one is able to prove that the adjoint trace field of ${\rho_{p,g}}_{\mid J_2(\Sigma_g)}$ is $\Q(\zeta_p+\zeta_p^{-1})$,
then strong approximation \cite[Corollary 10.6]{weisfeilerStrongApproximationZariskidense1984}
would imply that there exists $k\geq 1$ such that the closure
of the image of ${\rho_{p,g}}_{\mid J_2(\Sigma_g)}$ in $\PU(h_{p,g})(\Z[\zeta_p]_h)$
contains $\PU(h_{p,g})(\Z[\zeta_p]_h)\cap (I+h^k\mathrm{End}(V_{p,g}))$. Here $\Z[\zeta_p]_h$
is the $h$-adic completion.
We show the following.

\begin{proposition}
	Let $p\geq 7$ be a prime, $g\geq 4$ and $k$ such that the closure of
	the image of ${\rho_{p,g}}_{\mid J_2(\Sigma_g)}$ in $\PU(h_{p,g})(\Z[\zeta_p]_h)$
	contains $\PU(h_{p,g})(\Z[\zeta_p]_h)\cap (I+h^k\mathrm{End}(V_{p,g}))$.
	Then with $r=\mathrm{rk}_{\F_p}\:J_2(\Sigma_g)^\mathrm{ab}\otimes \F_p$,
	$$\frac{1}{1-1/r}r^k\geq \binom{\dim V_{p,g}}{2}.$$
	In particular $k$ goes to infinity as $p$ goes to infinity.
\end{proposition}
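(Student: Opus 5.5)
We plan to compare the size of the image of $J_2(\Sigma_g)$ in the finite quotient $\PU(h_{p,g})(\Z[\zeta_p]/(h^{k+1}))$ with the size of the congruence subgroup assumed to be contained in the closure. Write $\Gamma=\rho_{p,g}(J_2(\Sigma_g))$, $\overline{\Gamma}$ its closure in $\PU(h_{p,g})(\Z[\zeta_p]_h)$, and $K_j=\PU(h_{p,g})(\Z[\zeta_p]_h)\cap(I+h^j\mathrm{End}(V_{p,g}))$. The first step is the input from \cite[Corollary 2.4(ii)]{detcherryKernel$mathrmSO3$WittenReshetikhinTuraevQuantum2025}: $\rho_{(h),g}$ is trivial on $J_2(\Sigma_g)$. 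Hence $\overline{\Gamma}\subseteq K_1$, a pro-$p$ group. By hypothesis $K_k\subseteq\overline{\Gamma}$, so $\overline{\Gamma}$ surjects onto $K_k/K_{k+1}$. For the adjoint-type group, $K_k/K_{k+1}$ is an elementary abelian $p$-group isomorphic to $\F_p$-points of the Lie algebra (the unitary Lie algebra, traceless or projectivized), of dimension roughly $\dim V_{p,g}^2/2$ over $\F_p$, since $\Z[\zeta_p]/(h)=\F_p$ and the unitary condition cuts the dimension in half relative to $\mathfrak{gl}$. In particular $\log_p|K_k/K_{k+1}|\geq\binom{\dim V_{p,g}}{2}$.

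The second step is an upper bound on the number of generators of quotients of $\overline{\Gamma}$. Since $\overline{\Gamma}$ is pro-$p$ and topologically generated by the image of $J_2(\Sigma_g)$, its Frattini quotient is a quotient of $J_2(\Sigma_g)^{\mathrm{ab}}\otimes\F_p$, so it has at most $r$ topological generators. The filtration $\overline{\Gamma}_j:=\overline{\Gamma}\cap K_j$ has abelian successive quotients $\overline{\Gamma}_j/\overline{\Gamma}_{j+1}$ embedded in $K_j/K_{j+1}$, which is exponent $p$. Using the commutator structure $[K_i,K_j]\subseteq K_{i+j}$ and $K_j^p\subseteq K_{j+1}$ (or a better shift, as $p$ is ramified; we would check the precise $p$-power map), the graded Lie algebra $\bigoplus_j\overline{\Gamma}_j/\overline{\Gamma}_{j+1}$ should be generated in degree $1$ by at most $r$ elements, together with images of $p$-th powers. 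We will then bound $\dim_{\F_p}\overline{\Gamma}_j/\overline{\Gamma}_{j+1}\leq r^j$ (iterated brackets of $r$ degree-one generators, length-$j$ words), possibly adding $p$-power contributions which lie in higher degree and are absorbed in the same count.

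Finally combine: $\overline{\Gamma}_k\supseteq K_k$ gives $\dim_{\F_p}\overline{\Gamma}_k/\overline{\Gamma}_{k+1}\geq\dim K_k/K_{k+1}\geq\binom{\dim V_{p,g}}{2}$. Actually we will prefer to sum: $\log_p|\overline{\Gamma}/\overline{\Gamma}_{k+1}|\leq\sum_{j=1}^k r^j\leq\frac{1}{1-1/r}r^k$, while $\overline\Gamma_k/\overline{\Gamma}_{k+1}$ alone already contributes at least $\binom{\dim V_{p,g}}{2}$, giving the inequality. Since $r$ depends only on $g$ (as $J_2(\Sigma_g)$ is finitely generated for $g\geq 4$ and its abelianization has bounded rank mod $p$, e.g. via Johnson/Dimca--Hain--Papadima, bounded independently of $p$ — we would need to verify this uniformity or at least that $r$ grows slower than $\dim V_{p,g}$, which is polynomial of degree $3g-3$ in $p$), and $\dim V_{p,g}\to\infty$, $k\to\infty$. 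The main obstacle is the second step: making the graded generation argument rigorous in this ramified setting, where the $p$-power map shifts the filtration by $p-1$ levels in $h$-adic terms rather than one; we would try to show $p$-th powers land deep enough (in $K_{j+p-1}$) so that they only help, and otherwise replace the count $r^j$ by the number of words of length $j$ in generators and their $p$-powers.
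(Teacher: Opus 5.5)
You have the same skeleton as the paper, and your lower bound and Frattini bound are correct. Because $\rho_{(h),g}$ is trivial on $J_2(\Sigma_g)$, its image $H$ in $K_1/K_{k+1}$ satisfies $\log_p|H|\geq\log_p|K_k/K_{k+1}|\geq\binom{\dim V_{p,g}}{2}$ by hypothesis, and $H$ is generated by at most $r$ elements. The gap is in the upper bound $\log_p|H|\le r+\dots+r^k$. You claim that the graded Lie algebra of the induced filtration $\overline{\Gamma}\cap K_j$ is generated in degree $1$ by at most $r$ elements (plus $p$-powers), and deduce $\dim\overline{\Gamma}_j/\overline{\Gamma}_{j+1}\le r^j$. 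Nothing supports this. The Frattini argument bounds the number of generators, but not their filtration degrees or the initial forms of their products. Suppose two generators satisfy $x^{-1}y\in K_2\setminus K_3$. Then $\mathrm{gr}_1$ is spanned by their single common initial form, and the Lie algebra it generates is zero in degree $2$ (its $p$-th power sits in degree $p$). Yet $\mathrm{gr}_2\overline{\Gamma}$ contains the initial form of $x^{-1}y$. So the induced filtration is the wrong object to count with.

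The paper counts with the lower central series of $H$ instead.
\begin{itemize}
\item From $[K_i,K_j]\subseteq K_{i+j}$ one gets $\gamma_i(H)\subseteq K_i/K_{k+1}$, so $H$ has nilpotency class at most $k$.
\item Since $H$ has at most $r$ generators, each $\gamma_i(H)/\gamma_{i+1}(H)$ is the image of the multilinear iterated-commutator map on $i$ copies of $H^{\mathrm{ab}}$. It is therefore generated by at most $r^i$ left-normed commutators of the generators.
\item Summing over $i\le k$ gives $\log_p|H|\le r+\dots+r^k\le\frac{1}{1-1/r}r^k$, provided these factors are elementary abelian.
\end{itemize}
That last proviso is where your $p$-power worry belongs. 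The computation you sketched settles it: since $v_h(p)=p-1$, one has $K_j^p\subseteq K_{j+p-1}$, so $K_1^p\subseteq K_p$. Hence $K_1/K_{k+1}$ has exponent $p$ whenever $k\le p-1$. The paper uses this without saying so. For $k\ge p$ the displayed bound would require accounting for $p$-power terms, but the ``in particular'' is then automatic anyway. On uniformity, you do not need Dimca--Hain--Papadima. The integer $r$ is at most the number of generators of $J_2(\Sigma_g)$, which is finite for $g\ge 4$ and independent of $p$, while $\dim V_{p,g}\to\infty$.
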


\begin{remark}
	Note that by \cite[Theorem A]{churchFiniteGenerationJohnson2021}, $J_2(\Sigma_g)$ is finitely generated for $g\geq 4$,
	and the rank $\mathrm{rk}_{\Q}\:J_2(\Sigma_g)^\mathrm{ab}\otimes \Q$ is known for $g\geq 6$
	by \cite[Theorem B]{dimcaAbelianizationJohnsonKernel2014}. This rank will match
	$\mathrm{rk}_{\F_p}\:J_2(\Sigma_g)^\mathrm{ab}\otimes \F_p$ for all but finitely many $p$.
\end{remark}

\begin{proof}
	Let $G$ be the quotient of $\PU(h_{p,g})(\Z[\zeta_p]_h)\cap (I+h\mathrm{End}(V_{p,g}))$
	by $\PU(h_{p,g})(\Z[\zeta_p]_h)\cap (I+h^{k+1}\mathrm{End}(V_{p,g}))$ and let $H$ be the image of $J_2(\Sigma_g)$ in $G$.
	Both $G$ and $H$ are finite nilpotent $p$-groups of nilpotency rank $\leq k$.
	Hence, $\log_p|H|\leq r+r^2+\dots+r^k=\frac{r^{k+1}-r}{r-1}\leq \frac{1}{1-1/r}r^k$. Now, by assumption, $H$ contains the image of
	$\PU(h_{p,g})(\Z[\zeta_p]_h)\cap (I+h^{k}\mathrm{End}(V_{p,g}))$ in $G$. This image can be identified
	with a space of antisymmetric matrices in $\mathrm{End}(V_{p,g})\otimes_{\Z[\zeta_p]} \Z[\zeta_p]/(h)$
	for the quadratic or skew-quadratic form induced by $h_{p,g}$ modulo $h$.
	By linear algebra, this space has dimension at least $\binom{\dim V_{p,g}}{2}$ over $\F_p$.
	Hence, $\log_p|H|\geq \binom{\dim V_{p,g}}{2}$ and $\frac{1}{1-1/r}r^k\geq \binom{\dim V_{p,g}}{2}$.
\end{proof}

\section{The case \texorpdfstring{$p=5$}{p=5}}
\label{sec:case-p=5}
In this section, we prove versions of our main theorem for the representations $\rho_{5,(2)^n}$ for any surface $\Sigma_{g,n}$ with $(g,n)\neq (1,0)$.
Note that when $p=5$, the set of colors is $\Lambda=\lbrace 0,2\rbrace$, so $2$ is the only non-zero color.
Because of this, we will denote the representations $\rho_{5,(2)^n}$ simply by $\rho_{5,n}$ in this section.
Similarly, we will simplify the notation $V_{g,(2)^n}$ (resp. $V_{J,g,(2)^n}$) as $V_{g,n}$ (resp. $V_{J,g,n}$) in this section.

The case $p=5$ was a recurring exception in many of the arguments of \Cref{sec:simplicity,sec:densityProof,sec:caracFinie}.
However, different techniques may be used in this case, thanks to the fact that the images
of Dehn twists by $\rho_{5,g,n}$ have only two eigenvalues.

In \cite{FLW}, it was proved that the representations $\rho_{5,n}$ have dense images in $\mathrm{PSU}(V_{g,n})$ for any surface $\Sigma_{g,n}$
with $(g,n)\neq (1,0)$.

In this section, we will show that for any maximal ideal $J\neq (\zeta_p-1)$, the representations $\rho_{5,J,n}$ are surjective onto $\PSU(V_{J,g,n})$
or $\PSL(V_{J,g,n})$, depending on whether $\overline{J}=J$ or not.

\subsection{Small surfaces cases}
\label{sec:4holedSphere}

In this section, we treat the case of the $4$-holed sphere. This will be the base case of an induction proof for general surfaces in the next section.
In fact, since this case is not covered by \Cref{thm:density,thm:surjectivity} even when $p\geq 7$, we will work more generally
with any prime $p\geq 5$ and a $4$-holed sphere with boundary components all colored by $p-3$. Note that $\dim V_{0,(p-3)^4}=2$ for any $p\geq 5$.
We have the following:

\begin{proposition}
	\label{prop:4holedSphere} For any prime $p\geq 5$, the representation $\rho_{0,(p-3)^4}$ of $\PMod(\Sigma_{0,4})$ has dense image in $\mathrm{PSU}_2$.
	
	Moreover, for any maximal ideal $J\subset \Z[\zeta_p]$ of norm $q$ coprime to $p$, the image of $\rho_{J,0,(p-3)^4}$ is $\mathrm{PSL}_2(\F_q)$ if $q$
	is not a square, and $\mathrm{PSU}_2(\F_q)$ if $q$ is a square.
\end{proposition}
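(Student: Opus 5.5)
We would make everything explicit, since $V_{0,(p-3)^4}$ is $2$-dimensional. Choose the pants decomposition of $\Sigma_{0,4}$ by a curve $\alpha$ separating the punctures into pairs, and a second curve $\beta$ meeting $\alpha$ twice. Since $(p-3,p-3,j)$ is admissible only for $j\in\{0,2\}$, the TQFT basis for $\alpha$ is indexed by $j\in\{0,2\}$. The Dehn twist $T_\alpha$ is diagonal in this basis with eigenvalues (up to scalar) $1$ and $\zeta_p^{6}$, i.e. $(-\zeta_p)^{j(j+2)}$ for $j=0,2$; this ratio is a nontrivial $p$-th root of unity. The same holds for $T_\beta$ in the $\beta$-basis, and the two bases are related by a $6j$-symbol (fusion) matrix $F$. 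Thus the image group is generated by two elements $a=\rho(T_\alpha)$ and $b=F\rho(T_\alpha)F^{-1}$ of order $p$ in $\PGL_2$, each with eigenvalue ratio $\zeta_p^{6}$.

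\textbf{Density.} A closed proper subgroup of $\PSU_2\cong\SO(3)$ is either finite (cyclic, dihedral, $A_4$, $S_4$, $A_5$), or contained in the normalizer of a torus. The order-$p$ elements with $p\ge 7$ exclude $A_4$, $S_4$ and $A_5$. For $p=5$, an element of order $5$ in $A_5\subset\SO(3)$ has rotation angle $2\pi/5$ or $4\pi/5$, while ours comes from ratio $\zeta_5^{6}=\zeta_5$, which can occur; so for $p=5$ we would rule out $A_5$ with an explicit trace computation on $ab$ (e.g.\ showing $\tr(ab)$ is not in the finite list of traces of elements of order $\le 5$, up to scalars). Irreducibility (\Cref{thm:irreducibility}) excludes reducible, i.e.\ torus, images. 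It also excludes infinite subgroups of the torus normalizer. Indeed, both $a$ and $b$ have odd order $p$, so they lie in the identity component $N^\circ=T$, and would then commute; this is impossible because $F$ has all entries nonzero, since all $6j$-symbols with admissible colors are nonzero. So the closure is $\PSU_2$.

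\textbf{The mod $J$ statement.} We would use Dickson's classification of subgroups of $\PSL_2(\overline{\F}_{q'})$. The image $H$ is generated by two non-commuting elements of order $p$ coprime to $q'$ and is absolutely irreducible by \Cref{thm:irreducibility}. Hence it is not contained in a Borel subgroup. It is also not in a dihedral group, by the same odd-order argument as above. When $p\ge7$ it is not $A_4$, $S_4$ or $A_5$; for $p=5$ we would exclude $A_5$ by the explicit trace of $ab$ modulo $J$. Therefore $H$ is $\PSL_2(\F_{r})$ or $\PGL_2(\F_r)$ for a subfield $\F_r\subseteq\F_q$, up to conjugation. Since $H$ is generated by elements of odd order it is perfect (for $p\ge7$ or in general), giving $\PSL_2(\F_r)$. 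To get $\F_r=\F_q$, we note that the trace field of the image contains the projective-eigenvalue invariant $\tr(a)^2/\det(a)=\zeta_p^{6}+\zeta_p^{-6}+2$, which generates $\F_p(\zeta_p+\zeta_p^{-1})$ modulo $J$. For the unitary case we would also consider a product like $\tr(ab)^2/\det(ab)$, involving $F$, to get a generator of the full field $\F_q$ when $q$ is not a square; this is analogous to \Cref{lemma:fieldOfDefinition}. Finally, when $q$ is a square, $J=\overline{J}$ and the invariant Hermitian form gives $H\subseteq\PSU_2(\F_q)\cong\PSL_2(\F_{\sqrt q})$, and the trace field is exactly $\F_{\sqrt q}$.

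\textbf{Main obstacle.} The hard step is this field-of-definition computation: we need explicit $6j$-symbols for the colors $(p-3)^4$ with internal colors $0,2$, so that the invariant field of $\langle a,b\rangle$ can be shown to be all of $\F_q$ (resp.\ $\F_{\sqrt q}$), together with the special exclusion of $A_5$ at $p=5$. We would first compute $\tr(ab)$ in closed form as a Laurent polynomial in $\zeta_p$ and check that it generates the required field.
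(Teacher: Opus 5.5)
Your framework is the paper's. The image is generated by the two twist images: the paper's $A=\rho(t_c)$ and $B=\rho(t_c^{-1}t_d)$ generate the same group as your $a,b$. One then runs through the closed subgroups of $\PSU_2$ and through Dickson's list.

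Your individual exclusions are more conceptual than the paper's explicit matrix checks:
\begin{itemize}
\item The observation that odd-order elements of $N(T)$ lie in $T$ replaces the paper's check that $A,B$ neither commute nor satisfy $ABA^{-1}=B^{-1}$.
\item Absolute irreducibility (\Cref{thm:irreducibility}) replaces the paper's check that an off-diagonal entry of $\overline{B}$ is non-zero.
\item In characteristic $0$, the paper disposes of all finite subgroups, uniformly in $p$, via Masbaum's result that $B$ has infinite order. Your order-$p$ argument instead leaves $p=5$ to a computation.
\end{itemize}

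The step you call the main obstacle is not one. If $q=q_0^k$ is not a square, then $k$, the order of $q_0$ mod $p$, is odd. So $-1\notin\langle q_0\rangle\subset(\Z/p\Z)^\times$, and no non-trivial power of Frobenius fixes $\bar\zeta+\bar\zeta^{-1}$. Hence $\tr(a)^2/\det(a)$ alone already generates $\F_q$ over the prime field $\F_{q_0}$ (not ``$\F_p$'': the characteristic is $q_0\neq p$). Conversely, $\tr(ab)^2/\det(ab)$ could never help: by unitarity all such invariants are real, i.e.\ they lie in $\Q(\zeta_p+\zeta_p^{-1})$. The paper encodes the same Galois fact via Lagrange's theorem. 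Namely, $p$ divides $|H|$ but not $|\PGL_2(\F_{q'})|=q'(q'^2-1)$ for any proper subfield $\F_{q'}$ of $\F_q$ (resp.\ of $\F_{\sqrt q}$, after identifying $\PSU_2(\F_q)\cong\PSL_2(\F_{\sqrt q})$). So no trace computation is needed at all.

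For $p=5$ your plan needs adjusting in three places.
\begin{itemize}
\item For $J\mid(2)$ one has $\F_q=\F_{16}$, and the target $\PSU_2(\F_{16})\cong\PSL_2(\F_4)$ is itself $A_5$. Excluding $A_5$ via a trace of $ab$ is impossible there; that case closes only by observing that the image is the whole group.
\item For $J\mid(3)$ the target is $\PSL_2(\F_9)\cong A_6\supset A_5$, so you need an element of order $4$ or a direct check.
\item A single trace test separates $\tr(ab)^2/\det(ab)$ from the finitely many values attached to orders $1,2,3,5$ only away from the primes dividing the norms of the differences. Those primes need separate treatment.
\end{itemize}
This is exactly what the paper does by machine: it computes norms of coefficients of powers of $B$ and of $AB^2$, then checks $J\mid 2,3$ directly.

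Two minor points. Being generated by odd-order elements does not make $H$ perfect. It does, however, rule out $\PGL_2(\F_r)$ for odd $r$, where $\PSL_2(\F_r)$ has index $2$. Also, with the convention $(-\zeta_p)^{j(j+2)}$ the eigenvalue ratio is $\zeta_p^{8}$, not $\zeta_p^{6}$; this is harmless.
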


We will need the following theorem, which describes the structure of (closed) subgroups of $\mathrm{PSU}_2\simeq \mathrm{SO}_3$ and $\mathrm{PSL}_2(\F_q)$.
Note that $\mathrm{PGL}_2(\F_{\sqrt{q}})$ may be viewed as a subgroup of $\mathrm{PSL}_2(\F_q)$ when $q$ is an odd square.
This follows from the fact that any element of $\F_{\sqrt{q}}^{\times}$ is a square in $\F_q^{\times}$.

\begin{theorem}
	\label{thm:subgroupPSL2}\begin{enumerate}
		\item Any proper closed subgroup of $\mathrm{PSU}_2$ is either conjugate into the normalizer of a maximal torus, or isomorphic to $A_4, S_4$
		or $A_5$.
		\item \cite{Dickson} Any proper subgroup of $\mathrm{PSL}_2(\F_q)$ with $q\geq 5$ odd is either conjugate into the normalizer of a maximal torus,
		into $\mathrm{PSL}_2(\F_{q'})$ with $q'|q$, $q'\neq q$, into $\PGL_2(\F_{\sqrt{q}})$ if $q$ is a square, into a Borel subgroup, or is isomorphic
		to $A_4,S_4$ or $A_5$.
		\item \cite{Dickson} Any proper subgroup of $\mathrm{PSL}_2(\F_{2^l})$ with $l\geq 2$ is either conjugate into the normalizer of a maximal torus,
		into $\mathrm{PSL}_2(\F_{2^k})$ with $k<l$, into a Borel subgroup or is isomorphic to $A_4$, $S_4$ or $A_5$.
		\end{enumerate}
\end{theorem}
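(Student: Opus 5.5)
Parts (2) and (3) are classical results of Dickson, so for them I only plan to translate a standard form of Dickson's list (for instance Huppert, \emph{Endliche Gruppen I}, Hauptsatz II.8.27) into the formulation of the statement; I would give a genuine argument only for part (1). For (1), let $H\subset \PSU_2\simeq \SO_3$ be a proper closed subgroup and let $H^\circ$ be its identity component, a connected closed subgroup of $\SO_3$. Since $\mathfrak{so}_3$ has no $2$-dimensional subalgebra, $H^\circ$ is trivial, a one-dimensional torus $T$, or all of $\SO_3$, and the last option is excluded because $H$ is proper. If $H^\circ=T$, then $H$ normalizes $H^\circ$ because $H^\circ$ is normal in $H$, so $H\subset N(T)$, which is the normalizer of a maximal torus.

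If $H^\circ$ is trivial, then $H$ is a finite subgroup of $\SO_3$, and I would run Klein's orbit-counting argument on $S^2$. Every non-trivial element of $H$ is a rotation with exactly two fixed points (poles), and counting pairs (non-trivial element, pole) gives $2(|H|-1)=\sum_{\text{orbits}}|H|\,(1-1/e_i)$, where $e_i$ is the stabilizer order on the $i$-th orbit. The only solutions are two orbits, giving a cyclic group, or three orbits with $(e_i)=(2,2,n)$, $(2,3,3)$, $(2,3,4)$ or $(2,3,5)$, giving dihedral groups, $A_4$, $S_4$ and $A_5$. Cyclic and dihedral rotation groups preserve an axis, so they lie in the normalizer of the torus of rotations about that axis. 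This finishes (1).

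For (2), let $q$ be odd. Dickson's theorem says that a subgroup of $\PSL_2(\F_q)$ is one of the following: cyclic or dihedral of order dividing $q\pm1$; contained in a group with normal Sylow subgroup in the defining characteristic (an elementary abelian group extended by a cyclic group); $A_4$, $S_4$ or $A_5$; or $\PSL_2(\F_{q'})$ or $\PGL_2(\F_{q'})$ with $\F_{q'}\subseteq\F_q$, the latter only when $q'^2\mid q$. I would match these families with the statement as follows.
\begin{enumerate}
\item Cyclic subgroups of order prime to the characteristic lie in a split or non-split torus, and dihedral ones lie in its normalizer. This includes the Klein four-group, which lies in the normalizer of the torus through any of its involutions.
\item Subgroups of order divisible by the characteristic with a normal Sylow subgroup fix a point of $\mathbb{P}^1(\F_q)$, hence lie in a Borel subgroup.
\item $\PGL_2(\F_{q'})$ with $q'^2\mid q$ lies in $\PSL_2(\F_{q'^2})$, which is a subfield subgroup unless $q'^2=q$; that remaining case is exactly the $\PGL_2(\F_{\sqrt q})$ clause.
\end{enumerate}
The embedding of $\PGL_2(\F_{\sqrt q})$ into $\PSL_2(\F_q)$ was already justified just before the statement. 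For (3), with $q=2^l$ we have $\PGL_2=\PSL_2$ and all odd-order abelian subgroups are toral, so the same translation yields the shorter list.

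I expect the main obstacle to be this bookkeeping rather than any conceptual step. The points to check carefully are: that every $2$-subgroup (for instance $V_4$ in odd characteristic) really lands in the normalizer of a torus or in a Borel subgroup; that $\PGL_2(\F_{q'})$ for smaller subfields is absorbed by the subfield case; and that the small exceptional values of $q$ (such as $q=5$ and $q=9$, where $\PSL_2(\F_q)$ is itself isomorphic to $A_5$ or $A_6$) are handled, since there the groups $A_4$, $S_4$, $A_5$ may coincide with listed subgroups or with the whole group.
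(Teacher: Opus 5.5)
Your proposal is correct and follows the paper's route. For (2) and (3) you quote Dickson, which is all the paper does (it cites Dickson and Bray--Holt--Roney-Dougal without any translation). For (1) you argue as the paper does: $\mathfrak{so}_3$ has no $2$-dimensional subalgebra, so the identity component is trivial or a maximal torus. The only difference is that you prove the classification of finite subgroups of $\SO_3$ by Klein's pole-counting, where the paper calls it well known.

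One caution on your translation. Step (c), and the subfield case generally, assumes that an abstract subgroup isomorphic to $\PSL_2(\F_{q'})$ or $\PGL_2(\F_{q'})$ is conjugate to the standard copy. So cite a form of Dickson's theorem that includes this conjugacy statement, as the maximal-subgroup tables in Bray--Holt--Roney-Dougal do. Also, ``$q'^2\mid q$'' should read $\F_{q'^2}\subseteq\F_q$.
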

		\begin{proof}
			Items (2) and (3) are part of Dickson's theorem \cite{Dickson} that classified the subgroups of $\mathrm{PSL}_2(\F_q)$.
			See also \cite{BHR} for a modern reference.
			As for (1), let $G$ be a proper closed subgroup of $\mathrm{PSU}_2$. If $G$ is finite, it is well known that $G$ is either abelian,
			dihedral or isomorphic to $A_4$, $S_4$ or $A_5$. If not, then the Lie algebra of $G$ must be a proper non-trivial subalgebra of
			$\mathfrak{su}_2$, hence it must have dimension $1$.
			Therefore, the connected component of the identity element in $G$ must be a maximal torus of $\mathrm{PSU}_2$, and $G$ must belong
			to its normalizer.
		\end{proof}

Before proceeding with the proof of Proposition \ref{prop:4holedSphere}, we make one final remark: the group $\mathrm{PSU}_2(\F_{q^2})$ is
isomorphic to $\mathrm{PSL}_2(\F_q)$ for any prime power $q$; see for example \cite[Lemma 3.1.1(ii)]{BHR}. Hence, Theorem \ref{thm:subgroupPSL2}
also informs us about the structure of subgroups of $\mathrm{PSU}_2(\F_{q^2})$.
		\begin{proof}[Proof of Proposition \ref{prop:4holedSphere}] By Masbaum \cite{M99}, the following two elements of $\mathrm{PSL}_2(\Z[\zeta_p])$
			are in the image of $\rho_{0,(p-3)^4}$:
			$$A=\begin{pmatrix}
				1 & 0 \\ 0 & \zeta^4
			\end{pmatrix} \ \textrm{and} \ B=\begin{pmatrix}
			\zeta^4 + \delta^{-2}(1-\zeta^4) & \delta^{-1}(1-\delta^{-2})(1-\zeta^4)
			\\ \delta^{-1}(\zeta^{-4}-1) & \zeta^{-4}+\delta^{-2}(1-\zeta^{-4})
			\end{pmatrix}$$
		
		In the above, we wrote $\zeta$ as shorthand for $\zeta_p$, which is a primitive $p$-th root of unity, and we set $\delta=-\zeta-\zeta^{-1}$.
		Note that $\delta$ is a unit in $\Z[\zeta_p]$. The elements $A$ and $B$ can be written as
		$\rho_{0,(p-3)^4}(t_c)$ and $\rho_{0,(p-3)^4}(t_c^{-1}t_d)$ where $c$ and $d$ are two separating simple closed curves
		intersecting geometrically twice, and $t_c,t_d$ are the associated Dehn twists.
		Furthermore, it is shown in \cite{M99} that $B$ has infinite order. If the closure of the subgroup $\langle A,B \rangle$
		were conjugate into the normalizer of a maximal torus, then this maximal torus would be the centralizer of $B$,
		and either $A$ commutes with $B$ or $ABA^{-1}=B^{-1}$. It is straightforward to check that this is not the case.
		
		Now, let $J$ be a maximal ideal of $\Z[\zeta_p]$ with norm $q$ coprime to $p$. Let $\overline{A}$ and
		$\overline{B}$ be the reductions mod $J$ of $A$ and $B$, and let $G=\langle \overline{A},\overline{B} \rangle$.
		Then $\overline{A}$ has order $p$ since $\zeta^4\neq 1\mod J$. Therefore, for $G$ to be conjugate into a Borel subgroup,
		one would need this Borel subgroup to be the stabilizer of one of the eigenlines of $\overline{A}$,
		and hence the upper right or upper left coefficient of $\overline{B}$ should vanish; this is not the case.
		Similarly, if $G$ were conjugate into the normalizer of a maximal torus, it would be a subgroup of a dihedral group.
		However, $\overline{A}\overline{B}$ does not commute with $\overline{A}$ (since $\overline{B}$ does not)
		and these elements both have order $p$ as they are images of Dehn twists, so $G$ is not a subgroup of a dihedral group.

		Next we show that $G$ is not included (up to conjugation) in $\mathrm{PGL}_2(\F_{q'})$, and hence
		also not included in $\mathrm{PSL}_2(\F_{q'})$, for any proper divisor $q'$ of $q$ if $q$ is not a square.
		Let $q=q_0^{2k+1}$ where $q_0$ is a prime. Note that $G$ contains an element of order $p$ (namely $\overline{A}$).
		However, since $2k+1$ is the order of $q_0$ mod $p$, we get that $p$ does not divide $|\mathrm{PGL}_2(\F_{q'})|=q'(q'^2-1)$.
		
		Similarly, if $q=q_0^{2k}$, we know that $\rho_{0,(p-3)^4}$ takes values in $\mathrm{PSU}_2(\F_q)\simeq \mathrm{PSL}_2(\F_{\sqrt{q}})$.
		Again we have that $p$ does not divide the order of $\mathrm{PGL}_2(\F_{q'})$ for any strict divisor $q'$ of $\sqrt{q}$.
		
		Finally, we check whether $G$ can be $A_4$, $S_4$ or $A_5$. It suffices to check that $G$ has an element of order $>5$.
		If $p\geq 7$, then the order of $\overline{A}$ is $p>5$. If $p=5$, a computation using SageMath shows that the norm of the
		lower left coefficient of $B^i$ is not divisible by any prime other than $2,3,5$ or $31$, for $1\leq i \leq 5$.
		Hence, $\overline{B}$ has order at least $6$ if the norm of $J$ is coprime to $2,3,5$ and $31$.
		A similar computation using the powers of $C=AB^2$ eliminates the case where the norm of $J$ is $31$.
		Finally, if $J$ divides $(2)$ (resp. $J$ divides $(3)$), then $\rho_{J,0,(2)^4}$
		takes values in $\mathrm{PSU}_2(\F_{16})\simeq \mathrm{PSL}_2(\F_4)\simeq A_5$
		(resp. $\mathrm{PSU}_2(\F_{81})\simeq \mathrm{PSL}_2(\F_9)\simeq A_6$),
		and a SageMath computation shows that $\rho_{J,0,(2)^4}$ is surjective in both cases.
\end{proof}

\subsection{Induction on the genus and number of boundary components}
\label{sec:induction}

In this section, we restrict to $p=5$. Therefore, we will write $V_{J,g,n}$ and $\rho_{J,g,n}$ for $V_{J,g,(2)^n}$ and $\rho_{J,g,(2)^n}$ in this section.
We prove the following:

\begin{theorem}
	\label{thm:casep=5} For any compact oriented surface $\Sigma_{g,n}$ with $(g,n)\neq (1,0)$, and for any maximal ideal
	$J\neq (1-\zeta_5)$ of $\Z[\zeta_5]$,
	$\rho_{J,g,n}$ is surjective onto $\mathrm{PSL}(V_{J,g,n})$ or $\mathrm{PSU}(V_{J,g,n})$,
	depending on whether $\overline{J}\neq J$ or $\overline{J}=J$ respectively.
\end{theorem}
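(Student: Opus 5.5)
We argue by induction on the complexity $3g-3+n$ of $\Sigma_{g,n}$, using \Cref{prop:4holedSphere} (at $p=5$, colors $(2)^4$) as the base case. We also need the small genus-one cases $\Sigma_{1,1}$ and $\Sigma_{1,2}$. Since $\dim V_{1,(2)}=1$ by \Cref{lemma:dimTori}, the space $V_{1,1}$ is trivial and needs no argument. For $\Sigma_{1,2}$ we would treat the two-dimensional representation directly, in the same way as $\Sigma_{0,4}$. We exhibit two Dehn twists whose images generate a subgroup of $\PSL_2$. We then run through Dickson's list (\Cref{thm:subgroupPSL2}). Elements of order $5$ rule out subfields whose groups have order prime to $5$. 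Non-commutation rules out Borel and dihedral subgroups. Finally, a SageMath check of element orders rules out $A_4,S_4,A_5$, except for the finitely many small primes $2,3,31$, which are handled by direct computation.

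The inductive step will use the key feature of $p=5$: every Dehn twist acts with exactly two eigenvalues. Its images are therefore pseudo-reflection-like elements, in the sense that they are of order $5$ with a two-eigenvalue spectrum. Let $\Sigma_{g,n}$ have larger complexity. We choose a simple closed curve $\gamma$ that cuts off, or is contained in, a smaller surface $\Sigma'$. The fusion rule along $\gamma$ then gives
\[ V_{J,g,n}=U_0\oplus U_2, \]
where $U_0$ and $U_2$ are the summands with colors $0$ and $2$ along $\gamma$. Each $U_\mu$ is a tensor product of smaller TQFT spaces on which the stabilizer of $\gamma$ acts. Where one factor is one-dimensional or trivial, we choose $\gamma$ so that both $U_\mu$ are TQFT spaces of strictly smaller surfaces of the allowed type; for instance, we cut off a pair of pants containing two marked points, or a non-separating curve. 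By induction, the closure of the image of the stabilizer then contains $\SL(U_0)\times\SL(U_2)$ (or the corresponding $\SU$), at least up to the center. We lift to the linearization $\widetilde\rho$, and pass from $\PSL(U_0)\times\PSL(U_2)$ to the product of $\SL$'s using perfectness, as in the proof of \Cref{cor:density}.

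The group $G=\rho_{J,g,n}(\PMod)$ is then irreducible by \Cref{thm:irreducibility}. It contains the block subgroup $\SL(U_0)\times\SL(U_2)$ of a non-trivial decomposition, and it contains elements of order $5$ with two eigenvalues. We would finish with a generation result for $\SL_N(\F_q)$: an irreducible subgroup containing $\SL(U_0)\times \SL(U_2)$ for $V=U_0\oplus U_2$ with both parts nonzero is $\SL(V)$, or $\SU(V)$ if it preserves a Hermitian form. An elementary route goes through transvection-type arguments. Alternatively, we can use classification results for subgroups of $\SL_N(\F_q)$ containing a large Levi-type subgroup, of the kind found in \cite{Kup11} or in standard work on overgroups of $\SL_a\times\SL_b$. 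The remaining issue is the field of definition, which \Cref{lemma:fieldOfDefinition} handles, or which can be read off directly from the twist eigenvalues $\zeta_5^{k^2}$. We then conclude $\PSL$ when $\overline{J}\ne J$ and $\PSU$ when $\overline{J}=J$, since the Hermitian form descends exactly in the latter case.

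The main obstacle is the overgroup step. A subgroup containing $\SL(U_0)\times\SL(U_2)$ and acting irreducibly need not be all of $\SL(V)$ in general: it could be, for example, a symplectic or orthogonal group containing such a Levi, or, when $\dim U_0=\dim U_2$, the stabilizer of a tensor or imprimitive structure. We would exclude these as follows. Non-self-duality, argued as in \Cref{prop:notSelfDualModJ}, excludes symplectic and orthogonal groups. When $\overline{J}=J$, the invariant form forces $\SU$. To exclude the stabilizer of $U_0\cup U_2$ (the imprimitive case), we use a Dehn twist along a curve meeting $\gamma$; its image mixes $U_0$ and $U_2$, and it has order $5$, so it cannot swap two blocks in the manner excluded by \Cref{lemma:spectrumAdditiveDecomp}. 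The small cases where $\dim U_\mu\le 2$ need separate care, possibly via the explicit $2\times 2$ matrices of \Cref{prop:4holedSphere}.
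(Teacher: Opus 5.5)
There is a genuine gap. Your inductive step cuts along a \emph{single} curve $\gamma$ and then rests on an overgroup statement: an irreducible subgroup of $\SL(V)$ containing $\SL(U_0)\times\SL(U_2)$ is $\SL(V)$ (or $\SU(V)$). That statement is the whole difficulty, and it is not proved.

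Your exclusions (non-self-duality, the Hermitian form, the field of definition, block swaps via \Cref{lemma:spectrumAdditiveDecomp}) only deal with classical, subfield and imprimitive overgroups. Extension-field, tensor-type and almost simple irreducible subgroups are left open. \cite{Kup11} does not fill this: the paper uses it only for Hall's diagonal lemma, which turns surjections onto non-isomorphic factors into a surjection onto the product. It is not an overgroup theorem over $\F_q$. Closing the gap along your route would need something like McLaughlin's classification of irreducible linear groups generated by transvections, plus a unitary analogue. You also skip Hall's lemma itself: induction gives surjection onto each $\mathbf{PG}(U_\mu)$ separately, and the product only follows because the two dimensions differ.

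The paper avoids the overgroup problem by cutting along a \emph{second} disjoint curve $\gamma'$. The colours on $(\gamma,\gamma')$ split $V=U_1\oplus U_2\oplus U_3\oplus U_4$. The stabilizers of $\gamma$ and $\gamma'$ then give both $\mathbf{G}(U_1\oplus U_2)\times\mathbf{G}(U_3\oplus U_4)$ and $\mathbf{G}(U_1\oplus U_3)\times\mathbf{G}(U_2\oplus U_4)$. Two overlapping block subgroups generate everything by the elementary \Cref{lemma:inductionLevel5}: elementary transvections for $\SL$, and maximality of the stabilizer of a non-degenerate subspace for $\SU$. No irreducibility and no classification are needed at that step.

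Two low-complexity cases also go wrong in your plan. Write $V_{g,n}=V_{g,(2)^n}$ as in the $p=5$ section.
\begin{itemize}
\item $\Sigma_{1,2}$ is not a $\PSL_2$ problem. Indeed $\dim V_{1,2}=\dim V_{0,2}+\dim V_{0,4}=3$, since the genus-$0$ dimensions are Fibonacci: $1,0,1,1,2,3,5,\dots$. So Dickson's list does not apply. In the two-curve framework, take two disjoint non-separating curves cutting $\Sigma_{1,2}$ into two pairs of pants, each containing one marked point. This gives three lines and two overlapping copies of $\SL_2$ coming from \Cref{prop:4holedSphere}, hence $\SL_3$ by \Cref{lemma:inductionLevel5}. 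A curve encircling both marked points would split off $V_{1,0}$ and is useless here.
\item Your claim that one can always choose $\gamma$ with both sides ``allowed'' fails for $\Sigma_{2,0}$. A non-separating cut gives $V_{1,0}\oplus V_{1,2}$, of dimensions $2+3$, and $\rho_{J,1,0}$ is exactly the excluded non-surjective torus case. A separating cut gives $V_{1,0}\boxtimes V_{1,0}\oplus V_{1,1}\boxtimes V_{1,1}$. From one curve you essentially only get $\SL_3$ on a $3$-dimensional block. The paper combines two non-separating curves to obtain $\SL_4\times\mathrm{id}$ inside $\SL_5$. It then uses irreducibility together with the maximal-subgroup tables of \cite{BHR} in \Cref{lemma:inductionLevel5genus2}. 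This is the one place where an overgroup argument is genuinely required.
\end{itemize}
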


\begin{remark}
	Applying \Cref{lemma:surjectivityProduct,lemma:Frattini}, we see that when
	$g\geq 3$, the result of \Cref{thm:casep=5} also applies to any non-zero ideal $J$.
	See \Cref{thm:surjectivityImproved} for how to properly state the result in that case.
\end{remark}

We will need to use the following two Lemmas.

\begin{lemma}\label{lemma:inductionLevel5}
Let $\mathbb{F}_q$ be a finite field and let $V_1$, $V_2$, $V_3$ be three non-zero vector spaces over $\mathbb{F}_q$.
Let $G \subseteq \SL(V_1 \oplus V_2 \oplus V_3)$ be a subgroup containing $\SL(V_1) \times \SL(V_2 \oplus V_3)$ and $\SL(V_1 \oplus V_2) \times \SL(V_3)$.
Then $G = \SL(V_1 \oplus V_2 \oplus V_3)$. When $q$ is an even power of a prime, the same result holds replacing $\SL$ by $\SU$.
\end{lemma}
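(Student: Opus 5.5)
The plan is to show that $G$ contains all transvections of $V=V_1\oplus V_2\oplus V_3$ (unitary transvections in the $\SU$ case). Transvections generate $\SL(V)$, and unitary transvections generate $\SU(V)$ apart from a few tiny exceptional cases, which we treat separately. Write $n_i=\dim V_i$.

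\textbf{Step 1 (root subgroups).} Fix a basis of $V$ adapted to the decomposition, so $e_1,\dots,e_{n_1}$ span $V_1$, the next $n_2$ vectors span $V_2$, and so on. The subgroup $\SL(V_1\oplus V_2)\times\SL(V_3)$ contains the elementary matrices $E_{ab}(t)=I+tE_{ab}$ with $a\neq b$ and both indices in $V_1\cup V_2$. Similarly, $\SL(V_1)\times\SL(V_2\oplus V_3)$ contains the $E_{ab}(t)$ with both indices in $V_2\cup V_3$. The only elementary matrices still missing are those with one index in $V_1$ and the other in $V_3$.

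\textbf{Step 2 (commutators).} Since $V_2\neq 0$, pick an index $c$ in $V_2$. The Chevalley commutator relation gives $[E_{ac}(t),E_{cb}(1)]=E_{ab}(t)$ for distinct $a,b,c$. Taking $a\in V_1$ and $b\in V_3$, both factors lie in $G$ by Step 1, so $E_{ab}(t)\in G$. The same argument handles $a\in V_3$ and $b\in V_1$. Hence $G$ contains every elementary matrix, and therefore $G=\SL(V)$. This part is straightforward.

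\textbf{Step 3 (unitary case).} Now suppose $q$ is an even power of a prime and the $V_i$ are mutually orthogonal nondegenerate hermitian subspaces, as they are in the fusion setting. Direct elementary matrices are unavailable, so we use a generation argument instead. Let $H=\SU(V_1\oplus V_2)\times\SU(V_3)$ and $K=\SU(V_1)\times\SU(V_2\oplus V_3)$. Our approach is to show that $G$ contains all unitary transvections $x\mapsto x+\lambda h(x,v)v$, where $v$ is isotropic and $\lambda$ satisfies the appropriate trace condition; these generate $\SU(V)$.

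Both $H$ and $K$ contain the transvections centered at isotropic vectors lying in $V_1\oplus V_2$ or in $V_2\oplus V_3$. Given an arbitrary isotropic $v=v_1+v_2+v_3$, we want an element of $\langle H,K\rangle$ that moves $v$ into $V_1\oplus V_2$. To do this, use $K$ to replace the component $v_2+v_3$ by a vector of $V_2\oplus V_3$ with the same norm whose $V_3$-component vanishes. Witt's theorem allows this whenever $V_2$ represents that norm, and over finite fields every nondegenerate hermitian space of dimension at least $1$ represents every norm in $\F_{\sqrt q}$. The same reasoning applies when the norm is $0$, provided either $\dim V_2\geq 2$ or the vector in question is itself zero.

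\textbf{Main obstacle.} The hardest step is the degenerate small-dimensional cases, for example $\dim V_2=1$ with a zero norm on the $V_2\oplus V_3$ part. There, first apply $H$ to adjust the $V_1\oplus V_2$ component so that the norm of the $V_2\oplus V_3$ part becomes nonzero, and then run the Witt argument above.

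Finally, invoke the standard fact that unitary transvections generate $\SU_n(\F_q)$, together with a small check for tiny dimensions. Alternatively, one can argue that $\langle H,K\rangle$ acts irreducibly and contains a transvection, so McLaughlin-type classification results give $\SU(V)$.
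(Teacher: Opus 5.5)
Your $\SL$ argument is essentially the paper's. The paper observes that, in an adapted basis, $\SL_d(\F_q)$ is generated by the transvections $I_d+E_{i,i+1}$ and $I_d+E_{i+1,i}$. Each of these lies in $\SL(V_1\oplus V_2)$ or in $\SL(V_2\oplus V_3)$ because $V_2\neq 0$. Your commutator $[E_{ac}(t),E_{cb}(1)]=E_{ab}(t)$ through an index $c$ of $V_2$ is the same mechanism made explicit.

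For $\SU$ you take a genuinely different route. The paper works as follows:
\begin{itemize}
\item it swaps $V_1$ and $V_3$ if needed, so that $\dim V_1\neq\dim V_2+\dim V_3$;
\item it shows that $G$ contains $\mathrm{Stab}_{V_1}=(\mathrm{U}(V_1)\times\mathrm{U}(V_2\oplus V_3))\cap\SU(V)$, which is generated by $\SU(V_1)\times\SU(V_2\oplus V_3)$ together with the determinant-balancing matrices $P_{1,\alpha}\oplus P_{2,\alpha^{-1}}\oplus\mathrm{id}_{V_3}\in\SU(V_1\oplus V_2)\times\SU(V_3)$;
\item it concludes from the maximality of this $\mathcal{C}_1$-subgroup, read off from the Kleidman--Liebeck and Bray--Holt--Roney-Dougal tables.
\end{itemize}
You replace the maximal-subgroup classification by Witt's theorem plus generation by unitary transvections. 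This is more elementary and self-contained, but it needs a case analysis that you only sketch. Two details must be written out.
\begin{itemize}
\item Witt's theorem gives an element of $\mathrm{U}(V_2\oplus V_3)$, not of $\SU(V_2\oplus V_3)$. You must correct its determinant inside the stabiliser of the vector. This is possible for anisotropic vectors in dimension $\geq 2$ and for isotropic ones in dimension $\geq 3$, which covers every move you actually make.
\item When $\dim V_2=1$ and $v=v_3\in V_3$, your ``first apply $H$'' does nothing, since $H$ preserves $V_3$. You must first use $K$ to create a non-zero $V_2$-component, then $H$ to push the now anisotropic $V_1\oplus V_2$-part into $V_1$, then $K$ again.
\end{itemize}

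The real problem is your closing ``small check for tiny dimensions.'' For $\dim V\geq 2$, the only failure of generation by unitary transvections is $\SU_3$ over $\F_4$. There the lemma itself is false, so no check can succeed.

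Take $q=4$ and $n_1=n_2=n_3=1$. Then $\SU(V_1\oplus V_2)\cong\SU(V_2\oplus V_3)\cong\SL_2(\F_2)$, and both are generated by unitary transvections. On the other hand, the nine unitary transvections of $\SU(V)\cong 3^{1+2}{:}Q_8$ (of order $216$) generate only the subgroup $3^{1+2}{:}2$, which has index $4$. Hence $\langle H,K\rangle$ is proper. The unitary part of the statement therefore needs $q\neq 4$ (or $\dim V\geq 4$).

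The paper's proof has the same blind spot: there $\mathrm{Stab}_{V_1}$, of order $18$, lies inside that index-$4$ subgroup, so it is not maximal. This is harmless where the lemma is used in \Cref{thm:casep=5}. There $\zeta_5\in\F_q$ forces $5\mid q-1$, hence $q\geq 16$ in the unitary case. With that hypothesis added and the two details above filled in, your argument goes through.
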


\begin{proof}
Let us first prove the case of $\SL$.
Choose a basis $\mathbb{F}_q^d \simeq V_1 \oplus V_2 \oplus V_3$ formed by concatenating bases of $V_1, V_2$ and $V_3$.
Then $\SL(\mathbb{F}_q^d)$ is generated by the standard transvections $I_d + E_{i, i+1}$ and $I_d + E_{i+1, i}$, $1 \le i \le d-1$
\cite{humphriesGenerationSpecialLinear1986}.
But each such transvection is either in $\SL(V_1 \oplus V_2)$ or $\SL(V_2 \oplus V_3)$, hence the result.

Let us now turn to the case of $\SU$.
If necessary, we may change $(V_1,V_2,V_3)$ to $(V_3,V_2,V_1)$ without loss of generality to ensure that $\dim V_1\neq \dim V_2+\dim V_3$.
Then $\operatorname{Stab}_{V_1} = (\mathrm{U}(V_1) \times \mathrm{U}(V_2 \oplus V_3)) \cap \SU(V_1 \oplus V_2 \oplus V_3)$ is a maximal subgroup of
$\SU(V_1 \oplus V_2 \oplus V_3)$,
see \cite[Table 3.5.B]{KleidmanLiebeck} if the rank is $\ge 13$
and \cite[Tables 8.5,10,20,26,37,46,56,62,72,78]{BHR} if the rank is $< 13$.
Now, for any $\alpha \in \mathbb{F}_{q}^\times$ of norm $1$ in $\mathbb{F}_{\sqrt{q}}$, consider a matrix $P_{1, \alpha}$ in $\mathrm{U}(V_1)$
of determinant $\alpha$ and a matrix $P_{2, \alpha^{-1}}$ in $\mathrm{U}(V_2)$ of determinant $\alpha^{-1}$.
Then $M_\alpha = P_{1, \alpha} \oplus P_{2, \alpha^{-1}} \oplus \operatorname{id}_{V_3}$ is in $\SU(V_1 \oplus V_2) \times \SU(V_3)$,
and the matrices $M_\alpha$ together with $\SU(V_1) \times \SU(V_2 \oplus V_3)$ generate $\operatorname{Stab}_{V_1}$.
So, $\operatorname{Stab}_{V_1}\subseteq G$.
As $\SU(V_1 \oplus V_2) \times \SU(V_3) \not\subseteq \operatorname{Stab}_{V_1}$,
we have $\operatorname{Stab}_{V_1} \subsetneq G$ and hence $G = \SU(V_1 \oplus V_2 \oplus V_3)$ by maximality of $\operatorname{Stab}_{V_1}$.
\end{proof}

\begin{lemma}\label{lemma:inductionLevel5genus2}
Let $\mathbb{F}_q$ be a finite field and let $G \subseteq \SL_5(\mathbb{F}_q)$ be a subgroup acting irreducibly on $\mathbb{F}_q^5$
and containing $\SL_4(\mathbb{F}_q) \times \operatorname{id}_{\mathbb{F}_q}$. Then $G = \SL_5(\mathbb{F}_q)$.
When $q$ is an even power of a prime, the same holds replacing $\SL$ by $\SU$.
\end{lemma}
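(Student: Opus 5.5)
The plan is to use the maximal-subgroup approach from the $\SU$ case of \Cref{lemma:inductionLevel5}, organized through the stabilizer of the distinguished line. Write $\F_q^5=U\oplus L$ with $U=\F_q^4$ and $L$ the line fixed pointwise by $\SL_4(\F_q)\times\operatorname{id}$. Let $P_L$ be the stabilizer in $\SL_5(\F_q)$ of $L$ and $P_U$ the stabilizer of $U$; these are the two maximal parabolics of type $(4,1)$, i.e.\ the stabilizers of a point and of a hyperplane. The natural approach is to show that $G$ contains more than $\SL_4(\F_q)\times\operatorname{id}$, then produce a subgroup between $\SL_4$ and $\SL_5$ that is maximal, and conclude by irreducibility.

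First I would note that $\SL_4(\F_q)\times\operatorname{id}$ acts on $\F_q^5$ with exactly two invariant subspaces besides $0$ and $\F_q^5$, namely $U$ and $L$. Since $G$ is irreducible, there is some $g\in G$ with $gL\neq L$, and also some $g'$ with $g'U\neq U$. Consider $H=\langle \SL_4(\F_q)\times\operatorname{id},\, g(\SL_4(\F_q)\times\operatorname{id})g^{-1}\rangle$. This group fixes pointwise the intersection $L\cap gL=0$ of fixed spaces, and it contains the transvections of $\SL_4$ acting on $U$ and on $gU$. The key step is the observation that $\SL_5(\F_q)$ is generated by transvections, and that the transvections lying in $\SL(U)\times\operatorname{id}$ are exactly those with center in $U$ and axis containing $L$. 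Conjugating by elements of $G$ moves the pair (center, axis) around. The set of centers reachable in $G$ is a $G$-invariant union of subspaces containing $U$, and it is not contained in $U$ because $g'U\neq U$. Hence its span is all of $\F_q^5$, and likewise the axes do not all contain a common line, because $gL\neq L$.

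I would then invoke the classical fact (McLaughlin's theorem on irreducible groups generated by transvections over a finite field, for $\SL$) that an irreducible subgroup of $\SL_5(\F_q)$ generated by transvections, and containing a full root subgroup $\{I+tE\}$ for all $t\in\F_q$, is $\SL_5(\F_q)$, at least for $q$ not too small. The subgroup $G_0$ of $G$ generated by the $G$-conjugates of $\SL_4\times\operatorname{id}$ is normal in $G$. It is irreducible, since any $G_0$-invariant subspace would have to be $U$-like or $L$-like in every conjugate, and that is excluded. It is generated by full transvection groups, so $G_0=\SL_5(\F_q)$. The small-$q$ exceptions in McLaughlin's list occur only over $\F_2$ and are symplectic, so they are not irreducible under an $\SL_4$ factor in this way.

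For the $\SU$ case I would argue via maximality instead, mirroring \Cref{lemma:inductionLevel5}. Since $\SU_4\times\operatorname{id}$ fixes the nondegenerate line $L=U^\perp$, the subgroup $\langle\SU_4\times\operatorname{id}, \text{diagonal }M_\alpha\rangle$ is the stabilizer $(\mathrm U_4\times\mathrm U_1)\cap\SU_5$. This stabilizer is maximal, by the tables of \cite{BHR} for rank $4$. The obstacle is producing the diagonal elements $M_\alpha$ inside $G$, which is the main difficulty. I would first try to avoid needing them: the normal closure of $\SU_4\times\operatorname{id}$ in $G$ is irreducible, and it contains unitary transvections. I would then apply the unitary analogue of the transvection-generation theorem, Kantor's classification of subgroups generated by long root elements, to conclude that this normal closure is $\SU_5(\F_q)$. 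The small-field exceptions there need a separate check.
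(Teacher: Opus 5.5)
Your argument is correct, but it takes a different route from the paper's. The paper never passes to a normal closure. It reads off from the tables of \cite{BHR} that the only maximal subgroups of $\SL_5(\F_q)$ containing $\SL_4(\F_q)\times\operatorname{id}$ are the stabilizers of $\F_q^4$ and of the complementary line. In the unitary case the only one is $(\mathrm{U}_4\times\mathrm{U}_1)\cap\SU_5$. All of these are reducible, and the two non-geometric maximal subgroups are far too small to contain $\SL_4(\F_q)$ or $\SU_4(\F_q)$, so an irreducible $G$ lies in no maximal subgroup.

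In particular, the obstacle you flagged in the unitary case does not arise. You never need to produce the elements $M_\alpha$ inside $G$; you only need that every maximal overgroup of $\SU_4(\F_q)\times\operatorname{id}$ is reducible or too small.

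Your own route also works. The normal closure $G_0$ is irreducible: $\F_q^4$ and the line are the only proper non-zero invariant subspaces of $\SL_4\times\operatorname{id}$, and neither is $G$-stable. $G_0$ is generated by transvections, so McLaughlin's theorem applies. You do not need Kantor for the unitary case: unitary transvections are transvections of $\SL_5(\F_q)$, so McLaughlin's theorem applies directly to $G_0\subseteq\SU_5(\F_q)$.

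Your description of the exceptions is inaccurate. They are not confined to $\F_2$; for example $\SL_2(5)<\SL_2(9)$, and orthogonal groups occur in characteristic $2$. This is harmless, though. Any group on the list containing $\SL_4(\F_q)$ (resp.\ $\SU_4(\F_q)$) must be $\SL_5(\F_q)$ (resp.\ $\SU_5(\F_q)$) by comparing orders and root-subgroup sizes, and that comparison is all your ``separate check'' amounts to.

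As for what each route buys: the paper's proof is a short table lookup, but it rests on the CFSG-dependent classification of maximal subgroups. As written, it also uses $q\ge 11$ (resp.\ $q\ge 16$) to discard the non-geometric subgroups, which holds in the application. Yours relies on an older theorem that does not need the classification, is uniform in $q$, and explains structurally why the statement holds.
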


\begin{proof}
Maximal subgroups of $\SL_5(\mathbb{F}_q)$ are listed in \cite[Tables 8.18 and 8.19]{BHR}.
The only ones that contain $\SL_4(\mathbb{F}_q) \times \operatorname{id}_{\mathbb{F}_q}$
are the stabilizer of $\mathbb{F}_q^4 \subseteq \mathbb{F}_q^5$
and the stabilizer of $(\mathbb{F}_q^4)^* \subseteq (\mathbb{F}_q^5)^*$ (line $1$ in Table $8.18$ in the reference).
Neither of these acts irreducibly, hence $G$ is not included in either.
So, $G = \SL_5(\mathbb{F}_q)$.
To exclude the possibility that $\SL_4(\mathbb{F}_q)$ is contained in one of the non-geometric maximal subgroups
$\mathbb{Z}/5\mathbb{Z} \times \SL_2(\mathbb{F}_{11})$
and $\mathbb{Z}/5\mathbb{Z} \times \mathrm{U}_4(\mathbb{F}_2)$
in Table 8.19 in the reference,
note that as $q \ge 11$, the cardinality of $|\SL_4(\mathbb{F}_q)| \ge 10^{15}$ far exceeds their cardinality ($6600$ and $388800$ respectively).

The case of $\SU_4(\mathbb{F}_q) \subseteq G \subseteq \SU_5(\mathbb{F}_q)$ is treated similarly using \cite[Tables 8.20 and 8.21]{BHR}.
The only maximal subgroup containing $\SU_4(\mathbb{F}_q)$ is then
$(\mathrm{U}_4(\mathbb{F}_q) \times \mathrm{U}_1(\mathbb{F}_q)) \cap \SU_5(\mathbb{F}_q)$,
which does not act irreducibly on $\mathbb{F}_{q}^5$. The non-geometric maximal subgroups to exclude are the same as for $\SL_5(\mathbb{F}_q)$
and are dealt with by noting that $|\SU_4(\mathbb{F}_q)| \ge 10^{9}$ because $q \ge 16$.
\end{proof}

Using \Cref{prop:4holedSphere,lemma:inductionLevel5,lemma:inductionLevel5genus2}, Theorem \ref{thm:casep=5} can be proved by induction.
\begin{proof}[Proof of Theorem \ref{thm:casep=5}]
In this proof, we will denote $\mathrm{PSL}$ or $\mathrm{PSU}$ by $\mathbf{PG}$ and $\mathrm{SL}$ or $\mathrm{SU}$ by $\mathbf{G}$,
depending on whether $J$ is stable under complex conjugation.
We first consider the case of $\Sigma_{0,n}$, where $n\geq 0$. The cases $n=0,1,2,3$ are obvious since
$\dim V_{J,0,0}=\dim V_{J,0,2}=\dim V_{J,0,3}=1$ and $V_{J,0,1}=0$.
Moreover, the case $n=4$ is covered by Proposition \ref{prop:4holedSphere}.

Assume the proposition is true for all $\Sigma_{0,k}$ with $2\leq k\leq n$.
Pick a simple closed curve $\gamma$ in $\Sigma_{0,n+1}$ that cuts it into a pair of pants and a surface $\Sigma_{0,n-1}^1$.
We get an induced decomposition as $\PMod(\Sigma_{0,n-1}^1)$-representations
$$V_{J,0,{n+1}}\simeq V_{J,0,n}\oplus V_{J,0,{n-1}},$$
since $V_{J,0,(2,2,2)}$ and $V_{J,0,(2,2,0)}$ have dimension $1$.
The restriction of $\rho_{J,0,{n+1}}$ to $\PMod(\Sigma_{0,n-1}^1)$ is surjective on $\mathbf{PG}(V_{J,0,n})$
and $\mathbf{PG}(V_{J,0,{n-1}})$; in fact it surjects onto $\mathbf{PG}(V_{J,0,n})\times \mathbf{PG}(V_{J,0,{n-1}})$
by Hall's diagonal Lemma (see \cite[Lemma 3.5]{Kup11}), since $\dim V_{J,0,n}>\dim V_{J,0,{n-1}}$.
Now, choose another such curve $\gamma'$, disjoint from $\gamma$. Cutting along $\gamma$ and $\gamma'$, we get a decomposition:
$$V_{J,0,{n+1}}\simeq V_{J,0,n-1}\oplus V_{J,0,{n-2}}\oplus V_{J,0,{n-2}}\oplus V_{J,0,n-3}=U_1\oplus U_2\oplus U_3\oplus U_4,$$
with all the $U_i$ non-zero except if $n=4$, in which case $U_4=0$. By the above, we have that
the image of $\rho_{J,0,{n+1}}$ contains the images of both $\mathbf{G}(U_1\oplus U_2)\times\mathbf{G}(U_3\oplus U_4)$
and $\mathbf{G}(U_1\oplus U_3)\times\mathbf{G}(U_2\oplus U_4)$. Applying \Cref{lemma:inductionLevel5} twice
we have that $\rho_{J,0,{n+1}}$ is surjective onto $\mathbf{PG}(V_{J,0,{n+1}})$.

Next we induct on the genus of $\Sigma_{g,n}$. The case $g=1$, $n=1$ is obvious since $\dim V_{J,1,(2)}=1$.
Assume that the proposition is true for $\Sigma_{g,n}$ for all $n\geq 0$ and all $g\leq g_0$.
Pick a non-separating simple closed curve $\gamma$ in $\Sigma_{g_0+1,n}$ for some $n\geq 0$.
The curve $\gamma$ cuts the latter surface into a surface $\Sigma_{g_0,n}^2$. We get a decomposition of $\PMod(\Sigma_{g_0,n}^2)$-representations:
$$V_{J,g_0+1,n}\simeq V_{J,g_0,n}\oplus V_{J,g_0,{n+2}}.$$
As $(g_0,n)\neq (0,0)$ by assumption, $\dim V_{J,g_0,{n+2}}=\dim V_{J,g_0,n}+\dim V_{J,g_0,{n+1}}>\dim V_{J,g_0,n}$
and $\rho_{J,g_0+1,n}$ in restriction to $\PMod(\Sigma_{g_0,n}^2)$
surjects onto $\mathbf{PG}(V_{J,g_0,{n+2}})$ by the induction hypothesis.
If $(g_0,n)\neq (1,0)$, then the restriction also surjects onto $\mathbf{PG}(V_{J,g_0,{n}})$
and hence by Hall's diagonal Lemma, also on $\mathbf{PG}(V_{J,g_0,{n+2}})\times \mathbf{PG}(V_{J,g_0,{n}})$.
If $g_0\geq 1$, choose another non-separating curve $\gamma'$. Cutting along both $\gamma$ and $\gamma'$, we have a decomposition
$$V_{J,g_0+1,{n}}\simeq V_{J,g_0-1,n+4}\oplus V_{J,g_0-1,{n+2}}\oplus V_{J,g_0-1,n+2}\oplus V_{J,g_0-1,n}=U_1\oplus U_2\oplus U_3\oplus U_4,$$
and for $(g_0,n)\neq (1,0)$, the image of $\rho_{J,g_0+1,{n}}$ contains the images of both $\mathbf{G}(U_1\oplus U_2)\times\mathbf{G}(U_3\oplus U_4)$
and $\mathbf{G}(U_1\oplus U_3)\times\mathbf{G}(U_2\oplus U_4)$.
If $g_0=0$, then $n\geq 2$ and choose a simple closed curve $\gamma'$ in $\Sigma_{g_0+1,n}$ disjoint from $\gamma$ encircling $2$ of the marked points.
We then have a decomposition
$$V_{J,1,{n}}\simeq V_{J,0,n+1}\oplus V_{J,0,{n}}\oplus V_{J,0,n-1}\oplus V_{J,0,n-2}=U_1\oplus U_2\oplus U_3\oplus U_4,$$
with each $U_i$ non-zero unless $n=2$ (then $U_3=0$) or $n=3$ (then $U_4=0$),
and the image of $\rho_{J,g_0+1,{n}}$ contains the images of both $\mathbf{G}(U_1\oplus U_2)\times\mathbf{G}(U_3\oplus U_4)$
and $\mathbf{G}(U_1\oplus U_3)\times\mathbf{G}(U_2\oplus U_4)$.
In both cases $g_0=0$ and $g_0\geq 1$, applying \Cref{lemma:inductionLevel5} twice, we have that if $(g_0,n)\neq (1,0)$,
$\rho_{J,g_0+1,{n}}$ is surjective onto $\mathbf{PG}(V_{J,g_0+1,{n}})$.

In the case $(g_0,n)= (1,0)$, we have $\dim V_{J,2,0}=5$, $\dim U_1=2$, $\dim U_2=\dim U_3=\dim U_4=1$, and
the image of $\rho_{J,2,0}$ contains the images of both $\mathbf{G}(U_1\oplus U_2)\times\mathbf{G}(U_3)$
and $\mathbf{G}(U_1\oplus U_3)\times\mathbf{G}(U_2)$.
Applying \Cref{lemma:inductionLevel5}, we see that the image contains the image of $\mathbf{G}(U_1\oplus U_2\oplus U_3)$.
Applying \Cref{thm:irreducibility,lemma:inductionLevel5genus2}, we have that
$\rho_{J,2,0}$ surjects onto $\mathbf{PG}(V_{J,2,0})$.
\end{proof}

\section{Asymptotic faithfulness for specific sequences of boundary colors}
\label{sec:asympFaith}

It is well known that the $\mathrm{SO}(3)$-quantum representations $\rho_{p,g}$ of $\Mod(\Sigma_g)$ are asymptotically faithful;
this result was proved independently by Andersen \cite{A06} and Freedman, Walker, and Wang \cite{FWW02}.
Considering mapping classes with support $\Sigma_g^n \subset \Sigma_{g'}$ for some $g'$, one can readily deduce from asymptotic faithfulness for closed surfaces and fusion rules that the representations
$$\underset{\ul \in \Lambda^n}{\bigoplus}\rho_{p,g,\ul}$$
of $\Mod(\Sigma_{g}^{n})$ are also asymptotically faithful (modulo center). However,
for our purposes we wish to establish a stronger form of asymptotic faithfulness for surfaces with boundary.

Since in this section, the choice of parameter $p$ will vary, we will denote by $\Lambda_p$ the set of colors $\lbrace 0,2\ldots,p-3\rbrace$, and by $i_0(p)$ the permissive color introduced in Lemma \ref{lemma:permColor}.

For $\Sigma_{g}^{n}$ a compact oriented surface with genus $g$ and $n$ boundary components, for $\underline{x}\in [0,1]^n$,
and for $\Gamma$ a trivalent graph dual to some pair of pants decomposition of $\Sigma_{g}^{n}$,
we define a polytope $\Delta(\underline{x})$ as follows. Elements of $\Delta(\underline{x})$ consist of maps $\theta:E\longrightarrow [0,1]$ such that
$$\theta(i)+\theta(j)+\theta(k)\leq 2 \ \textrm{and} \ \theta(i)\leq \theta(j)+\theta(k)$$
if $(i,j,k)$ is a triple of edges adjacent to the same vertex, and $\theta(i)=x_i$ when $i$ is an external edge.

We note that $\Delta(\underline{x})$ may be viewed as a subset of $\R^{3g-3+n}$. One may think of $\Delta(\underline{x})$ as the polytope of "renormalized" $p$-admissible colorings of $\Gamma$, as $\frac{c(p)}{p}\in \Delta(\frac{1}{p}\ul(p))$ if $c(p)$ is a $p$-admissible coloring with boundary colors $\ul(p)$.
\begin{theorem}
	\label{thm:asymptFaith} Let $\Sigma_{g}^{n}$ be a compact oriented surface, and let $(\ul(p))_{p \geq 5, \ p \ \textrm{prime}}$
	be a sequence such that $\ul(p)\in \Lambda_p^n$ and $\frac{1}{p}\ul(p)\longrightarrow \underline{x}$.
	Assume furthermore that $\Delta(\underline{x})$ has non-empty interior. Then
	$$\underset{p\geq 5,\, p \, \textrm{prime}}{\bigcap} \ker \rho_{p,g,\ul(p)}=Z(\Mod(\Sigma_{g}^{n})).$$
\end{theorem}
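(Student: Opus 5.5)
Since central elements of $\Mod(\Sigma_g^n)$ act by scalars on each irreducible $V_{p,g,\ul}$, the inclusion $Z(\Mod(\Sigma_{g}^{n}))\subseteq\ker\rho_{p,g,\ul(p)}$ is immediate. For the converse I would take $f$ in the intersection and show it is central, using the mechanism of Andersen and Freedman--Walker--Wang: if $f$ acts projectively trivially for all $p$, then $f$ fixes every (multi)curve up to isotopy. I would adapt the curve-operator/skein argument of \cite{FWW02} rather than Andersen's geometric quantization, since it is combinatorial and fits the basis of $V_{p,g,\ul}$ given by $p$-admissible colorings of a trivalent graph $\Gamma$ dual to a pants decomposition.

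\textbf{Step 1: reduce to curves.} Suppose $f$ is not central. Then there is a simple closed curve $\gamma$ in the interior of $\Sigma_g^n$, not boundary parallel, with $f(\gamma)\neq\gamma$ up to isotopy. I may also need arcs ending at boundary components to detect boundary-twist-type elements, but those are already trivial in $\PMod$. Modulo the center (which contains the boundary twists), a mapping class fixing all essential curves is central, by the standard Alexander method argument. Since $\rho(f)$ is a scalar, it commutes with every curve operator $Z_p(\gamma)$, so $Z_p(f(\gamma))=\rho(f)Z_p(\gamma)\rho(f)^{-1}=Z_p(\gamma)$ for all $p$.

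\textbf{Step 2: asymptotics of curve operators.} For each $p$, choose a vector $v_p$ in the coloring basis whose renormalized coloring $c(p)/p$ converges to an interior point $\theta$ of $\Delta(\underline{x})$. Such colorings exist for large $p$ because $\Delta(\underline{x})$ has non-empty interior, $\frac{1}{p}\ul(p)\to\underline{x}$, and lattice points of step $2/p$ fill the interior; parity and exact boundary values can be fixed by small perturbations. Following \cite{FWW02} (or the Toeplitz-operator viewpoint of Marché--Paul and Detcherry), I would compute the matrix coefficient $\langle Z_p(\delta)v_p,v_p\rangle$ for a multicurve $\delta$. In Dehn--Thurston coordinates relative to the pants decomposition, as $p\to\infty$ this converges to an explicit trigonometric function $F_\delta(\theta)$. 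For example, for $\delta=\alpha_e$ a pants curve it is $-2\cos(\pi\theta_e)$. In general $F_\delta$ is a trigonometric polynomial in $\theta$ whose highest-frequency terms encode the intersection numbers of $\delta$ with the pants curves, and the twist parameters appear as phases in off-diagonal terms $\langle Z_p(\delta)v_p,v'_p\rangle$ with shifted colorings.

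\textbf{Step 3: distinguish $\gamma$ from $f(\gamma)$.} Since $\gamma\neq f(\gamma)$, their Dehn--Thurston coordinates differ for some pants decomposition. If the intersection numbers $i(\cdot,\alpha_e)$ differ, the limits of $\langle Z_p(\cdot)v_p,w_p\rangle$, with $w_p$ the basis vector whose coloring shifts edge $e$ by the appropriate amount, differ as functions of $\theta$ on an open set, which contradicts $Z_p(f(\gamma))=Z_p(\gamma)$. If only the twists differ, I would compare the phases of those off-diagonal coefficients. This is exactly where interior points are needed. On the boundary of the polytope, admissibility forces some shifted colorings to vanish or some $\sin(\pi\theta_e)$ factors to degenerate, which is why the theorem hypothesizes non-empty interior.

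\textbf{Main obstacle.} I expect the main difficulty to be the uniform asymptotic expansion of the matrix coefficients of curve operators (6j-symbol asymptotics) when the boundary colors vary with $p$. This requires that the $\theta$-values at all trivalent vertices adjacent to boundary edges stay strictly inside the admissibility region. I would handle it by localizing: restrict to colorings near a fixed interior $\theta$, and use the known leading-order asymptotics of the fusion coefficients in that regime.
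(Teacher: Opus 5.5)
Your overall mechanism matches the paper: curve operators, a pants basis, and convergence of matrix coefficients along colorings with $c(p)/p\to\theta\in\mathrm{Int}(\Delta(\underline{x}))$. The uniform asymptotics you flag as the main obstacle are exactly what the paper imports from \cite[Theorems 1.1 and 1.3]{Det16}. But Step 3 has a genuine gap. What actually separates $Z_p(\gamma)$ from $Z_p(f(\gamma))$ is a \emph{non-vanishing} statement. Set $m_i=i(f(\gamma),\alpha_i)$. The limit of $\langle Z_p(f(\gamma))\varphi_{c(p)},\varphi_{c(p)+m}\rangle$ is the $m$-th Fourier coefficient of $t_{f(\gamma)}$ on the torus fiber $\mu^{-1}(\theta)$, and you need this coefficient to be non-zero.

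Your phrase ``highest-frequency terms encode the intersection numbers'' silently assumes this non-vanishing, and it is not automatic. A vanishing limit would be indistinguishable from the corresponding zero coefficient of $Z_p(\gamma)$. For closed surfaces this is \cite[Lemma 4.4]{CM12}. For $\Sigma_g^n$ the paper needs an extra argument (\Cref{lemma:nonVanishingFourier}): double the surface, observe that the relevant Fourier coefficient is unchanged, and check that $(\underline{x},\theta,\theta)$ lies in the interior of the doubled polytope. The interior hypothesis is used here, not only in the asymptotics.

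Your ``only the twists differ'' case is left as an unspecified comparison of phases, which as stated is not an argument. It is also unnecessary if you choose the pants decomposition to contain $\gamma$, so that $Z_p(\gamma)$ is diagonal in the basis $(\varphi_c)$. If $f(\gamma)$ is disjoint from $\gamma$, put both curves in the decomposition and compare eigenvalues. Otherwise $i(\gamma,f(\gamma))>0$, so the shift vector $m$ is non-zero. Then a single non-zero off-diagonal coefficient of $Z_p(f(\gamma))$, supplied for large $p$ by the non-vanishing lemma, already gives $Z_p(f(\gamma))\neq Z_p(\gamma)$. With these two fixes your outline becomes the paper's proof.
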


\begin{remark}
	\label{rk:nonDependenceInDelta} While the polytope $\Delta(\underline{x})$ depends on the choice of $\Gamma$,
	whether it has non-empty interior depends only on $\underline{x}$.
	Indeed, one can show that this condition is equivalent to having $\dim V_{p,g,\ul(p)}\sim A p^{3g-3+n}$ for some constant $A$.
\end{remark}

\begin{remark}
	\label{rk:asympFaith} As will be clear in the proof of Theorem \ref{thm:asymptFaith},
	the same result holds if the intersection over all primes is replaced by the intersection over all primes $p\geq N$ for some integer $N$.
\end{remark}
Before proving Theorem \ref{thm:asymptFaith}, we mention the following corollary:
\begin{corollary}
	\label{cor:asymptFaith} For any compact oriented surface $\Sigma_{g}^{n}$,
	$$\underset{p\geq 5,\, p \, \textrm{prime}}{\bigcap} \ker \rho_{p,g,(i_0(p))^n}=Z(\Mod(\Sigma_{g}^{n})).$$
\end{corollary}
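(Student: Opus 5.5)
The plan is to deduce \Cref{cor:asymptFaith} directly from \Cref{thm:asymptFaith}, applied to the constant-type sequence $\ul(p)=(i_0(p))^n$.

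By \Cref{lemma:permColor}, $i_0(p)$ equals $\frac{p-1}{2}$ or $\frac{p-3}{2}$, so in either case $\frac{1}{p}\ul(p)\longrightarrow \underline{x}=(1/2,\ldots,1/2)$. Everything then reduces to checking the interior hypothesis of \Cref{thm:asymptFaith}: that $\Delta(\underline{x})$ has non-empty interior for this $\underline{x}$ and some trivalent graph $\Gamma$ dual to a pants decomposition of $\Sigma_g^n$. By \Cref{rk:nonDependenceInDelta}, the choice of $\Gamma$ does not matter.

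To verify this, I would exhibit an interior point explicitly. Take $\theta\equiv 1/2$ on all internal edges; it agrees with $\underline{x}$ on the external edges, so all constraints on boundary values hold. At each vertex the triangle inequalities hold strictly: $\frac12<\frac12+\frac12$. The sum constraint also holds strictly: $\frac12+\frac12+\frac12=\frac32<2$. Hence small perturbations of the internal coordinates (those parametrizing $\R^{3g-3+n}$) stay in $\Delta(\underline{x})$. External coordinates are fixed, but every inequality involving them is strict at $\theta$, so this still works.

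The one step needing care is the degenerate case in which $\Gamma$ has no internal edges, e.g.\ $(g,n)=(0,3)$. There the polytope lives in $\R^0$ and is non-empty, since the triple $(1/2,1/2,1/2)$ is admissible, so its interior is the whole point. Small surfaces where no pants decomposition exists, such as $g=0$ with $n\le 2$ or $(g,n)=(1,0)$, are either trivially of this degenerate type or are directly covered by the theorem's statement as written. Apart from these edge cases there is no real obstacle: the corollary is an immediate specialization.
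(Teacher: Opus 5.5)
Your proposal is correct and is essentially the paper's argument: apply \Cref{thm:asymptFaith} with $\underline{x}=(1/2,\ldots,1/2)$, using $i_0(p)/p\to 1/2$. Then observe that coloring every internal edge of $\Gamma$ by $1/2$ satisfies all vertex inequalities strictly, so it is an interior point of $\Delta(\underline{x})$. Your remarks on degenerate small surfaces go beyond what the paper addresses, and the appeal to \Cref{rk:nonDependenceInDelta} is unnecessary, since $\theta\equiv 1/2$ is interior for every choice of $\Gamma$.
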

\begin{proof}
	Note that $\frac{i_0(p)}{p}\underset{p\rightarrow \infty}{\longrightarrow}\frac{1}{2}$;
	thus we need to show that $\Delta(\frac{1}{2},\ldots, \frac{1}{2})$ has non-empty interior.
	However, coloring each internal edge of the graph $\Gamma$ by $\frac{1}{2}$ gives a point in the interior of
	$\Delta(\frac{1}{2},\ldots, \frac{1}{2})\subset \R^{3g-3+n}$.
\end{proof}

Before proving Theorem \ref{thm:asymptFaith}, we briefly review the set-up of \cite{Det16}, whose results we will use.
Pick a pair of pants decomposition $\mathcal{P}=\lbrace \alpha_1,\ldots,\alpha_{3g-3+n}\rbrace$ of $\Sigma_{g}^{n}$.
Let $(\varphi_c)$ be the associated basis of $V_{p,g,\ul(p)}$. 
We consider the moduli space
$$\mathcal{M}(\Sigma_g^n)=\mathrm{Hom}(\pi_1(\Sigma_g^n),\mathrm{SU}_2)/\mathrm{SU}_2,$$
which has a natural structure of Poisson variety, given by the Atiyah-Bott-Goldman bracket. We also set 
$$\mathcal{M}(\Sigma_g^n,\underline{x})=\lbrace \rho \in \mathcal{M}(\Sigma_g^n) \ | \ \tr(\rho(\delta_i))=2\cos(\pi x_i)\rbrace,$$
where $\delta_i\in \pi_1(\Sigma_g^n)$ is in the conjugacy class corresponding to the $i$-th boundary component of $\Sigma_g^n$.
For any curve $\delta$ on $\Sigma_g^n$, we have an associated trace function 
$$t_{\delta}:\rho \longrightarrow -\tr(\rho(\delta))$$
on $\mathcal{M}(\Sigma_g^n)$. Hamiltonian flows by the angle functions $a_{\alpha_i}=\frac{1}{\pi}\mathrm{acos}(t_{\alpha_i}/2)$ give rise to an action of a torus $(\R/2\pi\Z)^{3g-3+n}$ on $\mathcal{M}(\Sigma_g^n)$, and a (surjective) momentum map 
$$\mu : \mathcal{M}(\Sigma_g^n,\underline{x})\longrightarrow \Delta(\underline{x}),$$
whose fibers are exactly the orbits of the torus action.

Recall that to any simple closed curve $\delta$ on $\Sigma_g^n$, one may associate a curve operator $T_{p,\delta}\in \mathrm{End}(V_{p,g,\ul})$. Pick a sequence $c(p)$ of $p$-admissible coloring of a dual trivalent graph $\Gamma$ to $\mathcal{P}$, with boundary colors $\ul(p)$. We assume that $\frac{c(p)}{p}$ converges to $\theta \in \mathrm{Int}(\Delta(\underline{x}))$.
Then, the content of \cite[Theorem 1.1 and 1.3]{Det16} is that for any $m\in \Z^{3g-3+n}$, and any simple closed curve $\alpha$ on $\Sigma_g^n$ the coefficient $\langle T_{p,\alpha}\varphi_{c(p)},\varphi_{c(p)+m}\rangle$ converges the $m$-th Fourier coefficient of $t_{\alpha}$ on $\mu^{-1}(\theta)$.

We will need some non-vanishing results for some of those Fourier coefficients, which can be deduced from the results of \cite{CM12}:

\begin{lemma}
	\label{lemma:nonVanishingFourier} Let $\Sigma_g^n$ be a compact oriented surface with boundary, let $\mathcal{P}=\lbrace \alpha_1,\ldots,\alpha_{3g-3+n}\rbrace $ be a pair of pants decomposition of $\Sigma_g^n$ and let $\underline{x}$ be such that $\Delta(\underline{x})$ has non-empty interior. Finally, let $\delta$ be a simple closed curve on $\Sigma_g^n$ and let $m_i=i(\alpha_i,\delta)$.
	Then the $\underline{m}$-th Fourier coefficient of $t_{\delta}$ on $\mu^{-1}(\theta)$ is non-zero for any $\theta \in \mathrm{Int}(\Delta(\underline{x})).$
\end{lemma}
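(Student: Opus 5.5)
We propose to compute the Fourier expansion of $t_\delta$ on a torus fiber $\mu^{-1}(\theta)$ explicitly in action-angle coordinates, and to identify its top Fourier mode $\underline{m}=(i(\alpha_i,\delta))_i$ with a product of non-vanishing factors. The natural tool is the formula of Charles and Marché \cite{CM12}. For $\theta$ in the interior of $\Delta(\underline{x})$, the trace function of a simple closed curve, written in Fenchel--Nielsen-type coordinates $(\theta,\tau)$ adapted to $\mathcal{P}$, is a trigonometric polynomial in the angle variables $\tau$. Its Fourier modes $m$ satisfy $|m_i|\le i(\alpha_i,\delta)$. The extreme coefficient (the one with $m_i=i(\alpha_i,\delta)$ for all $i$) is, up to a unimodular phase, a product of local factors. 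There is one factor per pair of pants crossed by $\delta$, or per segment of $\delta$ inside a pair of pants or one-holed torus. Each factor is an explicit algebraic expression in $\sin(\pi\theta_j)$ and square roots of products of $\sin$ of half-sums and half-differences of the $\theta_j$'s at that vertex. This is the classical-limit analogue of the leading coefficient of the curve operator $T_{p,\delta}$ in the basis $\varphi_c$, which is likewise a product of $6j$-symbol-type leading terms.

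The steps would be as follows. First, put $\delta$ in normal position relative to $\mathcal{P}$ (Dehn--Thurston coordinates), so that it decomposes into arcs in the pairs of pants, each arc joining two boundary curves or going around one. Second, use the \cite{CM12} (or Goldman) product formula for $t_\delta$: as a function of the angle variables conjugate to $a_{\alpha_i}$, each crossing of $\alpha_i$ contributes factors $e^{\pm i\tau_i/2}$. The top mode $e^{i\langle\underline{m},\tau\rangle}$ then arises from a single term in the expansion, namely the one where every crossing takes the same sign. Its coefficient is therefore a product, not a sum, of local coefficients, so no cancellation can occur. Third, check that each local coefficient is non-zero exactly when the corresponding triangle inequalities $\theta_i\le\theta_j+\theta_k$ and $\theta_i+\theta_j+\theta_k\le 2$ are strict, and when $\theta_j\notin\{0,1\}$. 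These are precisely the interior conditions of $\Delta(\underline{x})$, so non-vanishing holds on $\mathrm{Int}(\Delta(\underline{x}))$.

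The main obstacle is the second step: making sure the top Fourier coefficient really comes from a unique term. In particular, for arcs returning to the same boundary curve, or curves crossing a one-holed torus component, the local matrices are not simply diagonal times shift. I would handle this by writing $t_\delta$ as the trace of a product of $\SU_2$ matrices along $\delta$. These are edge matrices $\mathrm{diag}(e^{i\tau_j/2},e^{-i\tau_j/2})$ times rotations, composed with fixed vertex matrices depending only on $\theta$. The coefficient of the extreme monomial is then the product of fixed entries of the vertex matrices, namely off-diagonal or diagonal entries depending on the arc type. I would verify that each such entry is a non-zero function on the interior of the polytope, using the explicit pair-of-pants parametrization from \cite{CM12}.
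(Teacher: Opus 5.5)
Your outline is sound, but it takes a genuinely different route from the paper.

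\textbf{The paper's route.} The paper does no computation on $\Sigma_g^n$ itself. It passes to the double $\widehat{\Sigma}=\Sigma_g^n\cup\overline{\Sigma_g^n}$, with the pants decomposition given by $\partial\Sigma_g^n$, $\mathcal{P}$ and $\overline{\mathcal{P}}$. It observes that $t_\delta$ is invariant under the twist flows of all curves other than the $\alpha_i$, since they miss $\delta$. Hence the $(\underline{m},0)$-th Fourier coefficient on $\hat{\mu}^{-1}((\underline{x},\theta,\theta))$ equals the $\underline{m}$-th one on $\mu^{-1}(\theta)$. It then checks that $(\underline{x},\theta,\theta)$ lies in the interior of $\widehat{\Delta}$, using that $\Delta(\underline{y})$ still has non-empty interior for $\underline{y}$ near $\underline{x}$. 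Finally it quotes the closed-surface result \cite[Lemma 4.4]{CM12}.

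\textbf{Your route.} You instead redo the Charles--Marché computation directly on the bordered surface. On a fiber, $t_\delta=-\tr\prod_k B_kD(\tau_{j_k})^{\pm1}$ with fixed $B_k$. The extreme mode comes from the unique index assignment in which every crossing contributes $e^{+i\tau/2}$. So its coefficient is a single product of matrix entries, and no cancellation is possible.

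All the content then sits in your Step 3, which you only sketch. An efficient way to settle it: a vanishing entry means the holonomy of an arc of $\delta$ inside a pair of pants sends an eigenline of one cuff holonomy to an eigenline of a cuff holonomy. Then two cuff holonomies of that pair of pants commute, and for the entries that actually occur this happens only on walls of $\Delta(\underline{x})$.

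\textbf{Comparison.} Your route is self-contained and shows why the interior is exactly the right condition. The cost is bookkeeping: crossing directions, arc types and normal forms. The paper's route takes a few lines, but it depends on \cite{CM12} as a black box plus the small openness argument.

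\textbf{A caveat.} This concerns the statement as much as your argument: both proofs need $\underline{m}\neq 0$. If $\delta=\alpha_j$, then $t_\delta$ is the constant $2\cos(\pi\theta_j)$ on $\mu^{-1}(\theta)$. This vanishes at interior points with $\theta_j=\frac12$, e.g.\ all internal colors equal to $\frac12$ when $\underline{x}=(\frac12,\dotsc,\frac12)$. The same can happen for boundary-parallel $\delta$. This is harmless in \Cref{thm:asymptFaith}, where $\delta=f(\alpha)$ meets $\alpha\in\mathcal{P}$.
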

\begin{proof}
	For closed surfaces, Lemma \ref{lemma:nonVanishingFourier} is exactly \cite[Lemma 4.4]{CM12}. We deduce the case with boundary from the closed case. Let $\widehat{\Sigma}=\Sigma_g^n \cup \overline{\Sigma_g^n}$ be the double of $\Sigma_g^n$, and let $\widehat{P}$ be the "double" pair of pants decomposition, obtained from the boundary curves of $\Sigma_g^n$, $\mathcal{P}$ and $\overline{\mathcal{P}}$. Let $\hat{\mu}$ be the associated momentum map on $\mathcal{M}(\widehat{\Sigma})$, with image $\widehat{\Delta}$.
	
	The simple closed curve $\delta$ lies on $\Sigma_{g,n}$, hence Hamiltonian flow by any of the angles functions except $a_{\alpha_i}$ leaves $t_{\delta}$ invariant. Thus, the $(\underline{m},0)$-Fourier coefficient of $t_{\delta}\in \mathcal{M}(\widehat{\Sigma})$ on $\hat{\mu}^{-1}((\underline{x},\theta,\theta))$ is equal to the $\underline{m}$-th Fourier coefficient of $t_{\delta}\in \mathcal{M}(\Sigma_g^n)$ on $\mu^{-1}(\theta)$.
	
	However, if $\theta \in \mathrm{Int}(\Delta(\underline{x})),$ then $(\underline{x},\theta,\theta) \in \mathrm{Int}(\widehat{\Delta})$. (Note that $\Delta(\underline{y})$ also has non-empty interior for $\underline{y}$ close to $\underline{x}$). Therefore, \cite[Lemma 4.4]{CM12} concludes.
\end{proof}

\begin{proof}[Proof of Theorem \ref{thm:asymptFaith}]
	The proof partly follows the strategy of \cite{FWW02}, but avoids the use of their Lemma 4.1 about ``graph-geodesic multicurves'',
	replacing it with more general estimates of some off-diagonal coefficients of curve operators.
	
	Let $f\in \Mod(\Sigma_{g}^{n})$ be non-central, then there exists a simple closed curve $\alpha$, not parallel to a boundary component,
	such that $f(\alpha)$ is non-isotopic to $\alpha$. Let $T_{p,\alpha},T_{p,f(\alpha)} \in \mathrm{End}(V_{p,g,\ul(p)})$
	be the associated curve operators; since $T_{p,f(\alpha)}=\rho_{p,g,\ul(p)}(f)\circ T_{p,\alpha}\circ \rho_{p,g,\ul(p)}(f)^{-1}$,
	it suffices to show that $T_{p,\alpha}\neq T_{p,f(\alpha)}$ for any large enough $p$. If $\alpha$ and $f(\alpha)$ are disjoint, we can pick a pair of pants decomposition containing $\alpha$ and $f(\alpha)$. The associated basis of $V_{p,g,\ul(p)}$ is a basis of common eigenvectors of $T_{p,\alpha}$ and $T_{p,f(\alpha)}$ but the eigenvalues do not match, hence $T_{p,\alpha}\neq T_{p,f(\alpha)}$ in that case. Therefore we assume that the geometric intersection number $i(\alpha,f(\alpha))$ is positive.
	
	Let $m_i=i(f(\alpha),\alpha_i)$. 
	By \cite[Theorems 1.1 and 1.3]{Det16}, for $\frac{c(p)}{p} \longrightarrow \theta \in \mathrm{Int}(\Delta(\underline{x}))$, the off-diagonal coefficients
	
	$$\langle T_{p,f(\alpha)}\varphi_{c(p)},\varphi_{c(p)+m}\rangle$$
	 converge to the $m$-th Fourier coefficient of $t_{f(\alpha)}$ on $\mu^{-1}(\theta)$, and Lemma \ref{lemma:nonVanishingFourier} implies that these coefficients are non-zero for large enough $p$.
	
	Hence, $T_{p,f(\alpha)}\neq T_{p,\alpha}$ for any large enough $p$, since $T_{p,\alpha}$ is diagonal in the basis $(\varphi_c)$.
\end{proof}

\section{Applications}
\label{sec:applications}

In this section, we collect a few applications of our main results.

\subsection{Subnormal cores of some subgroups of mapping class groups}
\label{sec:subnormalCore}

In this section, we prove Corollary \ref{cor:subnormalCore} that was stated in the introduction. Let $G=\Mod(\Sigma_{g}^{n})$
where $g\geq 2$ and $(g,n)\neq (2,1)$.%
\footnote{The condition $(g,n)\neq (2,1)$ comes from the fact that our density result \Cref{thm:density}
does not apply to that case as $V_{0,i_0}=0$.}
Recall that we can consider the following subgroups of $\Mod(\Sigma_{g}^{n})$:
\begin{itemize}
	\item[(i)]When $n=0$, the handlebody groups $\mathcal{H}_g$, which are the subgroups consisting of mapping classes that extend to
	a fixed handlebody of genus $g$ with boundary $\Sigma_g$;
	\item[(ii)]The centralizers $C_{\Mod(\Sigma_{g}^{n})}(f)$, where $f$ is a finite-order non-central mapping class in $ \Mod(\Sigma_{g}^{n})$;
	\item[(iii)]The normalizers $N_{\Mod(\Sigma_{g}^{n})}(\Gamma)$, where $\Gamma$ is a finite non-central subgroup of $\Mod(\Sigma_{g}^{n})$.
\end{itemize}

We note that there exist non-trivial subnormal subgroups of $\Mod(\Sigma_{g}^{n})$ with trivial normal cores. Indeed, by a result of Dahmani,
Guirardel and Osin \cite{DGO17}, the normal closures of high powers of pseudo-Anosov maps are free subgroups (with infinite rank),
and furthermore, by \cite[Theorem 1.1]{Ols17}, there exist normal non-trivial subgroups of any free group of rank at least $2$
with trivial normal core. The authors thank Andrew Putman for pointing out this argument to them.

Corollary \ref{cor:subnormalCore} will follow from the following lemma:

\begin{lemma}
	\label{lemma:subnormal} Let $g\geq 2$ and $(g,n)\neq (2,1)$, and let $H$ be a subgroup of $\Mod(\Sigma_{g}^{n})$ with non-trivial subnormal core.
	Then for $p$ prime large enough, $\rho_{p,g,(i_0)^n}(H)$ is dense in $\mathrm{PSU}(V_{p,g,(i_0)^n})$.
\end{lemma}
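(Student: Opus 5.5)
The plan is to combine density (\Cref{thm:density}), asymptotic faithfulness (\Cref{cor:asymptFaith}) and the fact that in a compact simple Lie group, closures of images of subnormal subgroups are either trivial or everything. Let $K\subset H$ be a non-trivial subnormal subgroup of $\Mod(\Sigma_g^n)$, with a chain
$$K=K_m\triangleleft K_{m-1}\triangleleft\dotsb\triangleleft K_0=\Mod(\Sigma_g^n).$$
It suffices to show that $\rho_{p,g,(i_0)^n}(K)$ is dense for $p$ large, since $\rho(K)\subset\rho(H)$.

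\textbf{Step 1: $K$ is not central.} The center of $\Mod(\Sigma_g^n)$ is generated by boundary twists (and, for $n=0$, $g=2$, the hyperelliptic involution). I would like to say that a non-trivial subnormal subgroup cannot be contained in the center. This is plausible but not obvious. A cleaner route is to note that if $K\subseteq Z$, then $\rho(K)$ is trivial. In that case the statement would fail, so non-centrality must be assumed or proved. I expect that a non-trivial subnormal subgroup contained in the center does not occur in the cases of interest. Otherwise I would argue as follows. If $K\subset Z$, then $K_{m-1}$ normalizes $K$, and going up the chain one would want a contradiction. In practice, for the application (trivial subnormal core modulo the center), I would first reduce by replacing $K$ with a non-central subnormal subgroup. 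If all subnormal subgroups inside $H$ are central, the claim is vacuous in the applications. So I would state the argument for non-central $K$, picking $f\in K\setminus Z(\Mod(\Sigma_g^n))$.

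\textbf{Step 2: non-triviality of the image.} By \Cref{cor:asymptFaith} (with \Cref{rk:asympFaith}), $\rho_{p,g,(i_0(p))^n}(f)\neq 1$ for infinitely many $p$. To get this for all large $p$, I would rerun the proof of \Cref{thm:asymptFaith}, which actually gives $T_{p,f(\alpha)}\neq T_{p,\alpha}$ for all sufficiently large $p$. Hence $\rho_p(f)\ne 1$ for all $p\ge N$.

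\textbf{Step 3: descending the chain.} Fix $p\ge N$ large, with $p\ge 7$. Since $g\ge2$, $(g,n)\ne(2,1)$ and the colors are $i_0$, we have $V_{0,(i_0)^n}\neq 0$ for $g=2$, $n\neq 1$ by \Cref{lemma:permColor}. So \Cref{thm:density} applies, and $G_0=\overline{\rho(K_0)}=\PSU_d$. By induction, let $G_j=\overline{\rho(K_j)}$. Because $K_j\triangleleft K_{j-1}$, the closed subgroup $G_j$ is normal in $G_{j-1}$. If $G_{j-1}=\PSU_d$, which is simple as an abstract group modulo its trivial center, then $G_j$ is either trivial or $\PSU_d$. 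Inductively, $G_m$ is trivial or $\PSU_d$. It cannot be trivial, since $\rho(f)\ne1$ with $f\in K$. Hence $\overline{\rho(K)}=\PSU_d$, and so $\rho(H)$ is dense.

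The main obstacle is Step 1/2: ensuring that $K$ contains a non-central element whose image survives for \emph{all} large $p$, rather than infinitely many. This requires the uniform-in-$p$ form of the argument in \Cref{thm:asymptFaith}, which I expect the proof there provides.
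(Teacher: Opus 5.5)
Your Steps 2 and 3 are the paper's argument. You get density of the whole image from \Cref{thm:density}, which applies since $V_{0,(i_0)^n}\neq 0$ for $n\neq 1$. You get non-triviality of the image of a non-central element for large $p$ from asymptotic faithfulness. You then use abstract simplicity of $\PSU_d$ to push density down the subnormal chain, since closures of normal subgroups stay normal in the closure. Your concern in Step 2 is legitimate and you resolve it correctly. The statement of \Cref{thm:asymptFaith} with \Cref{rk:asympFaith} only gives $\rho_{p,g,(i_0)^n}(f)\neq 1$ for infinitely many $p$. Its proof, however, gives this for every sufficiently large $p$: eigenvalues mismatch when $\alpha$ and $f(\alpha)$ are disjoint, and otherwise a limiting Fourier coefficient is non-zero. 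The paper uses this stronger form tacitly.

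Step 1 should not be left as an expectation, because that expectation is false. Non-trivial central subnormal subgroups do occur, and for them the statement fails.
\begin{itemize}
\item If $n\geq 1$, $Z(\Mod(\Sigma_g^n))$ is generated by the boundary twists, which lie in $\ker\rho_{p,g,\ul}$ by \Cref{rmk:vacuum}.
\item If $(g,n)=(2,0)$, the center is generated by the hyperelliptic involution, which acts by a scalar by Schur's lemma.
\end{itemize}
So $H=Z(\Mod(\Sigma_g^n))$ is a non-trivial normal subgroup whose image in $\PSU$ is trivial. Non-centrality therefore cannot be proved; it must be added as a hypothesis. For example, assume $H$ contains a subnormal subgroup of $\Mod(\Sigma_g^n)$ not contained in the center. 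The paper's own proof makes the same tacit assumption, since \Cref{thm:asymptFaith} only identifies the intersection of the kernels with the center. With that hypothesis stated explicitly, your argument is complete.

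For the same reason, the application is not ``vacuous'' in the central case. Centralizers and normalizers always contain the center, so \Cref{cor:subnormalCore} has to be read as saying that the subnormal cores are contained in the center.
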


\begin{proof}
	It suffices to prove the lemma for $H$ subnormal and non-trivial. Note that $\rho_{p,g,(i_0)^n}(\Mod(\Sigma_{g}^{n}))$
	is dense in $\mathrm{PSU}(V_{p,g,(i_0)^n})$ by Theorem \ref{thm:density}.
	
	By Theorem \ref{thm:asymptFaith}, $\rho_{p,g,(i_0)^n}(H)$ is non-trivial for $p$ large enough. Therefore, $\overline{\rho_{p,g,(i_0)^n}(H)}$
	is a closed non-trivial and subnormal subgroup of $\mathrm{PSU}(V_{p,g,(i_0)^n})$ for $p$ large enough. Since $\mathrm{PSU}(V_{p,g,(i_0)^n})$
	is a simple Lie group, we conclude that $\overline{\rho_{p,g,(i_0)^n}(H)}=\mathrm{PSU}(V_{p,g,(i_0)^n})$ for $p$ large enough.
\end{proof}

\begin{proof}[Proof of Corollary \ref{cor:subnormalCore}]
	It suffices to show that these subgroups do not have dense image by the representations $\rho_{p,g,(i_0)^n}$ for $p$ large.
	For the handlebody groups $\mathcal{H}_g$ we note that their images by $\rho_{p,g}$ all fix the vacuum vector,
	which is the vector $RT_p(H_g,\emptyset)$ associated by the Reshetikhin-Turaev TQFT to the empty handlebody.
	
	For the subgroups $C_{\Mod(\Sigma_{g}^{n})}(f)$ where $f$ has finite order, since $\rho_{p,g}(f)$ is non-trivial for $p>>1$,
	the closure of $\rho_{p,g}(C_{\Mod(\Sigma_{g}^{n})}(f))$ stabilizes a non-trivial decomposition
	$V_{p,g,(i_0)^n}=\underset{\lambda}{\bigoplus}V_{\lambda}$ into eigenspaces of $\rho_{p,g,(i_0)^n}(f)$.
	
	Similarly, when $\Gamma$ is a non-central finite subgroup of $\Mod(\Sigma_{g}^{n})$, by asymptotic faithfulness,
	for $p$ large enough we get a non-trivial decomposition
	 $$V_{p,g,(i_0)^n}=\underset{\chi}{\bigoplus}V_{\chi}$$
	 into $\Gamma$-isotypic subrepresentations, and any element of $\overline{\rho_{p,g,(i_0)^n}(N_{\Mod(\Sigma_{g}^{n})}(\Gamma))}$
	 leaves the decomposition invariant (perhaps permuting subspaces).
\end{proof}
\subsection{Mapping class groups are residually finite simple}
\label{sec:resFinite}

We recall that a group $G$ is said to be residually finite simple if there exists a sequence of epimorphisms
$\left(\rho_i:G \twoheadrightarrow G_i\right)_{i\in I}$, where the groups $G_i$ are non-abelian finite simple groups,
and such that $\underset{i\in I}{\bigcap}\ker \rho_i=\lbrace 1_G \rbrace$.
\begin{theorem}\label{thm:resSimple}
	Let $g\geq 1$ and $n\geq 0$ such that $(g,n)\neq (1,0),(1,2),(1,3),(2,0)$ or $(2,1)$.
	Then the pure mapping class group $\PMod(\Sigma_{g,n})$ of a closed surface of genus $g$ with $n$ marked points is residually finite simple.
	
	The same is true for any subgroup of $\PMod(\Sigma_{g,n})$ containing a non-trivial subnormal subgroup.
\end{theorem}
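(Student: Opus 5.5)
Given non-trivial $f\in\PMod(\Sigma_{g,n})$, I will produce a prime $p$, a coloring $\ul$ and a maximal ideal $J$ such that $\rho_{J,g,\ul}(f)\neq 1$ and $\rho_{J,g,\ul}$ is onto a non-abelian finite simple group. The quotients used are those of \Cref{thm:surjectivity} (for $g\geq 3$), \Cref{thm:simpleGroup}(2) (for the genus $1$ and $2$ cases), or \Cref{thm:casep=5}. The detection step comes from asymptotic faithfulness. Write $\PMod(\Sigma_{g,n})=\Mod(\Sigma_g^n)/\langle\tau_1,\dots,\tau_n\rangle$ and lift $f$ to $\tilde f\in\Mod(\Sigma_g^n)$. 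I claim $\tilde f$ is not central modulo the boundary twists. For $(g,n)$ outside the excluded list, the center of $\Mod(\Sigma_g^n)$ is generated by the boundary twists, apart from hyperelliptic phenomena that only occur in the excluded cases $(1,0),(1,2),(2,0),(2,1)$ and their relatives. So a non-trivial $f$ gives a non-central $\tilde f$.

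Apply \Cref{thm:asymptFaith} with $\ul(p)=(i_0(p))^n$, using \Cref{cor:asymptFaith} and \Cref{rk:asympFaith}. For infinitely many primes $p$, in fact all large ones, $\rho_{p,g,(i_0)^n}(f)\neq 1$. Fix such a $p\geq 7$ large enough that the hypotheses of the simplicity results hold with $\ul=(i_0)^n$:
\begin{itemize}
\item for $g\geq 3$ there is no extra condition;
\item for $g=2$ one needs $V_{0,(i_0)^n}\neq 0$, which holds when $n=0$ or $n\geq 2$; this is why $(2,1)$ is excluded, and $(2,0)$ is excluded because of the hyperelliptic center;
\item for $g=1$ one needs $(i_0)^n$ to split into two permissive tuples, which holds for $n\geq 4$ since $(i_0,i_0)$ is permissive; this accounts for the exclusions $n\leq 3$, except $n=1$, where $V_{1,(i_0)}$ should be handled by a separate choice or by $p=5$.
\end{itemize}

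Next, $\rho_{p,g,(i_0)^n}(f)$ is a non-scalar matrix over $\Z[\zeta_p,1/p]$. Some off-diagonal entry, or some difference of diagonal entries, is a non-zero algebraic integer up to powers of $p$. Choose a maximal ideal $J$ prime to $p$ not dividing that element; all but finitely many $J$ work. Then $\rho_{J,g,\ul}(f)\neq 1$ in $\PGL$. By \Cref{thm:surjectivity} or \Cref{thm:simpleGroup}, the image of $\rho_{J,g,\ul}$ is a simple group. It is non-abelian because it contains non-commuting images of Dehn twists, which follows from irreducibility with $\dim\geq 2$. This gives residual finite simplicity.

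For a subgroup $H$ containing a non-trivial subnormal $K$, the same asymptotic faithfulness step shows that $\rho_{J,g,\ul}(K)$ is non-trivial for suitable $p$ and $J$. A non-trivial subnormal subgroup of a finite simple group is the whole group, by induction along the subnormal chain. So $\rho_{J,g,\ul}(H)\supseteq\rho_{J,g,\ul}(K)$ is the full simple image, and $H$ surjects onto it.

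To separate a given $h\in H$, I need the subnormal part and $h$ to survive in the same quotient. Since the relevant properties hold for all large $p$ and all but finitely many $J$, I can pick $p$ and $J$ satisfying both conditions simultaneously. The main obstacle is the bookkeeping of the exceptional low-complexity cases, namely the center analysis and the $g=1,2$ permissivity conditions. The most delicate of these is genus $1$ with few punctures; there I would fall back on $p=5$ and \Cref{thm:casep=5}, combined with an asymptotic-faithfulness-type argument for colorings $(2)^n$.
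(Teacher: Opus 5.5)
Apart from genus $1$ with one marked point, your argument is the paper's: asymptotic faithfulness for the colorings $(i_0)^n$ (\Cref{cor:asymptFaith}), reduction modulo $J$, simplicity of the image (\Cref{thm:simpleGroup}, or \Cref{thm:surjectivity} for $g\geq 3$), and the fact that a non-trivial subnormal subgroup of a simple group is the whole group. Your choice of $J$ not dividing a non-zero entry of $\rho_{p,g,(i_0)^n}(f)$ spells out what the paper compresses into $\bigcap_{p,J}\ker\rho_{J,g,(i_0)^n}=Z(\Mod(\Sigma_g^n))$. Your checks for $g=2$, $n\geq 2$ and for $g=1$, $n\geq 4$ are correct. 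One small fix: non-abelianness should come from simplicity, not from irreducibility alone. An abelian projective image can act irreducibly (Heisenberg type), but a cyclic group of prime order has abelian preimage in $\GL$, so it cannot act absolutely irreducibly in dimension $\geq 2$.

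The genuine gap is $(g,n)=(1,1)$, which you leave to ``a separate choice or $p=5$''. No choice can work, because the statement as written is false there.
\begin{itemize}
\item $\PMod(\Sigma_{1,1})=\Mod(\Sigma_{1,1})\cong\SL_2(\Z)$ has center $\{\pm I\}$: the elliptic involution fixes the marked point.
\item A non-trivial central element is never detected by a surjection onto a non-abelian simple group, so residually finite simple groups are centerless.
\item Your claim that outside the excluded list the center of $\Mod(\Sigma_g^n)$ is generated by boundary twists fails exactly here. $\Mod(\Sigma_1^1)\cong B_3$ has center generated by $(T_aT_b)^3$, for curves $a,b$ meeting once, and this is a square root of the boundary twist.
\end{itemize}
The $p=5$ fallback is also unworkable on its own terms. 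Since $\dim V_{5,1,(2)}=1$, \Cref{thm:casep=5} gives trivial image for $(1,1)$. More generally, at a single fixed level $\bigcap_J\ker\rho_{J,g,(2)^n}=\ker\rho_{5,g,(2)^n}$, and this kernel contains fifth powers of Dehn twists, so no asymptotic-faithfulness argument is available. The paper's proof has the same blind spot: its identity $\PMod(\Sigma_{g,n})=\Mod(\Sigma_g^n)/Z(\Mod(\Sigma_g^n))$ fails at $(1,1)$, and \Cref{thm:simpleGroup} needs $n\geq 4$ when $g=1$. Your argument, like the paper's, proves the result for $g\geq 3$; for $g=2$ with $n\geq 2$; and for $g=1$ with $n\geq 4$. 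For $(1,1)$ one can only hope for the statement about $\PMod(\Sigma_{1,1})/Z\cong\PSL_2(\Z)$, as for $(1,0)$ and $(2,0)$.
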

We note that in the case $(g,n)=(1,0)$ or $(2,0)$, the same is true of the quotient of $\Mod(\Sigma_g)$ by its center.
The cases $(1,2)$, $(1,3)$ and $(2,1)$ are excluded because our proof or tensor-indecomposability (\Cref{thm:tensor-indec}) does not apply,
and hence neither does our proof of simplicity of the image (\Cref{thm:simpleGroup}).
The case of surfaces without marked points was proved in \cite{MR12},
and the above theorem answers a question of Masbaum and Reid (see \cite[§6]{MR16}).

\begin{proof}
	Note that $\PMod(\Sigma_{g,n})=\Mod(\Sigma_{g}^{n})/Z(\Mod(\Sigma_{g}^{n}))$ if $(g,n)\neq (1,0)$ or $(2,0)$.
	By Corollary \ref{cor:asymptFaith}, we have
	$$\underset{p,J}{\bigcap} \ker \rho_{J,g,(i_0)^n}=Z(\Mod(\Sigma_{g}^{n})),$$
	where the above intersection is for all primes $p\geq 7$ and all maximal ideals $J\neq (1-\zeta_p)$.
	However, by \Cref{thm:simpleGroup}, the image of $\rho_{J,g,(i_0)^n}$ is a non-abelian finite simple group.
	Note that by Lemma \ref{lemma:permissive1}, $(i_0)^n$ is permissive for any $n\geq 2$.
	
	Therefore, $\PMod(\Sigma_{g,n})$ is residually finite simple.
	
	To extend to the case of a subgroup $H$ containing a non-trivial subnormal subgroup $K$, notice that $\rho_{J,g,(i_0)^n}(K)$
	is non-trivial for any non-trivial subgroup $H$ and any $p,J$ large enough by Theorem \ref{thm:asymptFaith};
	since $K$ is subnormal, since $\rho_{J,g,(i_0)^n}$ is surjective on $\PMod(\Sigma_{g,n})$, and since the image is simple,
	we conclude that $\rho_{J,g,(i_0)^n}(K)=\rho_{J,g,(i_0)^n}(H)=\rho_{J,g,(i_0)^n}(\PMod(\Sigma_{g,n}))$ is simple for $p,J$ large enough.
	We conclude again by asymptotic faithfulness that $H$ is residually finite simple (see Remark \ref{rk:asympFaith}).
\end{proof}

\subsection{Realizability of congruence classes of invariants of links and embedded graphs in \texorpdfstring{$\bm{3}$}{3}-manifolds and
applications to embeddings between \texorpdfstring{$\bm{3}$}{3}-manifolds}
\label{sec:embeddings}

For $\Gamma$ a banded trivalent graph (not necessarily connected) and $p\geq 5$ a prime number, let $\Delta_{\Gamma}(p)$
be the set of maps $\lambda:E(\Gamma)\longrightarrow \Lambda_p=\lbrace 0,2,\ldots,p-3\rbrace$,
such that the triple of colors $(\lambda(a),\lambda(b),\lambda(c))$ is $p$-admissible for any triple $(a,b,c)$ of edges adjacent to the same vertex.
\begin{theorem}\label{thm:realizability}
	Let $p\geq 5$ be a prime number, let $M$ be a closed compact oriented $3$-manifold and let $\Gamma$ be a banded trivalent graph.
	Let also $J\neq (1-\zeta_p)$ be a maximal ideal in $\Z[\zeta_p]$. Then for any map
	$$Z:\Delta_{\Gamma}(p)\setminus \lbrace 0 \rbrace \longrightarrow \Z[\zeta_p]/J$$
	there exists an embedding of $\Gamma$ in $M$, such that for any $\ul \in \Delta_{\Gamma}(p)\setminus\{0\}$,
	$$Z(\ul)\equiv RT_p(M,\Gamma,\ul)\mod{J}.$$
\end{theorem}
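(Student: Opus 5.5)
We view $M$ as obtained from a Heegaard-type splitting adapted to $\Gamma$. Let $N(\Gamma)$ be a regular neighborhood of a fixed embedding of $\Gamma$ in a ball $B\subset M$. Its boundary $\Sigma=\partial N(\Gamma)$ is a closed surface of genus $h$ (the first Betti number of $\Gamma$, summed over components), with a handlebody $\mathcal{H}=N(\Gamma)$ inside and $M'=M\setminus \mathrm{int}\,N(\Gamma)$ outside. The first step is to stabilize: replace $\Gamma$ by $\Gamma$ with extra trivial loops so that $\Sigma$ is connected of genus $\geq 3$. Concretely, tube the components of $N(\Gamma)$ together inside $B$ and add unknotted handles colored by $0$. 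This does not change $RT_p(M,\Gamma,\ul)$ for the relevant colorings, and keeps $\Gamma$ as a spine of a genus-$g$ handlebody with $g\geq 3$.

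By the TQFT gluing axiom,
\[
RT_p(M,\phi(\Gamma),\ul)=\langle \rho_{p,g}(\phi)\,\varphi_{\ul},\, v_{M'}\rangle ,
\]
where $\varphi_{\ul}\in V_{p,g}$ is the skein basis vector of $\Gamma$ colored by $\ul$ (the vectors $\varphi_{\ul}$, $\ul\in\Delta_\Gamma(p)$, form the basis of $V_{p,g}$ from \cite{bhmv}), and $v_{M'}$ is the vector of the complement. Re-embedding $\Gamma$ by $\phi\in\Mod(\Sigma_g)$ acting on $\Sigma$ changes the pairing exactly as shown. The linearization ambiguity is a common scalar, which we absorb after fixing $\phi$ via $\widetilde\rho$.

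The problem thus becomes linear algebra modulo $J$. We want some $\phi$ such that the functional $\ell_\phi:w\mapsto\langle\widetilde\rho_{J}(\phi)w,v_{M'}\rangle$ takes prescribed values on all $\varphi_{\ul}$ with $\ul\neq 0$. By \Cref{thm:surjectivity} (or its linear version \Cref{thm:surjectivityImproved}, giving the full $\SU$ or $\SL$, which we need to control scalars), the orbit of the functional $\langle\cdot,v_{M'}\rangle$ under the image is the orbit of a vector under $\SL(V_J)$ or $\SU(V_J)$. The value at $\varphi_0$ must stay free, since $\varphi_0$ is the vacuum and its coefficient is forced by norm constraints in the unitary case. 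We therefore show:
\begin{enumerate}
\item $v_{M'}\not\equiv 0\bmod J$ (after rescaling the lattice). This holds because $v_{M'}$ is a nonzero vector in a $\Z[\zeta_p]$-lattice that we may scale to be primitive.
\item In the $\SL$ case, $\SL_d(\F_q)$ acts transitively on nonzero vectors for $d\geq 2$, so any nonzero functional is realized. The prescribed data on $\varphi_{\ul}$, $\ul\neq 0$, can be completed by choosing the $\varphi_0$-coordinate to be nonzero.
\item In the $\SU$ case, $\SU_d(\F_q)$ acts transitively on vectors of a given hermitian norm, with nonzero isotropic vectors forming a single orbit when $d\geq 3$. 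Given the prescribed coordinates for $\ul\neq0$, we solve for the $\varphi_0$-coordinate $x$ so that the total norm equals that of $v_{M'}$. This is a norm equation $N(x)=c$ over $\F_q/\F_{\sqrt q}$, which is always solvable because the norm is surjective and $N(x)=0$ has the solution $x=0$. We need $\varphi_0$ anisotropic, or more generally a suitable orthogonality relation for the basis; we check it using the orthogonality of the skein basis under $h_{p,g}$ with nonzero norms modulo $J$ (the norms are products of quantum integers, units away from $p$).
\end{enumerate}

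The main obstacle I expect is step 3. One issue is handling the case where $v_{M'}$ is isotropic versus nonisotropic and matching norms exactly. Another is the identification of the invariant form modulo $J$ with the hermitian TQFT form, including for $p\equiv1\bmod 4$ where it is only defined over $\Z[\zeta_{4p}]$. A further issue is making sure the scalar ambiguity from the projective action is fixed using $\widetilde\rho$ with surjectivity onto $\SU$ rather than $\PSU$. If $v_{M'}\equiv 0$ modulo $J$ for the unrescaled lattice, we instead work with the rescaled primitive vector and note that the target values $Z$ can be rescaled accordingly.
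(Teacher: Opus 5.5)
There is a genuine gap in your first reduction. You assume $RT_p(M,\phi(\Gamma),\ul)=\langle\rho_{p,g}(\phi)\varphi_{\ul},v_{M'}\rangle$ for arbitrary $\phi\in\Mod(\Sigma_g)$, and this is false in general. Cutting $M$ along $\Sigma=\partial N(\Gamma)$ and regluing by $\phi$ gives $N(\Gamma)\cup_\phi M'$, which is in general a different $3$-manifold. The pairing then computes an invariant of a graph in that manifold, not of a re-embedding of $\Gamma$ in $M$.

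Your own bookkeeping exposes this. The $\varphi_0$-coordinate $\langle\rho(\phi)\varphi_0,v_{M'}\rangle$ is $RT_p(N(\Gamma)\cup_\phi M')$, and you let it vary freely. For any embedding of $\Gamma$ in $M$, however, the value at $\ul=0$ is forced to be $RT_p(M)$.

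The formula does hold when $\phi$ extends over $N(\Gamma)$. But then $\phi$ lies in the handlebody group, whose image fixes the vacuum line and is nowhere near $\SL(V_J)$ or $\SU(V_J)$, so your transitivity arguments do not apply.

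More fundamentally, as long as the complement $M'$ is fixed, you are always pairing against the single vector $v_{M'}$. Since everything then stays in the ball $B$, $RT_p(M,\Gamma',\ul)$ equals $RT_p(M)\cdot RT_p(S^3,\Gamma',\ul)$ up to a factor invertible away from $p$. So when $RT_p(M)\in J$, which is exactly the case used in \Cref{cor:nonEmbedding}, every value you can produce is $0$ mod $J$. ``Rescaling the lattice'' to make $v_{M'}$ primitive is not allowed: the invariants are fixed numbers and cannot be rescaled along with $Z$.

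The missing idea is to make the splitting surface \emph{cross} the graph rather than bound a neighborhood of it, and to reglue only by maps that are trivial in $\Mod(\Sigma_g)$. The paper does this as follows.
\begin{enumerate}
\item It takes a Heegaard splitting $M=H_1\cup_\Sigma H_2$ of genus at least $3$.
\item It puts $\Gamma$ in a ball with its vertices in $H_2$ and its edge midpoints in $H_1$. Then $\Gamma$ meets $\Sigma$ in $2n$ points colored $(\ul,\ul)$.
\item It reglues by elements of the point-pushing subgroup $P_{2n}(\Sigma_g)\subset\PMod(\Sigma_{g,2n})$. These become trivial after forgetting the marked points, so the ambient manifold stays $M$ while $\Gamma$ is genuinely re-embedded. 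They also act trivially when $\ul=0$, which is precisely why $\ul=0$ is excluded.
\end{enumerate}
The price is that each coloring now lives in its own space $V_{J,g,(\ul,\ul)}$. Instead of one transitivity statement, you need the restriction to $P_{2n}(\Sigma_g)$ to surject onto the \emph{product} over all $\ul\neq 0$. This is \Cref{lemma:braidSurjectivity}: it follows from \Cref{thm:surjectivity} via normality and simplicity, plus Hall's diagonal lemma, with distinct colorings told apart by counting eigenspaces of twists around pairs of marked points.

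Finally, non-vanishing mod $J$ of the two local vectors $RT_p(H_i,\Gamma\cap H_i,\ul)$ is checked directly, not by rescaling. On one side the vector is a BHMV basis vector; on the other its norm is an unlink invariant, hence a unit away from $p$.
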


One significance of Theorem \ref{thm:realizability} is that, thanks to the theory of Frohman-Kania-Bartoszynska ideals \cite{FKB},
it allows us to construct complements of handlebodies in a given $3$-manifold $M$ that do not embed in another $3$-manifold $M'$,
if $M$ and $M'$ satisfy some mild conditions:
\begin{corollary}\label{cor:nonEmbedding}
	Let $M$ and $M'$ be two closed oriented $3$-manifolds, and let $H$ be a disjoint union of handlebodies (of genus at least $1$).
	Assume that for some prime number $p\geq 5$ and some maximal ideal $J$ in $\Z[\zeta_p]$ different from $(1-\zeta_p)$,
	one has $RT_p(M)\in J$ but $RT_p(M')\notin J$. Then there exists an embedding of $H$ in $M$ such that $M\setminus H$ does not embed in $M'$.
\end{corollary}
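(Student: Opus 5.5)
The plan is to reduce the non-embedding statement to \Cref{thm:realizability} via Frohman--Kania-Bartoszynska ideals. Recall the FKB setup: for a compact $3$-manifold $N$ with boundary $\partial N=\Sigma$, the ideal $I_p(N)\subset\Z[\zeta_p]$ is generated by the invariants $RT_p(N\cup_\Sigma N')$ obtained by gluing $N$ to arbitrary $N'$ (equivalently, by the pairings $\langle Z(N),v\rangle$ with $v$ in the lattice $V_p(\Sigma)$). If $N$ embeds in a closed $M'$, then $M'=N\cup (M'\setminus N)$, so $RT_p(M')\in I_p(N)$. Hence it suffices to produce an embedding $H\hookrightarrow M$ with $N:=M\setminus H$ satisfying $I_p(N)\subseteq J$; then $RT_p(M')\notin J$ rules out $N\hookrightarrow M'$.

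Take $\Gamma$ to be a disjoint union of trivalent spines of the components of $H$, so that any embedding of $\Gamma$ in $M$ yields, via a regular neighbourhood, an embedding of $H$. The vector $Z(N)\in V_p(\partial H)^*$ is determined by its pairings with the basis vectors $\varphi_c$ indexed by admissible colorings $c\in\Delta_\Gamma(p)$; by construction this pairing is $RT_p(M,\Gamma,c)$, and for $c=0$ it is $RT_p(M)$. Invoking \Cref{thm:realizability} with $Z\equiv 0$, we choose the embedding of $\Gamma$ so that $RT_p(M,\Gamma,c)\in J$ for every $c\neq 0$; combined with $RT_p(M)\in J$, all coefficients of $Z(N)$ in the dual basis lie in $J$.

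The delicate step, which I expect to be the main obstacle, is integrality. The basis $\varphi_c$ is only defined over $\Z[\zeta_p,\frac1p]$, not over the Gilmer--Masbaum integral lattice. Since $J$ is prime to $p$, however, we may work in the localization $\Z[\zeta_p]_J$, where $p$ is a unit; there the $\varphi_c$ span the lattice, and $I_p(N)\Z[\zeta_p]_J$ is generated by the pairings $\langle Z(N),\varphi_c\rangle$. We conclude $I_p(N)\subseteq J\Z[\zeta_p]_J\cap\Z[\zeta_p]=J$. One must also check the normalization conventions, namely that the gluing formula $RT_p(N\cup N')=\langle Z(N),Z(N')\rangle$ holds with the integral normalizations up to units. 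With that in place, $RT_p(M')\in I_p(N)\subseteq J$ would contradict the hypothesis, so $M\setminus H$ does not embed in $M'$.
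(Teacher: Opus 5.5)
Your proposal is correct and is essentially the paper's proof: you apply \Cref{thm:realizability} with $Z\equiv 0$ to a spine $\Gamma$ of $H$, and observe that the coefficients of $RT_p(M\setminus H)$ in a BHMV basis are the $RT_p(M,\Gamma,c)$, with $c=0$ giving $RT_p(M)\in J$. You then deduce $I_p(M\setminus H)\subseteq J$ and conclude from $RT_p(M')\notin J$. Your localization at $J$ only spells out the integrality step that the paper states without detail; for that step you need just one direction of your ``equivalently'', namely that $I_p(N)$ is contained in the ideal generated by pairings of $Z(N)$ with $\Z[\zeta_p,\frac1p]$-combinations of the $\varphi_c$.
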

\begin{proof}
	Recall from \cite{FKB} that, for a given prime number $p\geq 5$, the Frohman-Kania-Bartoszynska ideal $I_p(N)$ of a
	$3$-manifold with boundary is the $\Z[\zeta_p]$-ideal spanned by the invariants $RT_p(M)$ where $M$ runs over all closed
	$3$-manifolds in which $N$ embeds. For a maximal ideal $J\subset \Z[\zeta_p]$ different from $(1-\zeta_p)$
	and such that the vector $RT_p(N) \in RT_p(\partial N)$ has coefficients in $J$ in a BHMV basis, one has $I_p(N)\subset J$.
	
	Now, let $M$ and $M'$ satisfy the hypotheses of the corollary.
	Pick a trivalent graph $\Gamma$ embedded in $M$ as in the conclusion of Theorem \ref{thm:realizability} with $Z$ identically $0$,
	such that a regular neighborhood of $\Gamma$ is homeomorphic to $H$. Then setting $N:=M\setminus H$,
	the coefficients of $RT_p(N)$ in a BHMV basis are the $RT_p(M,\Gamma,\ul)$ for $\ul \in \Delta_{\Gamma}(p)$.
	They are all in $J$ by construction, since $RT_p(M,\Gamma,0)=RT_p(M)\in J$. Thus $I_p(N)\subset J$,
	but since $RT_p(M')\notin J$, we conclude that $N$ does not embed in $M'$.
\end{proof}

The proof of Theorem \ref{thm:realizability} will rely on the following Lemma:

\begin{lemma}
	\label{lemma:braidSurjectivity} Let $p\geq 5$ be a prime number,
	let $J\neq (1-\zeta_p)$ be a maximal ideal in $\Z[\zeta_p]$ and
	let $\Sigma_{g,2n}$ be a surface of genus $g\geq 3$ with $2n$ marked points, $n\geq 1$.
	Let us view the pure braid group $P_{2n}(\Sigma_g)$ as a subgroup of $\PMod(\Sigma_{g,2n})$. Then
	$$\underset{\ul\in \Lambda^n\setminus \lbrace 0 \rbrace}{\prod}\rho_{J,g,(\ul,\ul)}|_{P_{2n}(\Sigma_g)}\colon%
	P_{2n}(\Sigma_g)\longrightarrow \underset{\ul\in \Lambda^n\setminus \lbrace 0 \rbrace}{\prod} \mathbf{PG}_{d_{p,g,(\ul,\ul)}}(\Z[\zeta_p]/J)$$
	is surjective, where $\mathbf{PG}=\mathrm{PSU}$ or $\mathrm{PSL}$, depending on whether $\overline{J}=J$ or not.
\end{lemma}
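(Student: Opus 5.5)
The plan has two stages. First, each factor $\rho_{J,g,(\ul,\ul)}|_{P_{2n}(\Sigma_g)}$ is surjective. Second, Hall's diagonal lemma, in the form used in \Cref{lemma:surjectivityProduct}, upgrades this to surjectivity onto the product.

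\textbf{Each factor.} The point-pushing subgroup $P_{2n}(\Sigma_g)$ is normal in $\PMod(\Sigma_{g,2n})$ by the Birman exact sequence, since it is the kernel of the forgetful map to $\Mod(\Sigma_g)$. Fix $\ul\neq 0$. By \Cref{thm:surjectivity} (for $p\geq 7$) or \Cref{thm:casep=5} (for $p=5$; here the relevant colorings are $(2,\dots)$ up to zeros, which \Cref{rmk:vacuum} reduces to $\Sigma_{g,k}$), $\rho_{J,g,(\ul,\ul)}$ maps $\PMod(\Sigma_{g,2n})$ onto the simple group $\mathbf{PG}_{d}(\Z[\zeta_p]/J)$. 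Hence the image of $P_{2n}(\Sigma_g)$ is a normal subgroup, so it is either trivial or everything. To see it is non-trivial, take a nonzero color $\lambda_i$ of $\ul$ and a marked point carrying it. Push that point around a non-separating loop; the result is $t_\gamma t_{\gamma'}^{-1}$ for the two boundary curves $\gamma,\gamma'$ of an annulus containing the point. Both curves are non-separating. By fusion along $\gamma$ and $\gamma'$, the two twists act on the basis attached to a pants decomposition containing both curves by eigenvalues $\zeta_p^{c(c+2)}$ and $\zeta_p^{c'(c'+2)}$. Admissibility with $\lambda_i\neq 0$ allows colorings with $c\neq c'$, and also with $c=c'$, as in the $i_0,i_0'$ trick of \Cref{lemma:independenceOfTwists}. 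Since $J\neq(h)$ keeps distinct eigenvalues distinct, the element is not scalar mod $J$. So its image is non-trivial, and each factor is surjective.

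\textbf{The product.} Argue by contradiction with a minimal counterexample, exactly as in \Cref{lemma:surjectivityProduct}. Hall's lemma then produces $\ul\neq\ul'$ such that the two representations of $P_{2n}(\Sigma_g)$ differ by an automorphism of $G=\mathbf{PG}_d(\F_q)$. That automorphism is a composite of an inner automorphism, a Galois automorphism, and possibly transpose-inverse. One needs the dimensions to agree and no exceptional isomorphism to occur; the latter holds because $d\geq 5$.

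\textbf{Main obstacle: ruling out such a coincidence.} The difficulty is that the equivalence is only known on $P_{2n}(\Sigma_g)$, not on all of $\PMod$. I would handle this with a local invariant. Let $\delta_i$ be a curve bounding a disk around marked point $i$ together with a non-separating handle, so that the twist about $\delta_i$ lies in $P_{2n}$ up to twists around the punctures. Alternatively, use the push element $\beta_i$ above: its projective spectrum, read from fusion along $\gamma$ and $\gamma'$, consists of the ratios $\zeta_p^{c(c+2)-c'(c'+2)}$ over pairs $(c,c')$ that are admissible with $\lambda_i$. That set recovers $\lambda_i$, because its maximal $|c-c'|$ is $\lambda_i$ and the ratios for $|c-c'|=\lambda_i$ are distinct from those with smaller gap. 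Apply a Galois twist $F^k$ together with a possible inversion (duality). Arguing as with the $\mu=0,1$ trick in \Cref{lemma:surjectivityProduct}, this pins down $F^k$ on $\zeta_p$ and shows that the color multisets agree position by position. Hence $\ul=\ul'$, a contradiction. Getting this spectral recovery of $\lambda_i$ to hold uniformly modulo $J$ is the step I expect to need the most care.
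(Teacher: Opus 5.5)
\textbf{Gap in the main step.} Your overall structure is the same as the paper's. Each factor is surjective because $P_{2n}(\Sigma_g)$ is normal and the full image is simple. Your use of \Cref{thm:casep=5} for $p=5$ is a correct refinement, since \Cref{thm:surjectivity} needs $p\geq 7$. Your push-map argument for non-triviality works, and Hall's lemma then handles the product.

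The gap is the step you flagged. You never give an invariant of $\rho_{J,g,(\ul,\ul)}|_{P_{2n}(\Sigma_g)}$ that survives post-composition with an automorphism of $\mathbf{PG}$ and recovers $\ul$.

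\begin{itemize}
\item The projective spectrum of the push $\beta_i$ only records the exponents $(c+1)^2-(c'+1)^2 \bmod p$. So ``the maximal $|c-c'|$'' cannot be read off from it.
\item After a Frobenius twist, duality and rescaling, the exponent set is only known up to $e\mapsto ae+b$, with $a$ in the subgroup of $(\Z/p\Z)^\times$ generated by $-1$ and the characteristic $q'$ of $\Z[\zeta_p]/J$.
\item This genuinely loses information. For $p=7$ the exponent sets are $\{0,\pm1,\pm2\}$ for $\lambda_i=2$ and $\{0,\pm2,\pm3\}$ for $\lambda_i=4$. Multiplication by $2$ carries the first onto the second, and it is realized by a Frobenius power whenever $q'$ has order $3$ modulo $7$.
\item The $\mu=0,1$ argument of \Cref{lemma:surjectivityProduct} does not transfer. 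There one compares two reductions of the same TQFT space, and their fusion summands are matched through $\dim V_{1,\mu}$. Here the two spaces carry different colors, so no such matching of eigenspaces exists.
\item Your other candidate, a twist about a curve cutting off a handle together with point $i$, is not in $P_{2n}(\Sigma_g)$ at all. It maps to a non-trivial separating twist in $\Mod(\Sigma_g)$.
\end{itemize}

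\textbf{The missing idea: use the doubling $(\ul,\ul)$.} Let $\gamma_i$ bound a disk containing exactly the marked points $i$ and $n+i$, both colored $\lambda_i$. Then $T_{\gamma_i}\in P_{2n}(\Sigma_g)$. Fusion along $\gamma_i$ splits $V_{J,g,(\ul,\ul)}$ into eigenspaces indexed by $S_{\lambda_i}=\{\mu\mid(\lambda_i,\lambda_i,\mu)\text{ admissible}\}$. These are all non-zero by \Cref{lemma:dimAtLeastOne}, and their eigenvalues are pairwise distinct mod $J$.

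The number of distinct eigenvalues of an element is invariant under inner, field and transpose-inverse automorphisms. Moreover $\lambda\mapsto|S_\lambda|$ is injective:
\begin{itemize}
\item $|S_\lambda|=\lambda+1$, which is odd, for $\lambda<i_0$;
\item $|S_\lambda|=p-1-\lambda$, which is even, for $\lambda>i_0$;
\item $|S_{i_0}|=\frac{p-1}{2}$.
\end{itemize}
So $\ul$ is recovered from the restriction to $P_{2n}(\Sigma_g)$ and Hall's lemma applies. The same twists also give non-triviality of each factor directly.
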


\begin{proof}
	Let $1\leq i\leq n$. Let us first explain how to read out $\ul$ from eigenspaces of some elements in the image of
	$\rho_{J,g,(\ul,\ul)}|_{P_{2n}(\Sigma_g)}$.
	Let $\gamma$ be a simple closed curve in $\Sigma_{g,2n}$ that separates a disk containing only
	the two marked points $i$ and $n+i$, which are both labeled with $\lambda_i$.
	Then, cutting along $\gamma$ and denoting by $\ul'$ the tuple $\ul$ with the $i$-th component removed, we have
	the following decomposition into eigenspaces for the action of the Dehn twist $T_\gamma$:
	\begin{equation}\label{eq:dimSum}
	V_{J,g,(\ul,\ul)}=\underset{(\lambda_i,\lambda_i,\mu) \ p\textrm{-admissible}}{\bigoplus} V_{J,g,(\ul',\ul',\mu)}.
	\end{equation}
	with each term non-zero by \Cref{lemma:dimAtLeastOne} since $g\geq 1$.
	However, the set $S_{\lambda_i}:=\lbrace \mu \ | \ (\lambda_i,\lambda_i,\mu) \ p\textrm{-admissible}\rbrace$ is
	$$S_{\lambda_i}=\lbrace 0, 2,\ldots ,2\lambda_i \rbrace$$
	if $\lambda_i<i_0$, and
	$$S_{\lambda_i}=\lbrace 0, 2,\ldots, (2p-4)-2\lambda_i\rbrace$$
	if $i>i_0$.
	Note that $S_{\lambda_i}$ contains an odd number of elements in the former case,
	and an even number in the latter case. Finally, $S_{i_0}=\Lambda$ by Lemma \ref{lemma:permColor}.
	We deduce that $\lambda_i\mapsto |S_{\lambda_i}|$ is injective. Hence
	$\lambda_i$ can be read by counting the number of eigenspaces of $T_\gamma$.
	As $T_\gamma$ is in $P_{2n}(\Sigma_g)$, we have that $\ul$ can be read
	from $\rho_{J,g,(\ul,\ul)}|_{P_{2n}(\Sigma_g)}$ by
	counting eigenspaces.

	Assume by contradiction that the statement of the lemma is false.
	Let $L\subset \Lambda^n\setminus\{0\}$ be a subset with minimal cardinality such that
	${\prod}_{\ul\in L}\rho_{J,g,(\ul,\ul)}|_{P_{2n}(\Sigma_g)}$ is not surjective.
	Set $L=L'\sqcup\{\ul\}$. By \Cref{thm:surjectivity}, as $P_{2n}(\Sigma_g)$ is normal in $\PMod(\Sigma_{g,2n})$,
	$\rho_{J,g,(\ul,\ul)}(P_{2n}(\Sigma_g))$ is either
	$\mathbf{PG}_{d_{p,g,(\ul,\ul)}}(\Z[\zeta_p]/J)$ or trivial.
	As $\ul\neq 0$, at least one of the twists $T_\gamma\in P_{2n}(\Sigma_g)$ of the previous paragraph
	has at least $2$ eigenspaces.
	Hence, $\rho_{J,g,(\ul,\ul)}(P_{2n}(\Sigma_g))$ is non-trivial and $\rho_{J,g,(\ul,\ul)}|_{P_{2n}(\Sigma_g)}$ is surjective.
	Thus $L'\neq \emptyset$. Let $H$ denote the image of
	$\underset{\ul\in L}{\prod}\rho_{J,g,(\ul,\ul)}|_{P_{2n}(\Sigma_g)}$.
	It is a subgroup of
	$$G_1\times G_2=\left(\underset{\ul'\in L'}{\prod}\mathbf{PG}_{d_{p,g,(\ul',\ul')}}(\Z[\zeta_p]/J)
	\right)\times \mathbf{PG}_{d_{p,g,(\ul,\ul)}}(\Z[\zeta_p]/J)$$
	which surjects onto both factors, by minimality of $|L|$.
	As in the proof of \Cref{lemma:surjectivityProduct}, we argue, using Hall's diagonal Lemma,
	that $H=G_1\times G_2$.
	To do this, we need to show that no two distinct maps
	$\rho_{J,g,(\ul,\ul)}|_{P_{2n}(\Sigma_g)}$, $\rho_{J,g,(\ul',\ul')}|_{P_{2n}(\Sigma_g)}$
	are related by post-composition with an isomorphism
	$\phi\colon\mathbf{PG}_{d_{p,g,(\ul',\ul')}}(\Z[\zeta_p]/J)\simeq \mathbf{PG}_{d_{p,g,(\ul,\ul)}}(\Z[\zeta_p]/J)$.
	Assume otherwise. Then there exists $\sigma\in\mathrm{Aut}(\Z[\zeta_p]/J)$ such that $\phi$
	is induced by an isomorphism $V_{J,g,(\ul',\ul')}\simeq \sigma(V_{J,g,(\ul,\ul)})$
	or an isomorphism $V_{J,g,(\ul',\ul')}\simeq \sigma(V_{J,g,(\ul,\ul)})^*$
	\cite[§3.3]{conwayAtlasFiniteGroups1985}. In particular, for any $k\in P_{2n}(\Sigma_g)$,
	$\rho_{J,g,(\ul',\ul')}(k)$ and $\rho_{J,g,(\ul,\ul)}(k)$ have the same number of eigenvalues.
	By the above derivation, this implies $\ul'=\ul$, contradiction.
\end{proof}

\begin{proof}[Proof of Theorem \ref{thm:realizability}]
	Let $\Gamma$ be the banded trivalent graph. Pick a Heegaard splitting $M=H_1\underset{\Sigma}{\bigcup} H_2$ of genus at least $3$.
	Pick a $3$-ball $B$ that intersects $\Sigma$ in a single disk $D$.
	We embed $\Gamma$ in $B$, then isotope $\Gamma$ so that $\Gamma\cap H_2$ is a neighborhood in $\Gamma$ of the vertices of $\Gamma$,
	while $\Gamma \cap H_1$ is a neighborhood in $\Gamma$ of the midpoints of edges.
	(If $\Gamma$ has closed loops among its components, we want one arc of each loop in each of $H_1$ and $H_2$).
	
	With this setting, we can arrange that $(H_i,\Gamma\cap H_i,\ul)$ is a non-zero vector in $RT_p(\Sigma,\Gamma\cap \Sigma,\ul) \mod J$
	for $i=1$ or $2$ and any $\ul \in \Delta_{\Gamma}(p)$.
	For $i=2$, we can arrange that the different connected components of $\Gamma\cap H_2$ are embedded in disjoint balls in the ball $B$.
	The vectors $(H_i,\Gamma\cap H_i,\ul)$ are then primitive over $\Z[\zeta_p,\frac 1 p]$,
	as they may be seen as part of a BHMV basis of $RT_p(\Sigma,\Gamma\cap \Sigma,\ul)$.
	For $i=1$, we may change the embedding of $\Gamma\cap H_1$ in $B\cap H_1$ so that
	the closure of $\Gamma\cap H_1\subset B\cap H_1$ onto its mirror is a disjoint union of unlinks in the ball
	$(B\cap H_1)\cup_D \overline{(B\cap H_1)}$. This implies that the norms $\langle(H_1,\Gamma\cap H_1,\ul),(H_1,\Gamma\cap H_1,\ul)\rangle$
	are invertible in $\Z[\zeta_p,\frac 1 p]$, as they are, up to units,
	products of invariants of unlinks. Hence, the $(H_1,\Gamma\cap H_1,\ul)$ are primitive.
	
	We now modify the embedding of $\Gamma$ in $M$ by cutting along $\Sigma$ and regluing using a suitably chosen element $\beta$ of $P_n(\Sigma)$.
	The ambient manifold is still $M$ after this operation, since elements of $P_n(\Sigma)$ become trivial after forgetting the action on punctures.
	We choose the element $\phi$ so that
	$$\langle RT_p(H_1,\Gamma\cap H_1,\ul),\rho_{J,g,\ul}(\beta)(RT_p(H_2,\Gamma\cap H_2,\ul))\rangle \equiv Z(\ul) \mod J$$
	for any $\ul \in \Delta_{\Gamma}(p)\setminus \lbrace 0 \rbrace$.
	
	The existence of such an element is guaranteed by Lemma \ref{lemma:braidSurjectivity}, the non-vanishing of the vectors $RT_p(H_i,\Gamma\cap H_i,\ul)$
	and the non-degeneracy of the form $\langle ,\rangle$ mod $J$.
\end{proof}
\subsection{Characteristic subgroups of \texorpdfstring{$\bm{\pi_1(\Sigma_g)}$}{the fundamental group of a genus g surface} with trivial outer action
of \texorpdfstring{$\bm{\Mod(\Sigma_g)}$}{the mapping class group}}
\label{sec:congSubProp}

By the Dehn-Nielsen-Baer theorem, the mapping class group $\Mod(\Sigma_g)$ of a closed surface of genus $g$ is isomorphic to the group
$\mathrm{Out}(\pi_1(\Sigma_g))$. Note that from the Birman exact sequence
$$1\longrightarrow \pi_1(\Sigma_g)\longrightarrow \Mod(\Sigma_{g,1})\simeq \mathrm{Aut}^+(\pi_1(\Sigma_g))\longrightarrow%
\Mod(\Sigma_g)\simeq \mathrm{Out}^+(\pi_1(\Sigma_g)) \longrightarrow 1$$
we get that any characteristic subgroup $\Gamma$ of $\pi_1(\Sigma_g)$ gives rise to maps
$$\Mod(\Sigma_{g,1})\longrightarrow \mathrm{Aut}(\pi_1(\Sigma_g)/\Gamma)$$
and
$$\Mod(\Sigma_g)\longrightarrow \mathrm{Out}(\pi_1(\Sigma_g)/\Gamma).$$
A famous question of Ivanov \cite[§1]{Iva}, known as the \textit{Congruence Subgroup Property} for mapping class groups of surfaces, asks the following:

\begin{question}
	\label{ques:congSubProp} Let $g\geq 3$. Is it true that any finite index subgroup of $\Mod(\Sigma_g)$ contains the kernel of a map
	$$\Mod(\Sigma_g)\longrightarrow \mathrm{Out}(\pi_1(\Sigma_g)/\Gamma)$$
	where $\Gamma$ is some finite index characteristic subgroup of $\pi_1(\Sigma_g)$.
\end{question}

In contrast to Question \ref{ques:congSubProp}, a strange consequence of our main results is the following:

\begin{theorem}
	\label{thm:congSubProp} Let $g\geq 3$. There exists a sequence $(\Gamma_n)_{n\geq 0}$ of finite index characteristic subgroups of $\pi_1(\Sigma_g)$
	such that $\underset{n\geq 0}{\cap}\Gamma_n=\lbrace 1\rbrace$ and such that the associated maps
	$$\Mod(\Sigma_g)\longrightarrow \mathrm{Out}(\pi_1(\Sigma_g)/\Gamma_n)$$
	are all trivial.
\end{theorem}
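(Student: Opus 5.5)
The plan is to obtain the $\Gamma_n$ as kernels of maps from $\pi_1(\Sigma_g)$ to finite simple groups, supplied by the point-pushing subgroup of $\Mod(\Sigma_{g,1})$ and the surjectivity theorem.

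\textbf{Setup.} By the Birman exact sequence, $\pi_1(\Sigma_g)$ embeds as a normal subgroup of $\Mod(\Sigma_{g,1})\simeq\mathrm{Aut}^+(\pi_1(\Sigma_g))$ (for $g\geq 2$, $\PMod(\Sigma_{g,1})=\Mod(\Sigma_{g,1})$). For each prime $p\geq 7$ and each maximal ideal $J\neq(1-\zeta_p)$, consider
\[\rho_{J,g,(i_0)}:\Mod(\Sigma_{g,1})\to G_J,\]
with $G_J=\PSL$ or $\PSU(V_{J,g,(i_0)})$. By \Cref{thm:surjectivity} (here $g\geq 3$), this map is surjective and $G_J$ is simple. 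The restriction to the normal subgroup $\pi_1(\Sigma_g)$ therefore has normal image, which is either trivial or all of $G_J$. It is non-trivial because the point-push along a simple loop is a product $T_{\gamma_1}T_{\gamma_2}^{-1}$ of twists along the boundary curves of an annulus around the loop. On the fusion decomposition its eigenvalues differ across summands, so it is not scalar.

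\textbf{Definition of $\Gamma_J$.} Set $\Gamma_J=\ker(\rho_{J,g,(i_0)}|_{\pi_1(\Sigma_g)})$, which has finite index. It is invariant under conjugation by all of $\mathrm{Aut}^+(\pi_1)=\Mod(\Sigma_{g,1})$, since it is the intersection of $\pi_1$ with a normal subgroup. For an orientation-reversing automorphism, replace $\Gamma_J$ by $\Gamma_J\cap\overline{\Gamma_J}$, where $\overline{\Gamma_J}$ is the image of $\Gamma_J$ under a fixed orientation-reversing automorphism. This intersection corresponds to the product of $\rho_J$ and its mirror; alternatively, $\overline{\Gamma_J}$ may coincide with $\Gamma_{\overline J}$ via the complex-conjugate ideal. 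The result is characteristic.

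\textbf{Triviality of the outer action.} $\pi_1/\Gamma_J\cong G_J$, and the induced map $\Mod(\Sigma_{g,1})\to\mathrm{Aut}(G_J)$ is conjugation through $\rho_{J,g,(i_0)}$. Every element of $\Mod(\Sigma_{g,1})$ therefore acts on $\pi_1/\Gamma_J$ by an inner automorphism, namely conjugation by $\rho_J(f)\in G_J=\rho_J(\pi_1)$. Hence $\Mod(\Sigma_g)\to\mathrm{Out}(\pi_1/\Gamma_J)$ is trivial. In the intersected version, the quotient is a subdirect product of two copies of simple groups. By the diagonal Hall lemma it is either $G_J$ or $G_J\times G_J$. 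In both cases the action of $f$ is conjugation by the pair $(\rho_J(f),\overline{\rho}_J(f))$, and this pair lies in the image of $\pi_1$, because the image of $\pi_1$ equals the image of $\Mod(\Sigma_{g,1})$ by the simplicity argument above applied to the product. So the outer action is again trivial.

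\textbf{Separation, the main obstacle.} It remains to show $\bigcap_J\Gamma_J=\{1\}$. A nontrivial $x\in\pi_1(\Sigma_g)$ is a non-central element of $\Mod(\Sigma_{g,1})$, since the center is trivial for $g\geq 2$. By \Cref{cor:asymptFaith} with $n=1$, there is a prime $p$ with $\rho_{p,g,(i_0(p))}(x)\neq 1$. Because $\rho_p(x)$ is a non-scalar matrix over $\Z[\zeta_p,1/p]$, some off-diagonal entry, or some difference of diagonal entries, is a nonzero algebraic integer up to powers of $p$. That element avoids all but finitely many maximal ideals $J$, so $\rho_J(x)\neq 1$ for some $J$, and $x\notin\Gamma_J$. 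Enumerating the countably many pairs $(p,J)$ then gives the sequence $(\Gamma_n)$.

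The delicate point is the characteristic (orientation-reversing) invariance, which should be handled carefully by the mirror-image product as above.
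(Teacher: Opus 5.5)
Your proposal is correct and follows essentially the paper's route. You take $\Gamma_J=\ker\rho_{J,g,(i_0)}|_{\pi_1(\Sigma_g)}$, use surjectivity onto a simple group to see that $\rho_J(\pi_1(\Sigma_g))=\rho_J(\Mod(\Sigma_{g,1}))$, so that the induced action is inner, and use asymptotic faithfulness followed by reduction modulo $J$ for separation. The deviations are local. First, you obtain non-triviality of $\rho_J|_{\pi_1(\Sigma_g)}$ for every $(p,J)$ from the eigenvalues of the point-push $T_{\gamma_1}T_{\gamma_2}^{-1}$, whereas the paper uses asymptotic faithfulness and only gets it for large $p,J$. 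Second, you force invariance under orientation-reversing automorphisms by intersecting with the mirror image and applying Goursat, whereas the paper notes that $\Gamma_J$ is already characteristic: the mirror of $\rho_J$ is $\rho_{\overline{J}}$, which is dual to $\rho_J$ and therefore has the same kernel.
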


\begin{proof}
	Let us consider the representations $\rho_{J,g,(i_0)}$ of $\Mod(\Sigma_{g,1})$ for $p\geq 7$ prime and $J\neq (1-\zeta_p) \subset \Z[\zeta_p]$
	a maximal ideal.
	By Theorem \ref{thm:surjectivity}, these maps are surjective onto $G=\mathrm{PSL}(V_{J,i_0})$ or $\mathrm{PSU}(V_{J,i_0})$.
	Thanks to \Cref{thm:asymptFaith} and simplicity of $G$, for $p$ and $J$ large enough,
	the restriction to the subgroup $\pi_1(\Sigma_g)$, which is normal in $\Mod(\Sigma_{g,1})$, is also surjective onto $G$.
	Moreover, from the Skein construction of quantum representations, one can see that
	any orientation-reversing diffeomorphism of $\Sigma_{g,1}$ conjugates $\ker\rho_{J,g,(i_0)}$ into $\ker\rho_{\overline{J},g,(i_0)}$,
	which equals $\ker\rho_{J,g,(i_0)}$ since $\rho_{J,g,(i_0)}$ and $\rho_{\overline{J},g,(i_0)}$ are dual.
	As the group of (potentially orientation reversing)
	diffeomorphisms of $\Sigma_{g,1}$ up to isotopy is isomorphic to $\mathrm{Aut}(\pi_1(\Sigma_g))$,
	the subgroups $\Gamma_{J}:=\ker \rho_{J,g,(i_0)}|_{\pi_1(\Sigma_g)}$ are characteristic subgroups of $\pi_1(\Sigma_g)$, and moreover,
	the natural maps
	$$\Mod(\Sigma_{g,1})\longrightarrow \mathrm{Aut}(\pi_1(\Sigma_g)/\Gamma_J)$$
	take values in the inner automorphisms of $\pi_1(\Sigma_g)/\Gamma_J\simeq G$. Therefore, the natural maps
	$$\Mod(\Sigma_{g})\longrightarrow \mathrm{Out}(\pi_1(\Sigma_g)/\Gamma_J)$$
	are all trivial. The fact that $\underset{J \ large}{\cap}\Gamma_J=\lbrace 1 \rbrace$ follows once more from Theorem \ref{thm:asymptFaith}.
\end{proof}

We remark that the above construction does not provide counterexamples to Question \ref{ques:congSubProp}:
this construction can also be made for the groups $\PMod(\Sigma_{1,n})$ (\Cref{thm:resSimple}),
but these groups have the congruence property \cite[§2.1]{asadaFaithfulnessMonodromyRepresentations2001}.
It would be interesting to verify the Congruence Subgroup Property for the subgroups $\ker \rho_{J,g}$ of $\Mod(\Sigma_g)$.
\bibliographystyle{hamsalpha}
\bibliography{biblio}
\end{document}

%% file: tikzstyles.tikzstyles
\tikzstyle{green}=[text={black!30!green}]
\tikzstyle{blue}=[text=blue]
\tikzstyle{red}=[text=red]
\tikzstyle{puncture}=[fill=white, draw=red, shape=circle, minimum size=1pt]
\tikzstyle{blackpuncture}=[fill=white, draw=black, shape=circle, minimum size=1pt]
\tikzstyle{cyan}=[text=cyan]

\tikzstyle{markings}=[-, draw=red, line width=1pt, line cap=round]
\tikzstyle{overbraid}=[-, draw=white, fill=none, line width=6pt]
\tikzstyle{thick}=[-, line width=2pt, draw=blue]
\tikzstyle{dashedline}=[-, dashed]
\tikzstyle{dottedline}=[-, dash pattern=on 0.75pt off 0.75pt, line width=0.75pt]
\tikzstyle{thin red}=[-, line width=0.25pt, draw=black]
\tikzstyle{tangle}=[-, draw=blue, line width=1pt, fill={blue!20}]
\tikzstyle{scc}=[-, draw={black!30!green}, fill={blue!20}, line width=1pt]
\tikzstyle{inner boundary}=[-, fill=white]
\tikzstyle{outer boundary}=[-, fill={red!20}]
\tikzstyle{lowerboundery}=[-, line width=1.5pt, line cap=round, draw=red]
\tikzstyle{upperboundery}=[-, line width=1.5pt, line cap=round, draw=blue]
\tikzstyle{dottedcycle}=[-, draw=blue, dash pattern=on 0.5pt off 1pt on 4pt off 1pt, decoration={markings, mark=at position 0.5 with {\arrow{>}}}, postaction=decorate]
\tikzstyle{cycle}=[-, draw=blue, decoration={markings, mark=at position 0.5 with {\arrow{>}}}, postaction=decorate]
\tikzstyle{path}=[-, draw=cyan, line width=0.25pt]
\tikzstyle{arrowpath}=[-, draw=cyan, line width=0.25pt, decoration={markings, mark=at position 0.5 with {\arrow{>}}}, postaction=decorate]
\tikzstyle{orientedpath}=[-, line width=0.25pt, decoration={markings, mark=at position 0.5 with {\arrow{<}}}, postaction=decorate]
\tikzstyle{inner square}=[-, fill={blue!20}]
\tikzstyle{outer square}=[-, fill={red!20}]
\tikzstyle{blueline}=[-, draw=blue]
\tikzstyle{greenline}=[-, draw=green]
\tikzstyle{bluesquare}=[-, draw=blue, fill={blue!20}]